\definecolor{green}{RGB}{0,127,0}
\definecolor{red}{RGB}{191,0,0}
\newtheorem{thm}{Theorem}[section]
\newtheorem{cor}[thm]{Corollary}
\newtheorem{lem}[thm]{Lemma}
\newtheorem{prop}[thm]{Proposition}
\newtheorem{defi}[thm]{Definition}
\newtheorem{conj}{Conjecture}
\newtheorem{defprop}[thm]{Definition-Proposition}
\theoremstyle{remark}
\newtheorem{rmq}{Remark}
\newcommand{\bfp}{\mathbf{p}}
\newcommand{\bfq}{\mathbf{q}}
\newcommand{\bfr}{\mathbf{r}}
\newcommand{\bfs}{\mathbf{s}}
\newcommand{\B}{\mathcal B}
\newcommand{\Binf}{\mathcal B_\infty}
\newcommand{\Binfs}{\mathcal B_\infty^{>}}
\newcommand{\C}{\mathcal C}
\newcommand{\tY}{\widetilde{Y}}
\newcommand{\tZ}{\widetilde{Z}}
\newcommand{\ad}{\mathrm{ad}}
\newcommand{\Span}{\mathrm{Span}}
\newcommand{\mcP}{\mathcal{P}}
\newcommand{\PY}{\mathcal{P}_{Y}}
\newcommand{\PZ}{\mathcal{P}_{Z}}
\newcommand{\PYZ}{\mathcal{P}_{\tY,\tZ}}
\newcommand{\Jxi}{J^{(\alpha)}_{\xi}}
\newcommand{\Jxish}{J^{(\alpha)\#}_{\xi}}
\newcommand{\Ps}{\mathcal{P}_{\leq s}}
\newcommand{\tF}{\widetilde F}
\newcommand{\Jch}{\theta^{(\alpha)}}
\newcommand{\Jla}{J^{(\alpha)}_{\lambda}}
\newcommand{\tJch}{\tilde{\theta}^{(\alpha)}}
\newcommand{\Vcirc}{\mathcal{V}_\circ}
\newcommand{\Vbul}{\mathcal{V}_\bullet}
\newcommand{\cc}{cc}
\newcommand{\tf}{\nu_\diamond}
\newcommand{\Mk}{\mathcal{M}^{(k)}}
\newcommand{\Minf}{\mathcal{M}^{(\infty)}}
\newcommand{\GY}{\Gamma_Y}
\newcommand{\GtY}{\Gamma_{\widetilde Y}}
\newcommand{\GtZ}{\Gamma_{\widetilde Z}}
\DeclareMathOperator{\lex}{lex}
\DeclareMathOperator{\stat}{stat}
\newcommand{\Shifted}{\mathcal{S}^*_\alpha}
\newcommand{\Poly}{\mathscr{P}^*_\gamma}
\newcommand{\Sym}{\mathcal{S}_\alpha}
\DeclareMathOperator{\Ch}{Ch}
\DeclareMathOperator{\RS}{RS}
\DeclareMathOperator{\CS}{CS}
\DeclareMathOperator{\ct}{ct}
\newcommand{\Cha}{\Ch^{(\alpha)}}
\newcommand{\QQ}{\mathbb{Q}}
\newcommand{\YY}{\mathbb{Y}}
\newcommand{\RR}{\mathbb{R}}
\newcommand{\ZZ}{\mathbb{Z}}
\newcommand{\NN}{\mathbb{N}}
\newcommand{\pp}{\mathbf{p}}
\newcommand{\SSS}{\mathbf{S}}
\newcommand{\ribbon}{\vec{\bm{\Gamma}}}
\newcommand{\la}{\lambda}
\def\a{\alpha}
\author[H.~Ben Dali]{Houcine Ben Dali}
\address{Université de Lorraine, CNRS, IECL, F-54000 Nancy;
  Universit\'e de Paris, CNRS, IRIF, F-75006 Paris, France.
}
\email{houcine.ben-dali@univ-lorraine.fr}
\author[M.~Dołęga]{Maciej Dołęga}
\address{
Institute of Mathematics, 
Polish Academy of Sciences, 
ul. Śniadeckich 8, 
00-956 Warszawa, Poland.
}
\email{mdolega@impan.pl}
\thanks{MD is supported by {\it Narodowe Centrum Nauki},
  grant 2021/42/E/ST1/00162.}
\title[Positive formula
for Jack polynomials and Jack characters]{Positive formula
for Jack polynomials, Jack characters and proof of Lassalle's conjecture}
\begin{document}
\begin{abstract}
We give an explicit formula for the power-sum expansion of Jack
polynomials. We deduce it from a more general formula, which we provide
here, that interprets Jack characters in terms of bipartite maps. We
prove Lassalle's conjecture from 2008 on integrality and positivity of
Jack characters in Stanley's coordinates and give a new formula for
Jack polynomials using creation operators.
\end{abstract}

\maketitle

\section{Introduction}
\subsection{Jack polynomials and a one-parameter deformation of Young's formula}\label{ssec Jack}

Jack polynomials $J_\lambda^{(\alpha)}$ are symmetric functions
indexed by an integer partition $\lambda$ and a deformation parameter
$\alpha$. They interpolate, up to scaling factors, between Schur
functions for $\alpha=1$ and zonal polynomials for
$\alpha=2$. Originally, they were introduced by Jack
in~\cite{Jack1970/1971} as an important tool in statistics, but it
turned out that they appear quite naturally in many different
contexts: they play a crucial role in studying various models of
statistical mechanics and probability such as $\beta$-ensembles and
generalizations of Selberg
integrals~\cite{OkounkovOlshanski1997,Kadell1997,Johansson1998,DumitriuEdelman2002,Mehta2004,For}. Furthermore,
they are strongly related to the Calogero--Sutherland model from
quantum mechanics \cite{LapointeVinet1995} and to random partitions
\cite{BorodinOlshanski2005,DolegaFeray2016,BorodinGorinGuionnet2017,Moll2015,DolegaSniady2019}. Finally,
they were found to have a rich combinatorial structure~\cite{Stanley1989,Macdonald1995,GouldenJackson1996,KnopSahi1997,ChapuyDolega2022,Moll2023} and its
understanding was emphasized as an important program in Okounkov's survey on random partitions~\cite{Okounkov2003}.

In his seminal work \cite{Stanley1989}, Stanley initiated the
combinatorial analysis of these symmetric functions. Knop and Sahi have given in \cite{KnopSahi1997} a combinatorial interpretation for the coefficients of the Jack polynomial $J_\lambda^{(\alpha)}$ in the monomial basis in terms of tableaux of shape $\lambda$.
A longstanding problem in combinatorics suggests that there is a
connection between the expansion of Jack polynomials in the power-sum
basis and the enumeration of discrete surfaces modeled by graphs
embedded in surfaces (also known as combinatorial
maps)~\cite{Hanlon1988,GouldenJackson1996,Lassalle2008b,Lassalle2009}. Besides
strong motivations coming from algebraic combinatorics, this expansion
is of particular interest due to the aforementioned connections
between Jack polynomials, probability, and mathematical physics
(see~\cite{DolegaFeray2016,BorodinGorinGuionnet2017} and references therein).

In this paper, we establish an explicit combinatorial expression of
Jack polynomials in the power-sum basis in terms of weighted maps,
answering a question posed by Hanlon~\cite{Hanlon1988} and proving its
more refined version conjectured by Lassalle
in~\cite{Lassalle2008b}. The study of the structure of Jack
polynomials, which underlies a one-parameter extension of Young's
formula for the irreducible characters of the symmetric group, has a
rich history of investigation that began with the work of
Hanlon~\cite{Hanlon1988} and had an interesting twist in recognizing
that mysterious enumerative properties emerge once the deformation
parameter is shifted by one. In the following, we provide a brief
account of this story.

\vspace{5pt}

It is well-known that the Jack polynomials coincide (up to a normalization factor) with the Schur polynomials at $\alpha = 1$. The power-sum expansion of the latter is given by the irreducible characters of the symmetric group. Consequently, Young's formula implies that for any Young diagram $\lambda \vdash n$, one can write:

\begin{equation}
  \label{eq:YoungsFormula}
  J_\lambda^{(\alpha=1)} = \sum_{\substack{\sigma_\circ \in
      \CS(T_\lambda),\\ \sigma_\bullet \in
      \RS(T_\lambda)}}(-1)^{|\sigma_\circ|}p_{\ct(\sigma_\circ\sigma_\bullet)},
\end{equation}
where $T_\lambda$ is a bijective filling of the Young diagram
$\lambda$ by the numbers $1,\dots,n$, the subgroups $\RS(T_\lambda), \CS(T_\lambda)
< \mathfrak{S}_{n}$ are the row and column stabilizers of $T_\lambda$,
 the
\emph{cycle-type} of $\sigma_\circ\sigma_\bullet$ is a partition $\ct(\sigma_\circ\sigma_\bullet)$ corresponding to the
lengths of cycles in $\sigma_\circ\sigma_\bullet$, and
$|\sigma_\circ|$ is the number of cycles of $\sigma_\circ$
(the fixed points are also counted). 

Inspired by Young's formula~\eqref{eq:YoungsFormula}, Hanlon asked
in~\cite{Hanlon1988} if there exists a function \sloppy $\stat\colon \RS(T_\lambda) \times \CS(T_\lambda) \to \ZZ_{\geq 0}$ such that

\begin{equation}
\label{eq:Hanlon}
J_\lambda^{(\alpha)} = \sum_{\substack{\sigma_\circ \in \CS(T_\lambda),\\ \sigma_\bullet \in \RS(T_\lambda)}} \alpha^{\stat(\sigma_\circ,\sigma_\bullet)}(-1)^{|\sigma_\circ|} p_{\ct(\sigma_\circ\sigma_\bullet)}.
\end{equation}

Since then, substantial progress has been made in understanding the
structure of Jack polynomials. Remarkably, it has been discovered by
different researchers in various
areas~\cite{GouldenJackson1996,ChekhovEynard2006a,Lassalle2008,Lassalle2009,AldayGaiottoTachikawa2010}
that once the original parametrization of Jack polynomials by $\alpha$
is replaced by its shifted version $b := \alpha-1$ (under different
disguises in the mentioned references), fascinating conjectural
enumerative properties emerge. In particular, it became more clear
that a one-parameter deformation of \eqref{eq:YoungsFormula}, which
seems less naive than \eqref{eq:Hanlon}, shall involve the parameter
$b$. In order to understand why, we rewrite~\eqref{eq:YoungsFormula}
in order to obtain a formula where we sum over bipartite orientable
maps (also known as bipartite ribbon graphs) which resembles the genus expansion known from random matrices.

\subsection{Young's formula in terms of maps}\label{ssec maps}

The study of maps is a well-developed area with strong connections with analytic combinatorics, mathematical physics and probability \cite{LandoZvonkin2004}. The relationship between generating series of maps and the theory of symmetric functions was first noticed via a character theoretic approach, and has then been developed to include other techniques such as matrix integrals and differential equations \cite{GouldenJackson1996,Lassalle2009,DolegaFeraySniady2014,ChapuyDolega2022}.

A \textit{connected map} is a connected graph embedded into a surface such that all the connected components of the complement of the graph are simply connected (see \cite[Definition 1.3.7]{LandoZvonkin2004}). These connected components are called the \textit{faces} of the map. We consider maps up to homeomorphisms of the surface. A connected map is \textit{orientable} if the underlying surface is orientable. In this paper\footnote{This is not the standard definition of a map; usually a map is connected.}, a \textit{map} is an unordered collection (possibly empty) of connected maps. A map is orientable if each of its connected components is orientable.

\begin{rmq}\label{rmq empty map}
By convention, we require that there are no isolated vertices in a map. In particular, the empty map is the only map with 0 edges.
\end{rmq}

We will restrict our attention to \textit{bipartite maps}, whose
vertices are colored black or white such that each edge connects
vertices of different colors. Note that in a bipartite map, all faces
have even degree. We define the \textit{face-type} of a bipartite map
$M$ with $n$ edges as the partition of $n$ obtained by reordering the
half degrees of the faces, and we denote it $\tf(M)$. We also denote
its set of white and black vertices by $\Vcirc(M)$ and $\Vbul(M)$,
respectively. It is well known in Hurwitz theory that orientable
bipartite maps with $n$ edges labeled by $1,\dots,n$ are in bijection
with pairs of permutations $(\sigma_\circ,\sigma_\bullet) \in
\mathfrak{S}_n$, and the cycle-type $\ct(\sigma_\circ\sigma_\bullet)$
corresponds to the face-type $\tf(M)$. The condition $\sigma_\circ \in
\CS(T_\lambda), \sigma_\bullet \in\RS(T_\lambda)$ can be interpreted
via this bijection as the possibility of embedding the associated map
$M$ into the Young diagram $\lambda$, as described
in~\cite{FeraySniady2011,DolegaFeraySniady2014}. These observations
are key steps in deriving the following formula from \eqref{eq:YoungsFormula}:
\begin{equation}
  \label{eq:StanleyFeray}
J^{(\alpha=1)}_\lambda=\frac{(-1)^n}{n!}\sum_{M}\sum_{f\colon \Vbul(M) \to [\ell(\lambda)]}p_{\tf(M)}\prod_{1\leq i\leq \ell(\lambda)}(-\lambda_{i})^ {|\Vcirc^{(i)}(M)|}.
\end{equation}
Here, we sum over bipartite orientable maps with $n:=|\lambda|$ edges
labelled by $\{1,2,\dots,n\}$, and $f$ is a coloring of black vertices
of $M$ such that the white vertices are colored by the maximal color
of their neighbors, and $|\Vcirc^{(i)}(M)|$ is the number of white
vertices of color $i$. An important property of this formula is that
it reveals a special structure of the irreducible characters of
the symmetric group, conjectured by
Stanley~\cite{Stanley2003/04}, which has very strong implications in
asymptotic representation theory. Estimating character
values for large symmetric groups is a very challenging problem:
Young's formula~\eqref{eq:YoungsFormula} does not seem to directly
apply to this problem,
and another classical tool such as Murnaghan--Nakayama rule quickly
becomes intractable when the size of the group is large, and can be used only in some specific settings~\cite{Roichman1996,MooreRussellSniady2007}. A very successful approach to the
asymptotic representation theory, initiated by Kerov and
Olshanski~\cite{KerovOlshanski1994}, treats normalized irreducible
characters as functions on Young diagrams, where the argument is
associated with the irreducible representation. Within this
approach a formula equivalent
to~\eqref{eq:StanleyFeray}, became the key ingredient in achieving a
breakthrough in asymptotic representation theory of the symmetric
groups~\cite{FeraySniady2011a}.

The approach of Kerov and
Olshanski was extended to the Jack case by Lassalle~\cite{Lassalle2008b,Lassalle2009}, where the primal object of study is \emph{the Jack character} $\Jch_\mu$. It is the function on Young diagrams defined by:
$$ \theta^{(\alpha)}_\mu(\lambda):=\left\{
    \begin{array}{ll}
       0 & \mbox{ if } |\lambda|<|\mu|,\\
        \binom{|\lambda|-|\mu|+m_1(\mu)}{m_1(\mu)}[p_{\mu,1^{n-m}}]J^{(\alpha)}_\lambda &\mbox{ if $|\lambda|\geq|\mu|$,}
    \end{array}
\right.$$
where $m_1(\mu)$ is the number of parts equal to 1 in the partition $\mu$. Stanley~\cite{Stanley2003/04} was studying the normalized irreducible characters of the symmetric group using this dual approach, and he observed that if one expresses them in different variables than $\lambda_1,\lambda_2,\dots$, which he called \emph{multirectangular coordinates}, then they have very special positivity and integrality properties.
\begin{defi}[\cite{Stanley2003/04}]
Let $k\geq1$ and let $s_1\geq s_2 \geq \dots \geq s_k\geq 1$ and $r_1,\dots r_k$ be two sequences of non negative integers.
We say that $(s_1,s_2,\dots,s_k)$ and $(r_1,\dots,r_k)$ are \textit{multirectangular coordinates} for a partition $\lambda$ and we  denote 
$\lambda=\bfs^\bfr$, if $\lambda$ is the union of $k$ rectangles of sizes $s_i\times r_i$, or equivalently  $\lambda=[s_1^{r_1}\dots s_k^{r_k}]$, see \cref{fig multirectangular coordinates} for an example. 
\end{defi}
 
 \begin{figure}
     \centering
     \includegraphics[width=0.27\textwidth]{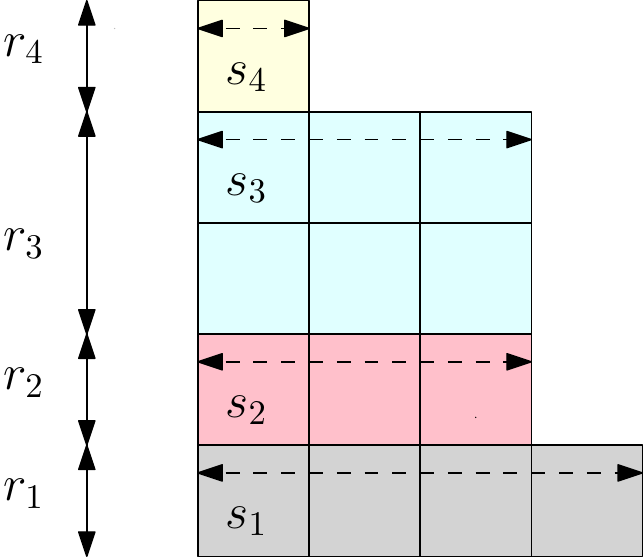}
     \caption{The Young diagram of the partition $[4,3,3,3,1]$ as the union of 4 rectangles, with $\bfs=(4,3,3,1)$ and $\bfr=(1,1,2,1)$ as multirectangular coordinates.}
      \label{fig multirectangular coordinates}
    \end{figure}

Since we do not require that the sequence $\bfs$ be strictly decreasing, the multirectangular coordinates are not unique in general. If $\lambda$ is a partition of multirectangular coordinates $(s_1,\dots,s_k)$ and $(r_1,\dots, r_k)$, we write, for any partition $\mu$,
$$\tJch_\mu(\bfs,\bfr):=\Jch_\mu(\lambda),$$
where $\bfr=(r_1,\dots,r_k,0\dots)$ and $\bfs=(s_1,\dots,s_k,0,\dots).$
Stanley found in~\cite{Stanley2003/04} an explicit formula for
$\theta^{(\alpha=1)}_\mu(\lambda)$ when $\lambda$ is a rectangle, and
he conjectured a formula for general $\bfr,\bfs$, which implies that
the normalized irreducible character $(-1)^{|\mu|}z_\mu
\tilde{\theta}^{(\alpha=1)}_\mu(\bfs,\bfr)$ is a polynomial in the variables
$-s_1,-s_2,\dots,r_1,r_2,\dots$ with {\bf non-negative integer
coefficients}, and the aforementioned consequences in asymptotic
representation theory follow. This formula, nowadays known as F\'eray--Stanley formula, is equivalent
to~\eqref{eq:StanleyFeray}, and it was soon after proved by F\'eray~\cite{Feray2010}, and then reproved in~\cite{FeraySniady2011a}.

Lassalle revisited in~\cite{Lassalle2008b} the question of
Hanlon regarding the combinatorial structure of Jack characters, with
an insight of F\'eray--Stanley that explains this structure when
$\alpha=1$. He conjectured that this special case of $\alpha=1$ is in fact a shadow of a more general phenomenon that holds for arbitrary Jack characters, once the parameter $\alpha$ is replaced by the parameter $b:=\alpha-1$:
\begin{conj}\label{Lassalle conj}
The normalized Jack characters expressed in the Stanley coordinates \sloppy$(-1)^{|\mu|}z_\mu\tJch_\mu(\bfs,\bfr)$ are polynomials in the variables $b,-s_1,-s_2,\dots,r_1,r_2,\dots$ with {\bf non-negative integer
coefficients}, where $b := \alpha-1$.
\end{conj}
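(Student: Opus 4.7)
The plan is to deduce Lassalle's conjecture from a more fundamental statement: an explicit combinatorial formula expressing $\tJch_\mu(\bfs,\bfr)$ as a manifestly positive sum over certain bipartite maps $M$ (in general non-orientable), each equipped with an embedding into the rectangles of the multi-rectangular Young diagram $\bfs^\bfr$. The target is a $b$-deformation of the Féray--Stanley formula underlying~\eqref{eq:StanleyFeray}: each map $M$ of appropriate face-type should contribute a monomial $\prod_i(-s_i)^{a_i(M)}\prod_j r_j^{b_j(M)}$ weighted by $b^{\eta(M)}$, where $\eta(M)$ is a non-negative integer statistic measuring the failure of orientability, vanishing precisely on orientable maps. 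Once such a formula is in hand, the conjecture follows by inspection: multiplication by $(-1)^{|\mu|}z_\mu$ is absorbed into the map weights, and the right-hand side is a sum of monomials in $b,-s_1,-s_2,\dots,r_1,r_2,\dots$ with manifestly non-negative integer coefficients.

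First I would fix the combinatorial data: the correct class of (possibly non-orientable) bipartite maps, the right generalization of the coloring $f$ in~\eqref{eq:StanleyFeray} that records the embedding of each vertex into a rectangle of $\bfs^\bfr$, and a candidate statistic $\eta$. A natural source of intuition for $\eta$ is the body of work surrounding the Matching--Jack and $b$-conjectures of Goulden--Jackson, where a measure of non-orientability is attached to a map through a root-edge decomposition. The basic sanity check at this stage is that setting $b=0$ must kill every non-orientable contribution and return exactly~\eqref{eq:StanleyFeray}.

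The heart of the argument is proving the map formula itself. My plan is induction on $|\mu|$ (equivalently, on the number of edges of $M$), matching a root-edge recursion on the combinatorial side against an algebraic recursion for Jack characters. The algebraic recursion should come from the creation operators for Jack polynomials announced in the abstract: each application of such an operator corresponds to gluing a new edge to a partial map, with a clean dichotomy between orientable gluings (weight $1$) and gluings forcing additional non-orientability (weight $b$). Unfolding this correspondence produces the desired weighted sum over maps with embeddings, and positivity in $b$ is automatic by construction, as is integrality.

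The hardest part will be making combinatorics and algebra fit exactly: choosing creation operators whose action admits a transparent edge-gluing interpretation, defining $\eta$ so that its change under root-edge removal matches the algebraic recursion term by term, and controlling how the Stanley-coordinate variables $-s_i$ and $r_j$ are created and destroyed (they must record the rectangles housing the endpoints of the newly glued edge). The case $\alpha=1$ provides strong guidance for the orientable skeleton of the argument, but the non-orientable contributions have no classical analogue and must be identified by direct computation and validated on low-degree examples before the induction can close.
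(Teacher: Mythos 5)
Your plan conflates two assertions that the paper keeps carefully separate, and one of them does not follow from the combinatorial formula you propose to prove.

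Your strategy is to establish a ``$b$-deformed F\'eray--Stanley'' formula writing $\tJch_\mu(\bfs,\bfr)$ as a weighted sum over (possibly non-orientable) maps with rectangle embeddings, and then to read off both positivity \emph{and} integrality ``by inspection''. The first half of this is indeed what the paper does for positivity: \cref{thm Jack char}, specialized via \cref{prop tF}, yields exactly such a formula (equation \eqref{main eq}). But the resulting weight of a map $M$ is
\[
\frac{b^{\vartheta_\rho(M)}}{2^{|\Vbul(M)|-\cc(M)}\alpha^{cc(M)}}\prod_{i\geq 1}\frac{r_{i}^{|\Vbul^{(i)}(M)|}(-\alpha s_{i})^{|\mathcal{V}_\circ^{(i)}(M)|}}{z_{\nu_\bullet^{(i)}(M)}},
\]
and the denominators $2^{|\Vbul(M)|-\cc(M)}$ and $z_{\nu_\bullet^{(i)}(M)}$ do not disappear. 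Multiplying by $z_\mu$ absorbs only the normalization attached to the \emph{face}-type $\mu$, not the $z_{\nu_\bullet^{(i)}(M)}$ factors attached to the black vertex degrees in each layer. So the combinatorial formula gives a manifestly positive expression with \emph{rational} coefficients; integrality is not ``by inspection''. The paper is explicit on this point (``\cref{thm Lassalle conj} does not immediately follow from \cref{thm Jack char}: the integrality part is not obvious, and requires new ideas'') and proves integrality by a completely independent argument in \cref{sec:Int}, via Nazarov--Sklyanin operators, a lattice-path (\L{}ukasiewicz ribbon path) expansion of Boolean/moment cumulants in terms of $\Ch^{(\alpha)}_\mu$, and the observation that the transition matrix between these bases is uni-triangular over $\ZZ[\gamma]$. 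You would need either a very different combinatorial model whose weights are literally monomials with coefficient $1$, or a separate integrality argument of your own; the proposal supplies neither.

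Separately, even for the positivity half, the mechanism you sketch (a root-edge induction matched against an algebraic recursion coming from creation operators) is not what the paper does and is not obviously closable. The paper instead invokes the uniqueness characterization of Jack characters as $\alpha$-shifted symmetric functions with prescribed top degree part and vanishing on small partitions (\cref{thm Feray}), and then proves that the generating series of layered maps satisfies the vanishing property (\cref{thm vanishing}) and the shifted-symmetry property (\cref{thm symmetry}) using commutation relations for the differential operators $\B_n$ and $\C_\ell$. The creation-operator formula for $J^{(\alpha)}_\lambda$ (\cref{thm Jack via diff op}) is in the paper an output of this machinery, not the starting point of an inductive recursion. Your proposed induction on $|\mu|$ is a plausible heuristic, but without the shifted-symmetry and vanishing structure there is no reason the recursion would pin down Jack characters rather than some other family agreeing at $b=0$.
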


Lassalle speculated that this reparametrization reflects a true
combinatorial meaning of Jack characters, but he was unable to find
its concrete, even conjectural interpretation. A formula analogous
to~\eqref{eq:StanleyFeray} was found in the case
$\alpha=2$~\cite{FeraySniady2011}, establishing a special case of
\cref{Lassalle conj}, next to the classical $\alpha=1$ case. Some other
very special cases were proven
in \cite{DolegaFeraySniady2014,BenDali2022a}, but despite many efforts,
\cref{Lassalle conj} remains unproven for the last 15 years. The
understanding of special cases $\alpha=1,2$ heavily relies on
representation-theoretical techniques commonly used to study Schur or
Zonal polynomials. These tools do not seem to exist
for general $\alpha$, and any attempt of attacking~\cref{Lassalle
  conj} requires developing new methods.

\vspace{5pt}

In this paper, we answer Hanlon's question about the combinatorial
structure of Jack polynomials expressed in the power-sum basis, for
which we prove an explicit formula. Actually, we prove a more general
result by giving a combinatorial interpretation for Jack
characters. Consequently, we prove \cref{Lassalle conj} by giving a
meaning to the parameter $b$ that interpolates between the generating
function of graphs drawn on orientable surfaces (for $b=0$) and on
non-oriented surfaces (for $b=1$). The main tool to prove our results
is a differential calculus approach developed by the second author and
Chapuy in~\cite{ChapuyDolega2022}. We combine this approach with an
algebraic characterization of Jack characters as the unique shifted
symmetric functions determined by vanishing conditions in the spirit
of~\cite{KnopSahi1996}, and we furthermore rely on combinatorial
aspects of the integrable system of
Nazarov--Sklyanin~\cite{NazarovSklyanin2013}, building new connections
between different fields that implicitly involve Jack polynomials. We
now describe our results in more details.

\subsection{First main result: combinatorial formula for Jack characters}

As mentioned earlier, our formula interprets Jack characters as
generating functions that interpolate between orientable and
non-oriented maps, and we use the following definition, introduced by
Goulden and Jackson in the context of the $b$-conjecture.

\begin{defi}[\cite{GouldenJackson1996}]\label{def SON}
\textit{A statistic of non-orientability} on bipartite maps is a statistic $\vartheta$ with non-negative integer values, such that $\vartheta(M)=0$ if and only if $M$ is orientable.
\end{defi}

In practice, a statistic of non-orientability is used to "quantify" the non-orientability of a map by counting the number of edges which contribute to its non-orientability, following a given algorithm of decomposition of the map. 
Several examples of such statistics have been introduced in prior
works
\cite{LaCroix2009,DolegaFeraySniady2014,Dolega2017a,ChapuyDolega2022}. We
are ready to introduce the main character in our story.

\begin{defi}\label{def layered maps}
Fix $k\geq0$. We say that a bipartite map $M$ is $k$-layered if its vertices are partitioned into $k$ sets (which may be empty), called the layers of the map; $\Vcirc(M)=\bigcup_{1\leq i\leq k}\Vcirc^{(i)}(M)$ (resp. $\Vbul(M)=\bigcup_{1\leq i\leq k}\Vbul^{(i)}(M)$), which satisfies the following condition: if $v$ is a white vertex in a layer $i$, then all its neighbors are in layers $j\leq i$, and it has at least one neighbor in the layer $i$.

For $1\leq i\leq k$, we define the partition $\nu_\bullet^{(i)}(M)$ obtained by ordering the degrees of the black vertices in the layer $i$.
A $k$-layered map is labelled if:
\begin{itemize}
    \item in each layer $1\leq i\leq k$, the black vertices having the same degree $j$ are numbered by $1,2,\dots, m_j\left(\nu_\bullet^{(i)}(M)\right)$,
    \item each black vertex has a distinguished oriented corner (a
      small neighborhood around the vertex delimited by two
      consecutive edges).
\end{itemize}
\end{defi}
An example of a 2-layered map is given in \cref{fig. 2-layer map}.
\begin{figure}[t]
    \centering
    \includegraphics[width=0.3\textwidth]{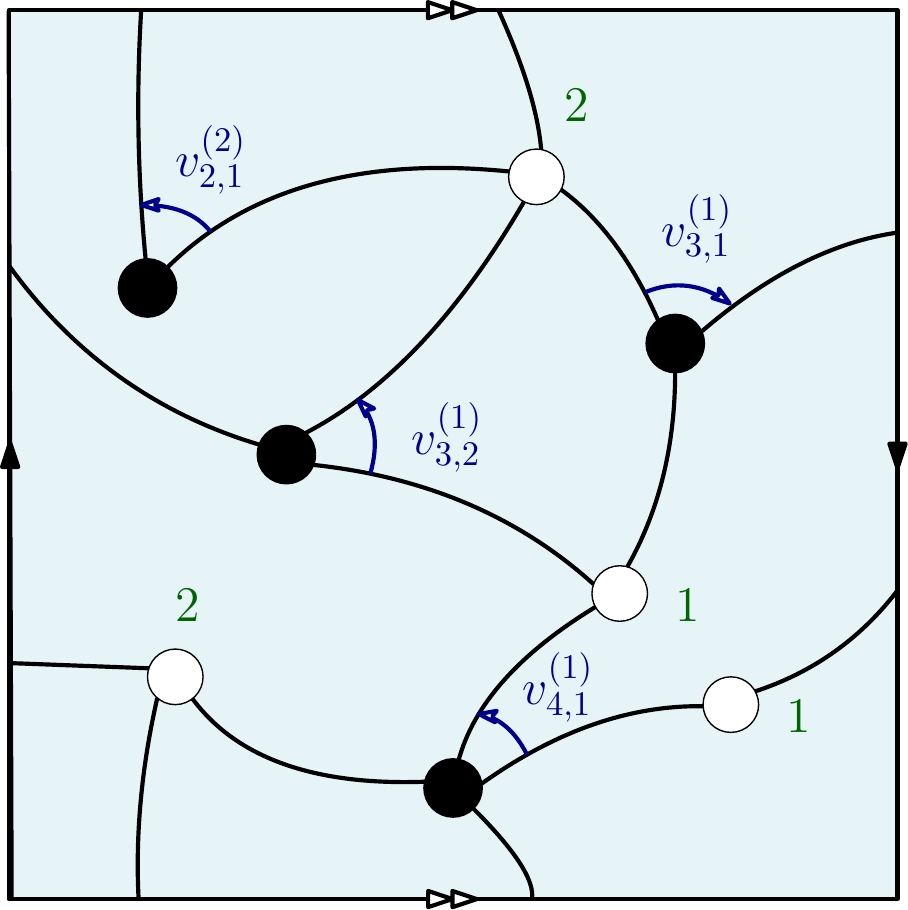}
    \caption{A 2-layered map on the Klein bottle, represented here by a square whose left side should be glued to the right one (with a
twist) and the top side should be glued to the bottom one (without a twist), as
indicated by the arrows. Moreover, $v^{(i)}_{j,m}$ denotes the black vertex of degree $j$ numbered by $m$ in the layer $i$, and the integer next to a white vertices indicate the number of its layer.}
    \label{fig. 2-layer map}
\end{figure}
Note that a $k$-layered map can be seen as $(k+1)$-layered map with an empty layer $k+1$.
We call a \textit{layered map} a $k$-layered map for some $k\geq 1$.
We denote by $\Mk$ (resp. $\Minf$) the set of all labelled $k$-layered
maps (resp. labelled layered maps). Similarly, we denote $\Mk_\mu$ and
$\Minf_\mu$ those of face type $\mu$.

From now on all the layered maps will be labelled (unless stated otherwise).

\begin{rmq}\label{rmq def Stanley}
This definition of layered maps is equivalent to maps equipped with a
coloring such as in
\eqref{eq:StanleyFeray}, cf.~\cite[Section 1.6]{DolegaFeraySniady2014}. However, we prefer to present this definition as above since it will play a slightly different role in this paper.
\end{rmq}

\begin{thm}[The First Main Result]\label{thm Jack char}
There exists a statistic of non-orientability $\vartheta$ on layered maps such that for any partitions $\mu$ and $\lambda$, we have
\begin{equation}\label{eq main thm}
\theta_\mu^{(\alpha)}(\lambda)=(-1)^{|\mu|}\sum_{M\in\Minf_\mu}\frac{b^{\vartheta_\rho(M)}}{2^{|\Vbul(M)|-\cc(M)}\alpha^{cc(M)}}\prod_{i\geq 1}\frac{(-\alpha\lambda_{i})^{|\mathcal{V}_\circ^{(i)}(M)|}}{z_{\nu_\bullet^{(i)}(M)}}
\end{equation}
where $b$ is the parameter related to $\alpha$ by $b:=\alpha-1$ and
$z_{\nu_\bullet^{(i)}(M)}$ is the classical normalization factor (see
\cref{subsec Partitions}).
\end{thm}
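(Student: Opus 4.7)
The plan is to introduce the right-hand side of \eqref{eq main thm} as a candidate function $F_\mu(\lambda)$, equipped with an explicit statistic $\vartheta$ to be constructed, and to identify $F_\mu$ with $\theta_\mu^{(\alpha)}$ via a Knop--Sahi-style characterization of Jack characters by vanishing conditions on shifted symmetric functions.

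First, I would construct $\vartheta$ combinatorially through a deterministic peeling algorithm. The labelling data of a layered map (distinguished corners at black vertices, together with the numbering of same-degree black vertices within each layer) single out a canonical root corner and a linear order in which to peel off edges one at a time. Following the differential calculus framework of Chapuy--Dołęga~\cite{ChapuyDolega2022}, each peeling step admits a finite case analysis according to how the peeled edge sits in the embedded graph, and exactly the "twisted" sub-cases are responsible for non-orientability. Defining $\vartheta(M)$ as the number of such twisted steps automatically makes $\vartheta(M)=0$ if and only if $M$ is orientable, so $\vartheta$ qualifies as a statistic of non-orientability in the sense of \cref{def SON}.

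Second, I would exploit the characterization of Jack characters as the unique family of $\alpha$-shifted symmetric functions of top degree $|\mu|$ which vanish on $|\lambda|<|\mu|$ and are normalized by prescribed values at suitable rectangular partitions. The verification that $F_\mu$ enjoys the same properties splits into three parts. (i) Polynomiality in Stanley's multirectangular coordinates $(\bfs,\bfr)$ is transparent from the definition, since the sum over white-vertex colorings factorizes rectangle by rectangle, each $\lambda_i$ appearing only through $(-\alpha\lambda_i)^{|\mathcal{V}_\circ^{(i)}(M)|}$. (ii) Shifted symmetry and degree control would be obtained by rewriting the layered map sum in the coordinates natural to the Nazarov--Sklyanin operators and reading off the grading. (iii) The delicate vanishing conditions are reduced to a combinatorial cancellation on the set of layered maps, organized by the peeling procedure that defines $\vartheta$.

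Third, and this is the main obstacle, I would establish the rectangular vanishing conditions. The strategy is to interpret the single-peel step as a differential operator acting on the generating function $\sum_M b^{\vartheta(M)} \prod_i (-\alpha\lambda_i)^{|\mathcal{V}_\circ^{(i)}(M)|}/z_{\nu_\bullet^{(i)}(M)}$, and to match it with the recursion satisfied by Jack characters under the action of the Nazarov--Sklyanin integrable system. The compatibility of the Chapuy--Dołęga differential calculus with the Nazarov--Sklyanin recursion is the technical heart of the argument: once both sides satisfy the same inductive identity and agree at base cases, the equality $F_\mu = \theta_\mu^{(\alpha)}$ follows. The normalization constants $2^{|\Vbul(M)|-\cc(M)} \alpha^{\cc(M)}$ and $z_{\nu_\bullet^{(i)}(M)}$ are then pinned down by specializing to $\alpha=1$ (i.e.\ $b=0$) and recovering \eqref{eq:StanleyFeray}, where only orientable maps survive. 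As an immediate corollary, \cref{Lassalle conj} drops out, since in multirectangular coordinates the formula manifestly presents $(-1)^{|\mu|} z_\mu \tJch_\mu(\bfs,\bfr)$ as a polynomial in $b, -s_1, -s_2, \dots, r_1, r_2, \dots$ with non-negative integer coefficients.
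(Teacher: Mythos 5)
Your high-level skeleton is in the right place: define $\vartheta$ via a peeling algorithm with weights $1$ and $b$, show that the candidate series is $\alpha$-shifted symmetric of degree $|\mu|$ with the right top term and vanishes on $|\lambda|<|\mu|$, then invoke uniqueness. This matches the paper's strategy via \cref{thm Feray}. However, three of your key steps go in the wrong direction.

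First, you route both the vanishing property and shifted symmetry through the Nazarov--Sklyanin integrable system, but in the paper Nazarov--Sklyanin plays no role whatsoever in the proof of \cref{thm Jack char}. It appears only in \cref{sec:Int}, for the \emph{integrality} half of \cref{thm Lassalle conj}, which is a separate result. The vanishing property (\cref{thm vanishing}) is instead established through the tri-alphabet function $\tau_b(t,\bfp,\bfq,\underline u) = \sum_\xi t^{|\xi|}\Jxi(\bfp)\Jxi(\bfq)\Jxi(\underline u)/j_\xi^{(\alpha)}$ and the filtration by spaces $\Ps=\Span\{\Jxi: \xi_1\le s\}$; the key inputs are the Chapuy--Dołęga identity $t^m\B_m/m \cdot \tau_b = \partial_{q_m}\tau_b$ and Macdonald's product formula for $\Jxi(\underline{-\alpha s})$. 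You do not mention $\tau_b$ or these stability spaces at all, so your proposed route to vanishing has no substance.

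Second, your item (iii) proposes to prove vanishing by ``a combinatorial cancellation on the set of layered maps, organized by the peeling procedure.'' The paper explicitly states that this works for $b=0$ and $b=1$ but ``such a proof does not seem to work for the general $b$ because of the presence of the $b$-weight.'' Avoiding this dead end is precisely why the $\tau_b$ machinery is brought in. Similarly, shifted symmetry is not a matter of ``reading off a grading'': it occupies all of \cref{sec com rel,sec shifted sym prop} and rests on the commutation relations $[\C_\ell,\C_m]=0$ for $\ell,m>0$ and $[\C_0,\C_m]=(m+1)\C_{m+1}$, proved through a long induction in double-catalytic variables, followed by Lie-theoretic exponential manipulations (a Baker--Campbell--Hausdorff-type lemma). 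None of this appears in your outline.

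Third, your final claim that ``\cref{Lassalle conj} drops out'' with non-negative \emph{integer} coefficients is incorrect. The formula \eqref{eq main thm} has $2^{|\Vbul(M)|-\cc(M)}\alpha^{\cc(M)}$ and $z_{\nu_\bullet^{(i)}(M)}$ in denominators, so integrality is not manifest; the paper stresses that ``the integrality part is not obvious, and requires new ideas,'' and proves it separately via Łukasiewicz ribbon paths and the Nazarov--Sklyanin operators in \cref{sec:Int}. Only positivity follows directly. A smaller point: the characterization you invoke is not normalization ``at rectangular partitions'' but by the \emph{top homogeneous part} being $\alpha^{|\mu|-\ell(\mu)}/z_\mu\cdot p_\mu$; the paper verifies this by a direct computation on maps with all white vertices of degree one.
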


Note that the product and the sum here are finite since each layered map has a finite number of non-empty layers. Actually, we prove that this theorem holds for a family of statistics $\vartheta$. Although the two parameters $\alpha$ and $b$ are related, we prefer to keep both of them in the previous formula since they play different roles. In particular, one may notice that the quantity $1/\left(2^{(|\Vbul(M)|-\cc(M))}\alpha^{cc(M)}\right)\prod_{1\leq i\leq k}\frac{(-\alpha\lambda_{i})^{|\mathcal{V}_\circ^{(i)}(M)|}}{z_{\nu_\bullet^{(i)}(M)}}$ depends only on the underlying "$k$-layered graph" of $M$ and is rather straightforward. This will not be the case for the quantity $b^{\vartheta(M)}$, called the \textit{$b$-weight} of the map.

In the case $b=0$, and by definition of a statistic of non-orientability, only bipartite maps appear in \cref{eq main thm}, so that it recovers Féray--Stanley formula~\eqref{eq:StanleyFeray}. Similarly, \cref{eq main thm} coincides when $b=1$ with the expression given in \cite[Thm 1.2]{FeraySniady2011}.

As a direct consequence of \cref{thm Jack char}, we obtain the following interpretation of Jack polynomials in the basis of power-sum functions.
\begin{thm}\label{thm Jack}
Let $n$ be a positive integer and let $\lambda$ be a partition of $n$. Then 
$$J^{(\alpha)}_\lambda=(-1)^n\sum_{M}p_{\tf(M)}\frac{b^{\vartheta_\rho(M)}}{2^{|\Vbul(M)|-\cc(M)}\alpha^{cc(M)}}\prod_{1\leq i\leq \ell(\lambda)}\frac{(-\alpha \lambda_{i})^{|\mathcal{V}_\circ^{(i)}(M)|}}{z_{\nu_\bullet^{(i)}(M)}},  
$$
where the sum is taken over all $\ell(\lambda)$-layered maps $M$ with $n$ edges.
\end{thm}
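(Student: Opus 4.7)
The plan is to derive Theorem~\ref{thm Jack} as a short corollary of Theorem~\ref{thm Jack char}, by using the elementary fact that the Jack character coincides with the power-sum coefficient of the Jack polynomial in top degree. I do not expect any serious obstacle; the proof is essentially bookkeeping.

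First I would invoke the definition of the Jack character recorded in the introduction in the special case $|\mu|=|\lambda|=n$: the binomial coefficient $\binom{|\lambda|-|\mu|+m_1(\mu)}{m_1(\mu)}$ collapses to $1$ and the auxiliary partition $\mu\cup 1^{n-|\mu|}$ is simply $\mu$, whence
$$\theta^{(\alpha)}_\mu(\lambda) \;=\; [p_\mu]\, J^{(\alpha)}_\lambda.$$
Since $J^{(\alpha)}_\lambda$ is homogeneous of degree $n$, its power-sum expansion is supported on partitions of $n$, so I would deduce
$$J^{(\alpha)}_\lambda \;=\; \sum_{\mu\vdash n} \theta^{(\alpha)}_\mu(\lambda)\, p_\mu.$$

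Next I would substitute the right-hand side of Theorem~\ref{thm Jack char} in place of each $\theta^{(\alpha)}_\mu(\lambda)$. Because $p_\mu = p_{\tf(M)}$ whenever $M\in\Minf_\mu$, the resulting double sum (first over $\mu\vdash n$, then over $M\in\Minf_\mu$) reorganizes into a single sum over all labelled layered maps $M$ with $n$ edges, weighted by
$$(-1)^n\, p_{\tf(M)}\,\frac{b^{\vartheta_\rho(M)}}{2^{|\Vbul(M)|-\cc(M)}\,\alpha^{\cc(M)}}\,\prod_{i\geq 1}\frac{(-\alpha\lambda_i)^{|\Vcirc^{(i)}(M)|}}{z_{\nu_\bullet^{(i)}(M)}}.$$

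Finally, to match the precise form stated in Theorem~\ref{thm Jack}, I would argue that only $\ell(\lambda)$-layered maps contribute. Indeed, if $M$ has a white vertex in a layer $i>\ell(\lambda)$, then the factor $(-\alpha\lambda_i)^{|\Vcirc^{(i)}(M)|}$ vanishes because $\lambda_i=0$; and by the defining condition of layered maps, the black vertices sit in layers at most equal to those of their white neighbors, so once all white vertices lie in layers $\leq\ell(\lambda)$ the whole map is $\ell(\lambda)$-layered. Truncating the infinite product to $1\leq i\leq \ell(\lambda)$ then yields exactly the stated formula. The only minor point of care is this identification of the contributing subfamily inside $\Minf$, which I expect to be routine.
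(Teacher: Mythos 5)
Your proposal is correct and coincides with what the paper means by calling Theorem~\ref{thm Jack} ``a direct consequence of Theorem~\ref{thm Jack char}'': specialize the defining formula of $\theta^{(\alpha)}_\mu$ to $|\mu|=|\lambda|=n$ (so it becomes the coefficient extraction $[p_\mu]J^{(\alpha)}_\lambda$), sum over $\mu\vdash n$, and observe that the only contributing layered maps are the $\ell(\lambda)$-layered ones since $\lambda_i=0$ for $i>\ell(\lambda)$ kills any white vertex in a higher layer and (by the no-isolated-vertices convention together with the layer condition on white vertices) any black vertex as well. The bookkeeping with the finite truncation of the product is handled correctly, so nothing is missing.
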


As in \cref{eq:StanleyFeray}, the sum in \cref{thm Jack} can be
interpreted using maps whose edges are embedded in a non-bijective way
in the Young diagram of $\lambda$ (see also \cref{rmq def
  Stanley}). Hence, our second main result can be considered as a
non-bijective version of Hanlon's conjecture\footnote{It is worth
  mentioning that Haglund and Wilson have found recently a very different
  combinatorial formula for the expansion of Jack polynomials in the power-sum
  basis~\cite[Corollary 4.3.1]{HaglundWilson2020}. The combinatorial
  objects in their formula are not weighted by a simple monomial in $\alpha$/$b$,
  but by more complicated products of $\alpha$-deformations of hooks,
  and the classical Young's formula~\eqref{eq:YoungsFormula} is not
  immediately obtained as the special case.}.

As we have already mentioned, the ideas used in the proofs of the special cases $b=0$ and $b=1$ of \cref{thm Jack char} do not apply to the general case, and we need to develop a new approach. Our strategy is to use a characterization of Jack characters $\theta_\mu^{(\alpha)}$ given by~\cref{thm Feray} (due to Féray~\cite[Theorem A.2]{Sniady2015a}) and prove that the right-hand side of \cref{eq main thm} satisfies the conditions of \cref{thm Feray}, see \cref{thm vanishing} and \cref{thm symmetry}. 

 A key technique
for proving these theorems, is using differential operators to study the properties of the generating series of layered maps. 
We prove various commutation relations between them using methods
inspired by the theory of Lie algebras, and we show that these algebraic relations reflect the desired combinatorial and algebraic properties of the right-hand side in \cref{thm Jack char}.

\subsection{Lassalle's conjecture and second main result}\label{ssec Lassalle conj}

Our second main result is the proof of \cref{Lassalle conj}.

\begin{thm}\label{thm Lassalle conj}
The normalized Jack characters expressed in the Stanley coordinates \sloppy$(-1)^{|\mu|}z_\mu\tJch_\mu(\bfs,\bfr)$ are polynomials in the variables $b,-s_1,-s_2,\dots,r_1,r_2,\dots$ with non-negative integer coefficients, where $b := \alpha-1$.
\end{thm}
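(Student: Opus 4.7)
The plan is to deduce~\cref{thm Lassalle conj} directly from~\cref{thm Jack char} via a ``rectangle-grouping'' argument, in the spirit of the classical deduction of Stanley's conjecture from the F\'eray--Stanley formula~\eqref{eq:StanleyFeray}. Substituting $\lambda=\bfs^\bfr$ into~\eqref{eq main thm} and multiplying by $(-1)^{|\mu|}z_\mu$, I can rewrite the target as
\begin{equation*}
(-1)^{|\mu|}z_\mu\,\tJch_\mu(\bfs,\bfr) \;=\; z_\mu\!\sum_{M\in\Minf_\mu}\!\frac{b^{\vartheta_\rho(M)}\,(1+b)^{|\Vcirc(M)|-\cc(M)}}{2^{|\Vbul(M)|-\cc(M)}\,\prod_i z_{\nu_\bullet^{(i)}(M)}}\prod_{j=1}^{k}(-s_j)^{W_j(M)},
\end{equation*}
where $I_j:=\{r_1+\cdots+r_{j-1}+1,\dots,r_1+\cdots+r_j\}$ collects the layer indices corresponding to the $j$-th rectangle and $W_j(M):=\sum_{i\in I_j}|\mathcal{V}_\circ^{(i)}(M)|$ counts the associated white vertices. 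I use $\lambda_i=s_j$ for $i\in I_j$ together with $(-\alpha)^{|\Vcirc|}/\alpha^{\cc}=(-1)^{|\Vcirc|}(1+b)^{|\Vcirc|-\cc}$, and I absorb the sign $(-1)^{|\Vcirc|}=\prod_j(-1)^{W_j}$ into $\prod_j(-s_j)^{W_j}$. Because each connected component of $M$ contains at least one edge, one has $|\Vcirc(M)|\geq\cc(M)$, so $(1+b)^{|\Vcirc|-\cc}$ is automatically a polynomial in $b$ with non-negative integer coefficients.

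Next, I would regroup the sum by the \emph{rectangle-projection} $\bar M$ of each $M\in\Minf_\mu$, obtained by collapsing the sub-layers in each block $I_j$ into a single layer indexed by $j$. For a fixed $\bar M$, the number of layered maps projecting to $\bar M$ equals a polynomial $P_{\bar M}(r_1,\dots,r_k)\in\ZZ_{\geq 0}[r_1,\dots,r_k]$: within each rectangle $j$, refining the $j$-th layer of $\bar M$ into $r_j$ ordered sub-layers subject to the layered-map constraints is a classical ``linear-extensions-into-a-chain'' count, whose coefficients in $r_j$ are non-negative integers. This regrouping relies on an additional ingredient that has to be established: among the family of statistics $\vartheta$ valid in~\cref{thm Jack char}, one must single out a statistic $\vartheta_\rho$ whose value depends only on the projected map $\bar M$, so that $b^{\vartheta_\rho}$ can be pulled out of the sum over lifts.

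After pulling out $P_{\bar M}(\bfr)$, the right-hand side becomes a sum over rectangle-projected maps $\bar M$ in which four factors---$b^{\vartheta_\rho(\bar M)}$, $(1+b)^{|\Vcirc|-\cc}$, $P_{\bar M}(\bfr)$, and $\prod_j(-s_j)^{W_j(\bar M)}$---are already manifestly polynomials in $b$, $r_j$ and $-s_j$ with non-negative integer coefficients. The principal obstacle that remains is \emph{integrality}: the rational prefactor $z_\mu\cdot 2^{-(|\Vbul|-\cc)}\prod_i z_{\nu_\bullet^{(i)}}^{-1}$ must combine with the summation over $\bar M$ to produce only integer coefficients. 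I expect to resolve this by a combinatorial refinement of the map model, in which $\prod_i z_{\nu_\bullet^{(i)}}^{-1}$ encodes the passage from labelled to unlabelled black-vertex configurations, $z_\mu^{-1}$ is absorbed by an analogous unlabelling on the face side, and $2^{-(|\Vbul|-\cc)}$ is absorbed by a $\ZZ/2$-symmetry such as a choice of spanning tree or edge-orientation per connected component. Once integrality is verified through such an explicit decorated-map model, positivity follows automatically from the factorization above, proving~\cref{thm Lassalle conj}.
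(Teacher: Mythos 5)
Your proposal splits into a positivity piece and an integrality piece, and the two parts fare very differently against the paper's actual argument.

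For positivity your route is in spirit similar to the paper's, but the paper avoids the ``rectangle-projection'' step you flag as an additional ingredient. Rather than starting from the $\ell(\lambda)$-layered sum and grouping the sub-layers of each block $I_j$ (which would require showing that $\vartheta_\rho$ depends only on the collapsed map $\bar M$, and this is false in general since the order $\prec_M$ used to define $\vartheta_\rho$ depends on the layer indices), the paper defines a generating function $\tF^{(k)}$ \emph{directly} over $k$-layered maps, with each black vertex in layer $j$ contributing a factor $r_j$, via the same differential-operator machinery used for $F^{(k)}$. The identity $\tF^{(k)}(\bfs,\bfr)=F^{(\ell(\lambda))}(\lambda)$ is then proved purely algebraically from two elementary properties of $\tF^{(k)}$ (merging adjacent layers with equal $s$-values adds the $r$'s, and setting $r_i=0$ deletes a layer), and \cref{eq tilde F} already exhibits the $\bfr$-dependence with non-negative coefficients. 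You would need an argument that $\vartheta_\rho$ descends to $\bar M$ (or choose a statistic with that property and check it still satisfies \cref{thm Jack char}), and that the lift count is the polynomial you claim; neither is established.

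The more serious problem is integrality. Your last paragraph is not a proof but a wish: ``I expect to resolve this by a combinatorial refinement\ldots'' with proposed absorptions of $z_\mu^{-1}$, $\prod_i z_{\nu_\bullet^{(i)}}^{-1}$, and $2^{-(|\Vbul|-\cc)}$ by unlabelling and a $\ZZ/2$-symmetry. The paper explicitly warns that ``\cref{thm Lassalle conj} does not immediately follow from~\cref{thm Jack char}: the integrality part is not obvious, and requires new ideas,'' and proves integrality by a completely different route (Section \ref{sec:Int}): the Nazarov--Sklyanin operators, the Łukasiewicz-ribbon-path expansion of $B^{(\alpha)}_{\ell_1}\cdots B^{(\alpha)}_{\ell_k}$ in \cref{theo:XtoCh}, the uni-triangularity argument of \cref{theo:mainInt}, and the explicit polynomiality of $\sqrt{\alpha}^{\,\ell}M^{(\alpha)}_\ell$ in $b,-s_i,r_i$. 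None of that is anticipated in your sketch. Indeed, individual terms on the right of your displayed formula have denominators $2^{|\Vbul(M)|-\cc(M)}\prod_i z_{\nu_\bullet^{(i)}(M)}$ that do not cancel term by term, and there is no known map-by-map bijective rescaling that kills them; the cancellation is only global, which is precisely what makes the Nazarov--Sklyanin route necessary. As written, your argument establishes positivity with rational coefficients but leaves the integer claim unproven.
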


Note that \cref{thm Lassalle conj} does not immediately follow
from~\cref{thm Jack char}: the integrality part is not
obvious, and requires new ideas. Indeed, the proof of \cref{thm Lassalle conj} consists of two parts that are
proved using very different techniques. In the first part we deduce
positivity as a consequence of the combinatorial expression of Jack
characters obtained in \cref{thm Jack char}. In the second part, we
obtain the integrality using integrable system of
Nazarov--Sklyanin~\cite{NazarovSklyanin2013}. We relate their theory
with Jack characters by proving an explicit combinatorial formula
expressing a certain basis of shifted symmetric functions in terms of
normalized Jack characters. We conclude by showing that the transition
matrix between these two bases is invertible over $\mathbb{Z}$. As a byproduct, we prove that Kerov polynomials for Jack characters have integer coefficients, which was an open problem posed by Lassalle in~\cite{Lassalle2009} (see~\cref{sec:Int} for details).

\subsection{Third main result: Jack polynomials via differential operators}
Another application of the first main result is a formula for the
expansion of Jack polynomials in the power-sum basis using creation
operators (see also \cref{thm Jack via diff op 2}). 
\begin{thm}\label{thm Jack via diff op}
Fix a partition $\lambda = (\lambda_1,\dots,\lambda_\ell)$. Then
\begin{equation*}
    J^{(\alpha)}_\lambda
      =\B^{(+)}_{\lambda_1}\cdots \B^{(+)}_{\lambda_\ell} \cdot 1,
    \end{equation*}
    where $\B^{(+)}_n := [t^{n}]\exp\left(\Binf(-t,\bfp,-\alpha n)\right)$.
\end{thm}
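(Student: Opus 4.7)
The plan is to reduce Theorem 1.5 to the combinatorial formula of Theorem 1.4 by interpreting $\B^{(+)}_n$ as an operator that appends one new ``top layer'' to the generating series of labelled layered maps. For $k \geq 0$, write
\begin{equation*}
F_k(\lambda_1,\dots,\lambda_k) := \sum_{M \in \Mk} p_{\tf(M)} \frac{b^{\vartheta_\rho(M)}}{2^{|\Vbul(M)| - \cc(M)}\, \alpha^{\cc(M)}} \prod_{1 \leq i \leq k} \frac{(-\alpha \lambda_i)^{|\Vcirc^{(i)}(M)|}}{z_{\nu_\bullet^{(i)}(M)}},
\end{equation*}
so that Theorem 1.4 reads $J^{(\alpha)}_\lambda = (-1)^{|\lambda|} F_{\ell(\lambda)}(\lambda_1,\dots,\lambda_{\ell(\lambda)})$, with the base case $F_0 = 1$ corresponding to the empty partition. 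The goal becomes proving the inductive identity $F_k(\lambda_1,\dots,\lambda_k) = \B^{(+)}_{\lambda_k} \cdot F_{k-1}(\lambda_1,\dots,\lambda_{k-1})$, which directly implies Theorem 1.5 by iteration.

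The key combinatorial input comes from the reading of the series $\Binf(-t, \bfp, -\alpha n)$ developed in the earlier sections of the paper: it should be the weighted generating function for a single connected bipartite component sitting in the top layer of a labelled layered map, where $t$ tracks the white vertices of this new component, multiplications by $p_j$ create new black vertices in the top layer, and the partial derivatives in $\bfp$ hidden in $\Binf$ implement the gluing of new edges onto pre-existing black vertices from lower layers (consuming a part of the $p_{\tf}$-monomial accordingly). Exponentiating $\Binf$ then produces the generating function for the top layer seen as an unordered collection of such connected pieces, in agreement with the very definition of a layered map as an unordered collection of connected maps. Finally, the extraction $[t^n]$ with $n = \lambda_k$ fixes the total number of white vertices in the new layer, reproducing the factor $(-\alpha\lambda_k)^{|\Vcirc^{(k)}(M)|}$ of Theorem 1.4.

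With this dictionary in hand, the inductive step reduces to checking that the natural decomposition of a $k$-layered map into its first $k-1$ layers and its top layer respects all the weights appearing in $F_k$. One has to verify, on top of the obvious book-keeping of $p_{\tf}$-monomials, that the normalization $2^{-(|\Vbul(M)| - \cc(M))} \alpha^{-\cc(M)}$ factorizes multiplicatively between the $(k-1)$-substructure and the top-layer component (with $\cc$ splitting as the sum of the numbers of connected components produced at each stage), that the corner labelling and the factors $1/z_{\nu_\bullet^{(k)}}$ automatically emerge from the exponential (the classical cycle-index correspondence between a connected series and its exponential), and that the $\alpha$-prefactors line up with the $-\alpha\lambda_k$ placed in the third slot of $\Binf$.

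The main obstacle is the last and most delicate point: the $b$-weight $b^{\vartheta_\rho(M)}$ must decompose additively along the layered filtration, i.e.\ the restriction of $\vartheta_\rho$ to a single top-layer connected component must be well defined and must coincide with the statistic implicitly encoded in $\Binf$. This compatibility is exactly what makes the layer-by-layer construction possible, and it is expected to follow from the explicit peeling construction of $\vartheta_\rho$ used in the proof of Theorem 1.3, where one explores a map edge-by-edge starting from distinguished corners and accumulates a $b$-contribution each time a non-orientable move is performed. Once this layer-additivity is granted, the inductive identity $F_k = \B^{(+)}_{\lambda_k} \cdot F_{k-1}$ follows by matching monomials on both sides, and Theorem 1.5 is established.
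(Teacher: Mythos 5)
Your proposal has a genuine gap, and moreover the inductive identity you plan to establish is \emph{false} as stated, so the route cannot be repaired without rethinking.

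First, the order of the operators is wrong. Your claimed induction $F_k(\lambda_1,\dots,\lambda_k) = \B^{(+)}_{\lambda_k}\cdot F_{k-1}(\lambda_1,\dots,\lambda_{k-1})$ iterates to $\B^{(+)}_{\lambda_\ell}\cdots\B^{(+)}_{\lambda_1}\cdot 1$, which is the \emph{reverse} of the claimed formula $\B^{(+)}_{\lambda_1}\cdots\B^{(+)}_{\lambda_\ell}\cdot 1$, and these operators do not commute (they involve different parameters $u=-\alpha\lambda_i$ and different $t$-extractions). This is not merely cosmetic: a direct computation shows $\B^{(+)}_2 \cdot p_1 = p_1^3+(\alpha-1)p_1p_2-\alpha p_3 = J^{(\alpha)}_{(2,1)}$, whereas $\B^{(+)}_1 \cdot J^{(\alpha)}_{(2)} = p_1^3+(\alpha-2)p_1p_2-2\alpha p_3 \neq J^{(\alpha)}_{(2,1)}$. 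So one cannot ``peel off the smallest part first''. In the paper's version (\cref{thm Jack via diff op 2}) the \emph{largest} part is removed, i.e.\ one proves $J^{(\alpha)}_\lambda = \B^{(+)}_{\lambda_1}\cdot J^{(\alpha)}_{(\lambda_2,\dots,\lambda_\ell)}$.

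Second, and more fundamentally, you never address why one may replace the full operator $\exp\left(\Binf(-t,\bfp,-\alpha\lambda_i)\right)$ by its single coefficient $\B^{(+)}_{\lambda_i}=[t^{\lambda_i}]\exp\left(\Binf(-t,\bfp,-\alpha\lambda_i)\right)$. The recursion $F^{(k)} = \exp(\Binf(-t,\bfp,-\alpha s_1))\cdot F^{(k-1)}(s_2,\dots,s_k)$ is already \cref{prop expr F}, so re-deriving it combinatorially buys nothing new. The actual content of the theorem is that in the expansion $[t^{|\lambda|}]\prod_i \exp(\Binf(-t,\bfp,-\alpha\lambda_i))\cdot 1 = \sum_{n_1+\cdots+n_\ell=|\lambda|}\prod_i [t^{n_i}]\exp(\Binf(-t,\bfp,-\alpha\lambda_i))\cdot 1$, only the term $n_i=\lambda_i$ survives. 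This is precisely what the stability and vanishing properties of \cref{prop stability,prop annhilation} (and \cref{eq 2 thm vanishing} in the proof of \cref{thm vanishing}) provide, using crucially that $\lambda_1\geq\cdots\geq\lambda_\ell$ so that the chain $\Ps[\lambda_\ell]\subset\cdots\subset\Ps[\lambda_1]$ is nested. Your proposal contains no analogue of this step. Relatedly, note also that the variable $t$ in $\Binf$ grades by the number of \emph{edges} (not white vertices), and your $F_k$ discards it entirely by summing over all of $\Mk$; this means your $F_k$ is not the quantity appearing in \cref{thm Jack}, which sums only over maps with $|\lambda|$ edges. Without the $t$-grading and the vanishing argument, the layer-by-layer decomposition cannot be localized to a single $t$-coefficient, and the theorem does not follow.
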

\noindent One may notice that this formula is simpler than the one
obtained by combining \cref{thm Jack} and \cref{prop expr F}. In fact,
it is obtained from these two results using some properties of the
differential operators proved in \cref{sec vanishing}  (see~\cref{ssec
  Jack via diff} for more details).

\subsection{Outline of the paper}
The paper is organized as follows. In \cref{sec preliminaries} we
introduce some notation related to partitions and symmetric functions
and we give a proof of~\cref{thm Feray} that characterizes the Jack characters. In \cref{sec:Int}, we prove the integrality in \cref{thm Lassalle conj}. In \cref{sec comb model} we explain the combinatorial decomposition of layered maps and we give a differential expression for the generating series of layered maps.  \cref{sec vanishing} is dedicated to the proof of the first characterization property, namely the vanishing property. In \cref{sec com rel} we prove a series of commutation relations for differential operators which are used to obtain the second characterization property in \cref{sec shifted sym prop}. In \cref{sec proof first main result}, we finish the proof of Theorem \ref{thm Jack char} and \ref{thm Jack via diff op} and we prove the positivity in \cref{thm Lassalle conj}.

\section{Notation and preliminaries}\label{sec preliminaries}

For the definitions and notation introduced in \cref{subsec Partitions,sec SymFun} 
we refer to \cite{Stanley1989,Macdonald1995}.
\subsection{Partitions}\label{subsec Partitions}

A \textit{partition} $\lambda=[\lambda_1,...,\lambda_\ell]$ is a weakly decreasing sequence of positive integers $\lambda_1\geq...\geq\lambda_\ell>0$. We denote by $\mathbbm{Y}$ the set of all integer partitions. The integer $\ell$ is called the \textit{length} of $\lambda$ and is denoted $\ell(\lambda)$. The size of $\lambda$ is the integer $|\lambda|:=\lambda_1+\lambda_2+...+\lambda_\ell.$ If $n$ is the \textit{size} of $\lambda$, we say that $\lambda$ is a partition of $n$ and we write $\lambda\vdash n$. The integers $\lambda_1$,...,$\lambda_\ell$ are called the \textit{parts} of $\lambda$. For $i\geq 1$, we denote $m_i(\lambda)$ the number of parts of size $i$ in $\lambda$. We set then 
$$z_\lambda:=\prod_{i\geq1}m_i(\lambda)!i^{m_i(\lambda)}.$$ 
We denote by $\leq$ the \textit{dominance partial} ordering on partitions, defined by 
$$\mu\leq\lambda \iff |\mu|=|\lambda| \text{ and }\hspace{0.3cm} \mu_1+...+\mu_i\leq \lambda_1+...+\lambda_i \text{ for } i\geq1.$$

\noindent We identify a partition  $\lambda$ with its \textit{Young diagram}, defined by 
$$\lambda:=\{(i,j),1\leq i\leq \ell(\lambda),1\leq j\leq \lambda_i\}.$$
\textit{The conjugate partition} of $\lambda$, denoted $\lambda^t$, is the partition associated to the Young diagram obtained by reflecting the diagram of $\lambda$ with respect to the line $j=i$:
$$\lambda^t:=\{(i,j),1\leq j\leq \ell(\lambda),1\leq i\leq
\lambda_i\}.$$

Finally, we define the \textit{$\alpha$-content} of a box $\Box:=(i,j)$ by 

\begin{equation}\label{eq alpha content}
    c_\alpha(\Box):=\alpha(j-1)-(i-1).
\end{equation}

\subsection{Symmetric functions and Jack polynomials}\label{sec SymFun}

We fix an alphabet $\mathbf{x}:=(x_1,x_2,..)$. We denote by $\mathcal{S}$ the algebra of symmetric functions in $\mathbf{x}$ with coefficients in $\mathbb Q$. For every partition $\lambda$, we denote $m_\lambda$ the monomial function and $p_\lambda$ the power-sum function associated to the partition $\lambda$.
We  consider the associated alphabet of power-sum functions
$\mathbf{p}:=(p_1,p_2,..)$. 

Let $\mathcal{S}_\alpha$ be the algebra of symmetric functions with
coefficients in $\mathbb{Q}(\alpha)$. It is well known that the
power-sum functions form a basis of the symmetric functions algebra,
therefore $\mathcal{S}_\alpha $ can be identified with the polynomial
algebra $\mathbb{Q}(\alpha))[\pp]$. In particular the following operator acts on
$\mathcal{S}_\alpha$:
\begin{equation}
\label{eq:Laplace-Beltrami}
   D_\alpha = \frac{1}{2}\left(\alpha\sum_{i,j\geq 1}p_{i+j}\frac{ij\partial^2}{\partial
  p_i \partial p_{j}} +\sum_{i,j\geq
  1}p_{i}p_j\frac{(i+j)\partial}{\partial p_{i+j}}+(\alpha-1)\cdot\sum_{i\geq
  1}p_{i}\frac{i(i-1)\partial}{\partial p_i}\right).
\end{equation}
This operator, called the \emph{Laplace-Beltrami operator}, can be thought
of as a defining operator for the\emph{Jack symmetric
functions}.


\begin{defprop}[Definition-Proposition 2.1 in~\cite{ChapuyDolega2022}]
  \label{defprop:Jack}
There is a unique family of symmetric functions
$(J^{(\alpha)}_\lambda)_{\lambda\in\mathbb Y}$ such that for each partition $\lambda$,
\begin{itemize}
\item $D_\alpha J^{(\alpha)}_\lambda = \left(\sum_{\square \in \la}c_\a(\square)\right)J^{(\alpha)}_\lambda$;
\item
$ J_\la^{(\a)} = \prod_{(i, j)\in\lambda}\big(\alpha(\lambda_i-j)+(\lambda_j^t-i)+1\big) m_\la + \sum_{\nu < \la}a^{\la}_\nu m_\nu,  \text{ where } a^{\la}_\nu \in \QQ(\a).$
  \end{itemize}
  We call them \textbf{Jack symmetric functions}.
\end{defprop}

We will often use the following quantity:
\begin{equation}\label{eq j alpha}
     j_\lambda^{(\alpha)} :=\prod_{(i, j)\in\lambda}\big(\alpha(\lambda_i-j)+(\lambda_j^t-i)+1\big) \big(\alpha(\lambda_i-j)+(\lambda_j^t-i)+\alpha\big).
 \end{equation}
In this paper, Jack polynomials will always be expressed in the
power-sum variables $\bfp$ rather than the alphabet $\mathbf{x}$. We will also often work with the substitution
\[J_\lambda^{(\alpha)}(\underline{u}) :=
  J_\lambda^{(\alpha)}\big|_{p_1=p_2=\cdots = u},\]
where $u$ is a variable. The following theorem due to
Macdonald \cite[Chapter VI Eq. 10.25]{Macdonald1995} gives an
expression of $J_\lambda^{(\alpha)}(\underline{u})$. 
\begin{thm}[\cite{Macdonald1995}]\label{thm Jack formula}
For every $\lambda\in\mathbbm{Y}$, we have
$$J_\lambda^{(\alpha)}(\underline{u})=\prod_{\Box\in\lambda}\left(u+c_\alpha(\Box)\right).$$
\end{thm}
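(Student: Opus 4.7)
The plan is to treat the identity as an equality of polynomials in $u$ over $\mathbb{Q}(\alpha)$ and establish it by verification at infinitely many values. First I would note that the right-hand side is a polynomial in $u$ of degree $n := |\lambda|$, while $J_\lambda^{(\alpha)}(\underline{u})$ is also a polynomial in $u$ of degree at most $n$: since $J_\lambda^{(\alpha)}$ is homogeneous of total degree $n$ in the $\mathbf{x}$-alphabet, only partitions $\mu \vdash n$ appear in its power-sum expansion, and the substitution $p_i \mapsto u$ replaces $p_\mu$ by $u^{\ell(\mu)}$ with $\ell(\mu) \leq n$. Hence it suffices to prove the identity for all positive integers $u = N$.

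For $u = N \in \mathbb{Z}_{>0}$, the key observation is that the alphabet $1^N := (\underbrace{1,\dots,1}_N, 0, 0, \dots)$ satisfies $p_k(1^N) = N$ for every $k \geq 1$, so the $\underline{N}$-specialization coincides with the principal specialization: $J_\lambda^{(\alpha)}(\underline{N}) = J_\lambda^{(\alpha)}(1^N)$. When $\ell(\lambda) > N$, both sides vanish. Indeed, the triangular expansion from \cref{defprop:Jack} shows that $J_\lambda^{(\alpha)}$ is a linear combination of monomial symmetric functions $m_\mu$ with $\mu \leq \lambda$ in dominance order; hence $\ell(\mu) \geq \ell(\lambda) > N$, so $m_\mu(1^N) = 0$ for each such $\mu$. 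On the right-hand side, the box $(N+1, 1) \in \lambda$ contributes the vanishing factor $N + c_\alpha(N+1, 1) = N - N = 0$.

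The substantive case $\ell(\lambda) \leq N$ I would handle by induction on $|\lambda|$ (equivalently on $N$) via the classical branching rule for Jack polynomials in finitely many variables,
\[
J_\lambda^{(\alpha)}(x_1, \dots, x_N) = \sum_{\mu \prec \lambda} \beta^{(\alpha)}(\lambda/\mu)\, x_N^{|\lambda/\mu|}\, J_\mu^{(\alpha)}(x_1, \dots, x_{N-1}),
\]
where $\mu \prec \lambda$ denotes interlacing ($\lambda_i \geq \mu_i \geq \lambda_{i+1}$) and the coefficients $\beta^{(\alpha)}(\lambda/\mu)$ are explicit products of $\alpha$-deformed arm and leg quantities. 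Setting $x_N = 1$ and applying the inductive hypothesis $J_\mu^{(\alpha)}(1^{N-1}) = \prod_{\square \in \mu}(N - 1 + c_\alpha(\square))$ reduces the theorem to the polynomial identity in $N$:
\[
\sum_{\mu \prec \lambda} \beta^{(\alpha)}(\lambda/\mu) \prod_{\square \in \mu}(N - 1 + c_\alpha(\square)) = \prod_{\square \in \lambda}(N + c_\alpha(\square)).
\]
The main obstacle will be verifying this combinatorial identity on branching coefficients: the $\beta^{(\alpha)}(\lambda/\mu)$ are explicit but intricate, and a direct expansion is painful. A promising strategy is to interpret the left-hand side as a residue of a suitable generating function summed over interlacing partitions, reducing the task to a partial-fraction decomposition whose poles lie exactly at the $\alpha$-contents of $\lambda$; in this way the identity can be pinned down by evaluating residues at $N = -c_\alpha(\square)$, $\square \in \lambda$.
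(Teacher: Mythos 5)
The paper does not prove this theorem; it imports it directly from Macdonald's book, so there is no in-paper argument to compare against. Judged on its own terms, your proposal has a sound outer shell but a genuine hole where the work actually happens.

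The reduction steps are correct and cleanly stated: both sides are polynomials in $u$ of degree at most $|\lambda|$, so it suffices to check $u=N$ for all positive integers $N$; the identification $J_\lambda^{(\alpha)}(\underline{N})=J_\lambda^{(\alpha)}(1^N)$ is right; the vanishing for $\ell(\lambda)>N$ is handled correctly on both sides (on the left via dominance and $m_\mu(1^N)=0$ when $\ell(\mu)\geq\ell(\lambda)>N$, on the right via the factor from the box $(N+1,1)$). Setting up the induction through the one-variable branching rule for Jack polynomials is also a legitimate route, and in spirit it is close to the classical Pieri-rule argument behind Macdonald's proof.

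The gap is that the entire content of the theorem is concentrated in the branching identity
\[
\sum_{\mu\prec\lambda}\beta^{(\alpha)}(\lambda/\mu)\prod_{\square\in\mu}\bigl(N-1+c_\alpha(\square)\bigr)=\prod_{\square\in\lambda}\bigl(N+c_\alpha(\square)\bigr),
\]
and you explicitly do not prove it. Saying the branching coefficients are ``explicit but intricate'' and gesturing at residues is not a proof. In fact, this identity is where every known proof of the evaluation formula does its real work (whether phrased via the Pieri rule, via duality, or via Knop--Sahi interpolation polynomials), so deferring it means you have reduced the problem to something essentially equivalent in difficulty. If you want to push the residue idea through, you would need to (a) check that the left-hand side is a monic polynomial of degree $|\lambda|$ in $N$ (the top term comes from $\mu=\lambda$ with $\beta^{(\alpha)}(\lambda/\lambda)=1$, so this part is fine), and (b) show it vanishes at $N=-c_\alpha(\square)$ for each $\square\in\lambda$; step (b) requires a genuine argument about the structure of the $\beta^{(\alpha)}(\lambda/\mu)$ and is precisely what is missing. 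As written, the proposal is an incomplete proof sketch rather than a proof.
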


\subsection{Lassalle's isomorphism and characterization of Jack characters}
The purpose of this section is to reprove a theorem of F\'eray that
uniquely determines Jack characters as shifted symmetric functions
with specific vanishing properties. 
We start by recalling some results on shifted symmetric functions from \cite{Lassalle2008b}. Several of these results were based on the work of Knop and Sahi \cite{KnopSahi1996}.
\begin{defi}[\cite{Lassalle2008b}]\label{ssec shifted fcts}
We say that a polynomial of degree $n$ in $k$ variables
$(s_1,\dots,s_k)$ with coefficients in $\mathbb Q (\alpha)$ is
\textit{$\alpha$-shifted symmetric} of degree $n$ if it is symmetric in the variables $s_i-i/\alpha$.
An $\alpha$-shifted symmetric function of degree $n$ is a sequence
$(f_k)_{k\geq1}$ such that for every $k\geq 1$, the function $f_k$ is
an $\alpha$-shifted symmetric polynomial of degree $n$ in $k$ variables and 
\begin{equation}\label{eq shifted functions}
  f_{k+1}(s_1,\dots,s_k,0)=f_k(s_1,\dots,s_k).  
\end{equation}
We denote by $\Shifted$ the algebra of $\alpha$-shifted symmetric functions.
\end{defi}

Let $f$ be an $\alpha$-shifted symmetric function and $\lambda =
(\lambda_1,\dots,\lambda_k)$ a partition. Then we denote
$f(\lambda):=f(\lambda_1,\dots, \lambda_{k},0,\dots)$. It turns that $\alpha$-shifted function is completely determined by its evaluation on partitions $(f(\lambda))_{\lambda\in\mathbb{Y}}$. Lassalle \cite{Lassalle2008b} has constructed an isomorphism
$f\longmapsto f^\#$ between $\Sym$ and $\Shifted$, which satisfies the following properties
\begin{enumerate}
    \item If $f$ is homogeneous, then the top degree
      part of $f^\#$ is $f$.
    \item For any partition $\xi$, the function $\Jxish$ is the unique
      $\alpha$-shifted symmetric function such that $\Jxish(\xi)\neq0$
      and $\Jxish(\lambda)=0$ if $\lambda$ does not contain $\xi$.
\end{enumerate}

Moreover, the image of power-sum functions by this isomorophism are the Jack characters up to explicit factors.
\begin{lem}[\cite{Lassalle2008b}, Proposition 2.9 in \cite{DolegaFeray2016}]\label{lem pdiez}
For any partition $\mu$, we have $\alpha^{|\mu|-\ell(\mu)}/z_\mu\cdot p_\mu^\#=\Jch_\mu.$
\end{lem}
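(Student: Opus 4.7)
The strategy is to prove that $F_\mu(\lambda) := \alpha^{|\mu|-\ell(\mu)}p_\mu^\#(\lambda)/z_\mu$ satisfies the defining formula of $\Jch_\mu(\lambda)$ for every partition $\lambda$. The main tool is the Jack scalar product $\langle\cdot,\cdot\rangle_\alpha$, under which $\langle p_\rho, p_\sigma\rangle_\alpha = z_\rho\alpha^{\ell(\rho)}\delta_{\rho,\sigma}$, the Jack polynomials form an orthogonal basis, and $\langle \Jla, \Jla\rangle_\alpha = j^{(\alpha)}_\lambda$ (see \eqref{eq j alpha}).

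First I would recast $\Jch_\mu(\lambda)$ via the scalar product. With $n=|\lambda|$ and $m=|\mu|$, the elementary identity $\binom{n-m+m_1(\mu)}{m_1(\mu)}/z_{\mu\cup 1^{n-m}} = 1/(z_\mu(n-m)!)$ together with $[p_\nu]\Jla = \langle \Jla, p_\nu\rangle_\alpha/(z_\nu\alpha^{\ell(\nu)})$ yields
\[
\Jch_\mu(\lambda) \;=\; \frac{1}{z_\mu\,(n-m)!\,\alpha^{\ell(\mu)+n-m}}\,\langle \Jla,\,p_\mu\, p_1^{n-m}\rangle_\alpha.
\]
On the other side, expanding $p_\mu = \sum_{\xi\vdash m} c^{(\mu)}_\xi \Jxi$ in the Jack basis, orthogonality gives $c^{(\mu)}_\xi = z_\mu\alpha^{\ell(\mu)}[p_\mu]\Jxi/j^{(\alpha)}_\xi$, and linearity of $f\mapsto f^\#$ produces $p_\mu^\# = \sum_{\xi\vdash m} c^{(\mu)}_\xi \Jxish$. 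Invoking property~(2) of Lassalle's isomorphism ($\Jxish(\lambda) = 0$ unless $\xi\subseteq\lambda$) restricts the evaluation at $\lambda$ to
\[
p_\mu^\#(\lambda) \;=\; \sum_{\substack{\xi\vdash m\\ \xi\subseteq\lambda}} c^{(\mu)}_\xi \Jxish(\lambda),
\]
which in particular vanishes when $|\lambda|<m$, matching the trivial vanishing of $\Jch_\mu$ there.

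To conclude, I would apply the adjoint identity $(p_1\cdot)^* = \alpha\,\partial/\partial p_1$ to convert the equality $F_\mu(\lambda)=\Jch_\mu(\lambda)$ into the reproducing-type identity in $\Sym$
\[
\frac{1}{(n-m)!}\Big(\frac{\partial}{\partial p_1}\Big)^{n-m}\Jla \;=\; \alpha^m\!\!\sum_{\substack{\xi\vdash m\\ \xi\subseteq\lambda}}\frac{J^{(\alpha)\#}_\xi(\lambda)}{j^{(\alpha)}_\xi}\,\Jxi.
\]
For $|\lambda|=m$ (so $n-m=0$) this reduces to the classical normalization $J^{(\alpha)\#}_\lambda(\lambda) = j^{(\alpha)}_\lambda/\alpha^{|\lambda|}$, which follows from Macdonald's formula \cref{thm Jack formula} combined with the top-degree characterization of Lassalle's isomorphism. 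The general case can then be propagated by induction on $n-m$, using Pieri-type rules for the action of $\partial/\partial p_1$ on the Jack basis. The technical heart of the argument is this reproducing identity: it is precisely the bridge between the algebraic Jack norm $j^{(\alpha)}_\xi$ and the shifted-symmetric self-evaluations $J^{(\alpha)\#}_\xi(\lambda)$, and constitutes the substantive content of the lemma.
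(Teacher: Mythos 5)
The paper does not prove \cref{lem pdiez}; it is imported verbatim from \cite{Lassalle2008b} and \cite[Proposition 2.9]{DolegaFeray2016}, so there is no in-paper proof to compare against. Judged on its own, your argument is a correct and clean reduction, but not a proof.

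The first half of your write-up is sound: the identity $\binom{n-m+m_1(\mu)}{m_1(\mu)}/z_{\mu\cup 1^{n-m}}=1/(z_\mu(n-m)!)$, the rewriting of $\Jch_\mu(\lambda)$ as a scalar product, the coefficients $c^{(\mu)}_\xi$ in the Jack expansion of $p_\mu$, and the passage via $(p_1\cdot)^*=\alpha\,\partial/\partial p_1$ to the reproducing identity
\[
\frac{1}{(n-m)!}\Big(\frac{\partial}{\partial p_1}\Big)^{n-m}\Jla \;=\; \alpha^m\!\!\sum_{\substack{\xi\vdash m\\ \xi\subseteq\lambda}}\frac{J^{(\alpha)\#}_\xi(\lambda)}{j^{(\alpha)}_\xi}\,\Jxi
\]
are all correct. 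However, this is a reformulation of the lemma, not a proof of it: as you say yourself, the reproducing identity \emph{is} the substantive content, and you leave it unproven. Two points in particular are unjustified. First, the base case $J^{(\alpha)\#}_\lambda(\lambda)=j^{(\alpha)}_\lambda/\alpha^{|\lambda|}$ does not ``follow from Macdonald's formula combined with the top-degree characterization'': Macdonald's formula (\cref{thm Jack formula}) gives the principal specialization of $\Jla$, and property (1) gives only the leading term of $J^{(\alpha)\#}_\lambda$; neither controls the value $J^{(\alpha)\#}_\lambda(\lambda)$, which is a separate and genuinely nontrivial evaluation formula for shifted Jack polynomials (it is a theorem, not an immediate consequence of what the paper records). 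Second, ``induction on $n-m$ using Pieri-type rules for $\partial/\partial p_1$'' is a gesture, not an argument: the Pieri rule decomposes $\partial/\partial p_1\,\Jla$ in the Jack basis with hook-type coefficients, but to close the induction you would additionally need a recurrence linking $J^{(\alpha)\#}_\xi(\lambda)$ to $J^{(\alpha)\#}_\xi(\eta)$ over $\eta=\lambda\setminus\Box$, together with compatibility of the two systems of coefficients. That compatibility is exactly the content of the binomial/Okounkov--Olshanski-type formula underlying the cited references, and it needs to be stated and proved (or cited) to make your induction run. As it stands, the proposal identifies the right target but stops at the point where the real work begins.
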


Since power-sum functions $(p_\mu)_{\mu\in\mathbb{Y}}$ form a linear
basis of $\mathcal{S}_\alpha$, \cref{lem pdiez} implies that Jack
characters $(\Jch_\mu)_{\mu\in\mathbb{Y}}$ form a basis of $\mathcal
S^*_\alpha$. Similarly, $(J^{(\alpha)\#}_\mu)_{\mu\in\mathbb{Y}}$ is a
basis for $\mathcal S^*_\alpha$.

The starting point of
the proof of the first main theorem is the following
characterization of the Jack characters $\theta_\mu^{(\alpha)}$ proved
by F\'eray (see
\cite[Theorem A.2]{Sniady2015a}). For completeness, we give its proof.
\begin{thm}\label{thm Feray}
Fix a partition $\mu$. The Jack character $\Jch_\mu$ is the unique $\alpha$-shifted symmetric function of degree $|\mu|$ with top homogeneous part $\alpha^{|\mu|-\ell(\mu)}/z_\mu\cdot p_\mu$, such that $\Jch_\mu(\lambda)=0$ for any partition $|\lambda|<|\mu|$.
\end{thm}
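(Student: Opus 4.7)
My plan is to establish existence and uniqueness separately, with Lassalle's isomorphism $f\mapsto f^\#$ and its two structural properties serving as the main tool throughout.

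For existence, I would simply verify that $\Jch_\mu$ itself meets the three requirements. By \cref{lem pdiez}, $\Jch_\mu = \tfrac{\alpha^{|\mu|-\ell(\mu)}}{z_\mu}\, p_\mu^\#$, and since $p_\mu$ is homogeneous of degree $|\mu|$, property~(1) of the $\#$-isomorphism tells me that $p_\mu^\#$ is $\alpha$-shifted symmetric of degree $|\mu|$ with top homogeneous part $p_\mu$; rescaling delivers the claimed top part. The vanishing $\Jch_\mu(\lambda)=0$ for $|\lambda|<|\mu|$ is built into the definition of the Jack character.

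For uniqueness, I would let $f$ be any $\alpha$-shifted symmetric function satisfying the same three conditions and set $g := f-\Jch_\mu$. Then $g\in\Shifted$ has $\deg(g)<|\mu|$ (the top homogeneous parts cancel by hypothesis) and vanishes on every partition of size less than $|\mu|$. The strategy is to expand $g = \sum_\xi a_\xi \Jxish$ in the interpolation basis and show $a_\xi = 0$ for all $\xi$ in two stages. Stage one kills $a_\xi$ with $|\xi|<|\mu|$ by strong induction on $|\lambda|$ running from $0$ up to $|\mu|-1$: property~(2) of the $\#$-isomorphism forces $\Jxish(\lambda)=0$ unless $\xi\subseteq\lambda$, so $0 = g(\lambda) = \sum_{\xi\subseteq\lambda} a_\xi \Jxish(\lambda)$; after removing already-vanishing coefficients, the only surviving term at the inductive step is $a_\lambda J^{(\alpha)\#}_\lambda(\lambda)$, and since $J^{(\alpha)\#}_\lambda(\lambda)\neq 0$ we conclude $a_\lambda=0$. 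Stage two kills $a_\xi$ with $|\xi|\geq|\mu|$ by a degree argument: by property~(1), the top homogeneous part of $\Jxish$ is $\Jxi$, and the $\Jxi$ are linearly independent in $\Sym$. If any $a_\xi$ with $|\xi|\geq|\mu|$ were nonzero, taking $N$ to be the maximum such $|\xi|$ would produce a nonzero top-degree-$N$ component $\sum_{|\xi|=N}a_\xi \Jxi$ of $g$, contradicting $\deg(g)<|\mu|\leq N$. Hence $g=0$.

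I do not anticipate a substantive obstacle here: once the interpolation basis $\Jxish$ and properties~(1)--(2) are in hand, the whole argument reduces to standard triangularity and degree-counting. The real difficulty is hidden in setting up Lassalle's isomorphism, which has already been recalled from \cite{KnopSahi1996, Lassalle2008b}.
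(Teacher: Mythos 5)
Your proof is correct and takes essentially the same approach as the paper: expand $g=f-\Jch_\mu$ in the interpolation basis $\Jxish$, use the degree bound to discard $\xi$ with $|\xi|\ge|\mu|$, and use the vanishing/non-vanishing properties of $\Jxish$ to kill the remaining coefficients. The only cosmetic difference is that you run a strong induction on $|\lambda|$ where the paper takes a minimal-size counterexample $\xi_0$ and derives a contradiction; these are two phrasings of the same triangularity argument.
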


We now prove \cref{thm Feray}.
\begin{proof}[Proof of \cref{thm Feray}]
The fact that $\Jch_\mu(\lambda)=0$ if $|\lambda|<|\mu|$ comes from the definition. Its  top homogeneous part is obtained from property $(1)$ and \cref{lem pdiez} above.

\textit{Uniqueness:} Let $G$ be an $\alpha$-shifted symmetric function of degree $|\mu |$ with the same top degree part as $\Jch_\mu$, and such that $G(\lambda)=0$ for any $|\lambda |<|\mu|$. Set $G:=F-\Jch_\mu$. Then $G$ is an $\alpha$-shifted symmetric function of degree at most $|\mu|-1$ with 
\begin{equation}\label{eq G van}
    G(\lambda)=0 \text{ for $|\lambda|<|\mu|$}. 
\end{equation}
We expand $G$ in the $\Jxish$ basis 
\begin{equation}\label{eq G J}
  G=\sum_\xi c_\xi \Jxish.  
\end{equation}

As $\deg(G)\leq |\mu|-1,$ the sum can be restricted to partitions $\xi$ of size at most $|\mu|-1$. 
We will prove by contradiction that $G=0$, \textit{i.e} that $c_\xi=0$
for all partitions $\xi$ with $|\xi|\leq |\mu|-1$. Assume this is not
the case and consider a partition $\xi_0$ of minimal size such that
$c_{\xi_0}\neq 0$.

\cref{eq G van} gives $G(\xi_0)=0$ since $|\xi_0|<|\mu|$. On the other
hand, $\Jxish(\xi_0)=0$ if $\xi_0$ does not contain $\xi$ (see
property (2) above). Therefore the RHS of \cref{eq G J} evaluated on
$\xi_0$ vanishes for
all partitions $\xi$ except for $\xi=\xi_0$. Moreover,
$c_{\xi_0}\neq 0$ by the assumptions and
$J^{(\alpha)\#}_{\xi_0}(\xi_0)\neq0$ from property (2). Therefore
$G(\xi_0) = c_{\xi_0} J^{(\alpha)\#}_{\xi_0}(\xi_0) \neq 0$, and we have reached a contradiction. Hence, $G=0$ and the uniqueness is proved.
\end{proof}

\section{Integrality in Lassalle's conjecture}\label{sec:Int}

Before we prove \cref{thm Jack char} we present the proof of \cref{thm
  Lassalle conj}. The positivity part follows directly from the
combinatorial interpretation (in terms of layered maps) stated in
\cref{thm Jack char}, whose proof is technically involved and will
occupy the most part of this paper. Integrality, however, does not
follow from \cref{thm Jack char} and requires new ideas. We prove it using different approach based on combinatorics of Nazarov--Sklyanin operators interpreted as lattice paths. These developments are independent of the other sections, and they are also of independent interest, as we demonstrate by proving other problems stated in the literature as a byproduct.

\subsection{Nazarov--Sklyanin operators and $\alpha$-polynomial functions}

Recall that $\theta^{(\alpha)}_\mu$ is a linear basis of the algebra $\Shifted$ of $\alpha$-shifted symmetric functions. It turns that a strictly related algebra is of special interest. Let $\gamma := \sqrt{\alpha}^{-1}-\sqrt{\alpha}$ and define the \emph{normalized Jack character}
\begin{equation}
  \label{eq:JackCharacter}
  \Cha_\mu(\lambda) := \alpha^{\frac{\ell(\mu)-|\mu|}{2}} z_\mu \theta^{(\alpha)}_\mu(\lambda),
\end{equation}
and the $\QQ[\gamma]$-module $\Poly$ spanned by
$\left(\Cha_\mu\right)_{\mu \in \YY}$. It was proved that $\Poly$ is
in fact an algebra called the algebra of \emph{$\alpha$-polynomial
  functions}, see~\cite{DolegaFeray2016,Sniady2019,DolegaSniady2019}
for the details. This normalization for Jack characters is of a special interest due to the connections with random partitions. In fact, several important bases of $\Poly$ grew up from this connection that we describe now.

\subsubsection{Kerov's transition measure}

Kerov associated with a Young diagrams $\lambda$ certain probability measure $\mu_{\lambda}$ on $\RR$ that is very useful for studying asymptotic behaviour of random Young diagrams. This \emph{transition measure} is uniquely characterized by its Cauchy transform:
\begin{equation}
  \label{eq:Cauchy-St}
  G_{\mu_\lambda}(z) := \int_\RR \frac{d\mu_\lambda(x)}{z-x} =
  \frac{1}{z+\ell(\lambda)}\prod_{i=1}^{\ell(\lambda)}\frac{z+i-\lambda_i}{z+i-1-\lambda_i}.
\end{equation}
In particular the $\ell$-th moment $M_\ell(\lambda)$ of the transition measure $\mu_\lambda$ can be computed by applying a simple relation between the Cauchy transform expanded around infinity and the generating function of moments:
\[ z^{-1}+\sum_{\ell \geq 1}M_\ell(\lambda)z^{-\ell-1} = G_{\mu_\lambda}(z).\]
Note that $M_\ell(\lambda)$ can be treated as functions of $\lambda_1,\dots,\lambda_{\ell(\lambda)}$, and define
\begin{equation}
  \label{eq:moments}
  M^{(\alpha)}_{\ell}(\lambda) := \alpha^{-\frac{\ell}{2}}M_\ell(\alpha \cdot \lambda_1,\dots,\alpha\cdot\lambda_{\ell(\lambda)}),
\end{equation}
which is a well-defined function on $\mathbb{Y}$.
It was proved in~\cite{DolegaFeray2016} that $M^{(\alpha)}_{\ell}$ is an algebraic basis of $\Poly$.

\begin{thm}
  \label{theo:MBasis}
  The algebra $\Poly$ is generated (over $\QQ[\gamma]$) by $\left(M^{(\alpha)}_{\ell}\right)_{\ell \geq 2}$.  
\end{thm}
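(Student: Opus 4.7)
The plan is to establish \cref{theo:MBasis} through a filtration/triangularity argument. First, I would verify that $M^{(\alpha)}_\ell \in \Poly$ for each $\ell \geq 2$; second, I would exhibit every $\Cha_\mu$ as a polynomial in the $M^{(\alpha)}_\ell$'s through analysis of the associated graded algebra.

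For the containment $M^{(\alpha)}_\ell \in \Poly$, I would use the classical fact, obtained by expanding \eqref{eq:Cauchy-St} around $z = \infty$ and comparing with the generating function of moments, that $M_\ell(\lambda)$ is a polynomial in the contents $c(\square) = j-i$ of the boxes $\square \in \lambda$, and hence lies in the algebra of ordinary shifted symmetric functions. Under the anisotropic substitution $\lambda \mapsto \alpha \lambda$, ordinary contents are replaced by $\alpha$-contents $c_\alpha(\square)$, and the scaling $\alpha^{-\ell/2}$ in \eqref{eq:moments} produces a well-defined element of $\Shifted$. Combined with the fact that $\Cha_\mu$ forms a $\QQ[\gamma]$-linear basis of $\Poly$ inside $\Shifted$, and tracking the $\gamma$-weights via the identity $c_\alpha(\square) = \sqrt{\alpha}\bigl(\sqrt{\alpha}(j-1) - \sqrt{\alpha}^{-1}(i-1)\bigr)$, this places $M^{(\alpha)}_\ell$ in $\Poly$.

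For the generating step, introduce the weight filtration on $\Poly$ defined by $\operatorname{wt}(\Cha_\mu) := |\mu| + \ell(\mu)$ and $\operatorname{wt}(\gamma) := 1$. I would verify that this is genuinely an algebra filtration by showing $\Cha_\mu \cdot \Cha_\nu = \Cha_{\mu \cup \nu} + (\text{strictly lower weight})$, which reflects the multiplicative behaviour of top-degree parts of shifted power-sums $p_\mu^\#$ combined with \cref{lem pdiez}. The associated graded $\operatorname{gr}\Poly$ is then a commutative polynomial algebra freely generated over $\QQ[\gamma]$ by the classes $\overline{\Cha_{(k)}}$ for $k \geq 1$. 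The main computation is to identify the top weight component of $M^{(\alpha)}_\ell$: extracting the $z^{-\ell-1}$ coefficient from the expansion of \eqref{eq:Cauchy-St} and carefully tracking homogeneous degrees (in $\lambda_i$ as well as in $\gamma$) should yield
\begin{equation*}
\operatorname{gr} M^{(\alpha)}_\ell \;=\; c_\ell \cdot \overline{\Cha_{(\ell-1)}}, \qquad \ell \geq 2,
\end{equation*}
for an explicit nonzero constant $c_\ell \in \QQ$, with the vanishing $M^{(\alpha)}_1 = 0$ (a known property of Kerov's transition measure) explaining why the sequence starts at $\ell = 2$.

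With these facts in hand, the conclusion follows by a standard triangular induction on the weight filtration. Since $\{\overline{\Cha_{(k)}} : k \geq 1\}$ freely generates $\operatorname{gr}\Poly$ and each $\overline{\Cha_{(k)}}$ is, up to a nonzero scalar, the symbol of $M^{(\alpha)}_{k+1}$, the symbols of the $M^{(\alpha)}_\ell$'s generate $\operatorname{gr}\Poly$ as a $\QQ[\gamma]$-algebra; lifting back through the filtration expresses every $\Cha_\mu$ as a polynomial in $\left(M^{(\alpha)}_\ell\right)_{\ell \geq 2}$ with coefficients in $\QQ[\gamma]$. The hardest part will be the precise identification of $\operatorname{gr} M^{(\alpha)}_\ell$ with $\overline{\Cha_{(\ell-1)}}$: this requires matching the polynomial-in-contents description of $M_\ell$ against the top degree of $\Cha_{(\ell-1)}$ computed via Lassalle's isomorphism $f \mapsto f^\#$, while simultaneously keeping track of how the anisotropic rescaling in \eqref{eq:moments} introduces the $\gamma$-variable.
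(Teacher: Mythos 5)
The paper does not prove this statement itself; it invokes [DolegaFeray2016]. So there is no in-paper argument to compare against, but your proposal does contain a genuine gap.

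The key claim
\[
\operatorname{gr} M^{(\alpha)}_\ell = c_\ell \cdot \overline{\Cha_{(\ell-1)}}
\]
is false. With the weight $\operatorname{wt}(\Cha_\mu) = |\mu|+\ell(\mu)$, $\operatorname{wt}(\gamma)=1$, the graded piece of weight $\ell$ in $\operatorname{gr}\Poly$ is spanned by all $\gamma^a\overline{\Cha_\mu}$ with $a + |\mu| + \ell(\mu) = \ell$, in particular by the products $\overline{\Cha_{(k_1-1)}}\cdots\overline{\Cha_{(k_n-1)}}$ with $k_1+\cdots+k_n = \ell$. The free cumulant $R^{(\alpha)}_\ell$ is weight-homogeneous of weight $\ell$ and its symbol equals $\overline{\Cha_{(\ell-1)}}$ (the top-degree statement $\Cha_{(k)} = R^{(\alpha)}_{k+1}+\text{lower}$ cited in \cref{sec:Int}). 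Plugging this into \cref{eq:Moment-Free} gives
\[
\operatorname{gr} M^{(\alpha)}_\ell \;=\; \sum_{n\geq 1}\frac{(\ell)_{n-1}}{n!}\sum_{\substack{k_1,\dots,k_n\geq 2\\ k_1+\cdots+k_n=\ell}}\overline{\Cha_{(k_1-1)}}\cdots\overline{\Cha_{(k_n-1)}},
\]
which is not a scalar multiple of a single generator. Already $\operatorname{gr} M^{(\alpha)}_4 = \overline{\Cha_{(3)}}+2\overline{\Cha_{(1)}}^{\,2}$. So step 3 of your plan fails as stated, and with it the claim that the symbols of $M^{(\alpha)}_\ell$ freely match the generators.

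The conclusion is still reachable, but not by matching single generators: the symbol of $M^{(\alpha)}_\ell$ is $\overline{\Cha_{(\ell-1)}}$ plus a polynomial in the strictly lower generators, so one solves for $\overline{\Cha_{(\ell-1)}}$ by induction on $\ell$. This is exactly the invertibility of the moment--free-cumulant relation in \cref{prop:Mom-Bool-Free}, so the correct bridge goes $M^{(\alpha)}_\ell \leftrightarrow R^{(\alpha)}_\ell \leftrightarrow \Cha_{(\ell-1)}$ rather than directly $M^{(\alpha)}_\ell \leftrightarrow \Cha_{(\ell-1)}$. Note that the paper's own proof of the stronger integrality statement \cref{theo:mainInt} carries out precisely this kind of triangularity argument, but correctly identifies the unique top-weight contribution of the product $M^{(\alpha)}_{\mu_1+1}\cdots M^{(\alpha)}_{\mu_\ell+1}$ with $\Cha^{(\alpha)}_\mu$ via the Łukasiewicz path combinatorics; that is the rigorous version of the idea you are reaching for, and in fact it already yields \cref{theo:MBasis} as a by-product.
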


The above theorem is a starting point for defining other interesting bases using other observables arising from classical and free probability. Besides the \emph{moments}, we will use the \emph{Boolean cumulants} $B^{(\alpha)}_{\ell}$,
and the \emph{free cumulants} $R^{(\alpha)}_{\ell}$. In our context it would be the most convenient to define them by the following recursive formulas that can be easily inverted over $\ZZ$, (see~\cite[Proposition 2.2]{DolegaFeraySniady2010} and \cite[Proposition 3.2]{CuencaDolegaMoll2023}).

\begin{prop}
  \label{prop:Mom-Bool-Free}
  For any integer $\ell \geq 2$,
  \begin{align}
    M^{(\alpha)}_\ell &= \sum_{n \geq 1}\sum_{k_1,\dots, k_n \geq
    2\atop k_1+\cdots k_n = \ell}B^{(\alpha)}_{k_1}\cdots B^{(\alpha)}_{k_n},\label{eq:Moment-Boolean}\\
        M^{(\alpha)}_\ell &= \sum_{n \geq 1}\frac{(\ell)_{n-1}}{n!}\sum_{k_1,\dots, k_n \geq
    2\atop k_1+\cdots k_n = \ell}R^{(\alpha)}_{k_1}\cdots R^{(\alpha)}_{k_n}.\label{eq:Moment-Free}
    \end{align}
  \end{prop}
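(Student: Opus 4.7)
The plan is to transfer the classical moment-cumulant identities from non-commutative probability theory to the $\alpha$-deformed setting via the defining scaling $M^{(\alpha)}_\ell(\lambda) = \alpha^{-\ell/2} M_\ell(\alpha\cdot\lambda)$ and the analogous scalings $B^{(\alpha)}_\ell(\lambda) = \alpha^{-\ell/2}B_\ell(\alpha\cdot\lambda)$ and $R^{(\alpha)}_\ell(\lambda) = \alpha^{-\ell/2}R_\ell(\alpha\cdot\lambda)$, where $M_\ell$, $B_\ell$, $R_\ell$ denote the usual moments, Boolean cumulants and free cumulants of Kerov's transition measure $\mu_\lambda$.

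First I would check that $\mu_\lambda$ has vanishing first moment. Expanding $\log G_{\mu_\lambda}(z)$ from~\eqref{eq:Cauchy-St} in powers of $1/z$, the coefficient of $z^{-1}$ equals $-\ell(\lambda) + \sum_{i=1}^{\ell(\lambda)}\big((\lambda_i - i + 1) - (\lambda_i - i)\big) = 0$, so $M_1(\lambda) = 0$, and consequently $B_1 = R_1 = 0$ (and the same for the $\alpha$-deformed versions). This justifies the restriction $k_i \geq 2$ in both formulas.

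To establish~\eqref{eq:Moment-Boolean}, I would invoke the defining identity of Boolean cumulants $1 + \sum_{\ell \geq 1} M_\ell z^\ell = 1/\bigl(1 - \sum_{k \geq 1} B_k z^k\bigr)$; expanding the right-hand side as the geometric series $\sum_{n \geq 0}\bigl(\sum_{k \geq 1} B_k z^k\bigr)^n$ and extracting the coefficient of $z^\ell$ yields the formula, with the $k_i \geq 2$ restriction coming from $B_1 = 0$. For~\eqref{eq:Moment-Free}, I would start from Voiculescu's identity $M_\ell = \sum_{\pi \in \mathrm{NC}(\ell)}\prod_{V \in \pi} R_{|V|}$ and group non-crossing partitions by the unordered multiset of block sizes. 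Kreweras's enumeration gives $\ell!/\bigl((\ell-n+1)!\prod_j m_j!\bigr)$ non-crossing partitions of $[\ell]$ with $n$ blocks whose multiset of sizes is $\{k_1,\dots,k_n\}$, where $m_j$ counts the $k_i$'s equal to $j$. Converting this sum over unordered multisets into a sum over ordered compositions introduces a compensating factor $n!/\prod_j m_j!$, and the product of these two factors collapses uniformly to $(\ell)_{n-1}/n!$, independent of the composition, producing~\eqref{eq:Moment-Free} after restricting to $k_i \geq 2$.

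The transition from these classical identities (valid for $\mu_\lambda$) to the $\alpha$-deformed statements is then immediate by homogeneity: each monomial $B_{k_1}\cdots B_{k_n}$ or $R_{k_1}\cdots R_{k_n}$ with $k_1+\cdots+k_n = \ell$ rescales by exactly $\alpha^{-\ell/2}$ under the substitution $\lambda \mapsto \alpha\cdot\lambda$ followed by the defining renormalization, matching the scaling factor on $M_\ell^{(\alpha)}$. The main technical subtlety is the combinatorial collapse in the free case: verifying that Kreweras's block-size count combines precisely with the unordered-to-ordered conversion to produce the clean falling-factorial coefficient $(\ell)_{n-1}/n!$ uniformly across compositions. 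Everything else is routine generating-function manipulation and rescaling.
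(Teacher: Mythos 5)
Your proof is correct. Note, however, that the paper does not give its own proof of this proposition: it treats the two identities as the defining relations for $B^{(\alpha)}_\ell$ and $R^{(\alpha)}_\ell$ in terms of the moments $M^{(\alpha)}_\ell$, remarking only that they are triangular and hence invertible over $\ZZ$, and cites \cite{DolegaFeraySniady2010} and \cite{CuencaDolegaMoll2023} for the identification with the classical Boolean and free cumulants. What you have supplied is precisely the content of those citations: the vanishing $M_1=0$ (your coefficient extraction from $\log G_{\mu_\lambda}$ is right, and yields $B_1=R_1=0$, which justifies restricting to $k_i\ge 2$); the geometric-series expansion of $M(z)=\bigl(1-\sum_k B_k z^k\bigr)^{-1}$ for \eqref{eq:Moment-Boolean}; and, for \eqref{eq:Moment-Free}, Kreweras's count $\ell!\big/\bigl((\ell-n+1)!\prod_j m_j!\bigr)$ combined with the multiset-to-composition factor $n!\big/\prod_j m_j!$, whose product indeed collapses to $(\ell)_{n-1}/n!$ independently of the composition. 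The final homogeneity argument is sound because each moment-cumulant identity expresses a degree-$\ell$ quantity as a sum of degree-$\ell$ monomials in cumulants, so the uniform factor $\alpha^{-\ell/2}$ arising from $\lambda\mapsto\alpha\cdot\lambda$ followed by the renormalization of \eqref{eq:moments} passes through unchanged. In short, you have given a self-contained proof of a statement the paper delegates to references; all the steps check.
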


  In particular we have the following theorem

  \begin{thm}
  \label{theo:XBasis}
  The algebra $\Poly$ is generated (over $\QQ[\gamma]$) by $\left(X^{(\alpha)}_{\ell}\right)_{\ell \geq 2}$, where $X = M,B,R$. 
\end{thm}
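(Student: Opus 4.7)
The plan is to deduce \cref{theo:XBasis} from \cref{theo:MBasis} by exploiting the triangular structure of the recursive relations in \cref{prop:Mom-Bool-Free}. The crucial observation is that in both identities the $n = 1$ summand on the right-hand side is simply $X^{(\alpha)}_\ell$ (with coefficient $1$, since $\frac{(\ell)_0}{1!} = 1$ for $X = R$ and no prefactor appears for $X = B$), while every summand with $n \geq 2$ involves only indices $k_1, \ldots, k_n$ subject to $k_i \geq 2$ and $k_1 + \cdots + k_n = \ell$, which forces $k_i \leq \ell - 2$ for all $i$.

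I would first isolate the $n = 1$ term to rewrite, for each $\ell \geq 2$ and each $X \in \{B, R\}$, an identity of the form
$$X^{(\alpha)}_\ell = M^{(\alpha)}_\ell - P^{(X)}_\ell\!\left(X^{(\alpha)}_2, \dots, X^{(\alpha)}_{\ell-2}\right),$$
where $P^{(X)}_\ell$ is a polynomial with coefficients in $\QQ \subset \QQ[\gamma]$ (in the Boolean case these coefficients are positive integers; in the free case they are the rationals $\frac{(\ell)_{n-1}}{n!}$). I would then argue by strong induction on $\ell$: assuming that each $X^{(\alpha)}_k$ with $k < \ell$ belongs to the $\QQ[\gamma]$-subalgebra $\mathcal{A}_M \subset \Poly$ generated by $\left(M^{(\alpha)}_j\right)_{j \geq 2}$, the displayed identity immediately places $X^{(\alpha)}_\ell$ in $\mathcal{A}_M$ as well. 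Conversely, reading \cref{prop:Mom-Bool-Free} directly presents $M^{(\alpha)}_\ell$ as a polynomial expression in $X^{(\alpha)}_2, \dots, X^{(\alpha)}_\ell$ with rational coefficients, so each $M^{(\alpha)}_\ell$ lies in the $\QQ[\gamma]$-subalgebra $\mathcal{A}_X$ generated by the $X^{(\alpha)}_j$. Hence $\mathcal{A}_M = \mathcal{A}_X$, and \cref{theo:MBasis} identifies this common subalgebra with the full algebra $\Poly$.

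There is no real conceptual obstacle: the entire content of the proof is the triangularity of the recursive relations, which follows at once from $k_i \geq 2$ and $\sum k_i = \ell$. I note, however, that the same inductive scheme, once one verifies that the rational coefficients appearing in the inversion collect into integers (which for the free-cumulant side follows from the interpretation of $\frac{(\ell)_{n-1}}{n!}$ summed over compositions as a count of non-crossing partitions with prescribed block sizes), yields the stronger fact that each $X^{(\alpha)}_\ell$ is a polynomial in the $M^{(\alpha)}_k$'s with \emph{integer} coefficients. This sharpening is precisely what the phrase ``easily inverted over $\ZZ$'' in the statement of \cref{prop:Mom-Bool-Free} refers to and is what will matter when \cref{theo:XBasis} is applied to the integrality half of \cref{thm Lassalle conj}.
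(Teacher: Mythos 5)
Your proof is correct and fills in precisely the argument the paper leaves implicit: \cref{theo:XBasis} is presented as an immediate consequence of \cref{prop:Mom-Bool-Free} together with \cref{theo:MBasis}, and your triangular isolation of the $n=1$ summand, followed by strong induction on $\ell$ and the reverse containment read directly off the same formulas, is exactly the intended reasoning. Your closing remark about the $\ZZ$-integrality of the inversion coefficients is not needed for the $\QQ[\gamma]$-generation claim here, but it correctly anticipates how \cref{prop:Mom-Bool-Free} is reused in the proof of \cref{theo:mainInt}.
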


\subsubsection{Nazarov--Sklyanin operators}

Consider the (infinite) row vector
$P = (P_{1,{k}})_{{k} \in \NN_{\geq 1}}$ and dually the column vector $P^\dagger =
(P^\dagger_{{k},1})_{{k} \in \NN_{\geq 1}}$, where $P_{1,{k}} := \sqrt{\alpha}^{-1}\cdot p_{{k}}$, and
$P^\dagger_{{k}, 1} := \sqrt{\alpha}\cdot  k\cdot\frac{\partial}{\partial p_{k}} =: \sqrt{\alpha}^{-1}\cdot p_{{-k}}$, are regarded as operators on the algebra of symmetric
functions $\Sym$.
Let $L = (L_{i,j})_{i,j \in \NN_{\geq 1}}$ be the infinite matrix defined by
$L_{i,j} := \sqrt{\alpha}^{-1}\cdot p_{j-i}-\delta_{i,j}\,i\gamma$, for all $i, j\in\NN_{\geq 1}$, with the convention that
$p_0:=0$:
$$
L = \begin{bmatrix} 
-\gamma & P_{1,{1}} & P_{1,{2}} & \cdots\\
P^\dagger_{{1},1} & -2\gamma & P_{1,{1}} & \ddots\\
P^\dagger_{{2},1} &P^\dagger_{{1},1} & -3\gamma & \ddots\\
\vdots & \ddots & \ddots & \ddots 
\end{bmatrix}.
$$
The main
result of Nazarov--Sklyanin~\cite[Theorem 2]{NazarovSklyanin2013} can be
reformulated as follows:

\begin{thm}
  \label{theo:Nazarov-Sklyanin}
  The following equality holds true for all $\ell\geq 0$:
  \begin{align}
        PL^\ell P^\dagger \Jla &=
    B^{(\alpha)}_{\ell+2}(\lambda) \cdot \Jla, \label{eq:NS-Boolean}
    \end{align}
  \end{thm}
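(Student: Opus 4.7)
The stated identity is a reformulation of~\cite[Theorem 2]{NazarovSklyanin2013} in the normalization of the present paper; the plan is to (a) invoke the joint diagonalization result of Nazarov--Sklyanin and (b) match the eigenvalue with the Boolean cumulant $B^{(\alpha)}_{\ell+2}(\lambda)$.

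The natural packaging is the resolvent
\[\mathcal{A}(z) := P(z-L)^{-1} P^\dagger = \sum_{\ell \geq 0} z^{-\ell-1}\, PL^\ell P^\dagger \in \mathrm{End}(\mathcal{S}_\alpha)[[z^{-1}]].\]
Nazarov--Sklyanin show that the coefficients $PL^\ell P^\dagger$ commute pairwise and commute with the Laplace--Beltrami operator $D_\alpha$ of~\eqref{eq:Laplace-Beltrami}; combined with \cref{defprop:Jack}, each $J_\lambda^{(\alpha)}$ is therefore a joint eigenvector, and the eigenvalue of $\mathcal{A}(z)$ on it is a rational function $h_\lambda(z)$. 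To identify $h_\lambda(z)$ I would evaluate on the principal specialization: using \cref{thm Jack formula} the family $\{J_\lambda^{(\alpha)}(\underline{u})\}_\lambda$ is explicit as a product over $\alpha$-contents, and tracking the action of $p_k$ and $k\,\partial/\partial p_k$ through the formal inverse $(z-L)^{-1}$ yields, after the $\alpha$-rescaling~\eqref{eq:moments} and the diagonal shift $\gamma = \sqrt{\alpha}^{-1}-\sqrt{\alpha}$ in $L$, precisely Kerov's Cauchy transform $zG_{\mu_\lambda}(z)$ from~\eqref{eq:Cauchy-St}.

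The passage from moments to Boolean cumulants is then automatic by the generating function identity $zG(z) = 1/\bigl(1 - \sum_{k\geq 1} z^{-k} B_k\bigr)$, which expands via the geometric series exactly to the moment--Boolean formula~\eqref{eq:Moment-Boolean}; the index shift of $2$ reflects the fact that a term $PL^\ell P^\dagger$ contributes to the coefficient of $z^{-\ell-2}$ in the Boolean-atom expansion of $zG(z)$, rather than to a moment of the same order. The main obstacle is the joint commutativity of $\{PL^\ell P^\dagger\}_{\ell\geq 0}$, which is the substance of~\cite[Theorem 2]{NazarovSklyanin2013} and reflects the integrability of the underlying Benjamin--Ono/Calogero--Sutherland hierarchy (their proof proceeds through a limiting construction from the trigonometric Cherednik--Dunkl algebra). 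Granted this commutativity, the identification of the eigenvalue with $B^{(\alpha)}_{\ell+2}(\lambda)$ is a careful bookkeeping of normalizations through the formulas~\eqref{eq:Cauchy-St}, \eqref{eq:moments}, and~\eqref{eq:Moment-Boolean}.
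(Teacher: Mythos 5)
The paper does not actually prove this theorem: it explicitly defers to external references, citing Nazarov--Sklyanin~\cite[Theorem 2]{NazarovSklyanin2013} for the diagonalization of the operators $PL^\ell P^\dagger$ on the Jack basis, and~\cite[Theorem 3.9]{CuencaDolegaMoll2023} for the identification of the eigenvalue with the Boolean cumulant $B^{(\alpha)}_{\ell+2}(\lambda)$ of Kerov's transition measure (a connection the paper attributes to Moll). Your sketch captures the intent of that cited argument, and the resolvent packaging is indeed the natural framework, but your first step contains a genuine gap. Commutativity of $PL^\ell P^\dagger$ with $D_\alpha$ together with \cref{defprop:Jack} does \emph{not} by itself show that $J_\lambda^{(\alpha)}$ is an eigenvector of $PL^\ell P^\dagger$, because the spectrum of $D_\alpha$ on a fixed degree is degenerate: for example the partitions $(4,1,1)$ and $(3,3)$ both have $\alpha$-content sum $6\alpha-3$, so $J_{(4,1,1)}^{(\alpha)}$ and $J_{(3,3)}^{(\alpha)}$ lie in a common $D_\alpha$-eigenspace, and a commuting operator could a priori mix them. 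Nazarov--Sklyanin prove the diagonalization directly (via the Cherednik--Dunkl construction you allude to); you should cite that, not derive it from commutativity with $D_\alpha$ alone, unless you also establish an independent triangularity property for $PL^\ell P^\dagger$.

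Your second step (matching the eigenvalue against the transition measure) is the right strategy, but the bookkeeping is loose: the resolvent eigenvalue $\sum_{\ell\ge 0}z^{-\ell-1}PL^\ell P^\dagger$ on $J_\lambda^{(\alpha)}$ is the shifted Boolean cumulant series, which by~\eqref{eq:Moment-Boolean} satisfies $\sum_{\ell\ge 0}z^{-\ell-1}B^{(\alpha)}_{\ell+2}(\lambda)=z-1/G_{\mu_\lambda^{(\alpha)}}(z)$ (up to the $\sqrt{\alpha}$-rescaling of~\eqref{eq:moments}), rather than $zG_{\mu_\lambda}(z)$ itself. The functional relation between the resolvent and the Cauchy transform is there, but it is not an equality. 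With both corrections your proposal is a fair high-level summary of the proof the paper delegates to~\cite{CuencaDolegaMoll2023}.
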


  \begin{rmq}
    Note that the Laplace--Beltrami operator $D_\alpha$ equals to
    $\frac{\sqrt{\alpha}}{2}PL P^\dagger 
    $.
    \end{rmq}

Nazarov and Sklyanin stated their theorem differently, as they did not realize the connection with the transition measure, the fact which is crucial for us. This connection was first noticed by Moll~\cite{Moll2015}, and we refer the reader to the proof of \cref{theo:Nazarov-Sklyanin} presented in~\cite[Theorem 3.9]{CuencaDolegaMoll2023}.

\subsection{Integrality}

\cref{theo:XBasis} implies that for every $\ell_1,\dots,\ell_k \geq 2$ and for every $X = B,M,R$ there exists a polynomial $X_{\ell_1,\dots,\ell_k}(x_0,x_1,x_2,\dots)$ with rational coefficients such that
\[ X^{(\alpha)}_{\ell_1}\cdots X^{(\alpha)}_{\ell_k} = X_{\ell_1,\dots,\ell_k}(-\gamma,x_1,x_2,\dots)\]
with the convention that we identify the monomials with the normalized Jack characters $x_1^{m_1}\cdot x_2^{m_2}\cdots = \Ch^{(\alpha)}_{1^{m_1},2^{m_2},\dots}$. We prove that for $X = B,M$ the polynomials $X_{\ell_1,\dots,\ell_n}$ have non-negative integer coefficients and we provide their combinatorial interpretation in terms of Łukasiewicz ribbon paths introduced in~\cite{CuencaDolegaMoll2023}.

\subsubsection{Łukasiewicz ribbon paths}

Informally speaking, an \emph{excursion} is a directed lattice path
with steps of the form $(1,k), k \in\ZZ$, starting at
$(0,0)$, finishing at $(\ell,0)$, and that stays in the first
quadrant. More formally, an excursion $\Gamma$ of length $\ell$ is a
sequence of points $\gamma = (w_0,\dots,w_{\ell})$ on $(\NN_{\geq 0})^2$ such
that $w_j = (j,y_j)$, with $y_0=y_\ell =0$, and if $w_i = (i,0)$ for some $i$ then $w_{i+} \neq (i+1,0)$. It is uniquely
encoded by the sequence of its \emph{steps} $e_j := w_{j}-w_{j-1} = 
(1,y_{j+1}-y_j)$. For a step $e=(1,y)$ its degree $\deg(e)$ is equal
to $y$. Steps of degree $0$ are called \emph{horizontal steps}. 

For a given excursion $\Gamma = (w_0,\dots,w_{\ell})$, the set of
points $\SSS(\Gamma):=\{w_1,w_2,\dots,w_\ell\}$ (not counting the origin $w_0=(0, 0)$) 
naturally decomposes as
\[ \SSS(\Gamma) = \bigcup_{n \in \ZZ}\SSS_n(\Gamma), \]
where $\SSS_n(\Gamma)$ is a set of points preceded by a step $(1, n)$. 
We also denote by $\SSS^i(\Gamma) \subset
\SSS(\Gamma)$ the
subset of points with second coordinate equal to $i$, i.e.
\[ \SSS^i(\Gamma) := \{w_j = (j,y_j)\colon y_j=i\}.\]
Additionally, we denote $\SSS_{0}(\Gamma)$ by
$\SSS_{\tiny\rightarrow}(\Gamma)$ to remind that these points are preceded by horizontal steps, and we define $\SSS^i_{\tiny\rightarrow}(\Gamma) := \SSS_{\tiny\rightarrow}(\Gamma) \cap \SSS^i(\Gamma)$.

\smallskip

For an ordered tuple $\vec{\Gamma} = (\Gamma_1,\dots,\Gamma_k)$ of $k$ excursions 
$\Gamma_i$, we will treat $\vec{\Gamma}$ itself as an excursion
obtained by concatenating $\Gamma_1,\dots,\Gamma_k$,
and we define $\SSS_n(\vec{\Gamma}), \SSS^i(\vec{\Gamma}),\SSS_{\tiny\rightarrow}^i(\vec{\Gamma})$, in the same way as before.
We say that \emph{$p = (w_i,w_j)$ is a pairing of degree $n > 0$} if $w_i \in \SSS_{-n}(\vec{\Gamma}), w_j\in\SSS_n(\vec{\Gamma})$, and $w_i$ appears
before $w_j$ in $\vec{\Gamma}$, i.e.~$i<j$.

\smallskip

By definition, a \emph{ribbon path on $k$ sites of lengths $\ell_1,\dots,\ell_k$} 
is a pair $\ribbon = (\vec{\Gamma},\,\mathbf{P}(\ribbon))$ consisting of an ordered tuple 
$\vec{\Gamma}$ of $k$ excursions $\Gamma_1,\dots,\Gamma_k$ of lengths $\ell_1, \dots, \ell_k$, 
respectively, and a set $\mathbf{P}(\ribbon)$ of disjoint
pairings $p_1, \dots, p_q$ on $\vec{\Gamma}$. This notion of ribbon paths was introduced by Moll in~\cite{Moll2023}. We denote by $\mathbf{P}_n(\ribbon)\subset\mathbf{P}(\ribbon)$
the subset of pairings of degree $n$, and define
$\SSS_n(\ribbon) := \SSS_n(\vec{\Gamma}) \setminus \mathbf{P}_{|n|}(\ribbon)$ 
as the set obtained by removing the points belonging to $\mathbf{P}_n(\ribbon)$ from 
$\SSS_n(\vec{\gamma})$. Also, let $\SSS_{\tiny\rightarrow}^i(\ribbon) := \SSS_{\tiny\rightarrow}^i(\vec{\Gamma})$.
Then we have the decomposition\footnote{In this paragraph, we abused the notation: the
  set of pairings $\mathbf{P}_n(\ribbon)$ contains pairs of
  distinct points $(w_i,w_j)$, but we implicitly treated such pairs as the 2-element sets $\{w_i, w_j\}$, for simplicity of notation.
}:
\begin{equation*}\label{decompos_S}
  \SSS(\ribbon) = \bigcup_{i=0}^\infty \SSS^i_{\tiny\rightarrow}(\ribbon)\cup\bigcup_{n=1}^\infty \bigl( \mathbf{P}_n(\ribbon)\cup \SSS_{-n} (\ribbon)\cup \SSS_{n}(\ribbon) \bigr).
\end{equation*}
We will denote $\mathbf{R}(\ell_1,\dots,\ell_k)$ the set of 
ribbon paths on $k$ sites of lengths $\ell_1,\dots,\ell_k$.

The following theorem is a direct combinatorial interpretation of the
operator \sloppy $P L^{\ell_1-2} P^\dagger\cdots PL^{\ell_k-2}P^\dagger$ in terms of ribbon paths, and we leave
its proof as a simple exercise (the full proof of its variant can be
found in~\cite{Moll2023}).

\begin{thm}
  \label{theo:N-SCombi}
  The following identity holds true:
  \begin{multline*}
    P L^{\ell_1-2} P^\dagger\cdots PL^{\ell_k-2}P^\dagger =\\
    \sqrt{\alpha}^{-(\ell_1+\cdots+\ell_k)}\sum_{\substack{\ribbon\in\mathbf{R}(\ell_1,\dots,\ell_k),\\
        |\SSS^0(\ribbon)| =
        k}}\sqrt{\alpha}^{|\SSS_{\tiny\rightarrow}(\ribbon)|}\prod_{n=1}^{\infty}
    (\alpha \cdot n)^{|\mathbf{P}_n(\ribbon)|} \prod_{i=1}^{\infty}(-i\cdot \gamma)^{|\SSS^i_{\tiny\rightarrow}(\ribbon)|} \cdot \prod_{j=1}^{\infty}p_{j}^{|\SSS_{j}(\ribbon)|}\cdot \prod_{m=1}^{\infty}p_{-m}^{|\SSS_{-m}(\ribbon)|}.
    \end{multline*}
\end{thm}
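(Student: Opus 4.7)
The plan is to expand the operator $PL^{\ell_1-2}P^\dagger\cdots PL^{\ell_k-2}P^\dagger$ directly by matrix multiplication and identify each summand with a ribbon-path contribution. First, I would unfold each factor as
\begin{equation*}
PL^{\ell_j-2}P^\dagger \;=\; \sum_{h_0,\dots,h_{\ell_j-2}\geq 1} P_{1,h_0}\,L_{h_0,h_1}\cdots L_{h_{\ell_j-3},h_{\ell_j-2}}\,P^\dagger_{h_{\ell_j-2},1},
\end{equation*}
so that each tuple of indices $(h_0,\dots,h_{\ell_j-2})$ encodes an excursion $\Gamma_j$ of length $\ell_j$ with successive heights $0, h_0, h_1,\dots, h_{\ell_j-2}, 0$. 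Because the indices of $L$ range over $\NN_{\geq 1}$, each excursion stays at height at least $1$ strictly between its endpoints; after concatenating into $\vec{\Gamma}=(\Gamma_1,\dots,\Gamma_k)$, this is exactly the condition $|\SSS^0(\ribbon)|=k$ appearing in the sum. Next I would read off the operator attached to each step: an up step of degree $n>0$ contributes $\sqrt{\alpha}^{-1}p_n$, a horizontal step at height $h$ contributes $-h\gamma$ (since $p_0=0$), and a down step of degree $-n$ contributes $\sqrt{\alpha}^{-1}p_{-n} = \sqrt{\alpha}\,n\,\partial/\partial p_n$.

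Second, I would collect the scalar prefactors. Summing the $\sqrt{\alpha}$-exponents over non-horizontal steps yields the global $\sqrt{\alpha}^{-(\ell_1+\cdots+\ell_k)}$ corrected by a factor $\sqrt{\alpha}^{+1}$ for each horizontal step, producing $\sqrt{\alpha}^{|\SSS_{\tiny\rightarrow}(\vec{\Gamma})|}$ as advertised. Horizontal steps also supply the scalar $\prod_i(-i\gamma)^{|\SSS^i_{\tiny\rightarrow}(\vec{\Gamma})|}$. At this point the contribution of a single excursion tuple is one ordered product of operators $p_{\pm n}$ along the path, multiplied by these scalar prefactors.

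The remaining (and only nontrivial) step is to reorganize this ordered product into the normal-ordered expression $\prod_{j\geq 1} p_j^{|\SSS_j(\ribbon)|}\cdot\prod_{m\geq 1} p_{-m}^{|\SSS_{-m}(\ribbon)|}$ required by the theorem. I would use the commutation relation $p_{-n}p_n = p_np_{-n}+\alpha n$, which follows immediately from $p_{-n}=\alpha n\,\partial/\partial p_n$; together with $[p_n,p_m]=0$ and $[p_{-n},p_m]=0$ for $n\neq m$, this is the only nontrivial identity needed to push each annihilation past all later creations. At each crossing one opens a binary choice---commute freely, or contract for a factor $\alpha n$---and summing over all such choices yields, by a standard bosonic Wick-type expansion, a sum indexed by arbitrary sets $\mathbf{P}$ of pairings, each connecting an occurrence of $p_{-n}$ to a strictly later occurrence of $p_n$. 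This is precisely the decomposition $\SSS(\vec{\Gamma})=\mathbf{P}(\ribbon)\cup\SSS(\ribbon)$ in the definition of ribbon paths, and it accounts for the factor $\prod_n(\alpha n)^{|\mathbf{P}_n(\ribbon)|}$. The main obstacle is thus this Wick-type bookkeeping; its structure is very transparent and parallels the argument carried out in detail by Moll~\cite{Moll2023}, so no further difficulty arises.
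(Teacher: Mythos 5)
Your proposal is correct and follows exactly the route the paper has in mind: the paper itself declares this theorem ``a direct combinatorial interpretation'' of the operator, leaves the proof as a simple exercise, and points to Moll's paper for the proof of a variant. Your expansion by matrix multiplication, the reading of $P_{1,k}$, $L_{i,j}$, $P^\dagger_{k,1}$ as step weights, the identification of the positivity constraint $h_i\ge 1$ with the condition $|\SSS^0(\ribbon)|=k$, the bookkeeping of $\sqrt{\alpha}$ powers through non-horizontal steps, and the Wick-style normal ordering via $[p_{-n},p_n]=\alpha n$ producing the pairings and the factor $\prod_n(\alpha n)^{|\mathbf{P}_n(\ribbon)|}$ together constitute the intended proof.
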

Finally, an excursion $\Gamma$ which has only up steps of degree $1$
is called a~\emph{Łukasiewicz path}. Similarly, a ribbon path
$\ribbon$ whose non-paired up steps are only of degree $1$ is called 
a \emph{Łukasiewicz ribbon path},
i.e.~$\SSS_{n}(\ribbon) = \emptyset,\, \forall\, n\ge 2$, if $\ribbon$ 
is a Łukasiewicz ribbon path. 
We will denote $\mathbf{L}(\ell_1,\dots,\ell_k)$ the set of 
Łukasiewicz ribbon paths on $k$ sites of lengths $\ell_1,\dots,\ell_k$.

Łukasiewicz paths are classical
objects in combinatorics\footnote{See~\cite{FlajoletSedgewick2009} for
an explanation of their name and more background.}. We show here that Łukasiewicz ribbon paths naturally arise in studying polynomials $X_{\ell_1,\dots,\ell_k}(x_0,x_1,\dots)$.

\begin{thm}
  \label{theo:XtoCh}
  The following identities hold true:
  \begin{align}
    B^{(\alpha)}_{\ell_1}\cdots B^{(\alpha)}_{\ell_k} &=
    \sum_{\substack{\ribbon \in
      \mathbf{L}(\ell_1,\dots,\ell_k),\\ |\SSS^0(\ribbon)| = k}}\prod_{n=1}^{\infty} n^{|\mathbf{P}_n(\ribbon)|} \prod_{i=1}^{\infty}(-i\cdot \gamma)^{|\SSS^i_{\tiny\rightarrow}(\ribbon)|} \cdot 
                                                        \Ch_{\mu(\ribbon)}^{(\alpha)}, \label{eq:BooleanCh}\\     
    M^{(\alpha)}_{\ell_1}\cdots M^{(\alpha)}_{\ell_k} &=
                                                        \sum_{\ribbon \in
      \mathbf{L}(\ell_1,\dots,\ell_k)}\prod_{n=1}^{\infty} n^{|\mathbf{P}_n(\ribbon)|} \prod_{i=1}^{\infty}(-i\cdot \gamma)^{|\SSS^i_{\tiny\rightarrow}(\ribbon)|} \cdot 
                                                        \Ch_{\mu(\ribbon)}^{(\alpha)},     \label{eq:MomentCh}
  \end{align}
  where $\mu(\ribbon)$ is the partition given by $(1^{|\SSS_{-1}(\ribbon)|},2^{|\SSS_{-2}(\ribbon)|}, \dots)$.
  \end{thm}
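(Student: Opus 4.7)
The plan is to combine \cref{theo:Nazarov-Sklyanin} and \cref{theo:N-SCombi}, and then translate the resulting operator identity into an equality of $\alpha$-shifted symmetric functions by means of Lassalle's isomorphism (\cref{lem pdiez}). Iterating \cref{theo:Nazarov-Sklyanin} shows that the composition $PL^{\ell_1-2}P^\dagger\cdots PL^{\ell_k-2}P^\dagger$ acts on every $J^{(\alpha)}_\lambda$ by the scalar $B^{(\alpha)}_{\ell_1}(\lambda)\cdots B^{(\alpha)}_{\ell_k}(\lambda)$, while \cref{theo:N-SCombi} expresses the same composition as a weighted sum over ribbon paths $\ribbon\in\mathbf{R}(\ell_1,\dots,\ell_k)$ with $|\SSS^0(\ribbon)|=k$ of operator monomials of the form $\prod_j p_j^{|\SSS_j(\ribbon)|}\prod_m p_{-m}^{|\SSS_{-m}(\ribbon)|}$. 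Equating these two expressions and taking the diagonal action on $J^{(\alpha)}_\lambda$ is what must be unpacked.

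To unpack it I would use the Heisenberg commutation $[p_j,p_{-m}]\propto \alpha m\,\delta_{j,m}$ to contract every unpaired up-step $p_j$ with $j\ge 2$ against a matching $p_{-j}$ lying to its right: each such contraction produces an explicit scalar factor (rebalancing the $(\alpha n)^{|\mathbf{P}_n|}$ prefactor of \cref{theo:N-SCombi} into the $n^{|\mathbf{P}_n|}$ weight of \eqref{eq:BooleanCh}) and converts a triple of ribbon-path data in $(\SSS_{-j},\SSS_j)$ into a genuine pairing in $\mathbf{P}_j$. Performing all such contractions restricts the effective sum to ribbon paths $\ribbon\in\mathbf{L}(\ell_1,\dots,\ell_k)$, since by definition a Łukasiewicz ribbon path is precisely one with $\SSS_n(\ribbon)=\emptyset$ for all $n\ge 2$. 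The residual operator $\prod_m p_{-m}^{|\SSS_{-m}(\ribbon)|}$ then has diagonal contribution $\alpha^{(|\mu|-\ell(\mu))/2}p^\#_{\mu(\ribbon)}(\lambda)=\Ch^{(\alpha)}_{\mu(\ribbon)}(\lambda)$ by \cref{lem pdiez}, with $\mu(\ribbon)=(1^{|\SSS_{-1}|},2^{|\SSS_{-2}|},\dots)$ as desired, proving \eqref{eq:BooleanCh}.

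The moment identity \eqref{eq:MomentCh} then follows from \eqref{eq:BooleanCh} by substituting \eqref{eq:Moment-Boolean} of \cref{prop:Mom-Bool-Free}: each composition $\ell_i=k_1+\cdots+k_n$ corresponds bijectively to a choice of internal height-$0$ return positions within the $i$-th excursion, so concatenating the primitive Łukasiewicz ribbon paths produced by \eqref{eq:BooleanCh} over all compositions yields exactly $\mathbf{L}(\ell_1,\dots,\ell_k)$ with the primitivity constraint $|\SSS^0(\ribbon)|=k$ dropped, while the weights multiply correctly under concatenation. The main obstacle is the contraction step above: one must verify that the off-diagonal contributions of individual monomials acting on $J^{(\alpha)}_\lambda$ cancel in the full sum -- a cancellation that \emph{must} occur since the total operator is diagonal by \cref{theo:Nazarov-Sklyanin}, but whose combinatorial incarnation is subtle -- and that the surviving diagonal parts combine with the precise weights claimed in \eqref{eq:BooleanCh}. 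A clean way to organize this is an induction on $\sum_{n\ge 2}|\SSS_n(\ribbon)|$, implemented by either a sign-reversing involution or an explicit reindexing that matches each non-Łukasiewicz contribution with a reweighting of a Łukasiewicz one.
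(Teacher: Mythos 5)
There is a genuine gap in your argument for \eqref{eq:BooleanCh}. You propose to use Heisenberg commutation to ``contract every unpaired up-step $p_j$ with $j\ge 2$ against a matching $p_{-j}$ lying to its right,'' but in the operator monomials produced by \cref{theo:N-SCombi}, all the multiplication operators $p_j$ already sit to the \emph{left} of all the derivation operators $p_{-m}$: they are in normal order, and the pairings $\mathbf{P}_n(\ribbon)$ are precisely the Wick contractions that arose in producing that normal-ordered form. There is nothing left to contract, so the step you describe does not reduce the sum to Łukasiewicz ribbon paths. Relatedly, your claim that the ``residual operator $\prod_m p_{-m}^{|\SSS_{-m}|}$ then has diagonal contribution $\alpha^{(|\mu|-\ell(\mu))/2}p^\#_{\mu(\ribbon)}(\lambda) = \Ch^{(\alpha)}_{\mu(\ribbon)}(\lambda)$'' is not well-formed: $J^{(\alpha)}_\lambda$ is an eigenvector of the \emph{total} operator $PL^{\ell_1-2}P^\dagger\cdots PL^{\ell_k-2}P^\dagger$, but not of any individual monomial, so an individual monomial does not have a ``diagonal contribution'' on $J^{(\alpha)}_\lambda$. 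You acknowledge this obstacle (off-diagonal contributions that ``must'' cancel), but the involution or reindexing you gesture at is exactly the missing argument.

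The paper sidesteps this entirely with one device you did not invoke: the specialization $p_i \mapsto \delta_{i,1}$. Since $J^{(\alpha)}_\lambda\big|_{p_i=\delta_{i,1}}=1$ (by \cref{thm Jack formula}), applying this linear functional to the eigenvalue identity turns the operator equation into a scalar one, $B^{(\alpha)}_{\ell_1}\cdots B^{(\alpha)}_{\ell_k}(\lambda) = \bigl(PL^{\ell_1-2}P^\dagger\cdots PL^{\ell_k-2}P^\dagger J^{(\alpha)}_\lambda\bigr)\big|_{p_i=\delta_{i,1}}$. Under this specialization any monomial $\prod_j p_j^{|\SSS_j|}\prod_m p_{-m}^{|\SSS_{-m}|}J^{(\alpha)}_\lambda$ with $\SSS_j(\ribbon)\ne\emptyset$ for some $j\ge 2$ vanishes outright, because its outer multiplication by some $p_j$, $j\ge 2$, is set to zero; no cancellation between terms is required. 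The surviving (Łukasiewicz) terms are then evaluated directly against the definition of $\theta^{(\alpha)}_\mu$ as a coefficient extraction, together with the bookkeeping identity $|\mu(\ribbon)|+\ell(\mu(\ribbon)) = \ell_1+\cdots+\ell_k-|\SSS_{\rightarrow}(\ribbon)|-2|\mathbf{P}(\ribbon)|$ to absorb the powers of $\sqrt{\alpha}$ into the normalization $\Ch^{(\alpha)}_{\mu(\ribbon)}$. Your reduction of \eqref{eq:MomentCh} to \eqref{eq:BooleanCh} by decomposing each excursion at its returns to height $0$ and invoking \eqref{eq:Moment-Boolean} is essentially the argument in the paper.
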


  \begin{proof}
    We start by explaining that equation \eqref{eq:MomentCh} is a direct consequence of \eqref{eq:BooleanCh} and relation \eqref{eq:Moment-Boolean}. Indeed, take Łukasiewicz ribbon path $\ribbon \in
    \mathbf{L}(\ell_1,\dots,\ell_k)$ and consider its points touching the $x$-axis $\SSS^0(\ribbon)$. They must be of the form
    \[\SSS^0(\ribbon) = \{\ell_1^1,\dots, \ell_1^1+\cdots+\ell_1^{n_1},\dots, \ell_1^1+\cdots+\ell_{k-1}^{n_{k-1}}+\ell_k^1,\dots, \ell_1^1+\cdots+\ell_{k}^{n_{k}}\},\]
    where $\sum_{j=1}^{n_i}\ell_i^j = \ell_i$ for each $i = 1,\dots, k$. In particular, we can consider $\ribbon$ as an element of $\mathbf{L}(\ell_1^1,\dots,\ell_1^{n_1},\dots, \ell_k^1,\dots,\ell_k^{n_k})$. Using this decomposition we can rewrite the RHS of~\eqref{eq:MomentCh} as
      \begin{equation*}
        \sum_{n_1,\dots,n_k \geq 1}\sum_{\substack{ \ell_1^1,\dots,\ell_1^{n_1} \geq 1,\\ \ell_1^1+\cdots+\ell_1^{n_1}= \ell_1}}\cdots \sum_{\substack{ \ell_k^1,\dots,\ell_k^{n_k} \geq 1,\\ \ell_k^1+\cdots+\ell_k^{n_k}= \ell_k}}\sum_{\substack{\ribbon \in
      \mathbf{L}(\ell_1^1,\dots,\ell_1^{n_1},\dots, \ell_k^1,\dots,\ell_k^{n_k}),\\ |\SSS^0(\ribbon)| = n_1+\cdots +n_k}}\prod_{n=1}^{\infty} n^{|\mathbf{P}_n(\ribbon)|} \prod_{i=1}^{\infty}(-i\cdot \gamma)^{|\SSS^i_{\tiny\rightarrow}(\ribbon)|} \cdot 
                                                        \Ch_{\mu(\ribbon)}^{(\alpha)},
                                                      \end{equation*}
                                                      which, by \eqref{eq:BooleanCh}, is equal to
                                                            \begin{equation*}
        \sum_{n_1,\dots,n_k \geq 1}\sum_{\substack{ \ell_1^1,\dots,\ell_1^{n_1} \geq 1,\\ \ell_1^1+\cdots+\ell_1^{n_1}= \ell_1}}\cdots \sum_{\substack{ \ell_k^1,\dots,\ell_k^{n_k} \geq 1,\\ \ell_k^1+\cdots+\ell_k^{n_k}= \ell_k}}\sum_{\substack{\ribbon \in
      \mathbf{L}(\ell_1^1,\dots,\ell_1^{n_1},\dots, \ell_k^1,\dots,\ell_k^{n_k}),\\ |\SSS^0(\ribbon)| = n_1+\cdots +n_k}}B^{(\alpha)}_{\ell^1_1}\cdots B^{(\alpha)}_{\ell^{n_1}_1}\cdots B^{(\alpha)}_{\ell^1_k}\cdots B^{(\alpha)}_{\ell^{n_k}_k}.
\end{equation*}
Relation \eqref{eq:Moment-Boolean} finishes the proof of~\eqref{eq:MomentCh}.

We now prove~\eqref{eq:BooleanCh}.
Using the fact that $J_\lambda^{(\alpha)}\big|_{p_i =
  \delta_{i,1}} = 1$ (see \cref{thm Jack formula}), we can use
\cref{theo:Nazarov-Sklyanin} to write $B^{(\alpha)}_{\ell_1}\cdots
B^{(\alpha)}_{\ell_k}$ as $\bigg(P L^{\ell_1-2} P^\dagger\cdots PL^{\ell_k-2}P^\dagger J_\lambda^{(\alpha)}\bigg)\bigg|_{p_i =
        \delta_{i,1}}$. \cref{theo:N-SCombi} allows to further rewrite

      \begin{align}
B^{(\alpha)}_{\ell_1}\cdots
B^{(\alpha)}_{\ell_k} &= \sqrt{\alpha}^{-(\ell_1+\cdots+\ell_k)}\sum_{\substack{\ribbon\in\mathbf{R}(\ell_1,\dots,\ell_k),\\ |\SSS^0(\ribbon)| = k}}\sqrt{\alpha}^{|\SSS_{\tiny\rightarrow}(\ribbon)|}\cdot \nonumber\\
        \cdot &\prod_{n=1}^{\infty} (\alpha\cdot n)^{|\mathbf{P}_n(\ribbon)|}\prod_{i=1}^{\infty}(-i\cdot \gamma)^{|\SSS^i_{\tiny\rightarrow}(\ribbon)|} \cdot \bigg(\prod_{j=1}^{\infty}p_{j}^{|\SSS_{j}(\ribbon)|}\cdot \prod_{m=1}^{\infty}p_{-m}^{|\SSS_{-m}(\ribbon)|}J_\lambda^{(\alpha)}\bigg)\bigg|_{p_i =
        \delta_{i,1}} .       \label{eq:MinL}
    \end{align}
    Fix a ribbon path $\ribbon \in \mathbf{R}(\ell_1,\dots,\ell_k)$ and note that
    \begin{equation}\label{eq:pomoc}
      \bigg(\prod_{j=1}^{\infty}p_{j}^{|\SSS_{j}(\ribbon)|}\cdot \prod_{m=1}^{\infty}p_{-m}^{|\SSS_{-m}(\ribbon)|}J_\lambda^{(\alpha)}\bigg)\bigg|_{p_i=
        \delta_{i,1}} = 0
      \end{equation}
    whenever there exists $j > 1$ such that $\SSS_{j}(\ribbon) \neq \emptyset$. In other terms, if $\ribbon$ is not a Łukasiewicz ribbon path, then its contribution into \eqref{eq:MinL} is zero, and we can replace the set $\mathbf{R}(\ell_1,\dots,\ell_k)$ in~\eqref{eq:MinL} by $\mathbf{L}(\ell_1,\dots,\ell_k)$. For a Łukasiewicz ribbon path \eqref{eq:pomoc} simplifies to:
        \begin{align}\label{eq:pomoc2}
      \bigg(p_{1}^{|\SSS_{1}(\ribbon)|}\cdot \prod_{m=1}^{\infty}p_{-m}^{|\SSS_{-m}(\ribbon)|}&J_\lambda^{(\alpha)}\bigg)\bigg|_{p_i=
      \delta_{i,1}} = \alpha^{\ell(\mu(\ribbon))}\bigg(p_{1}^{|\SSS_{1}(\ribbon)|}\cdot
                                                                                                \prod_{j=1}^{\infty}\left(
                                                                                                \frac{j
                                                                                                \partial}{\partial
                                                                                                p_j}\right)^{m_j(\mu(\ribbon))}J_\lambda^{(\alpha)}\bigg)\bigg|_{p_i=
      \delta_{i,1}} \nonumber \\
      &= \alpha^{\ell(\mu(\ribbon))}\binom{|\lambda|-|\mu(\ribbon)|+|\SSS_{-1}(\ribbon)|}{|\SSS_{-1}(\ribbon)|}\cdot z_{\mu(\ribbon)}\left[ p_{1^{|\lambda|-|\mu(\ribbon)|} \cup \mu(\ribbon)}\right]
      J_\lambda^{(\alpha)}\nonumber\\
      &= \sqrt{\alpha}^{|\mu(\ribbon)|+\ell(\mu(\ribbon))}\Ch^{(\alpha)}_{\mu(\ribbon)}.
    \end{align}
    Finally, notice that for any ribbon path $\ribbon$ of length $\ell$ one has
    \[ \sum_{n=1}^\infty n\big(|\SSS_{-n}(\ribbon)|-|\SSS_{n}(\ribbon)|\big)=0, \ \ \ \ \sum_{n=1}^\infty\big(|\SSS_{-n}(\ribbon)|+|\SSS_{n}(\ribbon)|\big) = \ell - |\SSS_{\tiny\rightarrow}(\ribbon)|-2|\mathbf{P}(\ribbon)|,\]
   thus for any Łukasiewicz ribbon path $\ribbon\in\mathbf{L}(\ell_1,\dots,\ell_k)$
   \begin{align}\label{eq:RibbonProp}
     |\mu(\ribbon)|+\ell(\mu(\ribbon)) = \sum_{n \geq
     1}(n+1)|\SSS_{-n}(\ribbon)| &= \sum_{n \geq
                                   1}\big(|\SSS_{-n}(\ribbon)|+|\SSS_{n}(\ribbon)|\big)
                                   = \nonumber\\ &=\ell_1+\cdots+\ell_k -
                                          |\SSS_{\tiny\rightarrow}(\ribbon)|-2|\mathbf{P}(\ribbon)|.
                                          \end{align}
    In particular the RHS of \eqref{eq:pomoc2} is equal to $\sqrt{\alpha}^{\ell_1+\cdots+\ell_k - |\SSS_{\tiny\rightarrow}(\ribbon)|-2|\mathbf{P}(\ribbon)|}\Ch^{(\alpha)}_{\mu(\ribbon)}$ for a Łukasiewicz ribbon path $\ribbon \in \mathbf{L}(\ell_1,\dots,\ell_k)$. Plugging this into \eqref{eq:MinL} yields precisely the desired identity~\eqref{eq:BooleanCh}, which finishes the proof.
  \end{proof}

  \subsubsection{Consequences}

  Before we conclude integrality of Lassalle's conjecture let us point several applications of \cref{theo:XtoCh}.
  
  In the special case $\alpha=1$ a problem of positivity between
  Boolean cumulants and normalized characters of the symmetric group
  was posed by Rattan in Śniady in~\cite{RattanSniady2008}, and has been proven very recently by Koshida~\cite{Koshida2021} by use of Khovanov's Heisenberg category. 
  Koshida, however, was not able to find an explicit interpretation of
  the positivity so that he did not provide the $\alpha=1$ case of our
  formula \cref{eq:BooleanCh} (his proof was a complicated induction)
  and left it as an open problem. \cref{theo:XtoCh} solves this
  problem and provides an explicit combinatorial interpretation for general $\gamma$; in particular in the special case $\alpha=1$ it gives an alternative proof to the work of Koshida.

  Furthermore, it implies the following theorem.

      \begin{thm}
    \label{theo:mainInt}
    The following $\ZZ[\gamma]$-algebras are all equal:
    \[ \ZZ[\gamma][\Ch^{(\alpha)}_\mu\colon \mu \in \YY] = \ZZ[\gamma, M_2^{(\alpha)},
    M_3^{(\alpha)},\dots] = \ZZ[\gamma, R_2^{(\alpha)},
    R_3^{(\alpha)},\dots] = \ZZ[\gamma, B_2^{(\alpha)},
    B_3^{(\alpha)},\dots].\]
\end{thm}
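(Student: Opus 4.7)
The plan is to reduce the equality of the four algebras to a single nontrivial inclusion, and then handle that via a triangularity argument. For the easy direction, \cref{theo:XtoCh} with $k=1$ directly expresses each $B^{(\alpha)}_\ell$ and each $M^{(\alpha)}_\ell$ as a $\ZZ[\gamma]$-linear combination of normalized Jack characters, which yields
\[
\ZZ[\gamma,B^{(\alpha)}_2,B^{(\alpha)}_3,\dots]\cup\ZZ[\gamma,M^{(\alpha)}_2,M^{(\alpha)}_3,\dots]\subseteq\ZZ[\gamma][\Cha_\mu\colon \mu\in\YY].
\]
Moreover, \cref{prop:Mom-Bool-Free} shows that relations \eqref{eq:Moment-Boolean} and \eqref{eq:Moment-Free} are unitriangular with top term $M^{(\alpha)}_\ell=X^{(\alpha)}_\ell+(\text{products of smaller }X^{(\alpha)}\text{'s})$, so the recursion inverts over $\ZZ$ (the inversion of \eqref{eq:Moment-Free} amounts to M\"obius inversion on the lattice of non-crossing partitions, whose M\"obius function takes integer values). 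This gives $\ZZ[\gamma,B^{(\alpha)}_2,\dots]=\ZZ[\gamma,M^{(\alpha)}_2,\dots]=\ZZ[\gamma,R^{(\alpha)}_2,\dots]$ and reduces the theorem to proving the reverse inclusion $\ZZ[\gamma][\Cha_\mu]\subseteq\ZZ[\gamma,B^{(\alpha)}_2,\dots]$.

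I would prove the reverse inclusion by induction on $n:=|\mu|+\ell(\mu)$, showing $\Cha_\mu\in\ZZ[\gamma,B^{(\alpha)}_2,B^{(\alpha)}_3,\dots]$ for every $\mu\in\YY$; the base $\mu=\emptyset$ gives $\Cha_\emptyset=1$. For the induction step, fix $\mu=(\mu_1,\dots,\mu_k)$ and consider the product $B^{(\alpha)}_{\mu_1+1}\cdots B^{(\alpha)}_{\mu_k+1}$. The choice of lengths $\ell_i:=\mu_i+1\geq 2$ makes \eqref{eq:RibbonProp} read
\[
|\mu(\ribbon)|+\ell(\mu(\ribbon))=n-|\SSS_{\tiny\rightarrow}(\ribbon)|-2|\mathbf{P}(\ribbon)|\leq n
\]
for every $\ribbon\in\mathbf{L}(\mu_1+1,\dots,\mu_k+1)$ with $|\SSS^0(\ribbon)|=k$, with equality iff $\ribbon$ has no horizontal steps and no pairings.

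The key combinatorial step is identifying the extremal contribution. In the equality case, $|\SSS^0(\ribbon)|=k$ forces each sub-excursion $\Gamma_i$ to touch the $x$-axis only at its endpoints, so $\Gamma_i$ has at least one down-step; since $\ell(\mu(\ribbon))=k$ equals the total number of down-steps, each $\Gamma_i$ has exactly one down-step, and a simple step count then forces $\Gamma_i$ to consist of $\mu_i$ up-steps of degree $1$ followed by a single down-step of degree $-\mu_i$. This identifies a unique extremal ribbon path, which satisfies $\mu(\ribbon)=\mu$ and contributes with coefficient $1$ in \eqref{eq:BooleanCh}. Expanding now gives the triangular identity
\[
B^{(\alpha)}_{\mu_1+1}\cdots B^{(\alpha)}_{\mu_k+1}=\Cha_\mu+\sum_{\nu\colon|\nu|+\ell(\nu)<n}c^\mu_\nu\,\Cha_\nu,\qquad c^\mu_\nu\in\ZZ[\gamma],
\]
and the inductive hypothesis places the right-hand side, and therefore $\Cha_\mu$, in $\ZZ[\gamma,B^{(\alpha)}_2,B^{(\alpha)}_3,\dots]$. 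The main obstacle is the uniqueness-and-coefficient-one analysis of extremal Łukasiewicz ribbon paths, but the step-counting argument above resolves it cleanly, so the theorem follows.
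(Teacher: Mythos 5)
Your overall strategy matches the paper's, but there is a concrete gap in the key ``unique extremal'' step, and it stems from your choice to work with the Boolean cumulants $B^{(\alpha)}_\ell$ via \eqref{eq:BooleanCh} rather than with the moments $M^{(\alpha)}_\ell$ via \eqref{eq:MomentCh}, as the paper does. Your inequality $|\mu(\ribbon)|+\ell(\mu(\ribbon))\leq n$ is correct, and equality does force $\mathbf{P}(\ribbon)=\emptyset$ and $\SSS_{\tiny\rightarrow}(\ribbon)=\emptyset$. But the next sentence, ``since $\ell(\mu(\ribbon))=k$ equals the total number of down-steps,'' asserts $\ell(\mu(\ribbon))=k$ without justification, and it is false in general. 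The condition $|\SSS^0(\ribbon)|=k$ only tells you that each $\Gamma_i$ touches the $x$-axis exactly at its endpoints, so it has \emph{at least} one down-step; nothing forces it to have exactly one. Concretely, take $\mu=(3)$, so $k=1$, $n=4$, and you look at $B^{(\alpha)}_4$, hence at Łukasiewicz excursions of length $4$ with $|\SSS^0|=1$, no pairings, no horizontal steps. There are two such excursions: heights $0,1,2,3,0$ giving $\mu(\ribbon)=(3)$, and heights $0,1,2,1,0$ giving $\mu(\ribbon)=(1,1)$. Both satisfy $|\mu(\ribbon)|+\ell(\mu(\ribbon))=4=n$, and both contribute with coefficient $1$ in \eqref{eq:BooleanCh}. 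So the claimed triangular identity $B^{(\alpha)}_4=\Cha_{(3)}+\sum_{|\nu|+\ell(\nu)<4}c_\nu\Cha_\nu$ fails: there is an extra $\Cha_{(1,1)}$ at level $4$.

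The paper avoids this by using $M^{(\alpha)}_{\mu_1+1}\cdots M^{(\alpha)}_{\mu_\ell+1}$ and \eqref{eq:MomentCh}, and ordering by $|\mu(\ribbon)|$ alone (then lexicographically). From \eqref{eq:RibbonProp} one gets $|\mu(\ribbon)|=|\mu|+\ell(\mu)-\ell(\mu(\ribbon))-|\SSS_{\tiny\rightarrow}|-2|\mathbf{P}|$, and since the total number of down-steps $\ell(\mu(\ribbon))+|\mathbf{P}|$ is at least the number of excursions $\ell(\mu)$ (each must descend at least once), one gets $|\mu(\ribbon)|\leq|\mu|$ with equality forcing \emph{both} $\mathbf{P}=\SSS_{\tiny\rightarrow}=\emptyset$ \emph{and} exactly one down-step per excursion, which pins down the unique extremal ribbon with $\mu(\ribbon)=\mu$. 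Your argument can be salvaged if you replace your single order parameter $|\nu|+\ell(\nu)$ by a lexicographic order on $(|\nu|+\ell(\nu),\,-|\nu|)$ (equivalently, $(|\nu|+\ell(\nu),\,\ell(\nu))$): under equality $|\mu(\ribbon)|+\ell(\mu(\ribbon))=n$ you still have $\ell(\mu(\ribbon))\geq k$ (at least one unpaired down-step per excursion), and among these only $\ell(\mu(\ribbon))=k$ forces each $\Gamma_i$ to be $\mu_i$ up-steps followed by one down-step, giving $\mu(\ribbon)=\mu$ uniquely with coefficient $1$. As written, though, the induction on $|\mu|+\ell(\mu)$ alone does not close.
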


This theorem gives the biggest progress so far towards another conjecture of Lassalle from~\cite{Lassalle2009} that postulates that $\Ch^{(\alpha)}_\mu$ is a polynomial in $\gamma, R_2^{(\alpha)},
    R_3^{(\alpha)},\dots$ with positive integer coefficients (this
    formulation, which is a more precise version of the original
    Lassalle's conjecture was formulated in~\cite[Section 2.5]{DolegaFeray2016}). Rationality of the coefficients of this polynomial (called \emph{Kerov polynomial} for Jack characters) was proven in~\cite{DolegaFeray2016}, and the top-degree part of the Kerov polynomial was found by Śniady in~\cite{Sniady2019}; these properties have found applications for studying random Young diagrams~\cite{DolegaFeray2016,DolegaSniady2019,CuencaDolegaMoll2023}.

  \begin{proof}[Proof of \cref{theo:mainInt}]
    
    The last two equalities are well-known to the experts and follow from \cref{prop:Mom-Bool-Free} and the fact that the equations \eqref{eq:Moment-Boolean} and \eqref{eq:Moment-Free} are invertible over $\ZZ$.

    In order to prove the first equality, consider the lexicographic order on the set of partitions $\mu$ of size $n$ and extend it to the set of partitions of size at most $n$ by defining $\nu \leq \mu$ if $|\nu| < |\mu|$, or $|\nu| = |\mu|$ and $\nu \leq_{\lex} \mu$. Notice then the following fact:
    for any partition $\mu$ of length $\ell$ we have
    \[ M^{(\alpha)}_{\mu_1+1}\cdots M^{(\alpha)}_{\mu_\ell+1} =
      \Ch^{(\alpha)}_{\mu}+\sum_{\rho < \mu}a_\rho^\mu(\gamma) \Ch^{(\alpha)}_{\rho},\]
    where $a_\rho^\mu(\gamma) \in \ZZ[\gamma]$.
    Indeed, \eqref{eq:MomentCh} implies that the contribution of
    $\Ch^{(\alpha)}_{\rho}$ comes from $\ribbon \in
    \mathbf{L}(\mu_1+1,\dots,\mu_{\ell(\mu)}+1)$ with $|\mu(\ribbon)|
    = \rho$. Therefore \cref{eq:RibbonProp} implies that
\[ |\mu(\ribbon)| = |\mu|+\ell(\mu)-
                                         \sum_{n \geq
                                            1}|\SSS_{n}(\ribbon)|-|\SSS_{\tiny\rightarrow}(\ribbon)|-2|\mathbf{P}(\ribbon)|
                                          \leq |\mu|,\]
                                        where the last inequality is
                                        an equality
                                        if and only if
                                        $|\SSS_{\tiny\rightarrow}(\ribbon)|=2|\mathbf{P}(\ribbon)|$
                                        and $\sum_{n \geq
                                            1}|\SSS_{n}(\ribbon)| =
                                          \ell(\mu)$. There is a
                                          unique $\ribbon \in
                                          \mathbf{L}(\mu_1+1,\dots,\mu_{\ell(\mu)}+1)$
                                          that satisfies these
                                          conditions: $\Gamma =
                                          (\Gamma_1,\dots,\Gamma_{\ell(\mu)})$,
                                          where $\Gamma_i$ is given by
                                          an up-step of degree $\mu_i$
                                          followed by $\mu_i$
                                          down-steps. In particular
                                          $\mu(\ribbon) = \mu$, which
                                          implies that the matrix $(a_\rho^\mu(\gamma))_{\rho,\mu}$ is
    uni-triangular over $\ZZ[\gamma]$, thus it is invertible. This
    finishes the proof.  
  \end{proof}

 We have the following corollary.

  \begin{cor}
    The normalized Jack characters  expressed in the Stanley coordinates $(-1)^{|\mu|}z_\mu\tJch_\mu(\bfs,\bfr)$ are polynomials in the variables $b,-s_1,-s_2,\dots,r_1,r_2,\dots$ with integer coefficients, where $b := \alpha-1$.
  \end{cor}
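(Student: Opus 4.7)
The plan is to combine \cref{theo:mainInt} with an explicit evaluation of the transition moments on multirectangular Young diagrams, together with the (known) polynomiality of $\tJch_\mu$ in the parameter $\alpha$.

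First I would invoke \cref{theo:mainInt}: for each partition $\mu$ there exists a polynomial $P_\mu \in \ZZ[x_0, x_2, x_3, \ldots]$ with $\Cha_\mu = P_\mu(\gamma, M^{(\alpha)}_2, M^{(\alpha)}_3, \ldots)$. Combined with~\eqref{eq:JackCharacter}, this gives
\[
(-1)^{|\mu|}\,z_\mu\,\tJch_\mu(\bfs,\bfr) = (-1)^{|\mu|}\,\sqrt{\alpha}^{\,|\mu|-\ell(\mu)}\,\Cha_\mu(\bfs^\bfr),
\]
so the task reduces to establishing: (a) $\Cha_\mu(\bfs^\bfr) \in \ZZ[\sqrt{\alpha}^{\pm 1}, s_1,\dots,s_k, r_1,\dots,r_k]$, and (b) that the left-hand side is a polynomial in $\alpha$ with rational coefficients.

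For (a), I would exploit the multirectangular structure of $\lambda = \bfs^\bfr$ to telescope the product in~\eqref{eq:Cauchy-St}: writing $r_{\leq i} := r_1+\cdots+r_i$, one finds
\[
G_{\alpha\lambda}(z) = \frac{1}{z + r_{\leq k}}\prod_{i=1}^{k}\frac{z + r_{\leq i} - \alpha s_i}{z + r_{\leq i-1} - \alpha s_i}.
\]
Expanding $z\,G_{\alpha\lambda}(z)$ as a formal series in $y=1/z$ (each factor being $(1+c\,y)^{\pm 1}$ with $c$ a $\ZZ$-linear combination of $\alpha s_i, r_i$) shows that $M_\ell(\alpha\lambda)$ is a homogeneous polynomial of degree $\ell$ in $\{\alpha s_i, r_i\}$ with integer coefficients. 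The normalization~\eqref{eq:moments} then gives $M^{(\alpha)}_\ell(\bfs^\bfr) \in \ZZ[\sqrt{\alpha}^{\pm 1}, \bfs, \bfr]$. Since $\gamma = \sqrt{\alpha}^{-1} - \sqrt{\alpha} \in \ZZ[\sqrt{\alpha}^{\pm 1}]$, substituting into $P_\mu$ establishes (a), and the extra prefactor $\sqrt{\alpha}^{\,|\mu|-\ell(\mu)}$ keeps us in the same ring.

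For (b), I would invoke the polynomiality in $\alpha$ of $\tJch_\mu(\bfs,\bfr)$ with rational coefficients, due to Lassalle~\cite{Lassalle2008b}. Combining (a) and (b) yields
\[
(-1)^{|\mu|}\,z_\mu\,\tJch_\mu(\bfs,\bfr) \in \ZZ[\sqrt{\alpha}^{\pm 1}, \bfs, \bfr]\,\cap\,\QQ[\alpha, \bfs, \bfr] = \ZZ[\alpha, \bfs, \bfr] = \ZZ[b, -\bfs, \bfr],
\]
where the intersection identity follows from the elementary observation that an element of $\ZZ[\sqrt{\alpha}^{\pm 1}]$ lying in $\QQ[\alpha]$ must have vanishing coefficients on all odd and on all negative powers of $\sqrt{\alpha}$. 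The hard part will be the Stieltjes-transform computation in (a): the generating-function manipulation is routine, but one has to track the half-integer powers of $\sqrt{\alpha}$ carefully in order to land in the correct integral ring.
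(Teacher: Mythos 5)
Your proof is correct and follows essentially the same route as the paper: both arguments combine \cref{theo:mainInt} with the telescoped Stieltjes transform for a multirectangular diagram to show that $\sqrt{\alpha}^{\ell}M^{(\alpha)}_\ell(\bfs^\bfr)$ lies in $\ZZ[b,-\bfs,\bfr]$ and then substitute into $P_\mu(\gamma,M^{(\alpha)}_\bullet)$. Your computation of $G_{\alpha\lambda}(z)$ is in fact cleaner than the paper's display, which has a spurious $\prod_{i=1}^k$ on the Stieltjes pole factor.

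The one place where you depart from (or rather, expand on) the paper is the final step. Once one knows $\Cha_\mu(\bfs^\bfr)\in\ZZ[\sqrt\alpha^{\pm1},\bfs,\bfr]$, the paper simply asserts that the prefactor $\sqrt{\alpha}^{|\mu|-\ell(\mu)}$ lands the expression in $\ZZ[b,-\bfs,\bfr]$; this requires a cancellation of $\sqrt{\alpha}$-powers that is not visible from a naive degree count (the leading monomial $M^{(\alpha)}_{\mu_1+1}\cdots M^{(\alpha)}_{\mu_\ell+1}$ alone contributes $\sqrt{\alpha}^{-2\ell(\mu)}$, which must be absorbed by $\alpha$-factors hidden inside $M_\ell(\alpha\lambda)$). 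You instead close this gap by importing the (previously known) polynomiality of $\tJch_\mu(\bfs,\bfr)$ in $\alpha$ and intersecting $\ZZ[\sqrt{\alpha}^{\pm1},\bfs,\bfr]\cap\QQ[\alpha,\bfs,\bfr]=\ZZ[\alpha,\bfs,\bfr]$. This is a perfectly sound and self-contained way to finish, and it makes explicit the half-integer-power bookkeeping that the paper leaves implicit; the cost is the extra external input. Your attribution of that polynomiality to Lassalle alone is slightly loose (it also follows from the Knop--Sahi integrality of the monomial expansion combined with \cref{thm Feray}), but the fact itself is standard and the proof goes through.
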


  \begin{proof}
    Strictly from the definition~\eqref{eq:Cauchy-St} of the moments
    one has
    \[ \sqrt{\alpha}^{\ell}\cdot M^{(\alpha)}_\ell(\bfs^\bfr) =
      [z^{-\ell-1}]\prod_{i=1}^{k}\frac{1}{z+r_1+\cdots+r_k}\prod_{i=1}^k\frac{z-\big(\alpha\cdot
        s_i-(r_1+\cdots+r_{i-1})\big)}{z-\big(\alpha\cdot
        s_i-(r_1+\cdots+r_{i})\big)}, \]
    where we extract the coefficient of $z^{-\ell-1}$ in the above
    rational function treated as a formal power series in $z^{-1}$. In
    particular it is clear that $\sqrt{\alpha}^{\ell} M^{(\alpha)}_\ell $ is a polynomial in $b,-s_1,-s_2,\dots,r_1,r_2,\dots$ with integer coefficients. Then the result follows from the definition of $\gamma := b \cdot \sqrt{\alpha}^{-1}$, the definition of the normalized Jack characters~\eqref{eq:JackCharacter}, and \cref{theo:mainInt}.
    \end{proof}

\section{The combinatorial model and differential equations}\label{sec comb model}

\subsection{Statistics of non-orientability}\label{ssec SON}
The purpose of this section is to define a family of statistic of non-orientability (see \cref{def SON}) on layered maps. We start by some general definitions related to non-orientable maps.

Let $M$ be a bipartite map and let $c_1$ and $c_2$ be two corners of
$M$ of different colors. Then we have two ways to add an edge to $M$
between these two corners (see \cref{fig edge-types}). We denote by $e_1$ and $e_2$ these edges. We say that the pair $(e_1,e_2)$ is a \textit{pair of twisted edges} on the map $M$ and we say that $e_2$ is obtained by twisting $e_1$. Note that if $M$ is connected and orientable, then exactly one of the maps $M\cup\{e_1\}$ and $M\cup\{e_2\}$ is orientable. 
For a given map with a distinguished edge $(M,e)$, we denote $(\widetilde{M},\tilde{e})$ the map obtained by twisting the edge $e$.  

We recall that $b$ is the parameter related to the Jack parameter $\alpha$ by $b=\alpha-1$.
We now give the definition of a measure of non-orientability due to La Croix.

\begin{defi}\cite[Definition 4.1]{LaCroix2009}
We call a measure of non-orientability \textup{(MON)} a function $\rho$ defined on the set of connected maps $(M,e)$ with a distinguished edge, with values in  $\left\{1,b\right\}$, satisfying the following conditions:
\begin{itemize}
    \item if $e$ connects two corners of the same face of
      $M\backslash\{e\}$, and the number of the faces increases by 1
      by adding the edge $e$ on the map $M\backslash\{e\}$, then
      $\rho(M,e)=1$. In this case we say that $e$ is a
      \textit{border}. 
    \item  if $e$ connects two corners in the same face $M\backslash\{e\}$, and the number of the faces of $M\backslash\{e\}$ is equal to the number of faces of $M$, then $\rho(M,e)=b$. In this case we say that $e$ is a \textit{twist}.
    \item if $e$ connects two corners of two different faces lying in the same connected component of $M\backslash\{e\}$, then $\rho$  satisfies $\rho(M,e)+\rho(\widetilde{M},\tilde{e})=1+b$. In this case we say that $e$ is a \textit{handle}. Moreover, if $M$ is orientable then $\rho(M,e)=1$. 
    \item if $e$  connects two faces lying in two different connected components, then $\rho(M,e)=1$. In this case, we say that $e$ is a \textit{bridge}.
\end{itemize}

Let $M$ be a bipartite map and let $e_1,e_2,\dots,e_d$ be $d$ distinct edges of $M$. For $1\leq i\leq d$, we denote $M_j$ be the map obtained by deleting the edges $e_1$, $e_{2}$,..., $e_{j}$ from $M$.
We define $\rho(M,e_1,e_2,\dots,e_d)$ as the weight obtained by deleting the edges $e_j$ successively:
$$\rho(M,e_1,e_2,\dots,e_d):=\prod_{1\leq j\leq d}\rho(M_j,e_j).$$
\end{defi}

\begin{figure}[t]
\centering

\begin{subfigure}{0.8\textwidth}
\centering

\begin{subfigure}{.4\textwidth}
  \centering
    \includegraphics[width=0.5\textwidth]{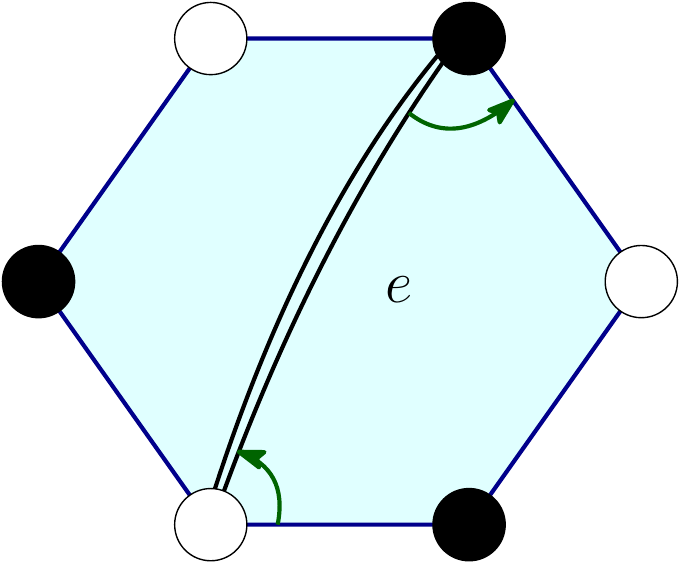}
    \label{border}
\end{subfigure}%
\begin{subfigure}{.4\textwidth}
  \centering
    \includegraphics[width=0.5 \textwidth]{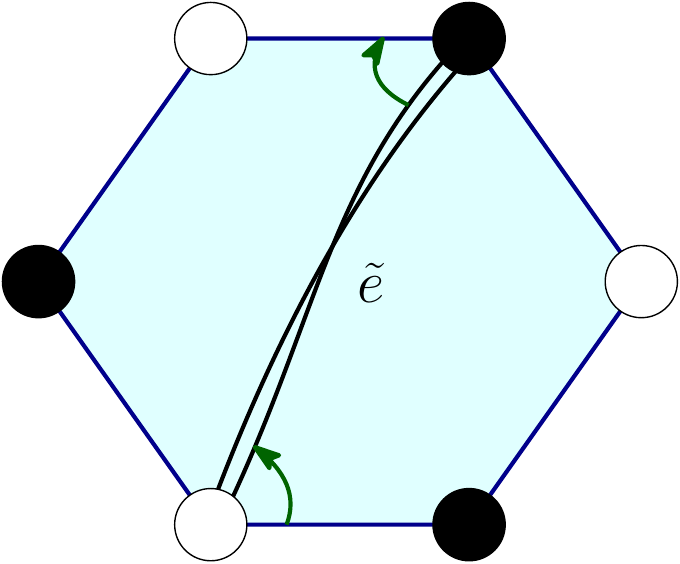}
\label{twist}
\end{subfigure}

\label{border twist}
\caption{A pair of twisted edges $(e,\tilde{e})$ between two corners of the same face; $e$ is a border while $\tilde{e}$ is a twist. }
\end{subfigure}

\begin{subfigure}{0.8\textwidth}
\begin{subfigure}{.45\textwidth}
  \centering
    \includegraphics[width=0.9\textwidth]{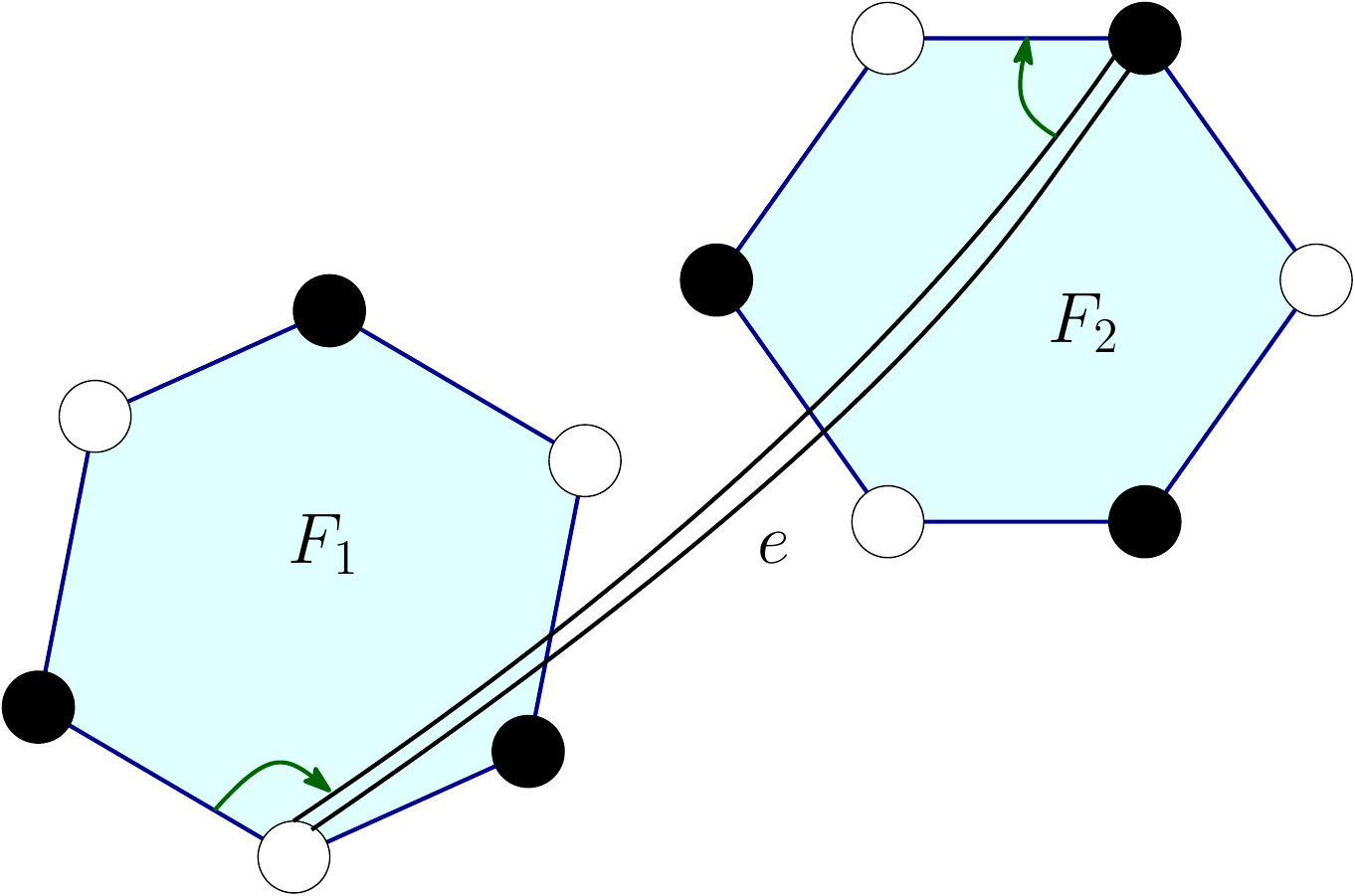}
  \label{handle 1}
\end{subfigure}%
\begin{subfigure}{.45\textwidth}
  \centering
    \includegraphics[width=0.9 \textwidth]{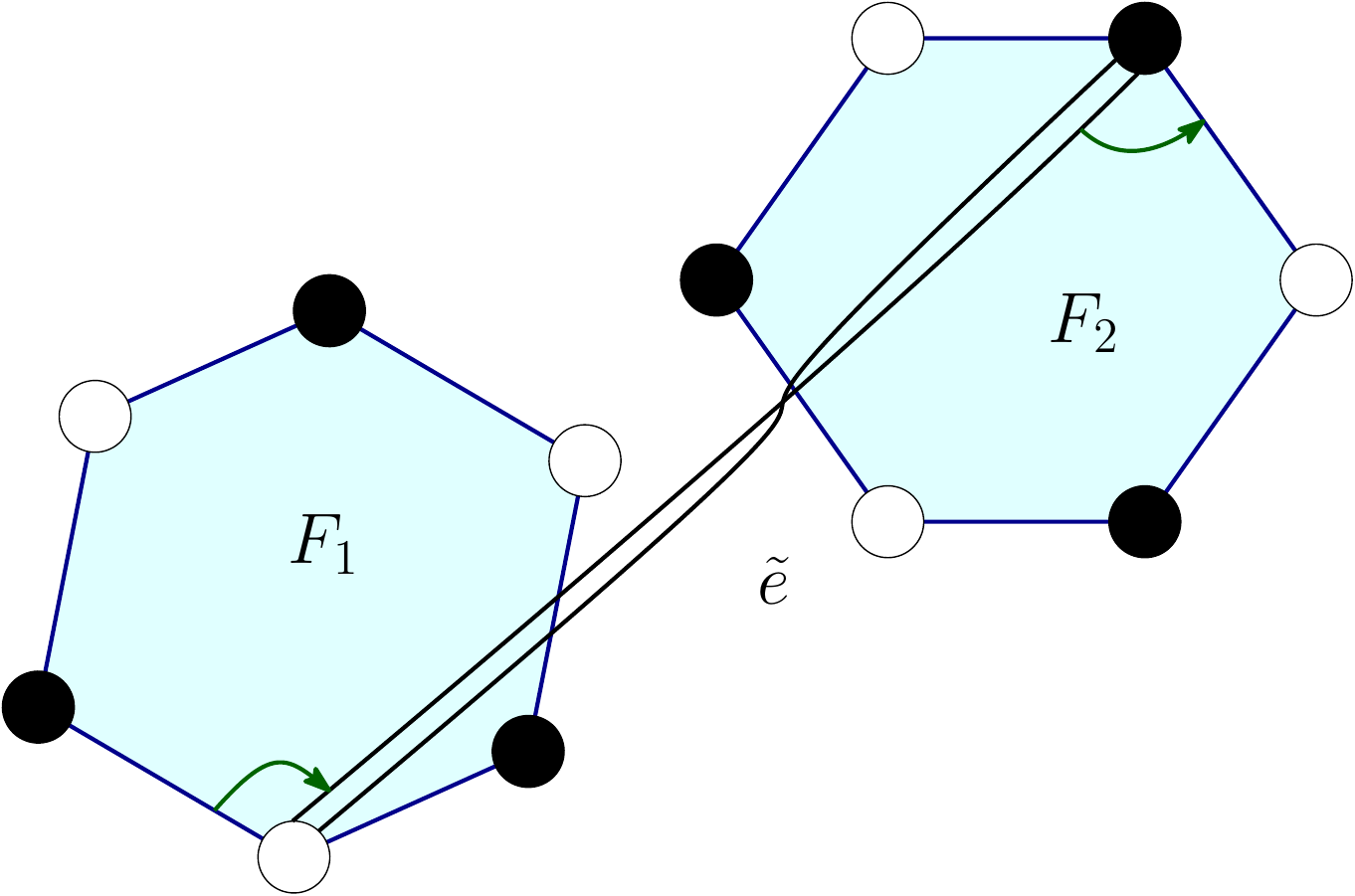}
    \label{handle 2}
\end{subfigure}
 \caption{A pair of twisted edges $(e,\tilde{e})$ between two corners of different faces $F_1$ and $F_2$. If $F_1$ and $F_2$ are on the same connected component then the edges $e$ and $\tilde e$ are handles, otherwise they are bridges. }
\label{handles bridges}
 \end{subfigure}

\caption{The different ways of adding an edge to a map. The added edge is represented each time by a band, that can be twisted at most once. The arrows indicate how the added edge connects the two respective corners.}
\label{fig edge-types}
\end{figure}

Given a MON $\rho$, one can associate a $b$-weight $\rho(M)$ to a layered map $M$ by choosing an order on the edges of $M$.  We start by defining an order on black vertices.

We fix an integer $k\geq1$ and a labelled $k$-layered map $M$. 
To each black vertex $v$ of $M$ we associate the triplet of positive integers $(i,n,j)$, where $i$ is the layer containing the vertex, $n$ is its degree and $j$ is the number given to $v$ by the labelling of the map. By definition, the triplets associated to two distinct black vertices are different. 
We define then a linear order $\prec_M$ on the black vertices of $M$ given by the lexicographic order on the triplets $(-i,n,j)$.  
In other terms, the maximal black vertex with respect to $\prec_M$ is
the vertex contained in the layer of the smallest index, and having maximal degree and maximal label.
Note that when we delete the maximal vertex from a $k$-layered map and all the edges incident to it, the map obtained is also $k$-layered.

\begin{rmq}
Note that if $M$ is a labelled layered map, and $N$ is one of its
connected components then $N$ inherits a structure of labelled layered
map from $M$. Moreover, the order $\prec_N$ is the order induced by $\prec_M$.
\end{rmq}

We now introduce a statistic of non-orientability on layered labelled maps which will have a key role in this paper.
\begin{defi}[Statistic of non-orientability on $k$-layered maps]\label{def maps stat}
Let $\rho$ be a \textup{MON} and let $M$ be a connected labelled $k$-layered map with $n$ edges. We define the $b$-weight $\rho(M)$ of $M$, as the weight obtained by decomposing $M$ recursively  as follows. If $M$ is the empty map then $\rho(M)=1$. Otherwise, let $v$ be the maximal black vertex of $M$  with respect to $\prec_M$, and let $c$ be its root and $d$ its degree. We denote by $e_1,\dots, e_d$ the edges incident to $v$ as they appear when we turn around $v$ starting from the root $c$. Let $M'$ be the map obtained from $M$ by deleting $v$ and all the edges incident to it.
Then, by definition, 
$$\rho(M):=\rho(M,e_1,\dots,e_d)\cdot\rho(M').$$
We extend this definition to disconnected maps by multiplicativity; if $M$ is a labelled $k$-layered map with $m$ connected components $M_1,\dots,M_m$, then 
$\rho(M):=\prod_{1\leq i \leq m}\rho(M_i).$

Finally, we define the statistic $\vartheta_\rho$ on labelled layered
maps with non-negative integer values, defined for every $M$ by $\rho(M)=b^{\vartheta_\rho(M)}$. 
\end{defi}

It follows from the definitions that $\vartheta_\rho$ is a statistic of non-orientability over labelled bipartite maps.
\begin{rmq}\label{rq statistics}
The statistic $\vartheta_\rho$ is a variant of the statistics introduced in \cite{LaCroix2009,Dolega2017a,ChapuyDolega2022}, except that the decomposition algorithm used here is different and depends not only on the structure of the map but also on its labelling.
\end{rmq}

\subsection{Catalytic operators in \texorpdfstring{$Y$}{}}
\label{ssec cat Y}
In this section, we fix an integer $k\geq1$, and $k$ variables $s_1,s_2,\dots,s_k$.
We consider the algebra
$$\mcP:=\Span_{\mathbb{Q}(b)}\{p_\lambda\}_{\lambda\in\mathbb{Y}}.$$
We also consider an alphabet $Y:=\{y_0,y_1,\dots\}$ and the space\footnote{This notation is slightly different from the one used in \cite{ChapuyDolega2022}.} $\PY$ 
$$\PY:=\Span_{\mathbb{Q}(b)}\left\{y_ip _\lambda\right\}_{i\in \mathbb{N},\lambda\in\mathbb{Y}}.$$

We fix a MON $\rho$.
Let $M$ be a $k$-layered map.
We define its \textit{marking} $\kappa(M)$ by
$$\kappa(M):=\frac{\rho(M)}{2^{|\Vbul(M)|-\cc(M)}\alpha^{cc(M)}}p_{\tf(M)}\prod_{1\leq i\leq k}(-\alpha s_{i})^{|\mathcal{V}_\circ^{(i)}(M)|}\in \mcP[s_1,\dots,s_k],$$
where $\tf(M)$ is the face-type of $M$ defined in \cref{ssec maps}.

\begin{defi}
We say that a layered map $M$ is rooted if it has a distinguished oriented black corner $c$ in layer 1, called  the root of the map. A rooted layered map is labelled if all the black vertices are numbered as in \cref{def layered maps} except for the root vertex $v_c$. As in \cref{def maps stat}, we define an order $<_{(M,c)}$ on the black vertices of a labelled rooted layered map $(M,c)$ with the convention that the root vertex is always maximal. Finally we denote $\rho(M,c)$ the $b$-weight associated to $(M,c)$ with respect to $<_{(M,c)}$.
\end{defi} 

We associate to a rooted labelled $k$-layered map$(M,c)$ the marking $\kappa(M,c)$ defined by:
    $$\kappa(M,c):=\frac{\rho(M,c)}{2^{|\Vbul(M)|-\cc(M)}\alpha^{\cc(M)}} y_{\deg(f_c)}\prod_{f\neq f_c} p_{\deg(f)}\prod_{1\leq i\leq k}(-\alpha s_{i})^{|\mathcal{V}_\circ^{(i)}(M)|}\in\PY[s_1,\dots,s_k],$$
where $\deg(f_c)$ is the degree of the root face, and the first product runs over the faces of $M$ different from the root face.

We use the catalytic operators $Y_+, \GY\colon \PY \rightarrow \PY$ introduced in \cite{ChapuyDolega2022}:
\begin{equation}\label{eq Y+}
Y_+=\sum_{i\geq 0}y_{i+1}\frac{\partial}{\partial y_i},
\end{equation}

\begin{equation}\label{eq LambdaY}
\GY=(1+b)\cdot\sum_{i,j\geq1}y_{i+j}\frac{i\partial^2}{\partial p_i\partial y_{j-1}}+\sum_{i,j\geq1} y_{i}p_j\frac{\partial}{\partial y_{i+j-1}} +b\cdot \sum_{i\geq1}y_{i+1}\frac{i\partial}{\partial y_i}.
\end{equation}

Note that the operator $\GY$ has a similar structure to the Laplace--Beltrami
operator $D_\alpha$, but it additionally uses variables from the
family $Y$. This justifies the name ``catalytic'', as these variables
will play the same role as the catalytic variable in the classical
Tutte decomposition, well-known to the combinatorial community. 
The following proposition gives a combinatorial interpretation for the
operator $\GY$, which corresponds to the particular case $k=1$ in
\cite[Proposition 4.4]{ChapuyDolega2022}. We give here the main
arguments of the proof. 

\begin{prop}[\cite{ChapuyDolega2022}]\label{prop Gamma}
Let $(M,c)$ be a rooted $k$-layered map. Then for every MON $\rho$, we have
$$\GY \kappa(M,c)=\sum_{e}\kappa(M\cup\{e\},c),$$
where the sum is taken over all ways to add an edge $e$ connecting $c$
to some white corner $c'$ (without adding a new white vertex).
\end{prop}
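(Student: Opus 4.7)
The plan is to match each of the three summands of $\GY$ in \eqref{eq LambdaY} to a class of edge additions sorted by the four MON cases: border, twist, handle, and bridge. Since the added edge $e$ introduces no new vertex, the black-vertex labeling and the layer structure of $M\cup\{e\}$ coincide with those of $M$, the order $\prec_{(M\cup\{e\},c)}$ is unchanged, and the identity reduces to a monomial-by-monomial comparison in $\PY[s_1,\dots,s_k]$.

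The second summand $\sum_{i,j} y_i p_j \, \partial/\partial y_{i+j-1}$ corresponds to borders: it selects the root face (of half-degree $N=i+j-1$) and substitutes $y_N\mapsto y_i p_j$, thereby splitting the root face into a new root face of half-degree $i$ and a new non-root face of half-degree $j$. Walking around the boundary of the old root face from $c$, each white corner $c'$ yields a unique such decomposition, so the sum over $(i,j)$ with $i+j=N+1$ reproduces the sum over the $N$ borders available from $c$, each with MON weight $1$. The third summand $b\sum_i y_{i+1}\,i\,\partial/\partial y_i$ corresponds to twists: the factor $i=N$ counts the white corners in the root face, the root face becomes a single face of half-degree $N+1$, and the factor $b$ is the MON weight of a twist.

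The first summand $(1+b)\sum_{i,j} y_{i+j}\,i\,\partial^2/(\partial p_i\partial y_{j-1})$ fuses a non-root face of half-degree $i$ (with $i$ white corners) with the root face of half-degree $j-1$ into a single face of half-degree $i+j$. This operator does not distinguish handles from bridges, so the key point is that both must contribute $\alpha=1+b$ per such choice. For handles, the third MON axiom gives $\rho+\tilde\rho=1+b$ over the two edge orientations, while $\cc(M)$ is unchanged. For bridges, each orientation has MON weight $1$ summing to $2$, but adding a bridge decreases $\cc(M)$ by one, so the normalization $1/(2^{|\Vbul(M)|-\cc(M)}\alpha^{\cc(M)})$ appearing in $\kappa$ gains a factor $\alpha/2$; the combined contribution is $2\cdot(\alpha/2)=1+b$, matching the handle case.

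The main obstacle is the bookkeeping of the $b$-weight $\rho$ through the recursive decomposition. Because the root vertex $v_c$ is the maximum under $\prec_{(M\cup\{e\},c)}$, the first recursive step removes $v_c$ together with all its incident edges in cyclic order starting from the new root corner; this places $e$ first (or last, per the orientation convention), and the MON axioms then yield $\rho(M\cup\{e\},c)=\rho_e\cdot\rho(M,c)$, where $\rho_e$ is the MON value of $e$ on the intermediate map. Once this factorization is in place, the case-by-case matching above concludes the proof, exactly as in the argument for the special case $k=1$ treated in \cite[Proposition~4.4]{ChapuyDolega2022}; the only new ingredient is the (trivial) observation that the layer structure is unaffected by adding an edge at a fixed root corner.
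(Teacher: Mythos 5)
Your proposal is correct and follows essentially the same route as the paper: observe that the root vertex $v_c$ remains maximal under $\prec_{(M\cup\{e\},c)}$, so the $b$-weight factors as $\rho(M\cup\{e\},c)=\rho(M\cup\{e\},e)\cdot\rho(M,c)$, then match the three summands of $\GY$ to the border, twist, and handle/bridge cases of the MON axioms. Your explicit accounting of the bridge case — two edges each of weight $1$, offset by the $\alpha/2$ change in the normalization $1/(2^{|\Vbul|-\cc}\alpha^{\cc})$ as $\cc$ drops by one — spells out a step the paper only sketches, but it is the same argument.
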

\begin{proof}

First, notice that by adding such an edge $e$, the maximal
vertex in $M\cup\{e\}$ is the same as the maximal vertex in $M$, therefore $\rho(M\cup\{e\}) =
\rho(M\cup\{e\},e) \cdot \rho(M)$. We distinguish three cases.
\begin{itemize}
    \item The two corners $c$ and $c'$ lie in distinct faces of respective sizes $i$ and $j$. When we add the edge $e$ we form a face of size $i+j+1$. Let us show that this case corresponds to the first term of the operator $\GY$. Let $\tilde{e}$ denote the edge obtained by twisting $e$, see \cref{ssec SON}. 
    
    If the two faces lie in the same connected component of $M$, then $e$ is a handle and  by definition of a MON  $$\rho(M\cup\{e\},e)+\rho(M\cup\{\tilde e\},\tilde e)=1+b,$$
    and this explains the factor $1+b$ in the first terms of $\Gamma_Y$.
    
    If the two faces lie in two different connected components, then $e$ is a bridge and $$\rho(M\cup\{e\},e)=\rho(M\cup\{\tilde e\},\tilde e).$$ In this case, the factor $1+b=\alpha$ in the first term of $\Gamma_Y$ is related to the fact that the number of connected components of $M$ decreases by 1.
    
    \item The two corners $c$ and $c'$ lie in the same face of degree $i+j-1$ and the added edge $e$ is a border, which splits the face into two faces of respective degrees $i$ and $j$. Then $\rho(M\cup\{e\},e)=1$.
    \item The two corners $c$ and $c'$ lie in the same face of degree $i$ and the added edge $e$ is a twist. By adding $e$ we form a face of degree $i+1$ and $\rho(M\cup\{e\},e)=b$.\qedhere
\end{itemize}
\end{proof}

\subsection{Proof of \texorpdfstring{\cref{prop expr F}}{}}\label{ssec proof expr F}
We consider the operator  $\Theta_Y\colon\PY \rightarrow \mcP$, defined by
$$\Theta_Y:=\sum_{i\geq1}p_i\frac{\partial}{\partial y_i}.$$
Applied on the marking of a rooted map, $\Theta_Y$ allows to forget
the root and to obtain the marking of an unrooted map. In other words,
if $(M,c)$ is a rooted map with vertex root $v_c$, and $\deg(v_c)$ is
maximal among black vertices in the layer $1$, then 
$$\Theta_Y\cdot\kappa(M,c)=\kappa(M),$$
where $M$ is the map obtained by numbering the root vertex such that
it remains maximal (which is possible due to the definition of the
order $\prec_M$, and our assumption on $\deg(v_c)$) and then
forgetting the map root $c$ (but the vertex $v_c$ remains rooted in
$c$, where this rooting is the one which is required in the definition of
a layered map). 
For $n\geq0$, and variable $u$, the operator $\B_n\colon \mcP \rightarrow \mcP[u]$ is defined by
\begin{equation*}\label{eq def Bn}
  \B_n(\bfp,u):=\Theta_Y\left(\GY+u Y_+\right)^n\frac{y_0}{1+b}.  
\end{equation*}


\begin{prop}\label{prop Bn}
Let $M$ be a $k$-layered bipartite map and let $n\geq \left(\nu_\bullet^{(1)}(M)\right)_1$. Then 
$$\B_n(\bfp,-\alpha s_1)\cdot \kappa(M)=\sum_{M'}\kappa(M'),$$
where the sum is taken over all $k$-layered maps obtained by adding a
black vertex of degree $n$ and label
$m_n\left(\nu_\bullet^{(1)}(M)\right)+1$ to the map $M$ in the layer 1 (using possibly new white
vertices which are necessarily in the layer 1).
\end{prop}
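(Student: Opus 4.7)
The plan is to prove the identity by induction on $n$, interpreting each factor of $\GY + uY_+$ as adding one edge from a rooted black vertex under construction in layer $1$, and then using $\Theta_Y$ at the end to unroot. First, observe that the seed $\frac{y_0}{1+b}\cdot\kappa(M)$ equals $\kappa(M^{(0)}, c_0)$, where $M^{(0)}$ is the formal rooted $k$-layered expression obtained by adjoining to $M$ a single isolated rooted black vertex $v$ in layer $1$: the factor $y_0$ encodes the root face at $v$ of half-degree zero, and $\frac{1}{1+b} = \frac{1}{\alpha}$ accounts for the extra connected component in the normalization $\alpha^{-\cc}$. Although $M^{(0)}$ is not a bona fide map (it has an isolated vertex), this is a convenient bookkeeping device.

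The induction hypothesis at $0\le m\le n$ is the identity
\[
(\GY + u Y_+)^m \cdot \frac{y_0}{1+b}\cdot\kappa(M) \;=\; \sum_{(M',c)}\kappa(M',c),
\]
with the sum over rooted $k$-layered maps $(M', c)$ obtained from $M$ by adjoining a degree-$m$ black vertex $v$ in layer $1$ rooted at $c$, together with $m$ edges at $v$ landing either on existing white corners of $M$ or on fresh degree-one white leaves created in layer $1$. The two summands of $\GY + uY_+$ handle the two edge types. The action of $\GY$ on $\kappa(M', c)$ is exactly the content of \cref{prop Gamma}: it enumerates the attachments of an edge from $c$ to an existing white corner of $M'$, carrying the correct $b$-weight (border/twist/handle/bridge) and the right $\alpha$-factor from changes in the number of connected components. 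For the second summand, with $u = -\alpha s_1$ and $Y_+ = \sum_i y_{i+1}\partial/\partial y_i$, the shift $y_i\mapsto y_{i+1}$ matches the root face half-degree increasing by $1$ when a pendant edge is attached, the factor $-\alpha s_1$ is the marking contribution of a fresh layer-$1$ white vertex, no $b$-weight arises since such a pendant edge is always orientable, and the normalization $2^{|\Vbul|-\cc}\alpha^{\cc}$ is unchanged because neither the number of black vertices nor the number of connected components changes.

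At $m = n$, applying $\Theta_Y = \sum_{i\ge 1} p_i\,\partial/\partial y_i$ substitutes $y_{\deg(f_c)}$ by $p_{\deg(f_c)}$, which is the effect of forgetting the map root while keeping the distinguished oriented corner at $v$. The hypothesis $n\ge \left(\nu_\bullet^{(1)}(M)\right)_1$ together with the label $m_n\left(\nu_\bullet^{(1)}(M)\right)+1$ assigned to $v$ guarantees that $v$ is the maximum black vertex in the resulting labelled $k$-layered map $M'$ with respect to $\prec_{M'}$; the recursive computation of $\rho(M')$ in \cref{def maps stat} therefore peels off $v$ first, producing exactly $\rho(M',v,e_1,\dots,e_n)\cdot\rho(M)$, which is what the operator yields.

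The delicate point, which I expect to be the main obstacle, is the bookkeeping of the root corner at $v$ after each edge addition. One has to verify that the convention implicit in \cref{prop Gamma} for choosing the new root corner (among the two corners at $v$ created by a new edge) is compatible with the cyclic order around $v$ starting from the final distinguished corner $c$ that is required by \cref{def maps stat}. Once this geometric consistency is established, the $b$-weights produced by successive applications of the operator match term-by-term the factors in $\rho(M')$, and the proposition follows.
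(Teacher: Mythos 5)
Your proposal takes essentially the same route as the paper's proof: seed with an isolated rooted black vertex via multiplication by $y_0/(1+b)$, interpret each factor of $\GY + uY_+$ as attaching one edge (to an existing white corner via \cref{prop Gamma}, or to a fresh layer-$1$ leaf via $(-\alpha s_1)Y_+$), apply $\Theta_Y$ to unroot, and use $n\geq\big(\nu_\bullet^{(1)}(M)\big)_1$ together with the label $m_n\big(\nu_\bullet^{(1)}(M)\big)+1$ to see that the new vertex is maximal under $\prec_{M'}$ and hence peeled off first by $\rho$. The root-corner bookkeeping you flag as the delicate point is likewise left implicit in the paper, which simply asserts that the algorithm erases the edges in the same order as they were added, so this is not a gap relative to the paper's own level of detail.
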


\begin{proof}
We start by adding an isolated black vertex $v_c$ of degree 0 to $M$ in layer $1$ --- this corresponds to the multiplication by $y_0/\alpha$ (the division by $\alpha$ is due to the fact that we increase the number of connected components of the map by 1). We have two ways to add an edge incident to $v$:
\begin{itemize}
\item We add an edge to an existing black corner in the map, this corresponds to the term $\GY$ (see \cref{prop Gamma}).  
\item We add an edge connected to a new white vertex. Since the weight of a white vertex in the layer $1$ is $-\alpha s_1$, this corresponds to $(-\alpha s_1)Y_+$. 
\end{itemize}
We apply $\Theta_Y$ to forget the root of the map $M'$ as explained above, in particular, we number the root vertex $v_c$ by $m_n\left(\nu_\bullet^{(1)}(M)\right)+1$.
The algorithm ensures that $\rho(M')$ is computed by erasing the edges in the same order as before, and the proof is finished.
\end{proof}

For each $\ell,k\geq0$, we define $\C_{\ell,k}(\bfp)\colon \mcP
\rightarrow \mcP$ by
$$\C_{\ell,k}(\bfp):=[u^\ell]\B_{k+\ell}(\bfp,u).$$
From \cref{prop Bn} we get that $\C_{\ell,k}$ acts on the marking of a
bipartite map by adding a black vertex of degree $\ell+k$ with $\ell$
new white neighbors. The operators $\C_\ell \colon \mcP \rightarrow \mcP \llbracket t \rrbracket_+$ are then defined for $\ell\geq0$ as the marginal sums
\begin{equation}
\label{eq:DefCl}
\C_\ell(t,\bfp):=\sum_{k\geq1}\frac{t^{\ell+k}}{\ell+k}\C_{\ell,k}({\bfp})+\mathbbm{1}_{\ell>0}\frac{t^\ell}{\ell}\C_{\ell,0}(\bfp),
\end{equation}
where $\mcP \llbracket t \rrbracket_+$ is the ideal in $\mcP
\llbracket t \rrbracket$ generated by $t$.

Finally, we set  
\begin{equation}
  \label{eq:DegBInfty}
  \Binf(t,\bfp,u)=\sum_{n\geq 1}\frac{t^n}{n}
\B_n(\bfp,u)=\sum_{\ell\geq0}u^\ell \C_\ell(t,\bfp) \colon \mcP
\rightarrow \mcP[u]\llbracket t \rrbracket_+.
\end{equation}
We define the generating series of $k$-layered maps by:
\begin{equation}\label{eq def F}
F^{(k)}\left(t,\bfp,s_1,\dots,s_{k}\right):=\sum_{M\in\Mk}(-t)^{|M|}p_{\tf(M)}\frac{b^{\vartheta_\rho(M)}}{2^{|\Vbul(M)|-\cc(M)}\alpha^{cc(M)}}\prod_{1\leq i\leq k}\frac{(-\alpha s_{i})^{|\mathcal{V}_\circ^{(i)}(M)|}}{z_{\nu_\bullet^{(i)}(M)}}.
\end{equation}
We show that the operator $\Binf$ can be used to build the generating series $F^{(k)}$ of the $k$-layered maps.

\begin{prop}\label{prop expr F}
The functions $F^{(k)}$ satisfy the following induction: $F^{(0)}=1$ and for every $k\geq1$ 
\begin{equation}\label{eq F}
F^{(k)}\left(t,\bfp,s_1,\dots,s_{k}\right)=\exp\left(\Binf(-t,\bfp,-\alpha s_{1})\right)\cdot F^{(k-1)}\left(t,\bfp,s_2,\dots,s_{k}\right).
\end{equation}
\end{prop}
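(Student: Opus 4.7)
The plan is to prove the recurrence directly by showing that any $k$-layered map decomposes uniquely as a $(k-1)$-layered sub-map plus an insertion sequence building layer $1$, and that $\exp(\Binf)$ precisely encodes these insertions. Given $M \in \Mk$, let $N$ denote the sub-map obtained by removing all vertices in layer $1$ (both black and white) together with their incident edges. By the layering condition, a white vertex in layer $1$ of $M$ has all its neighbors in layer $1$, so it becomes isolated upon deletion and is removed as well; meanwhile, every white vertex in layer $i \geq 2$ retains at least one black layer-$i$ neighbor in $N$, hence remains non-isolated. Consequently, $N$ (with layer indices shifted so that layer $i+1$ of $M$ becomes layer $i$ of $N$) is a well-defined $(k-1)$-layered map, and $M$ is recovered uniquely from $N$ together with the data of its layer-$1$ black vertices and their incident edges.

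By \cref{prop Bn}, applying $\B_n(\bfp, -\alpha s_1)$ to the marking of a partial $k$-layered map whose current layer-$1$ black-vertex-degree partition is $\mu$ sums over the markings of all maps obtained by inserting a new black vertex of degree $n$ in layer $1$, labelled $m_n(\mu) + 1$, with any valid choice of incident edges (each new white vertex in layer $1$ being weighted by $-\alpha s_1$). Iterating along a sequence $(n_1, \dots, n_m)$ (with $\B_{n_m}$ applied first), we obtain a sum over labelled maps $M$ built from $N$ by these $m$ insertions, the labels being determined by the insertion order. For each such labelled $M$ with $\nu := \nu_\bullet^{(1)}(M)$, the number of sequences $(n_1, \dots, n_m)$ producing $M$ equals the number of interleavings of the label-ordered blocks of each degree, namely $m!/\prod_j m_j(\nu)!$.

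Expanding $\exp\bigl(\Binf(-t, \bfp, -\alpha s_1)\bigr) = \sum_{m \geq 0}\frac{1}{m!}\bigl(\sum_{n \geq 1}\frac{(-t)^n}{n}\B_n\bigr)^m$ and grouping the sequences according to their target map, the factor $1/m!$ combines with the multinomial count $m!/\prod_j m_j(\nu)!$ and with the weights $\prod_j (1/j)^{m_j(\nu)}$ from $\Binf$ to produce exactly $1/z_\nu$, yielding
\begin{equation*}
\exp\bigl(\Binf(-t, \bfp, -\alpha s_1)\bigr)\,\kappa(N) \;=\; \sum_{M \text{ extends } N}\frac{(-t)^{|M|-|N|}}{z_{\nu_\bullet^{(1)}(M)}}\,\kappa(M),
\end{equation*}
where we used $|\nu_\bullet^{(1)}(M)| = |M|-|N|$. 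Finally, identifying $\kappa(N)$ of $N$ viewed as a $k$-layered map with empty layer $1$ with the $(k-1)$-layered marking of $N$ under the substitution $s_i \mapsto s_{i+1}$, then multiplying by $(-t)^{|N|}/\prod_{i\geq 2}z_{\nu_\bullet^{(i)}(N)}$ and summing over $N$, yields precisely the right-hand side of \eqref{eq F}. The main obstacle is the combinatorial bookkeeping of labels against the $1/m!$ of the exponential and the $1/n$ weights in $\Binf$, which is resolved by the multinomial identity above together with the observation that the $\B_n$'s for distinct $n$ produce the same labelled output regardless of relative order.
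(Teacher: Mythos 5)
Your argument follows the same route as the paper: decompose a $k$-layered map by removing layer $1$, encode the re-insertion of layer-$1$ black vertices via the operators $\B_n$ using \cref{prop Bn}, and recover the exponential from the multinomial count of insertion orders together with the normalization $1/z_\nu$.

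However, the closing ``observation that the $\B_n$'s for distinct $n$ produce the same labelled output regardless of relative order'' is not an elementary observation. It is precisely \cref{prop comm B}, which the paper states just before \cref{prop expr F} and proves only in \cref{sec vanishing} via the tau-function identity of \cref{thm ChapuyDolega2022}. The reason it is nontrivial is that \cref{prop Bn} gives a combinatorial meaning to $\B_n\kappa(M)$ only under the hypothesis $n \geq \big(\nu_\bullet^{(1)}(M)\big)_1$: the operator $\Theta_Y$ turns $\kappa(M',c)$ into $\kappa(M')$ only when the root vertex $v_c$ is $\prec$-maximal, because $\rho(M',c)$ is computed by deleting $v_c$ first whereas $\rho(M')$ deletes the $\prec_{M'}$-maximal vertex first, and these two $b$-weights need not agree when $v_c$ has smaller degree than the current layer-$1$ maximum. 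Consequently the interleaving count $m!/\prod_j m_j(\nu)!$ cannot be obtained purely combinatorially from \cref{prop Bn} for non-sorted sequences; you must invoke the algebraic commutation \cref{prop comm B} at exactly this step, as the paper does explicitly. With that citation supplied, your proof is complete and matches the paper's.
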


In order to prove \cref{prop expr F}, we need the following
commutation relation satisfied by the operators $\B_\ell$ (the proof is postponed to \cref{sec vanishing}).
\begin{prop}\label{prop comm B}
Let $m,\ell\geq1$ and let $u$ be a formal parameter. Then, 
$$\left[\B_\ell(\bfp,u),\B_m(\bfp,u)\right]=0.$$
\end{prop}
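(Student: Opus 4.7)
The plan is to reduce $[\B_\ell(\bfp,u), \B_m(\bfp,u)] = 0$ to a chain of algebraic identities among the elementary building blocks $\Theta_Y$, $\GY$, $Y_+$, and multiplication by $y_0$, which is precisely what~\cref{sec com rel} is devoted to. First I would compute the basic commutators. Since $\Theta_Y = \sum_{i\geq 1}p_i\frac{\partial}{\partial y_i}$ has no $\partial/\partial y_0$ component, one obtains $[\Theta_Y, y_0] = 0$ immediately. A direct calculation from~\eqref{eq LambdaY} and~\eqref{eq Y+} yields
\[
[Y_+, y_0] = y_1, \qquad [\GY, y_0] = (1+b)\sum_{i\geq 1} y_{i+1}\frac{i\,\partial}{\partial p_i},
\]
together with similarly explicit expressions for $[\Theta_Y, Y_+]$ and (term by term) for $[\Theta_Y, \GY]$. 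These are the atomic ingredients.

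Setting $A := \GY + u Y_+$, the defining formula $\B_n = \tfrac{1}{1+b}\,\Theta_Y\,A^n\,y_0$ rewrites the commutator as
\[
(1+b)^2\,[\B_\ell, \B_m] \;=\; \Theta_Y A^\ell\, y_0\, \Theta_Y A^m\, y_0 \;-\; \Theta_Y A^m\, y_0\, \Theta_Y A^\ell\, y_0.
\]
Using $[\Theta_Y, y_0] = 0$, the middle block $y_0\Theta_Y$ can be swapped on both sides, so the whole question reduces to controlling how $\Theta_Y$ and $y_0$ move across powers of $A$. I would then establish by induction on $n$ explicit formulas for $[\Theta_Y, A^n]$ and $[A^n, y_0]$ as sums of simpler operators, and show that the contributions on both sides of the commutator match up to terms that cancel telescopically. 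An equivalent and likely cleaner packaging is to work with the resolvent generating series
\[
\mathcal{R}(z) \;:=\; \frac{1}{1+b}\,\Theta_Y\,(z - A)^{-1}\, y_0,
\]
viewed as a formal power series in $z^{-1}$ whose coefficients are the $\B_n$'s, and to prove directly that $[\mathcal{R}(z), \mathcal{R}(w)] = 0$, which encodes the commutation of all $\B_n$'s simultaneously.

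The main obstacle will be controlling the iterated commutators of $\Theta_Y$ with $\GY$. Because the mixed term $(1+b)\sum_{i,j\geq 1} y_{i+j}\frac{i\,\partial^2}{\partial p_i\,\partial y_{j-1}}$ in $\GY$ couples the $\bfp$ and $Y$ variables, each commutator produces secondary operators not present in the original list, and these in turn generate tertiary corrections when pushed further past $A$. A careful bookkeeping, likely organized as an induction on $\ell + m$ with an appropriate filtration on the auxiliary operators that appear, will be needed to ensure that the cascade of corrections terminates and that both orderings produce identical results.
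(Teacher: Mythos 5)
Your proposal takes a genuinely different route from the paper, but it has a real gap: the "careful bookkeeping" and "cascade of corrections" you flag at the end is precisely the hard content, and nothing in your plan shows the cascade closes. The mixed term $(1+b)\sum_{i,j}y_{i+j}\,i\,\partial_{p_i}\partial_{y_{j-1}}$ in $\GY$ couples $\bfp$ and the catalytic variables, so each commutator of $\Theta_Y$ past $A=\GY+uY_+$ produces new operator types, and there is no a priori filtration presented here under which the corrections stabilize. The resolvent reformulation $[\mathcal{R}(z),\mathcal{R}(w)]=0$ is only a repackaging of the identity you want, not an independent handle on it. Also, $\Theta_Y\colon\PY\to\mcP$ and multiplication by $y_0\colon\mcP\to\PY$ act between different spaces, so the commutators $[\Theta_Y,A^n]$, $[A^n,y_0]$ you plan to induct on need to be set up with more care than ``swapping the middle block'' suggests. (The machinery of \cref{sec com rel} that you invoke concerns the operators $\C_\ell$ and the two-catalytic-variable spaces $\PYZ$; it is developed later for the shifted-symmetry property and is not what the paper uses to establish \cref{prop comm B}.)

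The paper avoids any direct computation. It invokes \cref{thm ChapuyDolega2022}: the series $\tau_b$ satisfies $t^m\frac{\B_m(\bfp,u)}{m}\tau_b(t,\bfp,\bfq,\underline{u})=\partial_{q_m}\tau_b(t,\bfp,\bfq,\underline{u})$. Since $\partial_{q_\ell}$ and $\partial_{q_m}$ manifestly commute and also commute with the $\B_n(\bfp,u)$ (which act only on the $\bfp$-variables), one gets $[\B_\ell,\B_m]\tau_b=0$ for free. Extracting the coefficient of $\Jxi(\bfq)$ then shows $[\B_\ell,\B_m]$ annihilates $\frac{\Jxi(\bfp)\Jxi(\underline{u})}{j^{(\alpha)}_\xi}$, and since $\Jxi(\underline{u})\neq 0$ (by \cref{thm Jack formula} and \cref{eq j alpha}) and the $\Jxi(\bfp)$ form a basis of $\mcP$, the commutator vanishes. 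So the commutation is imported from a differential equation for a generating function, not proved by elementary manipulation of $\Theta_Y$, $\GY$, $Y_+$, $y_0$. If you want to pursue your route, you would essentially need to rediscover or reprove that underlying differential relation, which is a substantial undertaking and not something your sketch addresses.
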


\begin{proof}[Proof of \cref{prop expr F}]
For $k=0$, we know that $F^{(0)}=1$ since the only map with 0 layers is the empty map.
Fix now $k\geq 1$. From the definitions 
$$F^{(k-1)}(t,\bfp,s_2\dots,s_k)=\sum_{M\in \mathcal{M}_{k-1}}(-t)^{|M|}p_{\tf(M)}\frac{\rho(M)}{2^{|\Vbul(M)|-\cc(M)}\alpha^{\cc(M)}}\prod_{1\leq i\leq k-1}\frac{(-\alpha s_{i+1})^{|\Vcirc^{(i)}(M)|}}{z_{\nu_\bullet^{(i)}(M)}}.$$
We can rewrite this sum over $k$-layered maps with empty layer 1, by reindexing the layer $j$ by $j+1$ for $1\leq j\leq k-1$. Hence,
$$F^{(k-1)}(t,\bfp,s_2\dots,s_k)=\sum_{M}(-t)^{|M|}\kappa(M)\prod_{1\leq i\leq k-1}\frac{1}{z_{\nu_\bullet^{(i+1)}(M)}},$$
where the sum is taken over all labelled $k$-layered maps with empty layer 1. 
Fix such a map $M$. To obtain a labelled $k$-layered map $M'$ from $M$, we should add the layer 1 (possibly empty). We proceed as follows.
\begin{itemize}
    \item We start by fixing a non-negative integer $d$ and a partition $\mu$ of length $d$ (this partition is empty if $d=0$).
    \item We add successively for each $1\leq i\leq d$ a black vertex $v_i$ of degree $\mu_i$, using possibly new white vertices, such that all the added vertices are in the layer $1$ of the map.
    \item The edges $e_1,\dots, e_{\mu_i}$ incident to a vertex $v_i$ are added successively in a cyclic order around the vertex. The vertex root is chosen such that if we travel around $v_i$ starting from the root corner we see the edges  in the following order $e_{\mu_{i}},\dots,e_1$.
\end{itemize} 
Note that  one has by definition $\nu^{(1)}_\bullet(M')=\mu$ and $\nu^{(j)}_\bullet(M')=\nu^{(j)}_\bullet(M)$ for $2\leq j\leq k$.
\cref{prop Bn} implies that the generating series of $k$-layered maps $M'$ which are obtained from $M$ as described above can be expressed as follows
\begin{multline*}
  \sum_{M'}(-t)^{|M'|}\kappa(M')\prod_{1\leq i \leq k}\frac{1}{z_{\nu_\bullet^{(i)}(M')}}=\sum_{\mu\in\mathbb{Y}}\left(\prod_{j \geq
      1}\frac{1}{m_j(\mu)!}\right)\cdot \\ \cdot \frac{(-t)^{\mu_{\ell(\mu)}}\B_{\mu_{\ell(\mu)}}(\bfp,-\alpha
    s_{1})}{\mu_{\ell(\mu)}}\dots\frac{(-t)^{\mu_{1}}\B_{\mu_{1}}(\bfp,-\alpha
    s_{1})}{\mu_1}(-t)^{|M|}\kappa(M)\cdot \prod_{1\leq i \leq k-1}\frac{1}{z_{\nu_\bullet^{(i+1)}(M)}}.
\end{multline*}
Since the operators $\B_{\mu_i}$ commute (see \cref{prop comm B} above), and since there are $\ell(\mu)!\prod_{j\geq 1}\frac{1}{m_j(\mu)!}$ reorderings $\gamma$ of $\mu$, the RHS of the last equation can be rewritten as follows
$$\sum_{\ell \geq 1}\sum_{n_1,\dots,n_\ell \geq 1}\frac{1}{\ell!}\frac{(-t)^{n_\ell}\B_{n_\ell}(\bfp,-\alpha s_{1})}{n_\ell}\cdots\frac{(-t)^{n_1}\B_{n_1}(\bfp,-\alpha s_{1})}{n_1}(-t)^{|M|}\kappa(M) \cdot \prod_{1\leq i \leq k-1}\frac{1}{z_{\nu_\bullet^{(i+1)}(M)}}.$$
Hence 
$$\sum_{M'}(-t)^{|M'|}\kappa(M')\prod_{1\leq i \leq k}\frac{1}{z_{\nu_\bullet^{(i)}(M')}}=\exp\left(\sum_{n\geq 1}(-t)^n\frac{\B_n(\bfp,-\alpha s_{1})}{n}\right)\cdot \kappa(M) \cdot \prod_{1\leq i \leq k-1}\frac{1}{z_{\nu_\bullet^{(i+1)}(M)}},$$
and we deduce that 
$$F^{(k)}(t,\bfp,s_1,\dots, s_k)=\exp\left(\sum_{n\geq 1}(-t)^n\frac{\B_n(\bfp,-\alpha s_{1})}{n}\right)\cdot F^{(k-1)}(t,\bfp,s_2,\dots,s_k).\qedhere$$
\end{proof}


\section{The vanishing property}\label{sec vanishing}
In this section we consider expressions in two different alphabets $\bfp:=(p_1,p_2,\dots,)$ and $\bfq:=(q_1,q_2,\dots)$. We will use repeatedly without further mention the fact that operators and coefficient extraction which depend on different alphabets trivially commute.

Let $\lambda$ be a partition, of length $\ell\geq 1$. Then we define 
\begin{equation}\label{eq Flambda}
    F(\lambda):=F^{(\ell)}(t,\bfp,\lambda_1,\lambda_2,\dots,\lambda_\ell)
\end{equation}
where $F^{(\ell)}$ is the generating series of $k$-layered maps given by \cref{eq def F}. The main purpose of this section is to prove the following theorem.
\begin{thm}[Vanishing property]\label{thm vanishing} 
Let $\lambda$ be a partition of size $n$. Then the function
$F(\lambda)$ is a polynomial in $t$ of degree less or equal than $n$. In other terms, if $m>n$ then
$$[t^m]F(\lambda)=0.$$
\end{thm}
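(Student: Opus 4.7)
The plan is to induct on the length $\ell = \ell(\lambda)$. The base case $\ell = 0$ is immediate since $F^{(0)} = 1$ is a polynomial in $t$ of degree $0$. For the inductive step, \cref{prop expr F} gives
\[ F(\lambda) = \exp(\Binf(-t,\bfp,-\alpha\lambda_1)) \cdot F^{(\ell-1)}(t,\bfp,\lambda_2,\dots,\lambda_\ell), \]
and by the induction hypothesis the second factor is a polynomial in $t$ of degree at most $|\lambda|-\lambda_1$. Since $\Binf$ contains no $t$-derivatives, the statement reduces layerwise in the $t$-expansion to the following \emph{key lemma}: for every $H \in \mcP$ and every non-negative integer $n$, the element $\exp(\Binf(-t,\bfp,-\alpha n)) \cdot H$ is a polynomial in $t$ of degree at most $n$.

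The starting point for the key lemma is the explicit vacuum identity
\[ \exp(\Binf(-t,\bfp,u)) \cdot 1 = \sum_{m \geq 0} \frac{(-t)^m}{m!\,\alpha^m}\, J^{(\alpha)}_{(m)}(\underline u)\, J^{(\alpha)}_{(m)}(\bfp), \]
whose crucial feature is the factor $J^{(\alpha)}_{(m)}(\underline u) = \prod_{j=0}^{m-1}(u + \alpha j)$ from \cref{thm Jack formula}, which vanishes at $u = -\alpha n$ whenever $m > n$. Because the operators $\B_n(\bfp,u)$ for fixed $u$ commute (\cref{prop comm B}), one has
\[ \partial_t\, \exp(\Binf(-t,\bfp,u)) = -\sum_{n\geq 1}(-t)^{n-1}\B_n(\bfp,u)\,\exp(\Binf(-t,\bfp,u)), \]
so I would verify the identity by checking that both sides satisfy the same first-order ODE with initial value $1$ at $t=0$; this reduces to computing the action of $\B_n$ on each $J^{(\alpha)}_{(m)}$ using \cref{prop Bn} and the Laplace--Beltrami eigenvalue characterization of Jacks (\cref{defprop:Jack}). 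Specializing $u = -\alpha n$ proves the key lemma for $H = 1$.

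To extend to general $H$, I would expand in the Jack basis and prove a Pieri-type identity
\[ \exp(\Binf(-t,\bfp,u)) \cdot J^{(\alpha)}_\mu(\bfp) = \sum_{\nu \supseteq \mu} (-t)^{|\nu/\mu|}\, C_{\mu\to\nu}(u,\alpha)\, J^{(\alpha)}_\nu(\bfp), \]
where $C_{\mu\to\nu}(u,\alpha)$ is supported on horizontal $\nu/\mu$-strips and factors as a product of $|\nu/\mu|$ linear factors in $u$ of the form $u + \alpha c$ that force vanishing at $u = -\alpha n$ whenever $|\nu/\mu| > n$. Combinatorially this would reflect the fact that $\exp(\Binf(-t,\bfp,-\alpha n))$ adds a new top layer (with white-vertex weight $-\alpha n$) to a layered map, and the constraint that white vertices have at least one neighbor in their own layer bounds the horizontal strip of newly introduced edges by $n$.

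The main obstacle is precisely this Pieri-type identity for $\exp(\Binf)$ acting on arbitrary Jacks: the truncation at $t$-degree $n$ is delicate because it requires tracking how linear factors in $u$ assemble into the Jack Pieri coefficients. I expect the proof to lean on the commutation relations developed in \cref{sec com rel} (extending \cref{prop comm B}), and to work most cleanly at the level of the catalytic operators $\GY,\,Y_+,\,\Theta_Y$ applied to ``dressed'' states $y_0\cdot J^{(\alpha)}_\mu(\bfp) \in \PY$, rather than trying to analyze $\B_n$ directly on $\mcP$.
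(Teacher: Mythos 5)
Your ``key lemma'' as stated — that $\exp\bigl(\Binf(-t,\bfp,-\alpha n)\bigr)\cdot H$ has $t$-degree at most $n$ for \emph{every} $H\in\mcP$ — is much stronger than what the paper proves, and the distinction is exactly where the real content lies. The paper's annihilation statement (\cref{prop annhilation}) only holds for $H=\Jxi(\bfp)$ with $\xi_1\leq s$, i.e.\ for $H$ in the filtered subspace $\Ps$. This restriction is not cosmetic: the proof proceeds by showing $[t^\ell]\tau_b(t,\bfp,\underline 1,\underline{-\alpha s})=0$ for $\ell>s$ (because any $\xi\vdash\ell$ contains the box $(s+1,1)$ or $(1,2)$), and then extracting the $\Jxi(\bfq)$-coefficient to deduce $[t^\ell]\exp(\Binf(t,\bfp,-\alpha s))\cdot \Jxi(\bfp)\cdot\Jxi(\underline{-\alpha s})/j_\xi^{(\alpha)}=0$. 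This only yields information about $\exp(\Binf)\cdot\Jxi$ when $\Jxi(\underline{-\alpha s})\neq 0$, which by \cref{lem Ps} is precisely the condition $\xi_1\leq s$. For $\xi_1>s$ the argument gives $0=0$, and the paper makes no claim about the $t$-degree of $\exp(\Binf(-t,\bfp,-\alpha s))\cdot\Jxi$ in that regime. Your verification for $H=1$ is a correct special case (it is essentially $\tau_b(-t,\bfp,\underline 1,\underline u)$ restricted to single-row $\xi$), but the ``Pieri-type identity'' you propose for general $\mu$ — with support on horizontal strips and linear factors forcing vanishing — is precisely the part you identify as the main obstacle, and it is not what the paper establishes; no such Pieri structure appears.

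The second, equally important missing ingredient is the stability property (\cref{prop stability}): each $\B_m(\bfp,-\alpha s)$ preserves $\Ps$. This is what makes the filtration usable through an iteration. In your induction on $\ell$, the coefficients $[t^j]F^{(\ell-1)}(t,\bfp,\lambda_2,\dots,\lambda_\ell)$ must lie in $\mcP_{\leq\lambda_1}$ before the annihilation property can be applied — and that follows only from stability combined with $\lambda_2\leq\lambda_1$. The paper's actual proof of \cref{thm vanishing} does not induct on $\ell$; instead it expands $[t^m]F(\lambda)$ as a sum over compositions $(n_1,\dots,n_\ell)$ of $m$, observes that some $n_i>\lambda_i$, uses stability to place the partial product (indices $\ell$ down to $i+1$) into $\mcP_{\leq\lambda_i}$, and then applies the annihilation property at index $i$. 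Your final speculation — that the proof leans on the commutation relations of \cref{sec com rel} and catalytic dressed states — is also off-target: those relations are developed for the shifted symmetry property (\cref{thm symmetry}), while the vanishing property rests entirely on \cref{thm ChapuyDolega2022}, the filtration $\Ps$, and the Jack-content specializations of \cref{thm Jack formula}.
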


Combinatorially, the vanishing property is equivalent to saying that
the total contribution of maps with more than $|\lambda|$ edges is
zero in the series $F(\lambda)$. In the cases $b=0$ and
$b=1$, a combinatorial proof of this property was given in
\cite{FeraySniady2011a} and \cite{FeraySniady2011},
respectively. However, such a proof does not seem to work for the
general $b$ because of the presence of the $b$-weight.

\subsection{The function \texorpdfstring{$\tau_b$}{}}
We consider the function $\tau_b$ introduced in \cite{ChapuyDolega2022}.
$$\tau_b(t,\bfp,\bfq,\underline u):=\sum_{\xi\in
  \mathbbm{Y}}t^{|\xi|}\frac{\Jxi(\bfp)\Jxi(\bfq)\Jxi(\underline
  u)}{j^{(\alpha)}_\xi} \in \mathbb{Q}(\alpha)[\bfp,\bfq,u] \llbracket t \rrbracket.$$




The following theorem, due to Chapuy and the second author, will play a crucial role in this section.
\begin{thm}\cite[Theorem 5.7]{ChapuyDolega2022}\label{thm ChapuyDolega2022}
For any $m\geq1$, we have 
$$t^m\frac{\B_m(\bfp,u)}{m}\cdot\tau_b(t,\bfp,\bfq, \underline u)=\frac{\partial}{\partial q_m}\tau_b(t,\bfp,\bfq, \underline u).$$
\end{thm}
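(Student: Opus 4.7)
Plan: The identity has the flavor of an ``adjoint'' relation between the creation operator $\B_m(\bfp,u)$ (acting in $\bfp$) and the elementary differential $\partial/\partial q_m$ (acting in $\bfq$), realized across the Jack-polynomial kernel $\tau_b$. My plan is to split it into manageable pieces by expanding $\tau_b$ in the Jack basis in $\bfq$, reducing the claim to a Pieri-type family of formulas for $\B_m$ acting on Jack polynomials.

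First, I would extract the coefficient of $J^{(\alpha)}_\eta(\bfq)$ on both sides. Writing $\partial_{q_m}\Jxi(\bfq)=\sum_{|\eta|=|\xi|-m}c^{(m)}_{\xi,\eta}J^{(\alpha)}_\eta(\bfq)$ in the Jack basis, and using that $\B_m(\bfp,u)$ commutes with everything in $\bfq$, the theorem becomes equivalent to the family of Pieri-type identities
\begin{equation*}
\frac{\B_m(\bfp,u)}{m}\cdot\frac{J^{(\alpha)}_\eta(\bfp)\,J^{(\alpha)}_\eta(\underline{u})}{j^{(\alpha)}_\eta}=\sum_{|\xi|=|\eta|+m}c^{(m)}_{\xi,\eta}\cdot\frac{\Jxi(\bfp)\,\Jxi(\underline{u})}{j^{(\alpha)}_\xi},
\end{equation*}
one per partition $\eta$ and $m\geq1$. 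The $u$-dependence is encoded by Macdonald's formula $\Jxi(\underline{u})=\prod_{\Box\in\xi}(u+c_\alpha(\Box))$, so the content-tracking factors in $u$ appearing on the right must emerge precisely from the iterated action of $\GY+uY_+$ hidden inside $\B_m$.

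Second, I would verify these identities by induction on $m$, exploiting the catalytic description $\B_m=\Theta_Y(\GY+uY_+)^m\cdot y_0/(1+b)$ together with the commutation relation $[\B_m,\B_\ell]=0$ of \cref{prop comm B}. The base case $m=1$ is a direct computation: $\GY\cdot y_0=0$ (since every summand of $\GY$ requires a $\partial/\partial y_{j-1}$ or $\partial/\partial y_i$ with $j\geq 1$ or $i\geq 1$, which annihilates $y_0$), while $Y_+\cdot y_0=y_1$ and $\Theta_Y\cdot y_1=p_1$, so $\B_1(\bfp,u)=up_1/\alpha$, and the required identity reduces to the standard ``multiplication by $p_1$'' Pieri formula for Jack polynomials. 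For the inductive step, I would package the $\B_m$'s through the generating series $\Binf(t,\bfp,u)=\sum_m\frac{t^m}{m}\B_m$ and use \cref{prop comm B} to reorder successive applications, propagating the Pieri formula from $m-1$ to $m$.

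The main obstacle is that $\GY$ does \emph{not} preserve the Jack basis, so intermediate steps of the induction involve non-Jack expressions that must be recombined carefully using Macdonald's content formula and the diagonal action of the Laplace--Beltrami operator $D_\alpha$. A possible alternative that sidesteps this difficulty is a direct combinatorial approach: one interprets $\tau_b$ as a generating series for $b$-weighted bipartite maps (using a Cauchy-type identity to disassemble the product $J_\xi(\bfp)J_\xi(\bfq)$), under which $\partial/\partial q_m$ corresponds to distinguishing a face of degree $m$, while \cref{prop Bn} shows that $\B_m(\bfp,u)$ inserts a black vertex of degree $m$ with $u$ tracking new white leaves. The identity then becomes a face/vertex duality that can be verified bijectively, with the prefactor $t^m/m$ accounting for both the edge-count grading and the cyclic symmetry of the root-corner choice at the newly inserted vertex.
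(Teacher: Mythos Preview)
The paper does not prove this theorem: it is imported verbatim from \cite[Theorem~5.7]{ChapuyDolega2022}, so there is no proof in the present paper to compare against. That said, your proposed argument has two genuine gaps.

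First, and most seriously, your inductive step invokes \cref{prop comm B} (the commutativity $[\B_m,\B_\ell]=0$). In this paper that proposition is \emph{derived from} \cref{thm ChapuyDolega2022}: the proof of \cref{prop comm B} begins by applying \cref{thm ChapuyDolega2022} to turn $\B_m,\B_\ell$ into commuting $\partial/\partial q_m,\partial/\partial q_\ell$. So your induction is circular.

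Second, your base case is wrong. You claim $\GY\cdot y_0=0$ because every summand contains $\partial/\partial y_{j-1}$ with $j\geq 1$ or $\partial/\partial y_i$ with $i\geq 1$; but $j=1$ gives $\partial/\partial y_0$, which does \emph{not} annihilate $y_0$. In fact $\GY\frac{y_0}{1+b}f=\sum_{i\geq 1}i\,y_{i+1}\frac{\partial f}{\partial p_i}$, so applying $\Theta_Y$ one finds
\[
\B_1(\bfp,u)=\frac{u\,p_1}{\alpha}+\sum_{i\geq 1}i\,p_{i+1}\frac{\partial}{\partial p_i},
\]
which is not multiplication by $up_1/\alpha$. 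The $m=1$ identity is therefore not the bare Pieri rule for $p_1$: the extra degree-shifting piece must also be matched against the structure constants $c^{(1)}_{\xi,\eta}$ and the content factors $J_\xi(\underline{u})/J_\eta(\underline{u})$, and you have not done this.

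Your alternative sketch via map combinatorics is closer in spirit to how the result is actually established in \cite{ChapuyDolega2022}, where $\tau_b$ is shown to be a $b$-weighted generating series of bipartite maps and \cref{thm ChapuyDolega2022} encodes a decomposition isolating a black vertex; but this is not a simple face/vertex bijection, and the matching of $b$-weights under the MON is the substantive content that your one-sentence description does not address.
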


We start by giving a construction of the function $\tau_b$ using the operators $\B_m(\bfp,u)$. This construction is related to the fact that $\tau_b$ is the generating series of bipartite maps, see \cite{ChapuyDolega2022, BenDali2022a}.

\begin{cor}\label{prop tau}
The function $\tau_b$ has the following expression:
$$\tau_b(t,\bfp,\bfq,\underline{u})=\exp\left(\sum_{m\geq1}\frac{t^mq_m}{m}\B_m(\bfp,u)\right)\cdot
1.$$
\end{cor}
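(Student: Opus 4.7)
The plan is to characterize both sides of the identity as the unique formal power series in $\bfq$ with coefficients in $\mathbb{Q}(\alpha)[\bfp,u]\llbracket t\rrbracket$ satisfying the system of first-order linear PDEs
$\partial_{q_m} F = (t^m/m)\B_m(\bfp,u) F$, for $m\geq 1$,
subject to the initial condition $F|_{\bfq=0}=1$.

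I would first verify the initial condition on both sides. For $\tau_b$, note that $J^{(\alpha)}_\xi(\bfq)|_{\bfq=0}=\delta_{\xi,\emptyset}$: indeed $J^{(\alpha)}_\xi$ for $\xi\neq\emptyset$ is a positive-degree polynomial in the power-sums with no constant term, while $J^{(\alpha)}_\emptyset=1$ and $j_\emptyset^{(\alpha)}=1$. Hence only the $\xi=\emptyset$ term survives, giving $\tau_b|_{\bfq=0}=1$. For the right-hand side, $\exp(0)\cdot 1=1$ trivially.

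Next, I would show both sides satisfy the PDE system. For $\tau_b$ this is precisely the content of \cref{thm ChapuyDolega2022}. For the right-hand side, set $E:=\sum_{k\geq 1}(t^kq_k/k)\B_k(\bfp,u)$. By \cref{prop comm B} the operators $\B_m(\bfp,u)$ commute pairwise, and since $q_k, t$ are scalars commuting with everything, $[\B_m,E]=\sum_k(t^kq_k/k)[\B_m,\B_k]=0$. Consequently $\partial_{q_m}\exp(E)=(t^m/m)\B_m\exp(E)$ in the usual way, and applying both sides to $1\in\mcP$ gives the required PDE.

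Finally, I would establish uniqueness. Expanding any solution as $F=\sum_\mu c_\mu(t,\bfp,u)\,q_\mu$ over partitions and using $\partial_{q_m}q_\nu=m_m(\nu)\,q_{\nu\setminus\{m\}}$, the PDE yields the recursion $c_{\nu\cup\{m\}}=\frac{t^m}{m\cdot m_m(\nu\cup\{m\})}\B_m c_\nu$. Iterating, and using the identity $z_\mu=\prod_j\mu_j\cdot\prod_im_i(\mu)!$ together with the commutativity of the $\B_m$ (so that the order of application is immaterial), one obtains the closed form
$c_\mu=(t^{|\mu|}/z_\mu)\,\B_{\mu_1}\cdots\B_{\mu_{\ell(\mu)}}\,c_\emptyset$.
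Since both $\tau_b$ and the right-hand side have $c_\emptyset=1$, they must coincide. No step looks delicate: the real work has already been done in \cref{thm ChapuyDolega2022} and in \cref{prop comm B}; commutativity is the key algebraic input that makes the exponential ansatz make sense, while Féray--Chapuy-Dołęga's theorem provides the PDE that $\tau_b$ itself satisfies.
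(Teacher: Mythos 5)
Your proof is correct and rests on the same two ingredients the paper uses: \cref{thm ChapuyDolega2022} as the source of the relation $\partial_{q_m}\tau_b = (t^m/m)\B_m\tau_b$, and combinatorics of $z_\mu$ to match multiplicities. The presentation differs slightly in flavor: the paper proves the identity directly by extracting $[t^n q_\mu]$ from $\tau_b$, applying \cref{thm ChapuyDolega2022} iteratively, and averaging over reorderings of $\mu$ to pick up the factor $\prod_i m_i(\mu)!$ (thereby avoiding any explicit appeal to \cref{prop comm B}); you instead characterize both sides as the unique solution to the first-order system $\partial_{q_m}F = (t^m/m)\B_m F$ with $F|_{\bfq=0}=1$, verify the exponential satisfies that system using \cref{prop comm B} explicitly, and then derive uniqueness by a recursion on the $q_\mu$-coefficients. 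Your uniqueness argument is airtight and your appeal to \cref{prop comm B} is legitimate (its proof, which appears later in the section, depends only on \cref{thm ChapuyDolega2022} and not on the present corollary, so there is no circularity). One small stylistic remark: for the uniqueness step, commutativity of the $\B_m$ is not actually required — the recursion determines each $c_\mu$ uniquely from any single $c_\nu$ with $|\nu|<|\mu|$ by induction on $|\mu|$; commutativity is only needed to show that the exponential ansatz is consistent, i.e., for existence. This is a minor point and does not affect correctness.
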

\begin{rmq}
  \label{rmq:WellDef}
  Recall that $\B_m(t,\bfp,u) \colon \mcP
\rightarrow \mcP[u]$, therefore
$\exp\left(\sum_{m\geq1}\frac{t^mq_m}{m}\B_m(\bfp,u)\right)\colon \mcP
\rightarrow \mathbb{Q}(\alpha)[\bfp,\bfq,u] \llbracket t \rrbracket$ is a well-defined operator.
  \end{rmq}
\begin{proof}
Fix an integer $n\geq1$ and a partition $\mu\vdash n$.
We prove that 
\begin{equation}\label{eq prop tau}
  [t^n q_\mu]\tau_b(t,\bfp,\bfq,\underline{u})=\frac{1}{\ell(\mu)!}\sum_{\gamma\models \mu}\frac{\B_{\gamma_{\ell(\mu)}}(\bfp,u)}{\gamma_{\ell(\mu)}}\cdots \frac{\B_{\gamma_1}(\bfp,u)}{\gamma_1} \cdot1,  
\end{equation}
where the sum is taken over all the reorderings $\gamma$ of $\mu$.
We start by noticing that, for any reordering $\gamma$ of $\mu$,
\begin{align*}
    [t^n q_\mu]\tau_b(t,\bfp,\bfq,\underline{u})
    =[t^n 1_{\bfq}]\left(\prod_{j\geq 1}\frac{1}{m_j(\mu)!}\right)\left(\prod_{1\leq i\leq \ell(\mu)}\frac{\partial}{\partial q_{\gamma_i}}\right)\tau_b(t,\bfp,\bfq,\underline{u}),
\end{align*}
where $[1_{\bfq}]$ denotes the extraction of the constant term in the variables $\bfq$. Since there are $\ell(\mu)!\prod_{j\geq 1}\frac{1}{m_j(\mu)!}$ reorderings $\gamma$ of $\mu$, we can rewrite the last equation as follows
\begin{align*}
    [t^n q_\mu]\tau_b(t,\bfp,\bfq,\underline{u})
    =[t^n 1_{\bfq}]\frac{1}{\ell(\mu)!}\sum_{\gamma\models \mu}\frac{\partial}{\partial q_{\gamma_1}}\cdots \frac{\partial}{\partial q_{\gamma_{\ell(\mu)}}}\tau_b(t,\bfp,\bfq,\underline{u}).
\end{align*}

 
Using \cref{thm ChapuyDolega2022} and the fact that the operators $\frac{\partial}{\partial q_{i}}$ commute with the operators $\B_{j}(\bfp,u)$ we obtain
\begin{align*}
    [t^n q_\mu]\tau_b(t,\bfp,\bfq,\underline{u})
    &=[t^n 1_{\bfq}]\frac{1}{\ell(\mu)!}\sum_{\gamma\models \mu}\frac{t^{\gamma_{\ell(\mu)}}\B_{\gamma_{\ell(\mu)}}(\bfp,u)}{\gamma_{\ell(\mu)}} \cdots \frac{t^{\gamma_1}\B_{\gamma_1}(\bfp,u)}{\gamma_1} \tau_b(t,\bfp,\bfq,u)\\
    &=\frac{1}{\ell(\mu)!}\sum_{\gamma\models \mu}\frac{\B_{\gamma_{\ell(\mu)}}(\bfp,u)}{\gamma_{\ell(\mu)}} \cdots \frac{\B_{\gamma_1}(\bfp,u)}{\gamma_1} [t^0 1_{\bfq}]\tau_b(t,\bfp,\bfq,\underline{u})\\
    &=\frac{1}{\ell(\mu)!}\sum_{\gamma\models \mu}\frac{\B_{\gamma_{\ell(\mu)}}(\bfp,u)}{\gamma_{\ell(\mu)}} \cdots \frac{\B_{\gamma_1}(\bfp,u)}{\gamma_1}\cdot 1.
\end{align*}
This concludes the proof of \cref{eq prop tau} and hence the proof of the proposition.
\end{proof}

A second consequence of \cref{thm ChapuyDolega2022} is \cref{prop comm B}. The key idea of the proof is to consider the action of the commutators $\B_\ell(\bfp,u)$ and $\B_m(\bfp,u)$ on the function $\tau_b(\bfp,\bfq,\underline{u})$ and then extract some coefficient.

\begin{proof}[Proof of \cref{prop comm B}]
  \cref{thm ChapuyDolega2022} implies that
  \begin{equation}\label{eq commutation B tau}
    \left[\B_\ell(\bfp,u),\B_m(\bfp,u)\right]\cdot\tau_b(t,\bfp,\bfq,\underline{u})=0,
  \end{equation}
  where
$[\cdot,\cdot]$ denotes the commutator. Indeed,
\begin{align*}
    t^{\ell+m}\B_\ell(\bfp,u)\B_m(\bfp,u)\cdot\tau_b(t,\bfp,\bfq,\underline{u})
    &=\frac{\partial}{\partial q_m}\frac{\partial}{\partial
      q_\ell}\tau_b(t,\bfp,\bfq,\underline{u}) \\
  = \frac{\partial}{\partial q_\ell}\frac{\partial}{\partial
  q_m}\tau_b(t,\bfp,\bfq,\underline{u}) &= t^{\ell+m}\B_m(\bfp,u)\B_\ell(\bfp,u)\cdot\tau_b(t,\bfp,\bfq,\underline{u}),
\end{align*}
where the first and third equalities are consequences of \cref{thm
  ChapuyDolega2022} and the fact that $\frac{\partial}{\partial q_m}$ and $\B_\ell(\bfp,\underline{u})$ commute.
By extracting the coefficient of $\Jxi(\bfq)$ in \cref{eq commutation B tau}:
\begin{align*}
    \left[\B_\ell(\bfp,u),\B_m(\bfp,u)\right]\cdot\frac{\Jxi(\bfp)\Jxi(\underline{u})}{j^{(\alpha)}_\xi}=0.
\end{align*}
This concludes the proof, since Jack polynomials form a basis of
$\mcP$, and $\frac{\Jxi(\underline{u})}{j^{(\alpha)}_\xi} \neq 0$ by
\cref{eq j alpha} and
\cref{thm Jack formula}.
\end{proof}

\subsection{The space \texorpdfstring{$\Ps$}{}}
We define for any integer $s\geq 1$, the space $\Ps\subset \mcP$ by 
$$\Ps:=\Span_{\mathbb{Q}(b)}\left\{\Jxi(\bfp) \right\}_{\xi\in \mathbbm{Y},\xi_1\leq s}.$$

In this section we prove some properties of the action of $\B_m$ on $\Ps$. 

\begin{lem}\label{lem Ps}
Let $s$ be a positive integer and let $\xi$ be a partition. 
Then $\Jxi(\underline{-\alpha s})=0$ if and only if $\xi_1>s$. 
\begin{proof}
From \cref{thm Jack formula}, we know that $\Jxi(\underline{-\alpha s})=0$ if and only if $\xi$ contains the box $\Box_0=(s+1,1)$ , the only box satisfying $c_\alpha(\Box_0)=\alpha s.$ From the definition of the $\alpha$-content this condition is satisfied if and only if $\xi_1>s$.
\end{proof}
\end{lem}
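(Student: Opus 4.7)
The plan is to apply Macdonald's explicit formula (\cref{thm Jack formula}) for the specialization $J_\xi^{(\alpha)}(\underline{u})$, which expresses it as a product over boxes of $\xi$:
\[ J_\xi^{(\alpha)}(\underline{-\alpha s}) = \prod_{\Box \in \xi}\bigl(-\alpha s + c_\alpha(\Box)\bigr). \]
Thus the specialization vanishes iff at least one factor vanishes, i.e.\ iff there exists a box $\Box \in \xi$ whose $\alpha$-content equals $\alpha s$.

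Next I would identify which boxes $\Box = (i,j)$ satisfy $c_\alpha(\Box) = \alpha s$. Using the definition $c_\alpha(i,j) = \alpha(j-1) - (i-1)$ (see \cref{eq alpha content}) and regarding $\alpha$ as a transcendental indeterminate, the equation $\alpha(j-1) - (i-1) = \alpha s$ forces $j - 1 = s$ and $i - 1 = 0$ (matching the coefficient of $\alpha$ and the constant term separately). Hence the unique box with $\alpha$-content equal to $\alpha s$ is $\Box_0 = (1, s+1)$.

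Finally, I would observe that by the definition of the Young diagram, $\Box_0 = (1,s+1) \in \xi$ if and only if $\xi_1 \geq s+1$, equivalently $\xi_1 > s$. Combining, $J_\xi^{(\alpha)}(\underline{-\alpha s}) = 0$ iff $\xi_1 > s$, which is the claim.

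There is no real obstacle here; this is a short direct verification from \cref{thm Jack formula}. The only subtlety worth flagging is that the argument implicitly relies on $\alpha$ being indeterminate (so that $\alpha(j-1) - (i-1) = \alpha s$ has the unique integer solution $(i,j) = (1,s+1)$), which is the setting of the paper.
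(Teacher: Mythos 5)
Your proposal is correct and follows exactly the paper's argument: apply Macdonald's specialization formula (\cref{thm Jack formula}), identify the unique box of $\alpha$-content $\alpha s$, and translate its membership in $\xi$ into the condition $\xi_1 > s$. One small point worth noting: the paper's proof writes the box as $\Box_0 = (s+1,1)$, but with the paper's conventions $(i,j)$ (row $i$, column $j$) and $c_\alpha(i,j) = \alpha(j-1)-(i-1)$ the correct box is $(1,s+1)$, as you wrote; the paper's $(s+1,1)$ would have $\alpha$-content $-s$, and its membership in $\xi$ would translate to $\ell(\xi) > s$ rather than $\xi_1 > s$. Your version is the intended one, and you also make explicit the (correct) point that uniqueness of the box relies on $\alpha$ being an indeterminate.
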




The space $\Ps$ satisfies a stability property and a vanishing property with respect to the operators $\B_m$, which will play a key role in the proof of \cref{thm vanishing}.

\begin{prop}[Stability property]\label{prop stability}
For any $s\geq1$, the space $\Ps$ is stable by the operators $\B_m(\bfp,-\alpha s)$ for every $m\geq1.$
\begin{proof}
It is enough to prove that 
\begin{equation}\label{eq thm stability}
  \B_m(\bfp,-\alpha s)\cdot \Jxi(\bfp)\in\Ps  
\end{equation}
for every partition $\xi$ such that $\xi_1\leq s$. Fix such a partition $\xi$.

From \cref{thm ChapuyDolega2022}, we know that
$$t^m \B_m(\bfp,-\alpha s) \cdot \tau_b(t,\bfp,\bfq, \underline{-\alpha s})=\frac{m\partial}{\partial q_m}\tau_b(t,\bfp,\bfq,-\alpha s).$$
By extracting the coefficient of $t^{|\xi|+m}\Jxi(\bfq)$ in the last equation, we get
\begin{align*}
  \B_m(\bfp,-\alpha s) \cdot \frac{\Jxi(\bfp)\Jxi(\underline{-\alpha s})}{j_\xi^{(\alpha)}}
  &=\left[t^{|\xi|+m}J_\xi(\bfq)\right]\frac{m\partial}{\partial q_m}\tau_b(t,\bfp,\bfq,-\alpha s)\\
  &=\sum_{\pi\vdash |\xi|+m}\frac{J_\pi^{(\alpha)}(\bfp)J_\pi^{(\alpha)}(\underline{-\alpha s})}{j^{(\alpha)}_\pi}[J_\xi(\bfq)]\frac{m\partial}{\partial q_m}J_\pi(\bfq).
\end{align*}
Using \cref{lem Ps}, we know that $\frac{\Jxi(\underline{-\alpha s})}{j_\xi^{(\alpha)}}\neq 0$, and that the right hand-side of the last equation is in $\Ps$. This finishes the proof of \cref{eq thm stability} and hence the proof of the proposition.
\end{proof}
\end{prop}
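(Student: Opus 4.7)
The plan is to reduce the claim to showing, for each partition $\xi$ with $\xi_1\leq s$, that $\B_m(\bfp,-\alpha s)\cdot J^{(\alpha)}_\xi(\bfp)$ lies in $\Ps$, and then to extract this statement by a coefficient extraction from the Chapuy--Do\l\k{e}ga intertwining identity stated as \cref{thm ChapuyDolega2022}. The Jack basis $\{J^{(\alpha)}_\xi(\bfp)\}_{\xi_1\leq s}$ spans $\Ps$ by definition, so this reduction is immediate.

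First I would specialize the variable $u$ to $-\alpha s$ in the generating series $\tau_b(t,\bfp,\bfq,\underline{u})$. By the preceding lemma, $J^{(\alpha)}_\pi(\underline{-\alpha s}) = 0$ as soon as $\pi_1>s$, so the expansion
\[ \tau_b(t,\bfp,\bfq,\underline{-\alpha s}) \;=\; \sum_{\pi:\,\pi_1\leq s} t^{|\pi|}\,\frac{J^{(\alpha)}_\pi(\bfp)\,J^{(\alpha)}_\pi(\bfq)\,J^{(\alpha)}_\pi(\underline{-\alpha s})}{j^{(\alpha)}_\pi} \]
is supported on partitions with first row at most $s$; its $\bfp$-dependence is entirely through $\Ps$. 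Next I apply the identity
\[ \frac{t^m}{m}\,\B_m(\bfp,-\alpha s)\cdot \tau_b(t,\bfp,\bfq,\underline{-\alpha s}) \;=\; \frac{\partial}{\partial q_m}\,\tau_b(t,\bfp,\bfq,\underline{-\alpha s}), \]
which is the specialization $u = -\alpha s$ of \cref{thm ChapuyDolega2022}, and extract on both sides the coefficient of $t^{|\xi|+m}J^{(\alpha)}_\xi(\bfq)$.

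On the left-hand side this isolates $\B_m(\bfp,-\alpha s)\cdot J^{(\alpha)}_\xi(\bfp)$ multiplied by the scalar $J^{(\alpha)}_\xi(\underline{-\alpha s})/(m\cdot j^{(\alpha)}_\xi)$, which is nonzero precisely because $\xi_1\leq s$ (using the lemma together with the fact that $j^{(\alpha)}_\xi\neq 0$ via \cref{eq j alpha,thm Jack formula}). On the right-hand side the same coefficient extraction produces a finite $\mathbb{Q}(\alpha)$-linear combination of terms $J^{(\alpha)}_\pi(\bfp)\,J^{(\alpha)}_\pi(\underline{-\alpha s})/j^{(\alpha)}_\pi$ over partitions $\pi$ of size $|\xi|+m$, and the presence of the factor $J^{(\alpha)}_\pi(\underline{-\alpha s})$ forces each nonvanishing term to satisfy $\pi_1\leq s$; hence the right-hand side lies in $\Ps$. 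Dividing by the nonzero scalar yields $\B_m(\bfp,-\alpha s)\cdot J^{(\alpha)}_\xi(\bfp)\in\Ps$, proving the stability.

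I do not anticipate any genuine obstacle here: the Chapuy--Do\l\k{e}ga identity does all the heavy lifting and the lemma on vanishing of $J^{(\alpha)}_\pi(\underline{-\alpha s})$ provides exactly the projection onto $\Ps$. The only point requiring care is the nonvanishing of the scalar on the left-hand side when we divide to isolate $\B_m(\bfp,-\alpha s)\cdot J^{(\alpha)}_\xi(\bfp)$; this is guaranteed by the hypothesis $\xi_1\leq s$ and the explicit product formula for $J^{(\alpha)}_\xi(\underline{u})$.
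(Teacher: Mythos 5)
Your proof is correct and follows essentially the same route as the paper: reduce to Jack basis elements $J^{(\alpha)}_\xi$ with $\xi_1\leq s$, specialize $u=-\alpha s$ in the Chapuy--Do\l\k{e}ga identity (\cref{thm ChapuyDolega2022}), extract the coefficient of $t^{|\xi|+m}J^{(\alpha)}_\xi(\bfq)$, and use the vanishing lemma (\cref{lem Ps}) both to place the right-hand side in $\Ps$ and to justify dividing by the nonzero scalar $J^{(\alpha)}_\xi(\underline{-\alpha s})/j^{(\alpha)}_\xi$.
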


\begin{prop}\label{prop annhilation}
Fix an integer $s\geq 1$. The operator $\exp\left(\Binf(t,\bfp,-\alpha
  s)\right)$ is a well defined operator on $\Ps$ with the domain being
$\Span_{\Ps}\{1,t,\dots,t^{s}\}$. In other terms, for every $\xi$, such that $\xi_1\leq s$ and for every $\ell>s$, we have 
$$[t^\ell]\left(\exp\left(\Binf(t,\bfp,-\alpha s)\right)\cdot \Jxi (\bfp) \right)=0.$$
\begin{proof}
\cref{rmq:WellDef} implies that $\exp\left(\Binf(t,\bfp,-\alpha
  s)\right)$ is well-defined on $\mcP$, and consequencetly it is
well-defined on $\mathbb{Q}(\alpha)[\bfp,\bfq]
\llbracket z \rrbracket$.
In particular, we can investigate the action of
$\exp\left(\Binf(t,\bfp,-\alpha s)\right)$ on $\tau_b(z,\bfp,\bfq,\underline{-\alpha s})$. Using \cref{prop tau} we have
\begin{multline*}
  [t^\ell]\left(\exp\left(\Binf(t,\bfp,-\alpha s)\right)\cdot\tau_b(z,\bfp,\bfq,\underline{-\alpha s}) \right)
  \\=[t^\ell]\left(\exp\left(\Binf(t,\bfp,-\alpha s)\right)\exp\left(\sum_{m\geq1}\frac{z^mq_m}{m}\B_m(\bfp,-\alpha s)\right)\cdot 1 \right).  
\end{multline*}
But since the operators $\left(\B_m(\bfp,-\alpha s)\right)_{m\geq1}$ commute (see \cref{prop comm B}), the last equation can be rewritten as follows:
\begin{align*}
  [t^\ell]\left(\exp\left(\Binf(t,\bfp,-\alpha s)\right)\right.&\left.\cdot\tau_b(z,\bfp,\bfq,\underline{-\alpha s}) \right)\\
  &=\exp\left(\sum_{m\geq1}\frac{z^mq_m}{m}\B_m(\bfp,-\alpha s)\right)
    [t^\ell]\left(\exp\left(\Binf(t,\bfp,-\alpha s)\right)\cdot 1 \right)\\
  &=\exp\left(\sum_{m\geq1}\frac{z^mq_m}{m}\B_m(\bfp,-\alpha s)\right)\cdot[t^\ell]\tau_b(t,\bfp,\underline 1,\underline{-\alpha s}),
\end{align*}
where
$$[t^\ell]\tau_b(t,\bfp,\underline 1,\underline{-\alpha s})=\sum_{\xi\vdash \ell}\frac{\Jxi(\bfp)\Jxi(\underline{1})\Jxi(\underline{ -\alpha s})}{j^{(\alpha)}_\xi}.$$
We claim that 
  \begin{equation}\label{eq poly tau}
      [t^\ell]\tau_b(t,\bfp,\underline 1,\underline{-\alpha s})=0.
  \end{equation}
Since $\ell>s$, we know that any partition of size $\ell$ contains at least one of the two boxes $(s+1,1)$ and $(1,2)$, of respective $\alpha$-content $\alpha s$ and $-1$.
Hence, from \cref{thm Jack formula},  we know that
  $$\frac{J_\xi(\underline{1})J_\xi(\underline{-\alpha s})}{j_\xi^{(\alpha)}}=0$$
  for any partition $\xi$ of size $\ell$.  This proves \cref{eq poly tau}, and as a consequence, 
  $$  [t^\ell]\left(\exp\left(\Binf(t,\bfp,-\alpha s)\right)\cdot\tau_b(z,\bfp,\bfq,\underline{-\alpha s}) \right)=0.$$
  Let $\xi$ be a partition with $\xi_1\leq s$. We extract the coefficient of $z^{|\xi|}\Jxi(\bfq)$ in the last equation, and we use the fact that this extraction commutes with the action of $\exp\left(\Binf(t,\bfp,-\alpha s)\right)$:
  $$[t^\ell]\left(\exp\big(\Binf(t,\bfp,-\alpha s)\big)\cdot[z^{|\xi|}\Jxi(\bfq)]\tau_b(z,\bfp,\bfq,\underline{-\alpha s}) \right)=0.$$
  Hence
  $$[t^\ell]\left(\exp\big(\Binf(t,\bfp,-\alpha s)\big)\cdot\frac{\Jxi(\bfp)\Jxi(\underline{-\alpha s})}{j^{(\alpha)}_\xi}\right)=0.$$
  But from \cref{lem Ps} we know that $\Jxi(\underline{-\alpha s})\neq0$, which concludes the proof.
\end{proof}
\end{prop}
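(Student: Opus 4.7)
The plan is to use the generating function $\tau_b$ as a universal test function that simultaneously encodes all Jack polynomials $\Jxi$ with $\xi_1 \leq s$. The point of this approach is that the series $\tau_b(z,\bfp,\bfq,\underline{-\alpha s}) = \sum_\xi z^{|\xi|}\frac{\Jxi(\bfp)\Jxi(\bfq)\Jxi(\underline{-\alpha s})}{j_\xi^{(\alpha)}}$ already isolates $\Jxi(\bfp)$, up to the scalar $\Jxi(\underline{-\alpha s})/j_\xi^{(\alpha)}$, which is nonzero precisely when $\xi_1 \leq s$ by \cref{lem Ps}. So it suffices to show $[t^\ell]\left(\exp\left(\Binf(t,\bfp,-\alpha s)\right)\cdot\tau_b(z,\bfp,\bfq,\underline{-\alpha s})\right)=0$ and then extract $[z^{|\xi|}\Jxi(\bfq)]$.

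First I would rewrite the action of $\exp(\Binf)$ on $\tau_b$. Applying \cref{prop tau} we have $\tau_b(z,\bfp,\bfq,\underline{-\alpha s}) = \exp\left(\sum_{m\geq1}\tfrac{z^m q_m}{m}\B_m(\bfp,-\alpha s)\right)\cdot 1$, and since $\Binf(t,\bfp,-\alpha s) = \sum_{n \geq 1}\tfrac{t^n}{n}\B_n(\bfp,-\alpha s)$ is built from the same family of operators, all of which mutually commute by \cref{prop comm B}, the two exponentials commute. Swapping them gives
\[
\exp(\Binf(t,\bfp,-\alpha s))\cdot\tau_b(z,\bfp,\bfq,\underline{-\alpha s}) = \exp\bigl(\textstyle\sum_{m}\tfrac{z^m q_m}{m}\B_m(\bfp,-\alpha s)\bigr)\cdot\exp(\Binf(t,\bfp,-\alpha s))\cdot 1.
\]
Recognizing the rightmost factor as $\tau_b(t,\bfp,\underline 1,\underline{-\alpha s})$ (set $q_m=1$ in \cref{prop tau}), the problem reduces to showing $[t^\ell]\tau_b(t,\bfp,\underline 1,\underline{-\alpha s}) = 0$ for $\ell > s$.

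Next I would prove this vanishing from Macdonald's product formula (\cref{thm Jack formula}). For any $\xi\vdash \ell$, the relevant coefficient is proportional to $\Jxi(\underline 1)\Jxi(\underline{-\alpha s}) = \prod_{\Box\in\xi}(1+c_\alpha(\Box))(-\alpha s+c_\alpha(\Box))$. The factor $1+c_\alpha(\Box)$ vanishes only at $\Box = (2,1)$, so $\Jxi(\underline 1) = 0$ whenever $\ell(\xi)\geq 2$; the factor $-\alpha s+c_\alpha(\Box)$ vanishes only at $\Box=(1,s+1)$, so $\Jxi(\underline{-\alpha s})=0$ whenever $\xi_1\geq s+1$. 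Since $|\xi|=\ell>s$, either $\xi$ has at least two rows, or $\xi$ is a single row of length $\ell>s$; in both cases at least one of the two factors vanishes, so the coefficient is zero.

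To conclude, I would extract $[z^{|\xi|}\Jxi(\bfq)]$ on both sides of the identity $[t^\ell]\exp(\Binf(t,\bfp,-\alpha s))\cdot\tau_b(z,\bfp,\bfq,\underline{-\alpha s}) = 0$. Since $\exp(\Binf(t,\bfp,-\alpha s))$ acts only on $\bfp$, it commutes with the $z,\bfq$-extraction, yielding $[t^\ell]\exp(\Binf(t,\bfp,-\alpha s))\cdot \tfrac{\Jxi(\bfp)\Jxi(\underline{-\alpha s})}{j_\xi^{(\alpha)}}=0$; dividing by the nonzero scalar $\Jxi(\underline{-\alpha s})/j_\xi^{(\alpha)}$ gives the desired identity. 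The main technical obstacle will be verifying that all of these manipulations with exponentials of infinite sums of differential operators make sense as well-defined operations on $\mathbb{Q}(\alpha)[\bfp,\bfq,u]\llbracket t,z\rrbracket$, so that the formal commutations and coefficient extractions are legitimate; once \cref{prop comm B} is available this is a routine check, but it is the crucial bookkeeping step that makes the argument work.
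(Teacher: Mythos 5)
Your proposal is correct and follows essentially the same route as the paper: use $\tau_b$ as a universal test function, commute the two exponentials via \cref{prop comm B}, identify the result as $\tau_b(t,\bfp,\underline 1,\underline{-\alpha s})$, kill $[t^\ell]$ via Macdonald's evaluation formula, and then extract $[z^{|\xi|}\Jxi(\bfq)]$. As a side note, your identification of the vanishing boxes as $(2,1)$ and $(1,s+1)$ is the correct one under the paper's convention $c_\alpha(i,j)=\alpha(j-1)-(i-1)$; the paper's printed boxes $(s+1,1)$ and $(1,2)$ appear to have the coordinates transposed.
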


\subsection{Proof of \texorpdfstring{\cref{thm vanishing}}{}}
We now prove the main theorem of this section.
\begin{proof}[Proof of \cref{thm vanishing}]
Fix a partition $\lambda$ and an integer $m>|\lambda|$, and let us prove that
$$[t^m]F(\lambda)=0.$$ Let $\ell$ denote the length of $\lambda$.
\cref{eq F,eq Flambda} imply that
\begin{multline}\label{eq thm vanishing}
[t^m]F(\lambda)=\sum_{n_1,\dots,n_\ell\geq 1\atop n_1+\cdots+n_\ell=m}\left([t^{n_1}]\exp\big(\Binf(-t,\bfp,-\alpha \lambda_{1})\big)\right)\\
\cdots  \left([t^{n_\ell}]\exp\big(\Binf(-t,\bfp,-\alpha \lambda_{m})\big)\right)\cdot 1.
\end{multline}
Let $1\leq i\leq \ell$ be such that $n_i>\lambda_i$ (such an integer exists 
because $m>|\lambda|$).
Since $\lambda_1 \geq \cdots \geq \lambda_\ell$, we have a chain of
subspaces $1\in\mcP_{\leq \lambda_\ell} \subset \mcP_{\leq
  \lambda_{\ell-1}}\subset \cdots \subset \mcP_{\leq
  \lambda_{i+1}}$. Therefore \cref{prop stability} implies that 
\begin{equation*}
\bigl([t^{n_{i+1}}]\exp\big(\Binf(-t,\bfp,-\alpha
\lambda_{i+1})\big)\bigr)\cdots\bigl([t^{n_\ell}]\exp\big(\Binf(-t,\bfp,-\alpha
\lambda_{\ell})\big)\bigr)\cdot 1\in\mcP_{\leq\lambda_{i+1}} \subset \mcP_{\leq\lambda_{i}}.
\end{equation*}

We now apply \cref{prop annhilation} with  $s=\lambda_{i}$ and
$\ell=n_i$, and we get
\begin{equation}\label{eq 2 thm vanishing}
\bigl([t^{n_{i}}]\exp\big(\Binf(-t,\bfp,-\alpha \lambda_{i})\big)\bigr)\cdots\bigl([t^{n_\ell}]\exp\big(\Binf(-t,\bfp,-\alpha \lambda_{m})\big)\bigr)\cdot 1=0.
\end{equation}
As a consequence, the contribution of each term in the RHS of \cref{eq thm vanishing} is zero, which concludes the proof of the theorem.
\end{proof}

\section{The commutation relations of operators $\C_\ell$}\label{sec com rel}
The main purpose of this section is to prove \cref{thm com C}. 
\subsection{The operators $Y_{\ell,k}$}
We consider the following catalytic version of the operators $\C_{\ell,k}$ defined in \cref{ssec proof expr F}. If $\ell$ and $k$ are two integers, then $Y_{\ell,k}$ is defined by
$$Y_{\ell,k}:=\left\{\begin{array}{cc}
    [u^\ell]\left(\GY+uY_+\right)^{\ell+k} &  \text{ if } \ell,k\geq0,\\
    0 & \text{ otherwise.}
\end{array}\right.$$ on $\PY$. 
In practice, we think of $Y_{\ell,k}$ as  the sum of all successions of operators $Y_+$ and $\GY$ in which $Y_+$ appears $\ell$ times and $\GY$ appears $k$ times.
$Y_{\ell,k}$ and $\C_{\ell,k}$ are related by
\begin{equation}\label{eq C Y}
    \C_{\ell,k}=\Theta_Y Y_{\ell,k}\frac{y_0}{1+b}.
\end{equation}

These operators satisfy the following recursive relations.
\begin{lem}
Fix a pair of integers $(\ell,k)$.
One has
\begin{equation}\label{eq recursion Y}
  Y_{\ell,k}=Y_{\ell,k-1}\GY+Y_{\ell-1,k}Y_++\delta_{\ell,0}\delta_{k,0},
\end{equation}
where $\delta$ denotes the Kronecker delta.
Moreover, if $1\leq m\leq \ell$ then 
\begin{equation}\label{eq recursion Y 2}
  Y_{\ell,k}=\sum_{0\leq i\leq k}Y_{m-1,i}Y_+Y_{\ell-m,k-i},
\end{equation}
and if $1\leq m\leq \ell+k$, then 
\begin{equation}\label{eq recursion Y 3}
    Y_{\ell,k}=\sum_{0\leq j\leq m}Y_{j,m-j}Y_{\ell-j,k-m+j}.
\end{equation}

\begin{proof}
If $\ell\leq 0$ or $k\leq 0$ then the equations are immediate from the definition.
Let us suppose that $\ell> 0$ and $k> 0$.
In order to obtain \cref{eq recursion Y}, we expand $Y_{\ell,k}$ according to the rightmost operator; the sum of terms ending with $\GY$ (reps. $Y_+$) give $Y_{\ell,k-1}\GY$ (resp. $Y_{\ell-1,k}Y_+$).

Similarly, we obtain \cref{eq recursion Y 2} by expanding $Y_{\ell,k}$ according to the position of the $m$-th occurrence of the operator $Y_+$, and we obtain \cref{eq recursion Y 3} by expanding on the number of occurrences of $Y_+$ in the $m$ left operators. 
\end{proof}
\end{lem}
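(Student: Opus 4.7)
The plan is to exploit the fact that $Y_{\ell,k}$, by its very definition as a coefficient extraction from $(\GY + uY_+)^{\ell+k}$, admits a clean combinatorial interpretation as a sum of composed operators indexed by words. More precisely, expanding the noncommutative product gives
\begin{equation*}
(\GY+uY_+)^{\ell+k} = \sum_{w \in \{\GY,\,uY_+\}^{\ell+k}} w_1 w_2 \cdots w_{\ell+k},
\end{equation*}
and extracting the coefficient of $u^\ell$ restricts the sum to words containing exactly $\ell$ letters equal to $Y_+$ and $k$ letters equal to $\GY$. Call this set of words $\mathcal{W}_{\ell,k}$. Then $Y_{\ell,k} = \sum_{w \in \mathcal{W}_{\ell,k}} w_1 \cdots w_{\ell+k}$, and all three identities reduce to partitioning $\mathcal{W}_{\ell,k}$ appropriately.

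For the base/degenerate cases, if $\ell<0$ or $k<0$ then $\mathcal{W}_{\ell,k}=\emptyset$ and the identities hold trivially (with the convention $Y_{\ell,k}=0$); if $\ell=k=0$ the only word is the empty word, contributing the identity operator, which matches the $\delta_{\ell,0}\delta_{k,0}$ term in \eqref{eq recursion Y}. So I would assume $\ell,k\geq 0$ with $\ell+k\geq 1$.

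For \eqref{eq recursion Y}, I would partition $\mathcal{W}_{\ell,k}$ according to the last letter $w_{\ell+k}$. Words ending in $\GY$ biject with $\mathcal{W}_{\ell,k-1}$ via removing the last letter and contribute $Y_{\ell,k-1}\GY$; words ending in $Y_+$ similarly contribute $Y_{\ell-1,k}Y_+$. For \eqref{eq recursion Y 2}, fix $1\leq m\leq \ell$ and partition $\mathcal{W}_{\ell,k}$ by the position of the $m$-th occurrence of $Y_+$ (reading left to right). Conditioning instead on the number $i$ of $\GY$'s appearing before that position, the prefix ranges over $\mathcal{W}_{m-1,i}$ and the suffix over $\mathcal{W}_{\ell-m,k-i}$, giving the claimed sum. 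For \eqref{eq recursion Y 3}, fix $1\leq m\leq \ell+k$ and partition by $j := \#\{i\leq m : w_i = Y_+\}$: the first $m$ letters form a word in $\mathcal{W}_{j,m-j}$ and the remaining $\ell+k-m$ letters form a word in $\mathcal{W}_{\ell-j,k-m+j}$; out-of-range indices contribute zero by the convention, so summing $j$ from $0$ to $m$ is harmless.

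There is no real obstacle here — the entire content is the observation that $Y_{\ell,k}$ enumerates binary words of prescribed letter counts with the operator-product weighting, after which each identity is an elementary partition of this set. The only care needed is ensuring that the stated ranges of summation agree with the supports of the $Y_{\cdot,\cdot}$ factors, which is automatic from the vanishing convention.
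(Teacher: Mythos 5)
Your proof is correct and is essentially the paper's proof, just spelled out more explicitly via the language of words in $\{Y_+,\GY\}$; the three decompositions (by last letter, by position of the $m$-th $Y_+$, and by the count of $Y_+$'s among the first $m$ letters) are exactly those used in the paper.
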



\subsection{Catalytic operators in \texorpdfstring{$\tY$}{} and \texorpdfstring{$\tZ$}{}}
We consider three new alphabets $$Y':=\{y'_0,y'_1\dots\},\hspace{1cm}Z:=\{z_0,z_1,\dots\}, \hspace{0.4cm}\text{and}\hspace{0.4cm} Z':=\{z'_0,z'_1,\dots\}.$$
We also denote 
$$\tY:=Y\cup Y', \hspace{0.4cm} \text{and}\hspace{0.4cm}\tZ:=Z\cup Z'.$$  
Let $\PYZ$ be the space 
$$\PYZ=\Span_{\mathbb{Q}(b)}\left\{y_iz_jp_\lambda,y'_iz'_jp_\lambda\right\}_{i,j\in\mathbb{N},\lambda\in\mathbb{Y}}.$$

In this section, we use several differential operators acting on the space $\PYZ$ which have been introduced in \cite{ChapuyDolega2022}. 




We define the operators $Y'_+$ and $\Gamma_{Y'}$ by replacing $y_i$ by $y'_i$ in \cref{eq Y+} and \cref{eq LambdaY}, respectively. Similarly, we define $Z_+$, $Z'_+$, $\Gamma_{Z}$ and $\Gamma_{Z'}$. 
We also consider the catalytic operators in the two variables $\tY$ and $\tZ$.
$$\Gamma^{Y,Y'}_{Z,Z'}=(1+b)\cdot\sum_{i,j,k\geq 1}\frac{y'_{i+j-1}z'_{k} \partial^2}{\partial y_{i+k-1}\partial z_{j-1}} +\sum_{i,j,k\geq1} \frac{y_{i+j-1}z_{k} \partial^2}{\partial y'_{i+k-1}\partial z'_{j-1}}+b\cdot \sum_{i,j,k\geq1}\frac{y'_{i+j-1} z'_{k}\partial^2}{\partial y'_{i+k-1}\partial z'_{j-1}},$$

$$\GtZ=\Gamma_Z+\Gamma_{Z'}+\Gamma_{Z,Z'}^{Y,Y'},\qquad \text{and} \qquad \tZ_+=Z_+ +Z'_+.$$
We also consider the following operator
$$\Theta _{\tZ}=\sum_{i\geq0} p_i\frac{\partial}{\partial
  z_i}+\sum_{i,j\geq0}y_{i+j}\frac{\partial^2}{\partial y'_i \partial
  z'_j}\colon \PYZ \rightarrow \PY.$$

Similarly, the operators $\Gamma^{Z,Z'}_{Y,Y'}$, $\Gamma_{\tY}$, $\tY_+$, $\Theta_{\tY}$ are defined by exchanging $z_i\leftrightarrow y_i$ and $z'_i\leftrightarrow y'_i$ in the previous definitions.
Moreover, let $\Delta$ be the operator
$$\Delta:=(1+b)\cdot \sum_{i,j\geq0}\frac{y'_jz'_i\partial^2}{\partial y_i\partial z_j}+\sum_{i,j\geq0}\frac{y_jz_i\partial^2}{\partial y'_i\partial z'_j}+b\cdot\sum_{i,j\geq 0}\frac{y'_jz'_i\partial^2}{\partial y'_i\partial z'_j}.$$

We now consider a two catalytic variables version of $Y_{\ell,k}$, defined as the operators on $\PYZ$ given by
\[\widetilde{Y}_{\ell,k}:=\begin{cases}[u^\ell]\left(\GtY+u \tY_+\right)^{\ell+k} &\text{ if }\ell,k\geq0,  \\
     0 &\text{ otherwise }\end{cases},\ \ \widetilde{Z}_{\ell,k}:=\begin{cases}[u^\ell]\left(\GtZ+u \tZ_+\right)^{\ell+k} &\text{ if }\ell,k\geq0,  \\
     0 &\text{ otherwise }\end{cases},\]
Finally, we define the operator on $\PY$: 
\begin{equation}
  \label{eq:CYDef}
  \C^Y_{\ell,k}:=\Theta_{\widetilde{Z}}\tZ_{\ell,k}\frac{z_0}{1+b}.
  \end{equation}

\subsection{Preliminary commutation relations} 
In this section, we prove some commutation relations satisfied by
these operators. We will use some identities from the work of Chapuy
and the second author~\cite{ChapuyDolega2022}, which are stated for the operators $\Lambda_{\tilde{Y}},
\Lambda_{\tilde{Z}}$ that are related to our
$\Gamma_{\tilde{Y}}, \Gamma_{\tilde{Z}}$ by
\begin{equation}
  \label{eq:CDPomoc}
  \Gamma_{\tilde{Y}} =
\tY_+\Lambda_{\tilde{Y}}, \quad \Gamma_{\tilde{Z}} =
\tZ_+\Lambda_{\tilde{Z}}.
\end{equation}

\begin{lem}\label{lem Delta}
We have the following equalities between operators on $\PYZ$
$$\Delta \GtY=\GtZ\Delta,\ \Delta \GtZ=\GtY\Delta,\qquad \text{ and }
\qquad \Delta \tY_+=\tZ_+ \Delta,\ \Delta \tZ_+=\tY_+ \Delta.$$
As a consequence, for every $\ell,k\geq0$, we have
\begin{equation}\label{eq com Delta Y Z}
  \Delta \tY_{\ell,k}=\tZ_{\ell,k}\Delta,\ \Delta \tZ_{\ell,k}=\tY_{\ell,k}\Delta.   
\end{equation}
\begin{proof}
Not that each identity comes in pair with an identity obtained by exchanging
the alphabets $y_i \leftrightarrow z_i$, $y_i' \leftrightarrow z_i'$. Therefore it is enough to prove
only the first identity for each pair (this argument will appear
all over this section, so we will always prove only one of the
identities that appear in such pairs). The second identity is direct
from the definitions, and the first one follows from the second one,
\cref{eq:CDPomoc}, and the identity $\Lambda_{\tilde{Z}}\Delta = \Delta
\Lambda_{\tilde{Y}}$ from~\cite[Eq. (29a)]{ChapuyDolega2022}. \cref{eq com Delta Y Z} follows immediately.
\end{proof}
\end{lem}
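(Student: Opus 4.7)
The plan is to exploit two features. First, every identity comes in a pair obtained by exchanging the roles of the alphabets $(y_i,y_i')\leftrightarrow (z_i,z_i')$. Both sides of each identity, and the operator $\Delta$ itself, are manifestly symmetric under this swap, so it is enough to prove one identity in each pair, namely $\Delta\tY_+=\tZ_+\Delta$ and $\Delta\GtY=\GtZ\Delta$. Second, the definition of $\GtY$ and $\GtZ$ can be factored through $\tY_+$ and $\tZ_+$ via \cref{eq:CDPomoc}, reducing the harder commutation to an identity for the simpler operators $\Lambda_{\tilde Y}$ and $\Lambda_{\tilde Z}$ that has already been established in \cite{ChapuyDolega2022}.

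First I would verify $\Delta\tY_+=\tZ_+\Delta$ directly. Both $\tY_+=Y_++Y'_+$ and $\tZ_+=Z_++Z'_+$ are first-order linear shifts in a single alphabet, while $\Delta$ involves only the pairs $(y_i,z_j)$ and $(y'_i,z'_j)$ (unprimed with unprimed, primed with primed, plus a $b$-twisted primed term). Shifting the $y$-index in $\Delta$ by one is the same as shifting the $z$-index in $\Delta$ by one on the opposite side, which is exactly what the commutation asserts; this is a direct index bookkeeping on each of the three sums defining $\Delta$.

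Next, to obtain $\Delta\GtY=\GtZ\Delta$, I would write $\GtY=\tY_+\Lambda_{\tilde Y}$ and $\GtZ=\tZ_+\Lambda_{\tilde Z}$ using \cref{eq:CDPomoc}. The identity $\Lambda_{\tilde Z}\Delta=\Delta\Lambda_{\tilde Y}$ from \cite[Eq.~(29a)]{ChapuyDolega2022} gives $\Delta\Lambda_{\tilde Y}=\Lambda_{\tilde Z}\Delta$, and combined with the just-proved $\Delta\tY_+=\tZ_+\Delta$ we get
\[\Delta\GtY=\Delta\tY_+\Lambda_{\tilde Y}=\tZ_+\Delta\Lambda_{\tilde Y}=\tZ_+\Lambda_{\tilde Z}\Delta=\GtZ\Delta,\]
as desired. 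The mirror identity $\Delta\GtZ=\GtY\Delta$ follows from the $y\leftrightarrow z$ symmetry.

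Finally, to deduce \cref{eq com Delta Y Z}, recall that $\tY_{\ell,k}=[u^\ell](\GtY+u\tY_+)^{\ell+k}$. Since $\Delta(\GtY+u\tY_+)=(\GtZ+u\tZ_+)\Delta$ as operators on $\PYZ[u]$ by the two identities just proved, iterating $\ell+k$ times and extracting the coefficient of $u^\ell$ yields $\Delta\tY_{\ell,k}=\tZ_{\ell,k}\Delta$; the other equality is again the $y\leftrightarrow z$ mirror. The only genuine obstacle is the input $\Lambda_{\tilde Z}\Delta=\Delta\Lambda_{\tilde Y}$ from \cite{ChapuyDolega2022}, which we import as a black box; the rest is formal manipulation via the factorization \cref{eq:CDPomoc} and the symmetry argument.
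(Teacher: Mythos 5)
Your proof is correct and follows exactly the same route as the paper: reduce to one identity from each pair by the $y\leftrightarrow z$ symmetry of $\Delta$, verify $\Delta\tY_+=\tZ_+\Delta$ by inspection, then obtain $\Delta\GtY=\GtZ\Delta$ by combining the factorization $\GtY=\tY_+\Lambda_{\tilde Y}$, $\GtZ=\tZ_+\Lambda_{\tilde Z}$ with the imported identity $\Lambda_{\tilde Z}\Delta=\Delta\Lambda_{\tilde Y}$, and finally iterate $\Delta(\GtY+u\tY_+)=(\GtZ+u\tZ_+)\Delta$ to get \cref{eq com Delta Y Z}. The only difference is that you spell out the intermediate chain of equalities, which the paper leaves implicit.
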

We have the following commutation relations between $\tY$ and $\tZ$ operators.
\begin{lem}\label{lem prel 2}\cite{ChapuyDolega2022}
We have the following commutation relations on $\mathcal P_{\tY,\tZ}$,
\begin{equation}
\left[\tZ_+,\tY_+\right]=0, \qquad \left[\GtZ,\GtY\right]=0,
\end{equation}
\begin{equation}
\left[ \GtZ,\tY_+\right]=-\left[ \tZ_+,\GtY\right]=\tY_+\Delta \tY_+.
\end{equation}
\begin{proof}
  \cite[Eq. (30)]{ChapuyDolega2022} says that for $m,n \geq 0$ the following equation
  holds
\[[\tilde{Z}_+\Lambda_{\tilde{Z}}^{m+1},\tilde{Y}_+\Lambda_{\tilde{Y}}^{n}] +
  [\tilde{Z}_+\Lambda_{\tilde{Z}}^{n+1},\tilde{Y}_+\Lambda_{\tilde{Y}}^{m}]
  =\tilde{Y}_+\Lambda_{\tilde{Y}}^n\tilde{Z}_+\Lambda_{\tilde{Z}}^m\Delta +
  \tilde{Y}_+\Lambda_{\tilde{Y}}^m\tilde{Z}_+\Lambda_{\tilde{Z}}^n\Delta.\]
Special cases of this equation ($m=n=0$,
$m=n=1$ and $(m,n)=(1,0)$ resp.), \cref{eq:CDPomoc}, and \cref{lem Delta} finish the proof.
\end{proof}
\end{lem}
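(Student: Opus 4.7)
The first identity, $[\tZ_+, \tY_+] = 0$, is essentially a triviality: by definition $\tY_+ = Y_+ + Y'_+$ is a first-order differential operator involving only the variables $y_i$ and $y'_i$, while $\tZ_+ = Z_+ + Z'_+$ involves only $z_j$ and $z'_j$. Since the two families of variables are disjoint, each shift operator commutes with all the partial derivatives appearing in the other, and the commutator vanishes term by term.

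For the remaining identities, my plan is to invoke the master commutation relation of \cite[Eq.~(30)]{ChapuyDolega2022}, which asserts that for all $m,n\geq 0$
\[
[\tZ_+\Lambda_{\tZ}^{m+1},\tY_+\Lambda_{\tY}^{n}] + [\tZ_+\Lambda_{\tZ}^{n+1},\tY_+\Lambda_{\tY}^{m}] = \tY_+\Lambda_{\tY}^n\tZ_+\Lambda_{\tZ}^m\Delta + \tY_+\Lambda_{\tY}^m\tZ_+\Lambda_{\tZ}^n\Delta,
\]
and combine it with the factorizations $\GtY = \tY_+\Lambda_{\tY}$ and $\GtZ = \tZ_+\Lambda_{\tZ}$ coming from \eqref{eq:CDPomoc}, together with the intertwining relations $\Delta\tY_+ = \tZ_+\Delta$ and $\Delta\tZ_+ = \tY_+\Delta$ from Lemma~\ref{lem Delta}.

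The key specializations I would use are the following. First, setting $m=n=0$ produces $2[\GtZ,\tY_+] = 2\,\tY_+\tZ_+\Delta$; rewriting the right-hand side via $\tZ_+\Delta = \Delta\tY_+$ immediately yields $[\GtZ,\tY_+] = \tY_+\Delta\tY_+$. An analogous specialization with the roles of $\tY$ and $\tZ$ swapped (equivalently, taking the adjoint viewpoint on the same identity) gives $-[\tZ_+,\GtY] = \tY_+\Delta\tY_+$. Second, to access $[\GtZ,\GtY]=0$, I would combine the $(m,n)=(1,0)$ specialization, which produces $[\tZ_+\Lambda_{\tZ}^2,\tY_+] + [\GtZ,\GtY] = \tY_+\tZ_+\Lambda_{\tZ}\Delta + \GtY\tZ_+\Delta$, with the $m=n=1$ specialization, which pins down $[\tZ_+\Lambda_{\tZ}^2,\GtY]$ and, together with the already-established relation for $[\GtZ,\tY_+]$, determines $[\tZ_+\Lambda_{\tZ}^2,\tY_+]$ in a form that exactly cancels the right-hand side above.

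I expect the first commutation ($[\tZ_+,\tY_+]=0$) and the mixed commutation with $\tY_+\Delta\tY_+$ on the right to be essentially immediate once the master identity is in hand. The main obstacle is really notational: tracking the $\Lambda$-factors through the combination of the $m=n=1$ and $(m,n)=(1,0)$ specializations to isolate $[\GtZ,\GtY]$, and then verifying that the various $\Delta$-twists align using Lemma~\ref{lem Delta}. All of this is symbolic manipulation, so the only genuine difficulty is to keep the bookkeeping transparent.
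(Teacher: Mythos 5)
Your overall plan follows the paper exactly: quote Eq.~(30) from \cite{ChapuyDolega2022}, specialize at $m=n=0$, $m=n=1$ and $(m,n)=(1,0)$, and combine with $\GtY=\tY_+\Lambda_{\tY}$, $\GtZ=\tZ_+\Lambda_{\tZ}$ and \cref{lem Delta}. Your treatment of $[\tZ_+,\tY_+]=0$ (disjoint variables) and of $[\GtZ,\tY_+]=-[\tZ_+,\GtY]=\tY_+\Delta\tY_+$ via $m=n=0$ and the intertwining $\tZ_+\Delta=\Delta\tY_+$ is correct.

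The gap is in the argument for $[\GtZ,\GtY]=0$. You claim that the $m=n=1$ case (which gives $[\tZ_+\Lambda_{\tZ}^2,\GtY]=\GtY\GtZ\Delta$) ``together with the already-established relation for $[\GtZ,\tY_+]$ determines $[\tZ_+\Lambda_{\tZ}^2,\tY_+]$''. This does not follow as stated: writing $\GtY=\tY_+\Lambda_{\tY}$ and applying Leibniz to $[\tZ_+\Lambda_{\tZ}^2,\GtY]$ gives
\[
[\tZ_+\Lambda_{\tZ}^2,\GtY]=[\tZ_+\Lambda_{\tZ}^2,\tY_+]\,\Lambda_{\tY}+\tY_+\,[\tZ_+\Lambda_{\tZ}^2,\Lambda_{\tY}],
\]
which introduces the extra unknown commutator $[\tZ_+\Lambda_{\tZ}^2,\Lambda_{\tY}]$, so $[\tZ_+\Lambda_{\tZ}^2,\tY_+]$ is not pinned down. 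A route that actually closes the argument is to observe that, since $[\tZ_+,\tY_+]=0$, the $m=n=0$ relation reads $\tZ_+[\Lambda_{\tZ},\tY_+]=[\GtZ,\tY_+]=\tZ_+\tY_+\Delta$, and $\tZ_+$ is injective on $\PYZ$, so $[\Lambda_{\tZ},\tY_+]=\tY_+\Delta$; now compute $[\tZ_+\Lambda_{\tZ}^2,\tY_+]=\GtZ[\Lambda_{\tZ},\tY_+]+[\GtZ,\tY_+]\Lambda_{\tZ}$ and check, using $\Delta\Lambda_{\tZ}=\Lambda_{\tY}\Delta$ (a consequence of \cref{lem Delta} and the same cancellation), that it equals the right-hand side $\tY_+\GtZ\Delta+\GtY\tZ_+\Delta$ of the $(1,0)$ specialization, forcing $[\GtZ,\GtY]=0$. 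So your identification of the relevant ingredients is right, but the step producing $[\GtZ,\GtY]=0$ needs to be replaced by something along these lines rather than the Leibniz-on-$\GtY$ manipulation you sketched.
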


\begin{lem}\label{lem prel 1}
The following operators are equal:
\begin{subequations}
\begin{equation}\label{eq com lem 1}
    \Theta_{\tZ}\tY_{i,j}=Y_{i,j}\Theta_{\tZ}\colon \PYZ
    \rightarrow\PY,\ \ \      \Theta_{\tY}\tZ_{i,j}=Z_{i,j}\Theta_{\tY}\colon \PYZ
    \rightarrow\PZ\text{ for } i,j\geq0,
\end{equation}
\begin{equation}\label{eq com lem 2}
    \Theta_{Y}\Theta_{\tZ}=\Theta_Z\Theta_{\tY}\colon \PYZ \rightarrow
    \mcP,
\end{equation}
\begin{equation}\label{eq com lem 3}
    \Theta_{\tY}z_i=z_i\Theta_{Y}\colon \PY\rightarrow\PZ,\ \ \ \Theta_{\tZ}y_i=y_i\Theta_{z}\colon \PZ\rightarrow\PY \text{ for } i\geq0,
\end{equation}
\begin{equation}\label{eq com lem 4}
    \Theta_{\tZ}\Delta\frac{z_0}{1+b}=1\colon\PY\rightarrow\PY,\ \ \     \Theta_{\tY}\Delta\frac{y_0}{1+b}=1\colon\PZ\rightarrow\PZ,
\end{equation}    
\begin{equation}\label{eq com lem 5}
        \tY_{i,j}z_0=z_0Y_{i,j}\colon \PY
        \rightarrow\PYZ,\ \ \ \tZ_{i,j}y_0=y_0Z_{i,j}\colon \PZ
        \rightarrow\PYZ, \text{ for } i,j\geq0,
\end{equation}
\begin{equation}\label{eq com lem 6}
        \C^Y_{i,j}\frac{y_0}{1+b}=\frac{y_0}{1+b}\C_{i,j} \text{ for } i,j\geq0, \text{ as operators from $\mcP$ to $\PY$}.
\end{equation}
\end{subequations}

\begin{proof}
  \cref{eq com lem 1} is a consequence of \cref{eq:CDPomoc} and the identities
  \[ \Theta_{\tilde{Z}}\Lambda_{\tilde{Y}} =
                                                    \Lambda_{Y}\Theta_{\tilde{Z}},\
                                                    \Theta_{\tilde{Y}}\Lambda_{\tilde{Z}}
                                                    =
                                                    \Lambda_{Z}\Theta_{\tilde{Y}},\
                                                    \ \
                                                    \Theta_{\tilde{Z}}\tilde{Y}_+
                                                    =
                                                    Y_+\Theta_{\tilde{Z}},\
                                                  \Theta_{\tilde{Y}}\tilde{Z}_+= Z_+\Theta_{\tilde{Y}} \]
  proved in~\cite[Eqs. (29e) and (29f)]{ChapuyDolega2022}. Eq. \eqref{eq com lem 2}---\eqref{eq com lem 5}  are direct from the definitions. 
\cref{eq com lem 5}
gives $$\tZ_{i,j}\frac{y_0}{1+b}=\frac{y_0}{1+b}Z_{i,j}\colon \PZ\rightarrow\PYZ.$$
\cref{eq:CYDef} implies that applying $\Theta_{\tZ}$ on the left and
$\frac{z_0}{1+b}$ on the right we get
$$\C^Y_{i,j}\frac{y_0}{1+b}=\Theta_{\tZ}\frac{y_0}{1+b}Z_{i,j}\frac{z_0}{1+b}.$$
We deduce
\cref{eq com lem 6} from \cref{eq com lem
  3}, and definition of $\C_{i,j}$ (see~\cref{eq C Y}).
\end{proof}
\end{lem}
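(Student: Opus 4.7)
The plan is to prove the six identities of \cref{lem prel 1} by exploiting two structural observations. First, each identity comes paired with one obtained by swapping the alphabets $Y \leftrightarrow Z$ and $Y' \leftrightarrow Z'$, so it suffices to verify one of each pair. Second, the operators $\tY_{i,j}$ and $\tZ_{i,j}$ are polynomial expressions in $\GtY, \tY_+$ and $\GtZ, \tZ_+$ respectively, so any commutation statement involving them reduces to the analogous statement for these generators, followed by extraction of the coefficient $[u^\ell]$.

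I would handle \cref{eq com lem 2,eq com lem 3,eq com lem 4,eq com lem 5} by direct verification on the generators of the respective domains. For \cref{eq com lem 3}, the operator $\Theta_{\tY}$ differentiates only in $\tY$-variables and hence commutes past multiplication by $z_i$; applied to an element of $\PY$, the mixed term $\sum y_{i+j}\partial_{y'_i}\partial_{z'_j}$ vanishes for lack of $y'$-variables to act on, leaving $\Theta_Y$. For \cref{eq com lem 5}, neither $\GtY$ nor $\tY_+$ involves $z_0$ or $\partial_{z_0}$, so they commute with multiplication by $z_0$; restricting the output to $\PY$-free content then collapses $\tY_{i,j}$ to $Y_{i,j}$. \cref{eq com lem 2} is a Fubini-type identity verified by matching the two successive contractions on a monomial basis. \cref{eq com lem 4} is the cancellation identity where the $(1+b)$ normalization becomes essential: on an element of $\PY$, only the first summand of $\Delta$ (carrying the factor $(1+b)$) can produce a $\tZ$-variable that is then matched by $z_0$ and contracted by $\Theta_{\tZ}$; the other two summands of $\Delta$ contribute zero.

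The main obstacle is \cref{eq com lem 1}, the claim $\Theta_{\tZ}\tY_{i,j}=Y_{i,j}\Theta_{\tZ}$, because pushing $\Theta_{\tZ}$ through $\GtY$ is nontrivial: the mixed piece $\Gamma^{Z,Z'}_{Y,Y'}$ of $\GtY$ genuinely involves $\tZ$-variables, and the collapse from $\tY$-operators to $Y$-operators has to emerge from a delicate interplay. My approach is to use the factorization $\GtY=\tY_+\Lambda_{\tY}$ from \cite{ChapuyDolega2022} together with the two generator-level relations $\Theta_{\tZ}\tY_+=Y_+\Theta_{\tZ}$ (immediate from the definitions) and $\Theta_{\tZ}\Lambda_{\tY}=\Lambda_Y\Theta_{\tZ}$ (their Eq.~(29e)). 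An induction on the length of a word in $\{\GtY,\tY_+\}$ then shows that $\Theta_{\tZ}$ commutes through such a word, simultaneously converting $\GtY\to\Gamma_Y$ and $\tY_+\to Y_+$; extracting the coefficient $[u^i]$ delivers the identity.

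Finally, \cref{eq com lem 6} follows by chaining what has already been established. Starting from $\C^Y_{i,j}\frac{y_0}{1+b}=\Theta_{\tZ}\tZ_{i,j}\frac{z_0}{1+b}\cdot\frac{y_0}{1+b}$, one commutes $\frac{y_0}{1+b}$ leftward: past $\frac{z_0}{1+b}$ trivially, past $\tZ_{i,j}$ by the second half of \cref{eq com lem 5}, and past $\Theta_{\tZ}$ by the second half of \cref{eq com lem 3}, producing $\frac{y_0}{1+b}\Theta_Z Z_{i,j}\frac{z_0}{1+b}=\frac{y_0}{1+b}\C_{i,j}$ by the definition in \cref{eq C Y}. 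The overall technical discipline will be careful tracking of domains and codomains, since several operators act nontrivially only on specific subspaces, and it is precisely this bookkeeping that makes the $(1+b)$ factors in \cref{eq com lem 4,eq com lem 6} line up.
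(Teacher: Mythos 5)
Your proposal follows the paper's proof essentially verbatim: the same use of the $Y\leftrightarrow Z$ involution to halve the work, the same factorization $\Gamma_{\tilde Y}=\tilde Y_+\Lambda_{\tilde Y}$ together with the generator commutations $\Theta_{\tilde Z}\Lambda_{\tilde Y}=\Lambda_Y\Theta_{\tilde Z}$ and $\Theta_{\tilde Z}\tilde Y_+=Y_+\Theta_{\tilde Z}$ from~\cite{ChapuyDolega2022} pushed through a word in $\{\Gamma_{\tilde Y},\tilde Y_+\}$ for \eqref{eq com lem 1}, direct verification for \eqref{eq com lem 2}--\eqref{eq com lem 5}, and the same chain through \eqref{eq com lem 5} and \eqref{eq com lem 3} to deduce \eqref{eq com lem 6}. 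The only difference is that you spell out slightly more of the direct verifications (for instance which summand of $\Delta$ survives in \eqref{eq com lem 4}), which the paper leaves implicit.
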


We conclude this section with the following lemma.
\begin{lem}\label{lem C CY}
Let $\ell,k\geq0$. Then,
$$\Theta_Y\C^Y_{\ell,k}=\C_{\ell,k}\Theta_Y, \text{as operators from  }\PY \text{ to }\mcP.$$
\begin{proof}
Applying Eqs. \eqref{eq com lem 2}, \eqref{eq com lem 1} and \eqref{eq com lem 3} successively, we get that
\begin{align*}
\Theta_Y\C^Y_{\ell,k}
&=\Theta_Y\Theta_{\widetilde{Z}}\tZ_{\ell,k}\frac{z_0}{1+b}\\
&=\Theta_Z\Theta_{\widetilde{Y}}\tZ_{\ell,k}\frac{z_0}{1+b}\\
&=\Theta_Z Z_{\ell,k}\Theta_{\widetilde{Y}}\frac{z_0}{1+b}\\
&=\Theta_Z Z_{\ell,k}\frac{z_0}{1+b}\Theta_{Y}\\
&=\C_{\ell,k}\Theta_{Y}.\qedhere
\end{align*}
\end{proof}
\end{lem}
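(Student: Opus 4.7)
The plan is to prove this identity by simply unfolding the definitions of $\C^Y_{\ell,k}$ and $\C_{\ell,k}$ and then migrating the operator $\Theta_Y$ from the left of $\C^Y_{\ell,k}$ to the right, one factor at a time, using the preliminary commutation relations collected in \cref{lem prel 1}. No new combinatorial or algebraic content is required; this is bookkeeping at the level of operator identities.

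More concretely, I would start from
\[\Theta_Y\,\C^Y_{\ell,k} \;=\; \Theta_Y\,\Theta_{\tZ}\,\tZ_{\ell,k}\,\frac{z_0}{1+b},\]
which is just the definition \eqref{eq:CYDef}. Then I would apply the three identities in order. First, \eqref{eq com lem 2} lets me replace $\Theta_Y\Theta_{\tZ}$ by $\Theta_Z\Theta_{\tY}$. Second, \eqref{eq com lem 1} (in the $\Theta_{\tY}\tZ_{i,j}=Z_{i,j}\Theta_{\tY}$ form) moves $\Theta_{\tY}$ past the catalytic operator $\tZ_{\ell,k}$. Third, \eqref{eq com lem 3} (in the form $\Theta_{\tZ}y_i=y_i\Theta_Z$, applied with $Y \leftrightarrow Z$, i.e. $\Theta_{\tY}z_0 = z_0\Theta_Y$) pulls $\Theta_{\tY}$ past the multiplication by $z_0/(1+b)$. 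This chain yields
\[\Theta_Y\,\C^Y_{\ell,k} \;=\; \Theta_Z\,Z_{\ell,k}\,\frac{z_0}{1+b}\,\Theta_Y.\]
Finally, the factor on the left of $\Theta_Y$ equals $\C_{\ell,k}$: the definition \eqref{eq C Y} gives $\C_{\ell,k}=\Theta_Y Y_{\ell,k}\frac{y_0}{1+b}$, but the roles of the alphabets $Y$ and $Z$ are formally symmetric in the construction of $\C_{\ell,k}$, so the same operator is also equal to $\Theta_Z Z_{\ell,k}\frac{z_0}{1+b}$.

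I do not expect any real obstacle. The only mildly delicate point is tracking domains and codomains carefully, since each operator goes between a specific pair among $\mcP$, $\PY$, $\PZ$, $\PYZ$; this is why the appeals to \cref{lem prel 1} must use precisely the variants indicated in \eqref{eq com lem 1}--\eqref{eq com lem 3}. Once the domains match up, the computation is a three-line chain of substitutions.
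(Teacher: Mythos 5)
Your proof is correct and follows exactly the same chain of substitutions as the paper's own proof: unfold the definition of $\C^Y_{\ell,k}$, then apply \eqref{eq com lem 2}, \eqref{eq com lem 1}, and \eqref{eq com lem 3} in that order, and finally recognize $\Theta_Z Z_{\ell,k}\frac{z_0}{1+b}$ as $\C_{\ell,k}$ by the $Y\leftrightarrow Z$ symmetry of the construction. The only difference is that you spell out this last identification explicitly, whereas the paper treats it as immediate.
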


\subsection{Proof of \texorpdfstring{\cref{thm com C}}{}}
In this section, we prove the following theorem.
\begin{thm}\label{thm com C}
Let $m>0$. Then 
$$\left[\C_\ell,\C_m\right]=\left\{\begin{array}{cc}
     0 & \text{ if } \ell>0,  \\
    (m+1)\C_{m+1} & \text{ if }  \ell=0.
\end{array}\right.$$
\end{thm}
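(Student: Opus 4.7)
My strategy is to lift the commutator from $\mcP$ to the two-alphabet space $\PYZ$ via the dual catalytic framework, and exploit the (almost) commutativity of the $\tY$- and $\tZ$-generators established in \cref{lem prel 2}. First, I would reduce to the individual commutators $[\C_{\ell,k}, \C_{m,j}]$ by expanding the series definition \cref{eq:DefCl}. Using \cref{lem C CY} (i.e.~$\Theta_Y \C^Y_{\ell,k} = \C_{\ell,k}\Theta_Y$) together with \cref{eq com lem 6} (i.e.~$\C^Y_{m,j}\tfrac{y_0}{1+b} = \tfrac{y_0}{1+b}\C_{m,j}$) and \cref{eq com lem 2} (i.e.~$\Theta_Y\Theta_{\tZ} = \Theta_Z\Theta_{\tY}$), one can rewrite $\C_{\ell,k}\C_{m,j} = \Theta_Y\Theta_{\tZ}\,\tZ_{\ell,k}Y_{m,j}\,\tfrac{z_0\,y_0}{(1+b)^2}$, so that
\[ [\C_{\ell,k},\C_{m,j}] \;=\; \Theta_Y\Theta_{\tZ}\bigl(\tZ_{\ell,k}Y_{m,j}-\tZ_{m,j}Y_{\ell,k}\bigr)\tfrac{z_0\,y_0}{(1+b)^2}. \]

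Next, introducing the generating operators $\mathcal A(u) := \GtY + u\tY_+$ and $\mathcal B(v) := \GtZ + v\tZ_+$, \cref{lem prel 2} yields the base commutation $[\mathcal A(u),\mathcal B(v)] = (v-u)\,\tY_+\Delta\tY_+$. Combined with the intertwiners $\Delta\tY_+ = \tZ_+\Delta$ and $\Delta\GtY = \GtZ\Delta$ from \cref{lem Delta}, one can compute inductively the commutator $[\mathcal A(u)^p,\mathcal B(v)^q]$ as an explicit expression in which every correction term carries a factor of $\tY_+\Delta\tY_+$. Extracting the appropriate coefficients in $u,v$ (and using $\tY_{m,j}|_{\PY} = Y_{m,j}|_{\PY}$ to freely move between the tilde and untilde operators) yields $\tZ_{\ell,k}Y_{m,j} - \tZ_{m,j}Y_{\ell,k}$ up to $\Delta$-decorated remainders. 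Under the final contraction $\Theta_Y\Theta_{\tZ}(\cdot)\tfrac{z_0\,y_0}{(1+b)^2}$, the identity $\Theta_{\tZ}\Delta\tfrac{z_0}{1+b}=1$ from \cref{eq com lem 4} (and its $Y$-dual) forces the majority of these correction terms to collapse. For $\ell,m\geq 1$, the surviving contributions cancel pairwise, proving $[\C_\ell,\C_m]=0$; for $\ell=0$, the residual assembles into $(m+1)\C_{m+1}$.

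The main obstacle is the combinatorial bookkeeping in the iterated commutator computation. The base identity generates, upon iteration, many terms in which $\Delta$ is conjugated by strings of $\GtY,\tY_+,\GtZ,\tZ_+$; determining precisely which terms survive the final contraction, and how they assemble into $(m+1)\C_{m+1}$ in the $\ell=0$ case, requires a careful tracking of commutators such as $[\GtY,\tY_+]$ that are not supplied directly by \cref{lem prel 2}. As a sanity check in the lowest-order case, one verifies directly from the definitions that $\C_{0,1}$ acts as the derivation $p_i\mapsto i\,p_{i+1}$ and $\C_{m,0}$ is multiplication by $p_m/(1+b)$, giving $[\C_{0,1},\C_{m,0}] = m\,\C_{m+1,0}$, which matches the leading $t$-coefficient of $(m+1)\C_{m+1}$ and confirms that the factor $(m+1)$ arises from the combination of this derivation with the multiplicative action.
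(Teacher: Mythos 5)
Your opening reduction is correct: writing
\[
[\C_{\ell,k},\C_{m,j}] \;=\; \Theta_Y\Theta_{\tZ}\bigl(\tZ_{\ell,k}Y_{m,j}-\tZ_{m,j}Y_{\ell,k}\bigr)\tfrac{z_0\,y_0}{(1+b)^2}
\]
follows from \cref{lem C CY}, \cref{eq com lem 6}, \cref{eq com lem 2}, and the base commutator $[\mathcal A(u),\mathcal B(v)] = (v-u)\,\tY_+\Delta\tY_+$ is correctly derived from \cref{lem prel 2}. Your low-order sanity check ($\C_{0,1}$ is the derivation $p_i\mapsto i\,p_{i+1}$, $\C_{m,0}$ is multiplication by $p_m/(1+b)$, giving $[\C_{0,1},\C_{m,0}]=m\C_{m+1,0}=$ coefficient of $t^{m+1}$ in $(m+1)\C_{m+1}$) is also correct. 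However, the core of the argument is absent.

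The step ``one can compute inductively $[\mathcal A(u)^p,\mathcal B(v)^q]$ \dots\ every correction term carries a factor of $\tY_+\Delta\tY_+$'' is not a computation; it is a promissory note. Since $\tY_+\Delta\tY_+$ commutes with neither $\mathcal A(u)$ nor $\mathcal B(v)$, the iterated commutator immediately produces nested corrections whose arrangement is exactly the content of the proof. You acknowledge this yourself (``requires a careful tracking of commutators such as $[\GtY,\tY_+]$''), and that is not a side issue but the entire theorem. Concretely, the paper's proof consists of \emph{three} nontrivial layers that your sketch compresses into one sentence: \cref{lem com tZ tY} establishes $[\tZ_{\ell,k},\tY_+]$ and $[\tZ_{\ell,k},\GtY]$ by a double induction producing explicit double sums of the form $\sum\tY_+\tY_{i,j}\Delta\tY_{\ell-i,k-1-j}\tY_+$; \cref{cor com C Y} contracts with $\Theta_{\tZ}(\cdot)\frac{z_0}{1+b}$ via \cref{eq com lem 4} to reduce these to $(\ell+k)Y_+Y_{\ell,k-1}Y_+$; and \cref{prop com C Y} runs a \emph{second} induction to obtain $\sum_i[\tfrac{1}{\ell+i}\C^Y_{\ell,i},Y_{m,k-i}]$, using the combinatorial recursions \eqref{eq recursion Y}--\eqref{eq recursion Y 3}. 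Only after that does the generating-series reassembly in \cref{thm com C} produce the factor $m+1$ in the $\ell=0$ case (with the extra $+1$ coming from the special role of $\C_{0,0}=1$), a point your sketch asserts but does not justify.

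There is also a structural issue with your plan. The paper does \emph{not} compute $[\tZ_{\ell,k},\tY_{m,j}]$ directly; it first eliminates the $\tZ$-catalysis (producing $\C^Y_{\ell,k}$ acting on $\PY$) and only then commutes against $Y_{m,j}$. This intermediate projection is essential: it means the second induction only needs commutators with the single generators $Y_+,\GY$, which \cref{cor com C Y} supplies in closed form. Your proposal instead commutes two full products of generators at once, which tangles $\Delta$-decorated strings of all lengths; the identification $\tY_{m,j}|_{\PY}=Y_{m,j}|_{\PY}$ is valid before $\tZ_{\ell,k}$ has acted, but not after, since $\GtZ$ introduces $y'_i$ and $z'_i$ terms—so ``freely moving between tilde and untilde'' breaks precisely when you start commuting. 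You would need to reintroduce the paper's intermediate contraction step anyway, at which point you have reproduced the paper's argument. As it stands, the proposal identifies the right tools but stops before the part of the proof that is actually hard.
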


The idea of the
proof is to start from \cref{lem prel 2} which computes the commutator
of a linear monomial in $\tY_+,\GtY$ with a linear monomial in
$\tZ_+,\GtZ$, and use inductions to obtain the commutators of such
monomials of arbitrary degrees.

The proof is organized as follows: we start from  \cref{lem prel 2} which gives an expression for the commutator $[\tZ_{\ell,k},\tY_+]$ and $[\tZ_{\ell,k},\GtZ]$ when $\ell+k=1$. By induction we obtain in \cref{lem com tZ tY} an expression for these commutators for any $\ell$ and $k$. By "forgetting" the first catalytic operator $\tZ$, we deduce in \cref{cor com C Y} the commutators $[\C^Y_{\ell,k},\tY_+]$. We then use induction to obtain an expression for $\sum_{0\leq i\leq k}\left[\frac{1}{\ell+i}\C^Y_{\ell,i},Y_{m,k-i}\right]$.
Finally, we deduce \cref{thm com C} by forgetting the catalytic variable $Y$.

\begin{lem}\label{lem com tZ tY}
For any integers $\ell,k\geq -1$, we have the following equalities between operators on $\PYZ$.
\begin{equation}\label{eq tilde Z Y 1}
\left[\tZ_{\ell,k},\widetilde{Y}_{+}\right]=\sum_{i=0}^\ell\sum_{j=0}^{k-1}\tY_+ \tY_{i,j}\Delta\tY_{\ell-i,k-1-j}\tY_+,    
\end{equation}
and
\begin{equation}\label{eq tilde Z Y 2}
  \left[\tZ_{\ell,k},\GtY\right]=-\sum_{i=0}^{\ell-1}\sum_{j=0}^k\tY_+
  \tY_{i,j}\Delta\tY_{\ell-1-i,k-j}\tY_+.  
\end{equation}

In particular, by exchanging the variables $\tY$ and $\tZ$, we get that 
\begin{equation}\label{eq tilde Y Z}
    \left[\tY_{\ell-1,k},\tZ_+ \right]+\left[ \tY_{\ell,k-1},\GtZ \right ]=0, 
\end{equation}

\begin{proof}

We prove simultaneously the three equations by induction on $\ell+k$. If $\ell=-1$ or $k=-1$ or $\ell=k=0$ the result is immediate from the definitions. For $(\ell=0,k=1)$ and $(\ell=1,k=0)$ it corresponds to \cref{lem prel 2}.

We now fix $\ell, k\geq 0$ such that $(\ell,k)\neq (0,0)$ and we suppose that \cref{eq tilde Z Y 1,eq tilde Z Y 2} hold for all  $(i,j)$ such that $i+j<\ell+k$.
First, since $(\ell,k)\neq(0,0)$, we have an analogue of \cref{eq recursion Y}:
\begin{equation} \label{eq induction}
\tZ_{\ell,k}=\tZ_{\ell,k-1}\GtZ + \tZ_{\ell-1,k}\tZ_+
\end{equation}
Applying \cref{eq tilde Z Y 1} with the pairs $(\ell,k-1)$ and $(\ell-1,k)$, and using \cref{lem prel 2} we get that 

\begin{align}
  \label{eq:CommutPomoc}
    \left[\tZ_{\ell,k},\widetilde{Y}_{+}\right]
    &=\left[ \tZ_{\ell,k-1}\GtZ,\tY_+\right]+
    \left[  \tZ_{\ell-1,k}\tZ_+,\tY_+\right]\nonumber\\
    &=\sum_{i=0}^\ell\sum_{j=0}^{k-2}\tY_+ \tY_{i,j}\Delta\tY_{\ell-i,k-2-j}\tY_+ \GtZ+  \tZ_{\ell,k-1}\tY_+\Delta \tY_+\nonumber\\
    &\quad+\sum_{i=0}^{\ell-1}\sum_{j=0}^{k-1}\tY_+ \tY_{i,j}\Delta\tY_{\ell-1-i,k-1-j}\tY_+\tZ_+\nonumber\\
    &=\sum_{i=0}^\ell\sum_{j=0}^{k-2}\tY_+ \tY_{i,j}\Delta\tY_{\ell-i,k-2-j}\GtZ\tY_+ -\sum_{i=0}^\ell\sum_{j=0}^{k-2}\tY_+ \tY_{i,j}\Delta\tY_{\ell-i,k-2-j}\tY_+\Delta\tY_+\nonumber\\
    &\quad+\tZ_{\ell,k-1}\tY_+\Delta \tY_+
    +\sum_{i=0}^{\ell-1}\sum_{j=0}^{k-1}\tY_+ \tY_{i,j}\Delta\tY_{\ell-1-i,k-1-j}\tZ_+\tY_+.
    \end{align}
Fix $0\leq i\leq \ell$ and $0\leq j \leq k-1$. \cref{eq
  tilde Y Z}, and \cref{lem Delta} show that the operator $\tY_+\tY_{i,j}\Delta\tY_{\ell-i,k-2-j}\GtZ\tY_+$ is equal to:
\[ \tY_+
  \tY_{i,j} \GtY\Delta \tY_{\ell-i,k-2-j}\tY_+- \tY_+
  \tY_{i,j}\Delta \tY_{\ell-i-1,k-1-j}\tZ_+\tY_++\tY_{i,j}\tY_+\Delta \tY_{\ell-i-1,k-1-j}\tY_+.\]
Therefore, we get that the sum of the first and the fourth item in the
RHS of \cref{eq:CommutPomoc} is equal to 
$$\sum_{i=0}^\ell\sum_{j=0}^{k-2}\tY_+ \tY_{i,j}\GtY\Delta\tY_{\ell-i,k-2-j}\tY_+ +\sum_{i=0}^{\ell-1}\sum_{j=0}^{k-1}\tY_+ \tY_{i,j}\tY_+\Delta\tY_{\ell-1-i,k-1-j}\tY_+.$$
On the other hand, applying \cref{eq tilde Z Y 1} with the pair $(\ell,k-1)$, we obtain that 
$$\tZ_{\ell,k-1}\tY_+\Delta \tY_+=\tY_+\Delta\tY_{\ell,k-1} \tY_+ + \sum_{i=0}^{\ell}\sum_{j=0}^{k-2}\tY_+\tY_{i,j}\Delta\tY_{\ell-i,k-2-j}\tY_+\Delta\tY_+.$$
Hence 
    \begin{align*}
    \left[\tZ_{\ell,k},\widetilde{Y}_{+}\right]
    &=\sum_{i=0}^\ell\sum_{j=0}^{k-2}\tY_+ \tY_{i,j}\GtY\Delta\tY_{\ell-i,k-2-j}\tY_+  +\tY_+\Delta\tY_{\ell,k-1} \tY_+\\
    &+\sum_{i=0}^{\ell-1}\sum_{j=0}^{k-1}\tY_+ \tY_{i,j}\tY_+\Delta\tY_{\ell-1-i,k-1-j}\tY_+.
\end{align*}
Shifting the summation indices we get
\begin{align*}\label{eq com tZ tY}
\left[\tZ_{\ell,k},\widetilde{Y}_{+}\right]
&=\sum_{i=0}^\ell\sum_{j=1}^{k-1}\tY_+ \tY_{i,j-1}\GtY\Delta\tY_{\ell-i,k-1-j}\tY_+ 
+\tY_+\Delta\tY_{\ell,k-1} \tY_+\\
&+ \sum_{i=1}^{\ell}\sum_{j=0}^{k-1}\tY_+ \tY_{i-1,j}\tY_+\Delta\tY_{\ell-i,k-1-j}\tY_+ \\
&=\sum_{i=0}^\ell\sum_{j=0}^{k-1}\tY_+ \tY_{i,j-1}\GtY\Delta\tY_{\ell-i,k-1-j}\tY_+ 
+\tY_+\Delta\tY_{\ell,k-1} \tY_+\\
&+ \sum_{i=0}^{\ell}\sum_{j=0}^{k-1}\tY_+ \tY_{i-1,j}\tY_+\Delta\tY_{\ell-i,k-1-j}\tY_+,
\end{align*}
where the second equality follows from the fact that $\tY_{i,j}=0$ if $i<0$ or $j<0$.
 For each couple of indices $(i,j)\neq(0,0)$, we regroup the terms in the two sums of the last equation by applying \cref{eq induction}. On the other hand, note that the second term in the last equation can be written $\tY_+\tY_{0,0}\Delta\tY_{\ell,k-1} \tY_+$. We deduce that 
$$\left[\tZ_{\ell,k},\widetilde{Y}_{+}\right]=\sum_{i=0}^\ell\sum_{j=0}^{k-1}\tY_+ \tY_{i,j}\Delta\tY_{\ell-i,k-1-j}\tY_+.$$
We prove \cref{eq tilde Z Y 2} in a similar way. Using \cref{eq induction} and the induction hypothesis, we have

\begin{align*}
    \left[\tZ_{\ell,k},\GtY\right]
    &=\left[ \tZ_{\ell,k-1}\GtZ,\GtY\right]+\left[ \tZ_{\ell-1,k}\tZ_+,\GtY\right]\\
    &=-\sum_{i=0}^{\ell-1}\sum_{j=0}^{k-1}\tY_+ \tY_{i,j}\Delta\tY_{\ell-1-i,k-1-j}\tY_+ \GtZ-\sum_{i=0}^{\ell-2}\sum_{j=0}^{k}\tY_+ \tY_{i,j}\Delta\tY_{\ell-2-i,k-j}\tY_+\tZ_+\\
    &\hspace{0.3cm} -  \tZ_{\ell-1,k}\tY_+\Delta \tY_+.
    \end{align*}
    From \cref{lem prel 2}, and \eqref{eq tilde Z Y 1} we have
    
    \begin{align*}
    \left[\tZ_{\ell,k},\GtY\right]
    &=-\sum_{i=0}^{\ell-1}\sum_{j=0}^{k-1}\tY_+ \tY_{i,j}\Delta\tY_{\ell-1-i,k-1-j}\GtZ\tY_+ +\sum_{i=0}^{\ell-1}\sum_{j=0}^{k-1}\tY_+ \tY_{i,j}\Delta\tY_{\ell-1-i,k-1-j}\tY_+\Delta\tY_+\\
    &\hspace{0.3cm}-\sum_{i=0}^{\ell-2}\sum_{j=0}^{k}\tY_+ \tY_{i,j}\Delta\tY_{\ell-2-i,k-j}\tZ_+\tY_+ -\tY_+ \Delta \tY_{\ell-1,k}\tY_+\\ 
    &\hspace{0.3cm} -\sum_{i=0}^{\ell-1}\sum_{j=0}^{k-1}\tY_+ \tY_{i,j}\Delta\tY_{\ell-1-i,k-1-j}\tY_+\Delta \tY_+.
    \end{align*}
Applying \cref{eq tilde Y Z} with $(\ell-i,k-1-j)$ for $0\leq i\leq \ell$ and $0\leq j \leq k-1$, we get that 
\begin{align*}
\left[\tZ_{\ell,k},\GtY\right]
    &=-\sum_{i=0}^{\ell-1}\sum_{j=0}^{k-1}\tY_+ \tY_{i,j}\GtY\Delta\tY_{\ell-1-i,k-1-j}\tY_+ -\sum_{i=0}^{\ell-2}\sum_{j=0}^{k}\tY_+ \tY_{i,j}\tY_+\Delta\tY_{\ell-2-i,k-j}\tY_+ \\
    &\hspace{0.3cm}-\tY_+ \Delta \tY_{\ell-1,k}\tY_+\\
    &=-\sum_{i=0}^{\ell-1}\sum_{j=0}^{k}\tY_+ \tY_{i,j}\Delta\tY_{\ell-1-i,k-j}\tY_+\qedhere
\end{align*}

\end{proof}

\end{lem}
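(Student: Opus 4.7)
The plan is to establish \cref{eq tilde Z Y 1,eq tilde Z Y 2,eq tilde Y Z} simultaneously by induction on $\ell+k$. The cases where either index equals $-1$ are trivial because the operators $\tZ_{\ell,k}$ and $\tY_{\ell,k}$ vanish by convention, and the case $(\ell,k)=(0,0)$ is trivial because $\tZ_{0,0}=1$. The first non-trivial cases $(\ell,k)\in\{(1,0),(0,1)\}$ are precisely the atomic commutators $[\tZ_+,\tY_+]$, $[\GtZ,\tY_+]$, $[\tZ_+,\GtY]$, $[\GtZ,\GtY]$ provided by \cref{lem prel 2}. Note that \cref{eq tilde Y Z} is obtained from \cref{eq tilde Z Y 1} (or \cref{eq tilde Z Y 2}) by swapping the roles of $\tY$ and $\tZ$; since the right-hand sides of \cref{eq tilde Z Y 1,eq tilde Z Y 2} already involve the $\tY$-indexed operators $\tY_{\bullet,\bullet}$, the induction is genuinely joint.

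For the inductive step with $\ell+k\geq 2$, I will use the $\tZ$-analog of \cref{eq recursion Y}, namely $\tZ_{\ell,k}=\tZ_{\ell,k-1}\GtZ+\tZ_{\ell-1,k}\tZ_+$. Applying the Leibniz rule for commutators gives
\[
[\tZ_{\ell,k},\tY_+]=[\tZ_{\ell,k-1},\tY_+]\GtZ+\tZ_{\ell,k-1}[\GtZ,\tY_+]+[\tZ_{\ell-1,k},\tY_+]\tZ_++\tZ_{\ell-1,k}[\tZ_+,\tY_+].
\]
The last commutator vanishes by \cref{lem prel 2}; the second equals $\tZ_{\ell,k-1}\cdot\tY_+\Delta\tY_+$; and the first and third are expanded by the inductive hypothesis of \cref{eq tilde Z Y 1}. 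The analogous decomposition handles $[\tZ_{\ell,k},\GtY]$, using the atomic identities $[\GtZ,\GtY]=0$ and $[\tZ_+,\GtY]=-\tY_+\Delta\tY_+$ from \cref{lem prel 2}.

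The main obstacle is collapsing the resulting sums into the single compact double sum on the right-hand side. The intermediate expressions carry trailing factors of $\GtZ$ and $\tZ_+$; I will move these past the $\Delta$ using \cref{lem Delta} (which converts them into $\GtY$ and $\tY_+$ respectively), and past the inner $\tY_{\bullet,\bullet}$ using the inductive version of \cref{eq tilde Y Z} at strictly smaller $\ell+k$ (which rewrites $\tY_{a,b}\GtZ$ plus $\tY_{a+1,b-1}\tZ_+$ as $\GtY \tY_{a,b} + \tY_+\tY_{a+1,b-1}$ after reindexing). The leftover term $\tZ_{\ell,k-1}\cdot\tY_+\Delta\tY_+$ must itself be unfolded by applying \cref{eq tilde Z Y 1} at $(\ell,k-1)$ in order to move $\tZ_{\ell,k-1}$ past $\tY_+$; this produces a boundary contribution $\tY_+\Delta\tY_{\ell,k-1}\tY_+$ together with a correction that cancels a parasitic term generated above. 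After these rewrites the sums can be regrouped by reading \cref{eq recursion Y} applied to $\tY_{i,j}$ from right to left, yielding the compact form on the right-hand side of \cref{eq tilde Z Y 1}. I expect the careful bookkeeping of shifted indices and the correct matching of cancelling contributions to be the most error-prone step of the argument.
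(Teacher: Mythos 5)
Your proposal matches the paper's proof essentially step for step: joint induction on $\ell+k$ with the same base cases, the same $\tZ$-analog of the recursion combined with the Leibniz rule for commutators, the same use of \cref{lem prel 2} for the atomic commutators, the same unfolding of the boundary term $\tZ_{\ell,k-1}\tY_+\Delta\tY_+$ via \cref{eq tilde Z Y 1} at $(\ell,k-1)$, and the same reassembly via \cref{eq recursion Y}. The only imprecision is cosmetic: the rewriting step via \cref{eq tilde Y Z} is applied under a factor of $\Delta$, so it is $\Delta\tY_{a,b}\GtZ$ that becomes $\GtY\Delta\tY_{a,b} - \Delta\tY_{a-1,b+1}\tZ_+ + \tY_+\Delta\tY_{a-1,b+1}$ (using \cref{lem Delta} to pull $\GtZ,\tZ_+$ through $\Delta$), rather than the bare identity you wrote; but the idea and index bookkeeping you flagged as the delicate point is exactly where the paper spends its effort.
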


We deduce the following corollary.
\begin{cor}\label{cor com C Y}
Let $\ell,k\geq 0$. As operators on $\PY$,
$$\left[\C^Y_{\ell,k},Y_+\right]=(\ell+k)Y_+Y_{\ell,k-1}Y_+, \qquad  \left[\C^Y_{\ell,k},\GY\right]=-(\ell+k)Y_+Y_{\ell-1,k}Y_+.$$
\begin{proof}
We start by multiplying \cref{eq tilde Z Y 1} by $\Theta_{\tZ}$ on the left and $\frac{z_0}{1+b}$ on the right, and we use Equations \eqref{eq com lem 1}, \eqref{eq com lem 5} and \eqref{eq com lem 4} to obtain:
\begin{align*}
  \left[\C^Y_{\ell,k},Y_+\right]
  &=\Theta_{\tZ}\sum_{i=0}^\ell\sum_{j=0}^{k-1}\tY_+ \tY_{i,j}\Delta\tY_{\ell-i,k-1-j}\tY_+\frac{z_0}{1+b}\\
  &=\sum_{i=0}^\ell\sum_{j=0}^{k-1}Y_+ Y_{i,j}\Theta_{\tZ}\Delta\frac{z_0}{1+b} Y_{\ell-i,k-1-j} Y_+\\
  &=\sum_{i=0}^\ell\sum_{j=0}^{k-1}Y_+ Y_{i,j} Y_{\ell-i,k-1-j} Y_+.\\
  &=\sum_{m=0}^{\ell+k-1}\sum_{i=0}^\ell Y_+ Y_{i,m-i} Y_{\ell-i,k-1-m+i} Y_+.
\end{align*}
From \cref{eq recursion Y 3}, we know that in the last line, for each $m$, the second sum is equal to $Y_+Y_{\ell,k-1}Y_+$, which concludes the proof of the first equation.
Similarly, we obtain the second equation of the corollary from \cref{eq tilde Z Y 2}.
\end{proof}
\end{cor}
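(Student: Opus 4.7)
The plan is to deduce both identities from the already-established doubly-catalytic commutators in \cref{lem com tZ tY} by applying the operator sandwich $\Theta_{\tZ}\cdot (-)\cdot \frac{z_0}{1+b}$, which collapses the auxiliary catalytic variable $\tZ$ and leaves only $Y$-operators behind. I will work out the $Y_+$ identity in detail; the $\GY$ identity is completely analogous.

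Starting from \cref{eq tilde Z Y 1}, I sandwich both sides between $\Theta_{\tZ}$ on the left and $\frac{z_0}{1+b}$ on the right. On the LHS I expand the commutator and slide $\Theta_{\tZ}$ past $\tY_+$ (using $\Theta_{\tZ}\tY_+=Y_+\Theta_{\tZ}$, which is the $\tY_+$-analogue of \cref{eq com lem 1}) and slide $\frac{z_0}{1+b}$ past $\tY_+$ from the other side (the $\tY_+$-analogue of \cref{eq com lem 5}, reflecting that $\tY_+$ contains no $\tZ$-derivatives). Recognising $\C^Y_{\ell,k}=\Theta_{\tZ}\tZ_{\ell,k}\frac{z_0}{1+b}$ by definition, the LHS becomes precisely $[\C^Y_{\ell,k},Y_+]$.

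On the RHS, for each summand $\tY_+\tY_{i,j}\Delta\tY_{\ell-i,k-1-j}\tY_+$ I slide $\Theta_{\tZ}$ rightward past $\tY_+\tY_{i,j}$ by \cref{eq com lem 1}, converting these into $Y_+Y_{i,j}$; symmetrically, I slide $\frac{z_0}{1+b}$ leftward past $\tY_{\ell-i,k-1-j}\tY_+$ by \cref{eq com lem 5}, converting them into $Y_{\ell-i,k-1-j}Y_+$. The middle factor $\Theta_{\tZ}\Delta\frac{z_0}{1+b}$ then collapses to the identity operator on $\PY$ by \cref{eq com lem 4}, so the RHS reduces to
$$\sum_{i=0}^{\ell}\sum_{j=0}^{k-1}Y_+\,Y_{i,j}\,Y_{\ell-i,k-1-j}\,Y_+.$$

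To finish, I reindex by $m:=i+j\in\{0,1,\dots,\ell+k-1\}$. For each fixed $m$, the inner sum $\sum_{i}Y_{i,m-i}Y_{\ell-i,k-1-m+i}$ equals $Y_{\ell,k-1}$: this is exactly \cref{eq recursion Y 3} applied to $Y_{\ell,k-1}$ with splitting position $m$ (the $m=0$ case is checked directly using $Y_{0,0}=\mathrm{Id}$), and the terms with $i$ outside $[0,m]$ contribute zero because either $Y_{i,m-i}$ or $Y_{\ell-i,k-1-m+i}$ has a negative index. Summing over the $\ell+k$ values of $m$ yields $(\ell+k)\,Y_+Y_{\ell,k-1}Y_+$, as claimed. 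The $\GY$-identity is obtained by the identical recipe applied to \cref{eq tilde Z Y 2}, whose overall minus sign and shifted indices $(\ell-1-i,k-j)$ in place of $(\ell-i,k-1-j)$ yield $-(\ell+k)Y_+Y_{\ell-1,k}Y_+$. The whole argument is bookkeeping on operator commutators; the only subtlety is ensuring the $\GtY$-analogues of the sliding relations $\Theta_{\tZ}\tY_+=Y_+\Theta_{\tZ}$ and $\tY_+z_0=z_0 Y_+$ actually hold, and this is either direct from the definitions or already encoded in \cref{lem prel 1}.
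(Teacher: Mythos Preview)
Your proof is correct and follows essentially the same approach as the paper: sandwich \cref{eq tilde Z Y 1} with $\Theta_{\tZ}$ and $\frac{z_0}{1+b}$, use \cref{eq com lem 1}, \eqref{eq com lem 5}, \eqref{eq com lem 4} to collapse to $Y$-operators, then reindex by $m=i+j$ and apply \cref{eq recursion Y 3}. You are slightly more explicit than the paper in handling the $m=0$ term (where \cref{eq recursion Y 3} does not literally apply, and one uses $Y_{0,0}=\mathrm{Id}$ directly), but otherwise the arguments coincide.
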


We deduce the following proposition.
\begin{prop}\label{prop com C Y}
Fix $m\geq0$ and $k\geq -1$. If $\ell>0$ then 
\begin{equation}\label{eq 1 prop com C}
  \sum_{0\leq i\leq k}\left[\frac{1}{\ell+i}\C^Y_{\ell,i},Y_{m,k-i}\right]=-Y_{\ell+m+1,k-1}.  
\end{equation}
Moreover, if $\ell=0$ then  
\begin{equation}\label{eq 2 prop com C}
    \sum_{1\leq i\leq k}\left[\frac{1}{i}\C^Y_{0,i},Y_{m,k-i}\right]=mY_{m+1,k-1}.
\end{equation}

\begin{proof}
We proceed by induction on $k+m$.
For $k=-1$ the two equations are immediate from the definitions. 

Let us start by proving \cref{eq 1 prop com C}. 
Fix $(k,m)$ with $k\geq0$, such that \cref{eq 1 prop com C} is satisfied for every $(j,s)$ such that $s+j< k+m$. 
Fix $0\leq i\leq k$. We rewrite \cref{eq recursion Y} as follows.
$$Y_{m,k-i}=\mathbbm{1}_{m>0}Y_{m-1,k-i}Y_++Y_{m,k-i-1}\GY+\delta_{m,0}\delta_{k,i}.$$
Hence, using \cref{cor com C Y}, we get  for each $0\leq i\leq k$
\begin{multline*}
     \left[\frac{1}{\ell+i}\C^Y_{\ell,i},Y_{m,k-i}\right]= \mathbbm{1}_{m>0}\left[\frac{1}{\ell+i}\C^Y_{\ell,i},Y_{m-1,k-i}\right]Y_+ + \mathbbm{1}_{m>0}Y_{m-1,k-i}Y_+Y_{\ell,i-1}Y_+\\ 
     +\left[\frac{1}{\ell+i}\C^Y_{\ell,i},Y_{m,k-i-1}\right]\GY - Y_{m,k-1-i}Y_+Y_{\ell-1,i}Y_+.
\end{multline*}

Applying the induction hypothesis on the pairs $(m-1,k)$ and $(m,k-1)$, we get that 
\begin{multline*}
    \sum_{0\leq i\leq k}\left[\frac{1}{\ell+i}\C^Y_{\ell,i},Y_{m,k-i}\right]
    =-\mathbbm{1}_{m>0}Y_{\ell+m,k-1}Y_+
    +\mathbbm{1}_{m>0}\sum_{0\leq i\leq k}Y_{m-1,k-i}Y_+Y_{\ell,i-1}Y_+\\
    -Y_{\ell+m+1,k-2}\GY
    -\sum_{0\leq i \leq k}Y_{m,k-1-i}Y_+Y_{\ell-1,i}Y_+.
\end{multline*}
Using \cref{eq recursion Y 2}, we know that the two sums in the right-hand side of the last equality are both equal to $Y_{\ell+m,k-1}Y_+$. On the other hand, from \cref{eq recursion Y}, we know that
$$Y_{\ell+m+1,k-2}\GY+Y_{\ell+m,k-1}Y_+=Y_{\ell+m+1,k-1},$$
which concludes the proof of \cref{eq 1 prop com C}.

We now prove \cref{eq 2 prop com C} in a similar way. Let $(k,m)$ be two non-negative integers such that \cref{eq 2 prop com C} is satisfied for every $(j,s)$ such that $s+j< k+m$.
For each $1\leq i\leq k$, one has 
\begin{multline*}
     \left[\frac{1}{i}\C^Y_{0,i},Y_{m,k-i}\right]
     = \mathbbm{1}_{m>0}\left[\frac{1}{i}\C^Y_{0,i},Y_{m-1,k-i}\right]Y_+ + \mathbbm{1}_{m>0}Y_{m-1,k-i}Y_+Y_{0,i-1}Y_+\\ 
     +\left[\frac{1}{i}\C^Y_{0,i},Y_{m,k-1-i}\right]\GY.
\end{multline*}

Applying the induction hypothesis on the pairs $(m-1,k)$ and $(m,k-1)$, we get that 
\begin{multline*}
    \sum_{1\leq i\leq k}\left[\frac{1}{i}\C^Y_{0,i},Y_{m,k-i}\right]
    =(m-1)\mathbbm{1}_{m>0}Y_{m,k-1}Y_+
    +\mathbbm{1}_{m>0}\sum_{0\leq i\leq k}Y_{m-1,k-i}Y_+Y_{0,i-1}Y_+\\
    \quad+mY_{m+1,k-2}\GY.
\end{multline*}
But from \cref{eq recursion Y 2}, we know that the sum in the right hand is equal to $Y_{m,k-1}Y_+$.
Hence
\begin{align*}
\sum_{1\leq i\leq k}\left[\frac{1}{i}\C^Y_{0,i},Y_{m,k-i}\right]
    &=m\mathbbm{1}_{m>0}Y_{m,k-1}Y_++mY_{m+1,k-2}\GY\\
    &=mY_{m,k-1}Y_++mY_{m+1,k-2}\GY\\
    &=mY_{m+1,k-1},
\end{align*}
where we use \cref{eq recursion Y} to obtain the last equality.
\end{proof}
\end{prop}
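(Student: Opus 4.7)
The plan is to prove \cref{prop com C Y} by induction on $k+m$. The base case $k=-1$ is immediate: the left-hand sum is empty, and on the right $Y_{\ell+m+1,-2}=0$ (respectively $Y_{m+1,-2}=0$) by the vanishing convention for $Y_{\ell,k}$ when $k<0$.

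For the inductive step, fix $(\ell,m,k)$ with $k\geq 0$ and assume the statement holds for all parameter triples with smaller $k+m$. For each $0\leq i \leq k$, use the recursion \eqref{eq recursion Y} to write
\[ Y_{m,k-i}=\mathbbm{1}_{m>0}Y_{m-1,k-i}Y_+ + Y_{m,k-i-1}\GY + \delta_{m,0}\delta_{k,i}, \]
apply the Leibniz rule $[A,BC]=[A,B]C+B[A,C]$, and use \cref{cor com C Y} to handle the commutators $[\C^Y_{\ell,i},Y_+]=(\ell+i)Y_+Y_{\ell,i-1}Y_+$ and $[\C^Y_{\ell,i},\GY]=-(\ell+i)Y_+Y_{\ell-1,i}Y_+$. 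The factor $\tfrac{1}{\ell+i}$ in front of $\C^Y_{\ell,i}$ is exactly what is needed to cancel the $(\ell+i)$ produced by these commutator identities. The Kronecker term $\delta_{m,0}\delta_{k,i}$ commutes with $\C^Y_{\ell,i}$ (it is a scalar) and so contributes nothing.

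Summing over $i$, the commutators with $Y_{m-1,k-i}$ and $Y_{m,k-i-1}$ fall under the induction hypothesis (applied respectively at $(m-1,k)$ and $(m,k-1)$), while the remaining terms form the double sums $\sum_{i}Y_{m-1,k-i}Y_+Y_{\ell,i-1}Y_+$ and $\sum_i Y_{m,k-1-i}Y_+Y_{\ell-1,i}Y_+$. Each of these collapses via \eqref{eq recursion Y 2} (applied with the shift parameter equal to $m$, resp.~$m+1$) to a single product of the form $Y_{\ell+m,k-1}Y_+$. Combining these with the inductive output $-Y_{\ell+m,k-1}Y_+ - Y_{\ell+m+1,k-2}\GY$ and invoking \eqref{eq recursion Y} one final time yields the target $-Y_{\ell+m+1,k-1}$ in the case $\ell>0$.

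The case $\ell=0$ is handled along the same lines but needs a separate treatment because the commutator $[\C^Y_{0,i},\GY]=-i\cdot Y_+Y_{-1,i}Y_+$ vanishes (since $Y_{-1,i}=0$), and the summation index starts at $i=1$ rather than $i=0$ to avoid dividing by zero. Consequently only one of the two double sums produced in the $\ell>0$ analysis appears, and its collapse by \eqref{eq recursion Y 2} combines with the inductive pieces to give $mY_{m,k-1}Y_+ + mY_{m+1,k-2}\GY = mY_{m+1,k-1}$, where the last step again uses \eqref{eq recursion Y}. The main obstacle I anticipate is bookkeeping: keeping careful track of index shifts (notably the off-by-one between the $Y_+$-branch and the $\GY$-branch of \eqref{eq recursion Y}), checking that the boundary contributions of $\delta_{m,0}\delta_{k,i}$ vanish after summation, and verifying that the two auxiliary double sums collapse via \eqref{eq recursion Y 2} with the correct parameters.
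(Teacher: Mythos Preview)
Your proposal is correct and follows essentially the same route as the paper: induction on $k+m$, expansion of $Y_{m,k-i}$ via \eqref{eq recursion Y}, the Leibniz rule combined with \cref{cor com C Y}, application of the inductive hypothesis at $(m-1,k)$ and $(m,k-1)$, collapse of the resulting sums via \eqref{eq recursion Y 2}, and a final use of \eqref{eq recursion Y}. Your handling of the $\ell=0$ case (vanishing of $[\C^Y_{0,i},\GY]$ because $Y_{-1,i}=0$, hence only one double sum survives) also matches the paper's treatment.
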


We deduce the following corollary by forgetting the catalytic variable $Y$.
\begin{cor}\label{cor com C}
Fix $m,k\geq0$. We have the following equalities between operators on $\mcP$:
\begin{equation}\label{eq 3 prop com C}
\sum_{0\leq i\leq k}\left[\frac{1}{\ell+i}\C_{\ell,i},\C_{m,k-i}\right]=-\C_{\ell+m+1,k-1}, \text{ if $\ell>0$},
\end{equation}
and
\begin{equation}\label{eq 2 cor com C}
    \sum_{1\leq i\leq k}\left[\frac{1}{i}\C_{0,i},\C_{m,k-i}\right]=m\C_{m+1,k-1}.
\end{equation}
\begin{proof}
Let us prove \cref{eq 3 prop com C}. Starting from \cref{eq 1 prop com C}, and applying $\Theta _Y$ on the left and $\frac{y_0}{1+b}$ on the right, we get
$$\sum_{0\leq i\leq k}\frac{1}{\ell+i}\left(\Theta_Y \C^Y_{\ell,i} Y_{m,k-i}\frac{y_0}{1+b}-\Theta_Y Y_{m,k-i} \C^Y_{\ell,i}\frac{y_0}{1+b}\right)=-\Theta_Y Y_{\ell+m+1,k-1}\frac{y_0}{1+b}.  
$$
Using \cref{lem C CY} and \cref{eq com lem 6} we obtain
$$\sum_{0\leq i\leq k}\frac{1}{\ell+i}\left( \C_{\ell,i}\Theta_Y Y_{m,k-i}\frac{y_0}{1+b}-\Theta_Y Y_{m,k-i}\frac{y_0}{1+b} \C_{\ell,i}\right)=-\Theta_Y Y_{\ell+m+1,k-1}\frac{y_0}{1+b}.  $$
We deduce \cref{eq 3 prop com C} using \cref{eq C Y}. We obtain in a
similar way \cref{eq 2 cor com C} from \cref{eq 2 prop com C}. 
\end{proof}
\end{cor}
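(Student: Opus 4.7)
My plan is to derive \cref{cor com C} by ``forgetting'' the catalytic variable $Y$ in \cref{prop com C Y}, which provides the analogous identities with $\C^Y_{\ell,i}$ and $Y_{m,k-i}$ acting on $\PY$. The conversion from $\PY$-operators to $\mcP$-operators is mediated by $\Theta_Y$ on the left and $\frac{y_0}{1+b}$ on the right, using the compatibility lemmas \cref{lem C CY} and \cref{eq com lem 6}, together with the definition \cref{eq C Y}.

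Concretely, for the first identity, I would start from
\[
\sum_{0\leq i\leq k}\left[\tfrac{1}{\ell+i}\C^Y_{\ell,i},Y_{m,k-i}\right]=-Y_{\ell+m+1,k-1}
\]
(the case $\ell>0$ of \cref{prop com C Y}), and apply $\Theta_Y$ on the left and $\frac{y_0}{1+b}$ on the right. The right-hand side becomes $-\Theta_Y Y_{\ell+m+1,k-1}\frac{y_0}{1+b}=-\C_{\ell+m+1,k-1}$ by the definition \cref{eq C Y}. For the left-hand side, each commutator expands into two terms, and in each one I would insert a factor $\frac{y_0}{1+b}\Theta_Y$ (well, rather reorganize the composition) so that I can recognise the block $\Theta_Y\,(\cdot)\,\frac{y_0}{1+b}$. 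Explicitly, the term $\Theta_Y \C^Y_{\ell,i}Y_{m,k-i}\frac{y_0}{1+b}$ becomes $\C_{\ell,i}\,\Theta_Y Y_{m,k-i}\frac{y_0}{1+b}=\C_{\ell,i}\C_{m,k-i}$ by \cref{lem C CY} and \cref{eq C Y}, while the term $\Theta_Y Y_{m,k-i}\C^Y_{\ell,i}\frac{y_0}{1+b}$ becomes $\C_{m,k-i}\C_{\ell,i}$ by first applying \cref{eq com lem 6} to push $\frac{y_0}{1+b}$ past $\C^Y_{\ell,i}$, and then recognising $\Theta_Y Y_{m,k-i}\frac{y_0}{1+b}=\C_{m,k-i}$. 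Combining, the LHS turns into $\sum_{0\leq i\leq k}\tfrac{1}{\ell+i}[\C_{\ell,i},\C_{m,k-i}]$, which is exactly the desired expression.

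The second identity, for $\ell=0$, is obtained in the identical manner from the second part of \cref{prop com C Y}, the only difference being that the summation starts at $i=1$ so that the factor $\tfrac{1}{i}$ is well-defined; the right-hand side $m Y_{m+1,k-1}$ transforms into $m\C_{m+1,k-1}$ after applying $\Theta_Y(\cdot)\frac{y_0}{1+b}$.

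There is no real obstacle here: the argument is a mechanical bookkeeping check that $\Theta_Y$ and $\frac{y_0}{1+b}$ respect the structure of the commutators. The only subtlety is making sure that \cref{lem C CY} is applied in the correct order for each of the two terms of the commutator (one uses $\Theta_Y\C^Y_{\ell,i}=\C_{\ell,i}\Theta_Y$ on the left, the other uses $\C^Y_{\ell,i}\frac{y_0}{1+b}=\frac{y_0}{1+b}\C_{\ell,i}$ on the right), which is straightforward.
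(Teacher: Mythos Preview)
Your proposal is correct and follows essentially the same approach as the paper: both apply $\Theta_Y$ on the left and $\frac{y_0}{1+b}$ on the right to the identities of \cref{prop com C Y}, then use \cref{lem C CY} for one term of the commutator and \cref{eq com lem 6} for the other, together with \cref{eq C Y}, to obtain the result.
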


We now prove the main result of this section.
\begin{proof}[Proof of \cref{thm com C}]
Let $\ell,m>0$. Recall the relation between $\C_\ell$ and
$\C_{\ell,m}$ given by \eqref{eq:DefCl}. It gives
\begin{align*}
    \left[\C_\ell,\C_m\right]
    &=\sum_{k\geq0}t^{k+\ell+m}\sum_{0\leq i\leq k}\frac{\left[\C_{\ell,i},\C_{m,k-i}\right]}{(i+\ell)(k-i+m)}\\
    &=\sum_{k\geq0}\frac{t^{k+\ell+m}}{k+\ell+m}\left(\sum_{0\leq i\leq k}\left[\frac{\C_{\ell,i}}{i+\ell},\C_{m,k-i}\right]+
    \sum_{0\leq i\leq k}\left[\C_{\ell,i},\frac{\C_{m,k-i}}{k-i+m}\right]\right).
\end{align*}
\cref{eq 3 prop com C} implies that it is equal to 0. 
Fix now $m>0$. We have  
\begin{align*}
        \left[\C_0,\C_m\right] &=\sum_{k\geq0}t^{k+m}\sum_{1\leq i\leq k}\frac{\left[\C_{0,i},\C_{m,k-i}\right]}{i(k-i+m)}\\
    &=\sum_{k\geq0}\frac{t^{k+m}}{k+m}\left(\sum_{1\leq i\leq k}\left[\frac{\C_{0,i}}{i},\C_{m,k-i}\right]+
    \sum_{1\leq i\leq k}\left[\C_{0,i},\frac{\C_{m,k-i}}{k-i+m}\right]\right).
    \end{align*}
    Since $\C_{0,0}=1$ and by consequence commutes trivially with any operator, we can write
    \begin{align*}
    \left[\C_0,\C_m\right] 
    &=\sum_{k\geq0}\frac{t^{k+m}}{k+m}\left(\sum_{1\leq i\leq k}\left[\frac{\C_{0,i}}{i},\C_{m,k-i}\right]+
    \sum_{0\leq i\leq k}\left[\C_{0,i},\frac{\C_{m,k-i}}{k-i+m}\right]\right)\\
    &=\sum_{k\geq0}\frac{t^{k+m}}{k+m}\left(m\C_{m+1,k-1}+\C_{m+1,k-1}\right)\\
    &=(m+1)\C_{m+1}.\qedhere
\end{align*}
\end{proof}

\section{The shifted symmetry property}\label{sec shifted sym prop}
The purpose of this section is to prove the following theorem.
\begin{thm}[Shifted symmetry property]\label{thm symmetry}
The function $F^{(k)}\left(t,\bfp,s_1,\dots,s_k\right)$ is $\alpha$-shifted symmetric into the variables $s_1,s_2,\dots,s_k$. 
\end{thm}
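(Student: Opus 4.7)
The plan is to reduce the shifted symmetry of $F^{(k)}$ to an operator identity between products of two consecutive exponentials, and then verify it using the commutation relations of \cref{thm com C}. Iterating \cref{prop expr F} gives the closed form
\[
F^{(k)}(t,\bfp,s_1,\dots,s_k) = \exp\bigl(\Binf(-t,\bfp,-\alpha s_1)\bigr)\cdots\exp\bigl(\Binf(-t,\bfp,-\alpha s_k)\bigr)\cdot 1.
\]
Any permutation of the shifted variables $s_i-i/\alpha$ decomposes into adjacent transpositions, and an adjacent transposition $(i,i+1)$ sends $(s_i,s_{i+1})$ to $(s_{i+1}-1/\alpha,\,s_i+1/\alpha)$. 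Setting $A(u):=\Binf(-t,\bfp,u)$ and $(u,v)=(-\alpha s_i,-\alpha s_{i+1})$, the theorem thus reduces to proving the operator identity
\[
\exp(A(u))\exp(A(v)) = \exp(A(v+1))\exp(A(u-1)) \qquad \text{for all scalars } u,v.
\]

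To establish this, I will unpack the Lie-algebraic structure of $A(u)$ using \cref{thm com C}. Split $A(u) = \C_0 + \hat{A}(u)$ with $\hat{A}(u) := \sum_{m\geq 1} u^m \C_m$, where $\C_\ell := \C_\ell(-t,\bfp)$. The relations $[\C_\ell,\C_m]=0$ for $\ell,m\geq1$ immediately give $[\hat{A}(u),\hat{A}(v)]=0$, while iterating $[\C_0,\C_m]=(m+1)\C_{m+1}$ and summing against $u^m$ yields the key shift formula
\[
e^{\tau \C_0}\,\hat{A}(u)\,e^{-\tau \C_0} = \hat{A}(u+\tau) - \hat{A}(\tau).
\]
Combining this with the commutativity within the $\hat{A}$-subalgebra, a short ODE argument for $\phi(s):=\exp(sA(u))\exp(-s\C_0)$ produces the triangular factorization
\[
\exp(A(u)) = \exp(G(u))\,e^{\C_0},
\]
where $G(u) = \sum_{m\geq 1} c_m(u)\,\C_m$ lies in the abelian subalgebra generated by $\{\C_m\}_{m\geq 1}$, with coefficients given by the explicit generating function
\[
\sum_{m\geq 1}\frac{c_m(u)}{m!}\,\zeta^m = \frac{(e^{u\zeta}-1)(e^{\zeta}-1)}{\zeta}.
\]

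Substituting the factorization into both sides of the target identity, both products take the form $\exp(\Lambda)\,e^{2\C_0}$: on the left with $\Lambda = G(u) + e^{\C_0}G(v)e^{-\C_0}$, and on the right with $\Lambda' = G(v+1) + e^{\C_0}G(u-1)e^{-\C_0}$. Hence the identity is equivalent to showing that $G(u) - e^{\C_0}G(u-1)e^{-\C_0}$ is independent of $u$; a direct generating-function computation using the shift formula shows this quantity has generating function $(e^{\zeta}-1)^2/\zeta$, manifestly independent of $u$, concluding the proof. The main obstacle is the non-abelian interaction of $\C_0$ with the higher $\C_m$'s: it is precisely this non-commutativity that forces the shift $s_i\leftrightarrow s_{i+1}-1/\alpha$ rather than a plain transposition, and the exact form of the required shift is matched on the nose by the generating-function identity $(e^\zeta-1)^2/\zeta$ being independent of the symmetrized variable.
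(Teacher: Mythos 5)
Your proof is correct, and it takes a genuinely different route from the paper's, though both ultimately reduce the shifted symmetry to the same ``adjacent transposition'' operator identity $\exp\bigl(\Binf(u+1)\bigr)\exp\bigl(\Binf(v)\bigr)=\exp\bigl(\Binf(v+1)\bigr)\exp\bigl(\Binf(u)\bigr)$ and both use \cref{thm com C} as the single algebraic input. The paper packages the argument through two abstract Lie-algebraic lemmas: \cref{lem exp formula} (a special case of BCH) gives $e^{A+C}e^{-C}=\exp\bigl(\tfrac{e^{\ad_C}-1}{\ad_C}(A)\bigr)$ for suitably commuting $A,C$, and \cref{prop exponential formula} allows one to re-assemble exponentials. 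These yield \cref{lem u+1}, $\exp(\Binf(u+1))=\exp(\Binf(1))\exp(\Binf(u))\exp(-\C_0)$, and then in \cref{rmq B u v} the product $\exp(\Binf(u+1))\exp(\Binf(v))$ is converted to the manifestly $u\leftrightarrow v$-symmetric form $\exp(\Binf(1))\exp\bigl(\C_0+\Binfs(u)+\Binfs(v)\bigr)$, which already gives \cref{thm symmetry} for $k=2$; the paper then upgrades this by induction to the fully explicit symmetric expression of \cref{thm symmetry 2}. Your proof bypasses the BCH scaffolding: you split $A(u)=\C_0+\hat A(u)$, derive the continuous-parameter shift formula $e^{\tau\C_0}\hat A(u)e^{-\tau\C_0}=\hat A(u+\tau)-\hat A(\tau)$ directly from $\ad_{\C_0}^{j}\C_m=\tfrac{(m+j)!}{m!}\C_{m+j}$, and obtain the triangular factorization $\exp(A(u))=\exp(G(u))e^{\C_0}$ via an ODE in the abelian subalgebra $\Span\{\C_m\}_{m\geq 1}$ (this $G(u)=\int_0^1\bigl(\hat A(u+\tau)-\hat A(\tau)\bigr)\,d\tau$ is in fact exactly the paper's $\tfrac{e^{\ad_{\C_0}}-1}{\ad_{\C_0}}(\hat A(u))$, though you never need to invoke that operator). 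The final verification is then a one-line generating-function computation, using that $e^{\ad_{\C_0}}$ acts on $\mathfrak a=\Span\{\C_m\}_{m\geq1}$ as multiplication by $e^\zeta$ on exponential generating functions, which makes the $u$-independence of $G(u)-e^{\C_0}G(u-1)e^{-\C_0}$ (with EGF $(e^\zeta-1)^2/\zeta$) transparent. The trade-off is clear: the paper's route produces the stronger and reusable \cref{thm symmetry 2}, while yours is more self-contained and gives closed-form coefficients $c_m(u)=\tfrac{(u+1)^{m+1}-u^{m+1}-1}{m+1}$ for the ``triangular'' part, which is a nice explicit byproduct not visible in the paper's formulation. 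One minor presentational point: you should state explicitly that $e^{\ad_{\C_0}}$ preserves $\mathfrak a$ (it does, since $\ad_{\C_0}\C_m=(m+1)\C_{m+1}$), as this is what justifies combining $G(u)$ and $e^{\C_0}G(v)e^{-\C_0}$ into a single exponential.
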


We start by proving some general commutation relations which will be useful in the proof of \cref{thm symmetry}.

\subsection{Preliminaries}
\label{subsec:PrelLie}


Let $X$ be a vector space and $\mathcal{O}(X)$ denote the set of linear operators on
$X$. Whenever $A \in \mathcal{O}(X)\llbracket z \rrbracket_+$,
then $\exp(A) := 1+\sum_{n\geq 1}\frac{A^n}{n!}$ is a well-defined element of
$\mathcal{O}(X)\llbracket z \rrbracket$. Note that $\mathcal{O}(X) \llbracket z \rrbracket$ is
a Lie algebra with the standard commutator. For $C \in
\mathcal{O}(X)\llbracket z \rrbracket$ we consider the adjoint action of
$C$, denoted by $\ad_C$, as the linear map defined on the space
$\mathcal{O}(X)\llbracket z \rrbracket$ by 
$$\ad_C(A):=[C,A].$$
By a direct induction we obtain the following lemma.
\begin{lem}\label{lem com ad}
Let $A_1$, $A_2$ and $C$ be three operators such that 
$$\left[A_2,\ad^m_{C}(A_1)\right]=0, \text{ for every }m\geq0.$$
Then 
\begin{equation*}
     \ad_{A_2+C}^m(A_1)=\ad_{C}^m(A_1), \text{ for } m\geq0.
 \end{equation*}
 \end{lem}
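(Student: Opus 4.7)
The plan is to proceed by a straightforward induction on $m$, with the hypothesis $[A_2,\ad^m_C(A_1)]=0$ serving precisely to absorb the $A_2$-part of the adjoint action at each step.

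For the base case $m=0$, both sides equal $A_1$ by definition of $\ad^0$ as the identity, so the claim is immediate. For the inductive step, assume the identity $\ad_{A_2+C}^m(A_1)=\ad_C^m(A_1)$ holds at level $m$. Then, using bilinearity of the commutator,
\begin{align*}
\ad_{A_2+C}^{m+1}(A_1) &= [A_2+C,\ad_{A_2+C}^m(A_1)] \\
&= [A_2+C,\ad_C^m(A_1)] \\
&= [A_2,\ad_C^m(A_1)] + [C,\ad_C^m(A_1)] \\
&= 0 + \ad_C^{m+1}(A_1),
\end{align*}
where the second equality applies the inductive hypothesis, and the vanishing of $[A_2,\ad_C^m(A_1)]$ is exactly the standing assumption of the lemma. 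This closes the induction.

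There is no real obstacle here; the lemma is purely formal and isolates a useful Lie-algebraic observation that will presumably be applied later with $C$ playing the role of one of the $\C_\ell$ operators and $A_2$ something that commutes with the entire $\ad_C$-orbit of $A_1$. The only subtlety worth flagging in the write-up is that the operators live in $\mathcal{O}(X)\llbracket z\rrbracket$, so one should note that the expressions $\ad_C^m(A_1)$ and their commutators make sense as formal power series and that bilinearity of the bracket extends coefficient-wise, which is immediate.
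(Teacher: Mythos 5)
Your proof is correct and is exactly the "direct induction" that the paper invokes without writing it out. Nothing to add.
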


We have the following classical
identities between the elements of the Lie algebra $\mathcal{O}(X)\llbracket z
\rrbracket$ (see e.g \cite{Muger2019} for a more general contexts):
\begin{align}\label{eq adjoint derivation}
e^C Ae^{-C}&=e^{\ad_C} A \text{ for } C \in \mathcal{O}(X)\llbracket z \rrbracket_+,\\
\label{eq exp derivation}
  \frac{d}{dt}e^{tA+C}&=e^{tA+C}\frac{e^{\ad_{-C-tA}}-1}{\ad_{-C-tA}}(A)
                        \text{ for } A\in \mathcal{O}(X)\llbracket z \rrbracket, C\in \mathcal{O}(X)\llbracket z \rrbracket_+,
\end{align}
where the last equality holds in $\mathcal{O}(X)\llbracket t,z
\rrbracket$ and
$$\frac{e^{\ad_{-C-tA}}-1}{\ad_{-C-tA}}:=1+\sum_{k\geq 0}\frac{\ad_{-C-tA}^{k}}{(k+1)!}.$$

The following lemma will be quite useful for proving \cref{thm
  symmetry}, and might be interpreted as a special case of the
Baker--Campbell--Hausdorff formula.
 
\begin{lem}\label{lem exp formula}
Let $A,C \in \mathcal{O}(X)\llbracket z \rrbracket_+$ be two operators such that $[A,\ad^m_C(A)]=0$ for every $m\geq0$. 
Then 
\begin{equation*}
  e^{A+C}e^{-C}=\exp\left(\frac{e^{\ad_C}-1}{\ad_C}(A)\right).  
\end{equation*}

\begin{proof}

We consider the following function 
$$\Phi(t):=e^{C}e^{-tA-C}\exp\left(\frac{e^{\ad_{C}}-1}{\ad_C}(tA)\right).$$
We want to prove that $\Phi(1)=1$. Since $\Phi(0)=1$, it is enough then to prove that $\frac{d}{dt}\Phi(t)=0.$
But 
$$\frac{d}{dt}\Phi(t)=e^{C}\left(\frac{d}{dt}e^{-tA-C}\right)\exp\left(\frac{e^{\ad_{C}}-1}{\ad_C}(tA)\right)+e^{C}e^{-tA-C}\frac{d}{dt}\exp\left(\frac{e^{\ad_{C}}-1}{\ad_C}(tA)\right).$$
On one hand, using \cref{eq exp derivation,lem com ad} we have that  
$$\frac{d}{dt}e^{-tA-C}=-e^{-tA-C}\cdot\frac{e^{\ad_{C}}-1}{\ad_C}(A).$$
On the other hand, 
$$\frac{d}{dt}\exp\left(\frac{e^{\ad_{C}}-1}{\ad_C}(tA)\right)=\frac{e^{\ad_{C}}-1}{\ad_C}(A)\exp\left(\frac{e^{\ad_{C}}-1}{\ad_C}(tA)\right).$$

This finishes the proof of the lemma.
\end{proof}
\end{lem}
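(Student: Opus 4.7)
The plan is to exhibit both sides as values at $t=1$ of one-parameter families and to show that the left-hand side satisfies a first-order ODE whose unique formal power series solution is the exponential appearing on the right-hand side. Specifically, I would set $B(t) := e^{tA+C} e^{-C}$, so that $B(0) = 1$ and $B(1)$ equals the left-hand side of the statement, and set $M := \frac{e^{\ad_C}-1}{\ad_C}(A)$. The goal is then to prove $B(t) = \exp(tM)$.

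The first key step is to exploit the hypothesis via \cref{lem com ad}. Applying it with $A_2 = -tA$ and $C$ replaced by $-C$ — the required commutation $[-tA, \ad_{-C}^m(A)] = 0$ follows from $\ad_{-C}^m(A) = (-1)^m \ad_C^m(A)$ together with the hypothesis — one obtains $\ad_{-C-tA}^m(A) = (-1)^m \ad_C^m(A)$. Substituting into~\eqref{eq exp derivation} gives
\begin{equation*}
\frac{d}{dt} e^{tA+C} = e^{tA+C} \cdot \frac{1 - e^{-\ad_C}}{\ad_C}(A).
\end{equation*}
Next, I would multiply on the right by $e^{-C}$, rewrite $e^{tA+C}$ as $B(t) \cdot e^C$, and conjugate the resulting operator by $e^C$ using~\eqref{eq adjoint derivation}, together with the formal algebraic identity $e^{\ad_C} \circ \frac{1-e^{-\ad_C}}{\ad_C} = \frac{e^{\ad_C}-1}{\ad_C}$ (valid because $\ad_C$ commutes with any formal power series in itself). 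This yields the ODE
\begin{equation*}
B'(t) = B(t) \cdot M, \qquad B(0) = 1.
\end{equation*}

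The final step is to solve this ODE in $\mathcal{O}(X)\llbracket t, z \rrbracket$. Writing $B(t) = \sum_{n\geq 0} t^n B_n$, the ODE forces the recursion $n B_n = B_{n-1} M$, whence $B_n = M^n/n!$ by induction, and so $B(t) = \exp(tM)$. Evaluating at $t = 1$ then gives the claim. The main subtlety — and the step requiring care — is the one-sided character of the ODE: $M$ need not commute with $B(t)$. But this is harmless because $M$ trivially commutes with $\exp(tM)$ (a general associative-algebra identity), so the right-sided derivative formula $\frac{d}{dt}\exp(tM) = \exp(tM)\cdot M$ holds and matches, and the coefficient-wise recursion determines $B(t)$ uniquely in the formal power series ring.
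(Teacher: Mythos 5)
Your proof is correct and takes essentially the same route as the paper: both compute the $t$-derivative of a one-parameter family built from $e^{\pm(tA+C)}$ via \eqref{eq exp derivation}, use \cref{lem com ad} to collapse the adjoint series $\ad_{\pm(tA+C)}^m(A)$ to $(\pm 1)^m \ad_C^m(A)$, and conclude by a uniqueness argument for a first-order linear problem. The only difference is organizational: you differentiate $B(t)=e^{tA+C}e^{-C}$ to obtain the ODE $B'=BM$ and then solve it coefficient-by-coefficient, which forces an extra conjugation by $e^C$ (turning $\frac{1-e^{-\ad_C}}{\ad_C}$ into $\frac{e^{\ad_C}-1}{\ad_C}$); the paper instead differentiates $\Phi(t)=e^C e^{-tA-C}\exp(tM)=B(t)^{-1}\exp(tM)$, where the sign choice makes $M$ appear directly so that one only has to check $\Phi'(t)=0$ and evaluate at $t=0,1$.
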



We deduce the main result of this section.
\begin{prop}\label{prop exponential formula}
Let $A_1,A_2,C \in \mathcal{O}(X)\llbracket z \rrbracket_+$ be three operators such that 
$$\left[\ad_{C}^\ell A_i,\ad_{C}^m A_j\right]=0, \text{ for } 1\leq i,j\leq 2 \text{ and } m,\ell\geq 0.$$

Then 
$$e^{A_1+C}e^{-C}e^{A_2+C}=e^{A_1+A_2+C}.$$

\begin{proof}
We prove that 
$$e^{A_1+C}e^{-C}=\exp\left(\frac{e^{\ad_C}-1}{\ad_C} (A_1)\right)=e^{A_1+A_2+C}e^{-A_2-C}.$$
The left equality in the last line is guaranteed by \cref{lem exp formula}. Let us prove the right equality.
Since the operators $A_1, A_2$ and $C$ satisfy the conditions of \cref{lem com ad}, then 
\begin{equation}\label{eq prop exp formula}
    \ad_{A_2+C}^m(A_1)=\ad_{C}^m (A_1) \text{ for }m\geq0.
\end{equation}
Hence, 
$$[A_1,\ad_{A_2+C}^m(A_1)]=[A_1,\ad_{C}^m (A_1)]=0.$$
By consequence, the operators $A_1$ and $A_2+C$ fulfill then the conditions of \cref{lem exp formula}, and we obtain that 
$$e^{A_1+A_2+C}e^{-A_2-C}=\exp\left(\frac{e^{\ad_{A_2+C}}-1}{\ad_{A_2+C}}(A_1)\right)
= \exp\left(\frac{e^{\ad_{C}}-1}{\ad_{C}}(A_1)\right),$$
where the last equality follows from \cref{eq prop exp formula}. This
finishes the proof.
\end{proof}
\end{prop}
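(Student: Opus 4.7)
The plan is to show that both $e^{A_1+C}e^{-C}$ and $e^{A_1+A_2+C}e^{-(A_2+C)}$ equal the same exponential, namely $\exp\left(\frac{e^{\ad_C}-1}{\ad_C}(A_1)\right)$; comparing the two and multiplying on the right by $e^{A_2+C}$ then yields the desired identity. Each of these equalities will follow from an application of \cref{lem exp formula}, with different choices for the pair $(A,C)$.

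First I would apply \cref{lem exp formula} directly to $A_1$ and $C$. The required hypothesis $[A_1,\ad_C^m(A_1)]=0$ for all $m\geq 0$ is just the $(i,j)=(1,1)$, $\ell=0$ case of the standing assumption, so I would immediately obtain
$$e^{A_1+C}e^{-C}=\exp\left(\frac{e^{\ad_C}-1}{\ad_C}(A_1)\right).$$
Next I would apply \cref{lem exp formula} a second time, now with $A_1$ and $A_2+C$ in the roles of $A$ and $C$. To verify the hypothesis $[A_1,\ad_{A_2+C}^m(A_1)]=0$, I would invoke \cref{lem com ad}: since $[A_2,\ad_C^m(A_1)]=0$ for every $m\geq 0$ by the $(i,j)=(2,1)$, $\ell=0$ case of the standing assumption, the lemma gives $\ad_{A_2+C}^m(A_1)=\ad_C^m(A_1)$, and then the desired commutator vanishes by the $(1,1)$ case. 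This same identity $\ad_{A_2+C}^m(A_1)=\ad_C^m(A_1)$ also reduces the resulting exponential back to the previous one, yielding
$$e^{A_1+A_2+C}e^{-(A_2+C)}=\exp\left(\frac{e^{\ad_{A_2+C}}-1}{\ad_{A_2+C}}(A_1)\right)=\exp\left(\frac{e^{\ad_C}-1}{\ad_C}(A_1)\right).$$

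The conceptual heart of the argument is the observation that the extra summand $A_2$ contributes nothing to the nested brackets $\ad_{A_2+C}^m(A_1)$, because $A_2$ commutes with every $\ad_C^k(A_1)$; this is precisely what \cref{lem com ad} is set up to exploit, and it makes the two applications of \cref{lem exp formula} collapse to the same series. I do not anticipate any genuine obstacle once \cref{lem exp formula} and \cref{lem com ad} are in hand --- the argument is a careful rewriting rather than any new substantive computation.
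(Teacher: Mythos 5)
Your proposal is correct and follows essentially the same route as the paper's proof: apply \cref{lem exp formula} twice, with pairs $(A_1,C)$ and $(A_1,A_2+C)$, and use \cref{lem com ad} to show $\ad_{A_2+C}^m(A_1)=\ad_C^m(A_1)$, which both validates the second application and collapses the resulting exponential to the first. Your writeup is slightly more explicit about which $(i,j,\ell,m)$-cases of the standing hypothesis are invoked, but there is no substantive difference.
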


\subsection{Proof of \texorpdfstring{\cref{thm symmetry}}{}}
It will be convenient in this section to separate the constant part of the operator $\Binf$ in the variable $u$. Namely, we consider 
\begin{equation}
  \label{eq:Binf>Def}
  \Binfs(t,\bfp,u):=\sum_{\ell\geq1}u^\ell \C_{\ell}(t,\bfp).
  \end{equation}
\noindent In the following, when there is no ambiguity, we will simply denote 
$$\Binf(u)\equiv \Binf(t,\bfp,u) \text{ and } \Binfs(u)\equiv \Binfs(t,\bfp,u).$$ 

As a consequence of \cref{thm com C}, we have 
\begin{equation}
\left[\Binfs(u),\Binfs(v)\right]=0.
\end{equation}

From \cref{prop expr F} we know that 
$$F^{(k)}(-t,\bfp,
    s_1, \dots,s_k)=\exp\big(\Binf(-\alpha s_1)\big)\cdots \exp\big(\Binf(-\alpha s_k)\big)\cdot1.$$
The purpose of this section is to prove that this function is
symmetric in the shifted variables $(s_i-i/\alpha)_{1 \leq i \leq k}$ and to give a symmetric expression of it. 
\begin{thm}\label{thm symmetry 2}
For every $k\geq1$, we have 
\begin{multline}
F^{(k)}\left(-t,\bfp,s_1,s_2\dots,s_k\right)\\
=\exp\big(\Binf(k-1)\big)\cdots\exp\big(\Binf(1)\big)\exp\big(\C_0+\Binfs(-\alpha
s'_1-k)+\dots+\Binfs(-\alpha s'_k-k)\big)\cdot 1,
\end{multline}
where $s'_i:=s_i-i/\alpha$.
\end{thm}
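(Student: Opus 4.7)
The plan is to reduce both sides of the desired identity to a common \emph{normal form} $\exp(X(s_1,\dots,s_k))\cdot e^{k\C_0}\cdot 1$, by commuting every $e^{\C_0}$ factor to the right, and then to verify that the resulting arguments $X$ coincide. The crucial input is a \emph{shift identity}: the relation $[\C_0,\C_m]=(m+1)\C_{m+1}$ supplied by \cref{thm com C} yields, by a direct induction, $\ad_{\C_0}^n\C_m=\frac{(m+n)!}{m!}\C_{m+n}$ for $m\geq 1,\,n\geq 0$. Summing over $m$ with the coefficients defining $\Binfs$ produces, for every scalar $t$,
\begin{equation*}
    e^{t\C_0}\,\Binfs(u)\,e^{-t\C_0}=\Binfs(u+t)-\Binfs(t),
\end{equation*}
which I will apply repeatedly.

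I will then invoke \cref{lem exp formula} with $C=\C_0$ and $A$ any finite linear combination of operators $\Binfs(w_i)$; the hypothesis $[A,\ad_{\C_0}^m A]=0$ is automatic because $\ad_{\C_0}^m\Binfs(u)$ is a series in $\C_j,\,j\geq 1$, and all such operators commute by \cref{thm com C}. Setting $\Lambda(u):=\frac{e^{\ad_{\C_0}}-1}{\ad_{\C_0}}\Binfs(u)$ (which lies in the span of the $\C_j$ for $j\geq 1$), this gives the factorizations
\begin{equation*}
    \exp(\Binf(u))=\exp(\Lambda(u))\,e^{\C_0},\qquad \exp\!\Bigl(\C_0+\sum_{i=1}^k\Binfs(w_i)\Bigr)=\exp\!\Bigl(\sum_{i=1}^k\Lambda(w_i)\Bigr)\,e^{\C_0},
\end{equation*}
and the shift identity for $\Binfs$ transports to $e^{t\C_0}\Lambda(u)e^{-t\C_0}=\Lambda(u+t)-\Lambda(t)$ since $\ad_{\C_0}$ commutes with itself.

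Iterating \cref{prop expr F} gives $F^{(k)}(-t,\bfp,s_1,\dots,s_k)=\prod_{i=1}^k\exp(\Binf(-\alpha s_i))\cdot 1$; replacing each factor by $\exp(\Lambda(-\alpha s_i))\,e^{\C_0}$ and pushing every $e^{\C_0}$ past the subsequent $\Lambda$'s collapses this to $\exp\!\bigl(\sum_{i=1}^k[\Lambda(-\alpha s_i+i-1)-\Lambda(i-1)]\bigr)\cdot e^{k\C_0}\cdot 1$. The same normalization applied to the right-hand side (with $w_i:=-\alpha s'_i-k$) yields inside the exponential the expression $\sum_{i=1}^{k-1}[\Lambda(k-1)-\Lambda(i-1)]+\sum_{i=1}^k[\Lambda(w_i+k-1)-\Lambda(k-1)]$, which telescopes to $\sum_{i=1}^k[\Lambda(w_i+k-1)-\Lambda(i-1)]$; using $w_i+k-1=-\alpha s'_i-1=-\alpha s_i+i-1$, the two normal forms coincide. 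The main obstacle is identifying the auxiliary series $\Lambda(u)$ and establishing its clean transformation rule — once this is in hand, both sides reduce mechanically to the same closed-form expression, and the resulting argument $\sum_{i=1}^k[\Lambda(-\alpha s'_i-1)-\Lambda(i-1)]$ is manifestly symmetric in $s'_1,\dots,s'_k$, recovering \cref{thm symmetry} at once.
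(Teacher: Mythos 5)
Your proof is correct, and it takes a genuinely different route from the paper's. The paper proves the identity by induction on $k$: it first establishes $\exp(\Binf(u+1))=\exp(\Binf(1))\exp(\Binf(u))\exp(-\C_0)$ (\cref{lem u+1}), derives the two-variable exchange relation $\exp(\Binf(u+1))\exp(\Binf(v))=\exp(\Binf(v+1))\exp(\Binf(u))$ (\cref{rmq B u v}), and then in the inductive step uses this exchange $k-1$ times before reassembling the three exponentials via \cref{prop exponential formula}. You instead bypass the induction entirely by reducing both sides to a common normal form $\exp(X)\,e^{k\C_0}\cdot 1$. The ingredients are the same — the shift relation (your $e^{t\ad_{\C_0}}\Binfs(u)=\Binfs(u+t)-\Binfs(t)$ is the paper's \cref{prop u+1} extended from $t=1$ to arbitrary scalar $t$, derived from the same formula $\ad_{\C_0}^n\C_m=\frac{(m+n)!}{m!}\C_{m+n}$), the commutativity of the $\C_j$ for $j\geq 1$ (\cref{thm com C}), and the BCH-type \cref{lem exp formula} — but you package them into a one-shot conjugation-and-telescoping computation. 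The auxiliary series $\Lambda(u)=\frac{e^{\ad_{\C_0}}-1}{\ad_{\C_0}}\Binfs(u)$ is precisely the object \cref{lem exp formula} already produces, so introducing it is natural. The payoff of your approach is that the symmetry in the shifted variables $s'_1,\dots,s'_k$ appears directly in the final expression $\sum_{i=1}^k\bigl[\Lambda(-\alpha s'_i-1)-\Lambda(i-1)\bigr]$, rather than being read off afterwards; the tradeoff is a slightly longer bookkeeping computation in the conjugation step. All the formal-series manipulations are legitimate here because $\C_0$ and the $\Binfs(w)$ live in $\mathcal{O}(\mcP[u])\llbracket t\rrbracket_+$, exactly the setting of the paper's \cref{subsec:PrelLie}.
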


The following lemma establishes a relation between the operators $\Binfs(u+1)$ and $\Binfs(u)$.

\begin{prop}\label{prop u+1}

For any variable $u$, we have 
$$\Binf(u+1)=\Binf(1)+e^{\C_0}\Binfs(u)e^{-\C_0}.$$
\begin{proof}
Note that $\Binfs(u),\Binf(u),\C_0 \in \mathcal{O}(\mcP[u])\llbracket
t \rrbracket_+$. In particular $\Binf(1) \in \mathcal{O}(\mcP[u])\llbracket
t \rrbracket_+$ is well-defined and we can use formulas from \cref{subsec:PrelLie}. From the definition, we have
\begin{align*}
\Binf(u+1)
&=\sum_{\ell\geq0}(u+1)^\ell \C_\ell \\
&=\sum_{\ell\geq0}\sum_{0\leq k\leq \ell}\binom{\ell}{k}u^k \C_\ell\\
&=\Binf(1)+\sum_{k\geq1}\sum_{\ell\geq k}\binom{\ell}{k} u^k  \C_\ell.
\end{align*}

Applying \cref{thm com C} inductively, we get that  

$$\frac{\ell!}{k!}\C_\ell=\ad_{\C_0}^{\ell-k}\C_k, \text{ for }1\leq k\leq \ell.$$

Hence,
\begin{align*}
\Binf(u+1)
&=\Binf(1)+\sum_{k\geq 1}u^k \left(\sum_{i\geq0 }\frac{\left(\ad_{\C_0}\right)^{i}}{i!}\right) \C_k\\
&=\Binf(1)+\sum_{k \geq 1}u^k e^{\C_0} \C_k e^{-\C_0}\\
&=\Binf(1)+e^{\C_0}\Binfs(u)e^{-\C_0},
\end{align*}
where the second equality follows from \cref{eq adjoint derivation}.
\end{proof}
\end{prop}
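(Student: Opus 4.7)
The plan is to expand $(u+1)^\ell$ via the binomial theorem and match the resulting series against the adjoint action of $e^{\C_0}$ on $\Binfs(u)$. First I would write
\begin{equation*}
\Binf(u+1)=\sum_{\ell\geq 0}(u+1)^\ell \C_\ell=\sum_{\ell\geq 0}\C_\ell+\sum_{k\geq 1}u^k\sum_{\ell\geq k}\binom{\ell}{k}\C_\ell=\Binf(1)+\sum_{k\geq 1}u^k\sum_{\ell\geq k}\binom{\ell}{k}\C_\ell,
\end{equation*}
so that the claim reduces to identifying the inner sum with $e^{\C_0}\C_k e^{-\C_0}$ for each $k\geq 1$.

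Since $\C_0,\C_k\in\mathcal{O}(\mcP[u])\llbracket t\rrbracket_+$, the formula $e^{\C_0}\C_k e^{-\C_0}=e^{\ad_{\C_0}}\C_k$ from \cref{eq adjoint derivation} applies. The key computation is then to establish by induction on $i\geq 0$ that
\begin{equation*}
\ad_{\C_0}^i\C_k=\frac{(k+i)!}{k!}\,\C_{k+i}\qquad (k\geq 1),
\end{equation*}
the base case $i=0$ being trivial and the inductive step following directly from \cref{thm com C}, which gives $[\C_0,\C_m]=(m+1)\C_{m+1}$ for $m\geq 1$; applied to $\C_{k+i}$ this contributes the factor $k+i+1$ needed to pass from $(k+i)!/k!$ to $(k+i+1)!/k!$.

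Once this identity is in hand, summing over $i$ with the $1/i!$ from the exponential yields
\begin{equation*}
e^{\ad_{\C_0}}\C_k=\sum_{i\geq 0}\frac{1}{i!}\cdot\frac{(k+i)!}{k!}\C_{k+i}=\sum_{i\geq 0}\binom{k+i}{k}\C_{k+i}=\sum_{\ell\geq k}\binom{\ell}{k}\C_\ell,
\end{equation*}
which is exactly the inner sum above. Multiplying by $u^k$ and summing over $k\geq 1$ then identifies the remainder with $e^{\C_0}\Binfs(u)e^{-\C_0}$, using the definition \eqref{eq:Binf>Def} of $\Binfs$.

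I do not expect a serious obstacle: the hypothesis $\ell>0$ in \cref{thm com C} is precisely why the conjugation formula involves $\Binfs$ rather than $\Binf$, since the $k=0$ terms are absorbed into the $\Binf(1)$ summand instead of being conjugated. The only mild care is in treating the series as elements of $\mathcal{O}(\mcP[u])\llbracket t\rrbracket_+$ (so that $e^{\pm\C_0}$ and the infinite sum over $\ell$ are well defined as formal power series in $t$), but this is immediate from the fact that each $\C_\ell$ is a multiple of $t^\ell$ plus higher order terms in $t$.
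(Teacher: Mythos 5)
Your proof is correct and takes essentially the same route as the paper: the binomial expansion of $(u+1)^\ell$, the inductive identity $\ad_{\C_0}^{\ell-k}\C_k=\frac{\ell!}{k!}\C_\ell$ derived from \cref{thm com C}, and the conjugation formula \eqref{eq adjoint derivation} to resum into $e^{\C_0}\Binfs(u)e^{-\C_0}$. The remark about the hypothesis $\ell>0$ explaining why $\Binfs$ appears rather than $\Binf$ is a sensible gloss on what the paper leaves implicit.
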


We now prove that the operators $\Binfs(u)$, $\Binfs(v)$ and $\C_0$ satisfy the conditions of \cref{prop exponential formula}.
\begin{lem}\label{lem com ad B}
Let $u$ and $v$ be two variables, and let $m$ and $\ell$  be two non negative integers. 
Then 
$$\left[\ad_{\C_0}^\ell \Binfs(u) ,\ad_{\C_0}^m \Binfs(v)\right]=0.$$
\begin{proof}
\cref{eq:Binf>Def} implies that it is enough to prove that
$\left[\ad_{\C_0}^\ell \C_i ,\ad_{\C_0}^m \C_j\right]=0$ for all $i,j
\geq 1$. The latter follows from \cref{thm com C}.
\end{proof}
\end{lem}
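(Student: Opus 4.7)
The plan is to reduce the claimed identity to commutator relations between individual operators $\C_i$, and then invoke \cref{thm com C} directly. The key observation is that $\Binfs(u)$ is, by its definition \eqref{eq:Binf>Def}, a $u$-linear combination of the $\C_i$'s for $i \geq 1$, so $\ad_{\C_0}$ preserves this structure in a particularly transparent way.

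First, I would expand
\[ \ad_{\C_0}^\ell \Binfs(u) = \sum_{i \geq 1} u^i \ad_{\C_0}^\ell \C_i, \qquad \ad_{\C_0}^m \Binfs(v) = \sum_{j \geq 1} v^j \ad_{\C_0}^m \C_j, \]
using that $\ad_{\C_0}$ is linear. By bilinearity of the commutator, the identity reduces to showing
\[ [\ad_{\C_0}^\ell \C_i,\, \ad_{\C_0}^m \C_j] = 0 \qquad \text{for all } i,j \geq 1 \text{ and } \ell, m \geq 0. \]

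Second, I would establish by induction on $\ell \geq 0$ that for every $i \geq 1$,
\[ \ad_{\C_0}^\ell(\C_i) = \frac{(i+\ell)!}{i!}\, \C_{i+\ell}. \]
The base case $\ell = 0$ is trivial. For the inductive step, \cref{thm com C} (the case $\ell=0$, $m \geq 1$) gives $\ad_{\C_0}(\C_n) = [\C_0, \C_n] = (n+1)\C_{n+1}$ for all $n \geq 1$, so applying this with $n = i+\ell$ to the inductive hypothesis yields
\[ \ad_{\C_0}^{\ell+1}(\C_i) = \frac{(i+\ell)!}{i!}\,(i+\ell+1)\C_{i+\ell+1} = \frac{(i+\ell+1)!}{i!}\,\C_{i+\ell+1}, \]
as required. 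Crucially, the recursion stays in the regime of indices $\geq 1$, so the case of \cref{thm com C} producing $\C_{m+1}$ from $[\C_0, \C_m]$ is the only one invoked.

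Third, I would substitute to conclude. The previous step gives
\[ [\ad_{\C_0}^\ell \C_i,\, \ad_{\C_0}^m \C_j] = \frac{(i+\ell)!}{i!}\cdot \frac{(j+m)!}{j!}\, [\C_{i+\ell},\, \C_{j+m}]. \]
Since $i+\ell \geq 1$ and $j+m \geq 1$, the first case of \cref{thm com C} ($[\C_p, \C_q] = 0$ whenever both $p, q > 0$) makes this commutator vanish. Summing over $i, j$ against $u^i v^j$ gives the desired identity. This proof is essentially mechanical once \cref{thm com C} is available; there is no serious obstacle beyond correctly bookkeeping the induction and noting that the indices never drop to $0$.
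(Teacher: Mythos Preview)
Your proof is correct and follows the same approach as the paper: reduce via \eqref{eq:Binf>Def} to commutators $[\ad_{\C_0}^\ell \C_i,\ad_{\C_0}^m \C_j]$ with $i,j\geq 1$, and then apply \cref{thm com C}. You spell out explicitly the inductive identity $\ad_{\C_0}^\ell(\C_i)=\tfrac{(i+\ell)!}{i!}\C_{i+\ell}$, which the paper leaves implicit here (and records later in the proof of \cref{prop u+1}).
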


\begin{lem}\label{lem u+1}
For any variable $u$, we have
$$\exp(\Binf(u+1))=\exp\big{(}\Binf(1)\big{)}\cdot \exp\big{(}\Binf(u)\big{)}\cdot\exp(-\C_0).$$
\begin{proof}

We know from \cref{prop u+1} that
\begin{align*}
\exp(\Binf(u+1))
&=\exp\big{(}\Binf(1)+e^{\C_0}\Binfs(u)e^{-\C_0}\big{)}\\
&=\exp\big{(}\C_0+\Binfs(1)+e^{\C_0}\Binfs(u)e^{-\C_0}\big{)}.
\end{align*}
But from \cref{lem com ad B} we know that the triplet of operators $\Binfs(1)$, $\Binfs(u)$ and $\C_0$ satisfy the conditions of  \cref{prop exponential formula}. Moreover,  this is also the case for the triplet $\Binfs(1)$, $e^{\C_0}\Binfs(u)e^{-\C_0}$ and $\C_0$, since $e^{\C_0}\Binfs(u)e^{-\C_0}=e^{\ad_{\C_0}}\Binfs(u)$. Hence, 
\begin{align*}
\exp(\Binf(u+1))
&=\exp\big{(}\C_0+\Binfs(1)\big{)}\cdot\exp(-\C_0)\cdot\exp(\C_0+e^{\C_0}\Binfs(u)e^{-\C_0}\big{)}\\
&=\exp\big{(}\C_0+\Binfs(1)\big{)}\cdot\exp(-\C_0)\cdot\exp\left(e^{\C_0}\left(\C_0+\Binfs(u)\right)e^{-\C_0}\right)\\
&=\exp\big{(}\C_0+\Binfs(1)\big{)}\cdot\exp(-\C_0)\cdot\exp\left(e^{\C_0}\Binf(u)e^{-\C_0}\right).
\end{align*}
Finally, by expanding the last exponential, and by observing that for each $\ell\geq0$
$$\exp(-\C_0)\cdot\left(e^{\C_0}\Binf(u)e^{-\C_0}\right)^\ell=\Binf(u)^\ell\cdot\exp(-\C_0),$$
we deduce that
$$\exp(\Binf(u+1))=\exp\big{(}\Binf(1)\big{)}\cdot \exp\big{(}\Binf(u)\big{)}\cdot\exp(-\C_0).\qedhere$$
\end{proof}
\end{lem}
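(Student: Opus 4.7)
The plan is to rewrite both sides in terms of $\C_0$ and $\Binfs$, and then invoke \cref{prop exponential formula} and \cref{lem com ad B}, which are the main technical ingredients already prepared above for precisely this purpose.

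First I would use \cref{prop u+1}, which rewrites
\[
\Binf(u+1) \;=\; \Binf(1) + e^{\C_0}\Binfs(u)e^{-\C_0}\;=\;\C_0 \;+\; \Binfs(1)\;+\; e^{\C_0}\Binfs(u)e^{-\C_0}.
\]
Setting $A_1 := \Binfs(1)$, $A_2 := e^{\C_0}\Binfs(u)e^{-\C_0}$, and $C := \C_0$, the goal becomes to show
\[
\exp(A_1+A_2+C) \;=\; \exp(A_1+C)\,\exp(-C)\,\exp(A_2+C)\cdot \exp(-C)\cdot e^C,
\]
which (after cancelling the trailing $\exp(-C)e^C$) reduces to applying \cref{prop exponential formula} to the triple $(A_1,A_2,C)$.

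The main thing to check, and the step that requires care, is the commutation hypothesis $[\ad_C^\ell A_i,\ad_C^m A_j]=0$ for $i,j\in\{1,2\}$. For $A_1 = \Binfs(1)$ this is immediate from \cref{lem com ad B}. For $A_2$ one uses \cref{eq adjoint derivation} to write $A_2 = e^{\ad_{\C_0}}\Binfs(u)$, so that $\ad_C^m A_2$ is a formal power series in $u$ whose coefficients are iterated adjoints $\ad_{\C_0}^{m+k}\Binfs(u)|_{u^\bullet}$; all required commutators then reduce, coefficient by coefficient, to instances of \cref{lem com ad B}. Granted this, \cref{prop exponential formula} yields
\[
\exp(\C_0+\Binfs(1)+e^{\C_0}\Binfs(u)e^{-\C_0}) \;=\; \exp(\C_0+\Binfs(1))\,\exp(-\C_0)\,\exp(\C_0+e^{\C_0}\Binfs(u)e^{-\C_0}).
\]

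To finish, I would simplify the third factor. Since $e^{\C_0}\C_0 e^{-\C_0}=\C_0$, we have
\[
\C_0 + e^{\C_0}\Binfs(u)e^{-\C_0} \;=\; e^{\C_0}\bigl(\C_0+\Binfs(u)\bigr)e^{-\C_0} \;=\; e^{\C_0}\,\Binf(u)\,e^{-\C_0},
\]
and exponentiating conjugates the exponential, giving $\exp\!\bigl(\C_0+e^{\C_0}\Binfs(u)e^{-\C_0}\bigr) = e^{\C_0}\exp(\Binf(u))e^{-\C_0}$. Recognising $\exp(\C_0+\Binfs(1)) = \exp(\Binf(1))$ and telescoping the $e^{\pm\C_0}$ in the middle yields
\[
\exp(\Binf(u+1)) \;=\; \exp(\Binf(1))\,\exp(\Binf(u))\,\exp(-\C_0),
\]
as desired. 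The only genuine obstacle is verifying the commutation hypothesis for $A_2$, which is why the preparatory \cref{lem com ad B} and the exponential-conjugation identity \cref{eq adjoint derivation} are needed; everything else is bookkeeping via \cref{prop exponential formula}.
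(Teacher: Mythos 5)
Your proposal is correct and follows essentially the same route as the paper: rewrite $\Binf(u+1)$ via \cref{prop u+1}, verify the commutation hypothesis of \cref{prop exponential formula} for the triple $\Binfs(1),\,e^{\C_0}\Binfs(u)e^{-\C_0},\,\C_0$ by reducing to \cref{lem com ad B} through $e^{\C_0}\Binfs(u)e^{-\C_0}=e^{\ad_{\C_0}}\Binfs(u)$, and then absorb $\C_0$ into a conjugation to telescope the exponentials. The only difference is cosmetic: the paper applies \cref{prop exponential formula} first and then untangles the conjugation by expanding the last exponential term by term, whereas you identify $\exp(A_2+C)=e^{\C_0}\exp(\Binf(u))e^{-\C_0}$ up front and cancel directly.
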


\begin{rmq}\label{rmq B u v}
Fix now two variables $u$ and $v$. By applying \cref{lem u+1} and  \cref{prop exponential formula} with $\Binfs(u)$, $\Binfs(v)$ and $\C_0$, we get that 
\begin{align}\label{eq sym B}
\exp(\Binf(u+1))
&\exp(\Binf(v))\\
&=\exp\big{(}\Binf(1)\big{)}\cdot\exp\big{(}\C_0+\Binfs(u)\big{)}\cdot\exp(-\C_0)\cdot\exp(\Binf(v))\nonumber\\
&=\exp\big{(}\Binf(1)\big{)}\cdot\exp\big{(}\C_0+\Binfs(u)+\Binfs(v)\big{)}\nonumber.
\end{align}
In particular, we deduce that
\begin{equation}\label{eq sym}
    \exp\big(\Binf(u+1)\big)\exp\big(\Binf(v)\big)=\exp\big(\Binf(v+1)\big)\exp\big(\Binf(u)\big).
\end{equation}
One can see that this together with \cref{prop expr F} imply that the series $F^{(2)}\left(-t,\bfp,
    s_1,s_2\right)$ is symmetric in the variables $s_1-1/\alpha,s_2-2/\alpha$.
\end{rmq}

We now prove the main theorem of this section.
\begin{proof}[Proof of \cref{thm symmetry 2}]
We prove it by induction on $k$. For $k=1$ the result is straightforward from the definition and for $k=2$ it corresponds to \cref{rmq B u v}.
Fix $k\geq0$ and suppose that the theorem holds for
$F^{(k)}$. \cref{prop expr F} implies that
\begin{multline*}
F^{(k+1)}\left(-t,\bfp,
    s_1 , s_2 ,\dots,s_{k+1} 
\right)\\
=\exp\big(\Binf(-\alpha s'_{1}-1)\big)\cdot\exp\big(\Binf(k-1)\big)\cdots\exp\big(\Binf(1)\big)\cdot\\
\quad\exp\Big(\C_0+\Binfs(-\alpha s'_2-k-1)+\cdots+\Binfs(-\alpha s'_{k+1}-k-1)\Big)\cdot1.
\end{multline*}

Using $k-1$ times \cref{eq sym}, we obtain
\begin{multline*}
F^{(k+1)}\left(-t,\bfp,
    s_1, s_2,\dots,s_{k+1} 
\right)\\
=\exp\left(\Binf(k)\right)\cdots\exp\left(\Binf(2)\right)\cdot\exp\big(\Binf(-\alpha s'_{1}-k)\big)\cdot\\
\exp\Big(\C_0+\Binfs(-\alpha s_2-k-1)+\cdots+\Binfs(-\alpha s'_{k+1}-k-1)\Big)\cdot1.
\end{multline*}

Using \cref{lem u+1}, this can be rewritten as follows
\begin{multline*}
F^{(k+1)}\left(-t,\bfp,
    s_1, s_2,\dots,s_{k+1} 
\right)\\
=\exp\left(\Binf(k)\right)\cdots\exp\left(\Binf(1)\right)\cdot\exp(\Binf(-\alpha s'_{1}-k-1))\cdot\exp(-\C_0)\cdot\\
\exp\Big(\C_0+\Binfs(-\alpha s'_2-k-1)+\dots+\Binfs(-\alpha s'_{k+1}-k-1)\Big)\cdot1.
\end{multline*}
Finally, \cref{lem u+1} allows us to apply \cref{prop exponential formula} with the operators $\Binfs(-\alpha s'_1-k-1)$, $\Binfs(-\alpha s'_2-k-1)+\dots+\Binfs(-\alpha s'_{k+1}-k-1)$ and $\C_0$ in order to reassemble the last three exponentials, which concludes the proof of the theorem.
\end{proof}

\section{Proofs of the main results}\label{sec proof first main result}

\subsection{End of proof of \cref{thm Jack char}}
\begin{lem}\label{lem proj lim}
The functions $F^{(k)}$ satisfy the condition of \cref{eq shifted functions}:
\begin{align*}
    F^{(k+1)}(t,\bfp,s_1,\dots,s_k,0)=F^{(k)}(t,\bfp,s_1,\dots,s_k).
\end{align*}
As a consequence, the projective limit $F^{(\infty)}:=\varprojlim F^{(k)}$ is well defined in $\mathcal S^*_\alpha\llbracket t,p_1,p_2,\dots\rrbracket$, and 
$$F^{(\infty)}(t,\bfp,s_1,s_2,\dots)=\sum_{M\in \Minf}(-t)^{|M|}p_{\tf(M)}\frac{b^{\vartheta_\rho(M)}}{2^{|\Vbul(M)|-\cc(M)}\alpha^{cc(M)}}\prod_{i\geq 1}\frac{(-\alpha s_{i})^{|\mathcal{V}_\circ^{(i)}(M)|}}{z_{\nu_\bullet^{(i)}(M)}}.$$
\begin{proof}
We know from the recursive expression of $F^{(k)}$ given in \cref{prop expr F} that 
\begin{equation*}
F^{(k+1)}\left(t,\bfp,s_1,\dots,s_{k},0\right)=\exp\left(\Binf(-t,\bfp,-\alpha s_{1})\right)\cdots\exp\left(\Binf(-t,\bfp,-\alpha s_{k})\right) F^{(1)}\left(t,\bfp,0\right).
\end{equation*}
Using the combinatorial expression of the function $F^{(1)}$ (see
\cref{eq def F}), one can see that $F^{(1)}\left(t,\bfp,0\right)=1$,
since the only bipartite map without any white vertex is the empty map
(by convention). 
Reapplying \cref{prop expr F}, the last equation is equal to $F^{(k)}(t,\bfp,s_1,\dots,s_k)$.
The second part of the lemma is a consequence of \cref{thm symmetry}.
\end{proof}
\end{lem}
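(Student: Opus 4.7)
The plan is to establish the consistency relation $F^{(k+1)}(t,\bfp,s_1,\dots,s_k,0)=F^{(k)}(t,\bfp,s_1,\dots,s_k)$ and then deduce the existence and the combinatorial expression of the projective limit.

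First, I would iterate \cref{prop expr F} to fully unfold $F^{(k+1)}$ at $s_{k+1}=0$:
\[
F^{(k+1)}(t,\bfp,s_1,\dots,s_k,0) = \exp\bigl(\Binf(-t,\bfp,-\alpha s_1)\bigr)\cdots\exp\bigl(\Binf(-t,\bfp,-\alpha s_k)\bigr)\cdot F^{(1)}(t,\bfp,0).
\]
The crux is the identity $F^{(1)}(t,\bfp,0)=1$, which reduces everything to a combinatorial inspection of the defining sum~\eqref{eq def F}. At $s_1=0$ every contributing map $M$ must satisfy $\Vcirc^{(1)}(M)=\emptyset$; but a $1$-layered map has no other layer, so it has no white vertices at all. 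By bipartiteness any edge is incident to a white vertex, so such a map has no edges, and by the no-isolated-vertices convention (\cref{rmq empty map}) it has no vertices either. The only such map is the empty map, which contributes $1$. Re-applying \cref{prop expr F} in reverse to the product of $k$ remaining exponentials then yields $F^{(k)}(t,\bfp,s_1,\dots,s_k)$, proving the consistency relation.

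For the second part, this compatibility together with \cref{thm symmetry} (which guarantees that each $F^{(k)}$ is $\alpha$-shifted symmetric in $s_1,\dots,s_k$) shows that the sequence $(F^{(k)})_{k\geq 1}$ meets the defining conditions~\eqref{eq shifted functions} coefficient-wise in $t$ and $\bfp$, and hence descends to a well-defined element $F^{(\infty)}\in \Shifted\llbracket t,p_1,p_2,\dots\rrbracket$. The explicit combinatorial formula is automatic: any $M\in\Minf$ has only finitely many non-empty layers, so for $k$ large enough $M$ can be identified with an element of $\Mk$ (with all layers of index $>k$ empty), and its monomial contribution to $F^{(k)}$ matches the corresponding term on the right-hand side; passing to the limit concludes.

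The only real step is the identity $F^{(1)}(t,\bfp,0)=1$, which is essentially a bookkeeping check; everything else is an application of results already established (\cref{prop expr F,thm symmetry}). I expect no genuine obstacle, which is fitting for the concluding lemma that assembles the pieces produced in the earlier sections.
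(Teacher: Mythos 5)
Your proof is correct and follows essentially the same route as the paper's: iterate \cref{prop expr F}, observe that $F^{(1)}(t,\bfp,0)=1$ by the combinatorial definition, reassemble via \cref{prop expr F}, and invoke \cref{thm symmetry} for the shifted symmetry needed to form the projective limit. You simply spell out in slightly more detail why the $s_1=0$ specialization kills all maps except the empty one (bipartiteness forces edges to touch white vertices, then the no-isolated-vertices convention kicks in), which the paper abbreviates to a single sentence.
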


We now finish the proof of the first main result.
\begin{proof}[Proof of \cref{thm Jack char}]
Fix a partition $\mu$. It is enough to prove that the coefficient of $t^{|\mu|}p_\mu$ in $F^{(\infty)}(t,\bfp,\lambda)$ satisfies the conditions of \cref{thm Feray}.  The fact that this coefficient vanishes on partitions $\lambda$ of size  $|\lambda|<|\mu|$ is given by \cref{thm vanishing}, and we know that it is $\alpha$-shifted symmetric from \cref{lem proj lim}. 

Let us now prove that the top homogeneous part in $[t^{|\mu|}p_\mu]F^{(k)}(t,\bfp,\lambda)$ is equal to $\frac{\alpha^{|\mu|-\ell(\mu)}}{z_\mu}p_\mu(\lambda_1,\lambda_2,\dots,\lambda_k)$ for any $k\geq1$.  
This part corresponds to labelled $k$-layered maps of face-type $\mu$ and with maximal number of white vertices, \textit{i.e} maps with $|\mu|$ white vertices which are all of degree 1.
Note that adding a black vertex connected to $n$ white vertices in a layer $i$ of a map $M$ corresponds to  multiplying its marking $\kappa(M)$ by $p_n (-\alpha \lambda_i)^n/\alpha$. Thus, in order to obtain the top homogeneous part in $F^{(k)}$ we replace $\B_n(\bfp,\lambda_i)$ by $p_n (-\alpha \lambda_i)^n$ in  \cref{eq F}.
As a consequence, the top homogeneous part in $[t^{|\mu|}p_\mu]F^{(k)}$ is given by 
\begin{align*}
  [t^{|\mu|}p_\mu]\exp\left(\sum_{n\geq 1}\frac{(-t)^{n}\cdot (-\alpha \lambda_1)^{n} p_n}{\alpha n}\right)&\cdots \exp\left(\sum_{n\geq 1}\frac{(-t)^{n}\cdot (-\alpha \lambda_k)^n p_n}{\alpha n}\right)\\
  &=[t^{|\mu|}p_\mu]\exp\left(\sum_{n\geq 1}\frac{t^{n} \alpha^{n-1}\cdot p_n(\lambda_1,\dots,\lambda_k) p_n}{ n}\right)\\
  &=\frac{\alpha^{|\mu|-\ell(\mu)}}{z_\mu} p_\mu(\lambda_1,\dots,\lambda_k).\qedhere
\end{align*}
\end{proof}

\subsection{Proof of \cref{thm Jack via diff op}}\label{ssec Jack via diff}
In this section, we give a direct way to construct Jack polynomials by
adding the rows in increasing order of their size. This formula is
more efficient to generate Jack polynomials than the one given by \cref{prop expr F}. 
\begin{thm}\label{thm Jack via diff op 2}
    Fix a partition $\lambda$. Let $\mu$ be the partition obtained from $\lambda$ by removing the largest part; $\mu=\lambda\backslash \lambda_1$. Then 
    \begin{align*}
      J^{(\alpha)}_\lambda
      &=[t^{\lambda_1}]\exp\left(\Binf(-t,\bfp,-\alpha \lambda_1)\right)\cdot J^{(\alpha)}_\mu\\
      &=\sum_{\ell \geq 1}\sum_{n_1,\dots,n_\ell \geq 1\atop
        n_1+\cdots +n_\ell=\lambda_1}\frac{1}{\ell!}\frac{\B_{n_1}(\bfp,-\alpha \lambda_1)}{n_1}\cdots \frac{\B_{n_\ell}(\bfp,-\alpha \lambda_1)}{n_\ell}\cdot J^{(\alpha)}_\mu.
    \end{align*}
\begin{proof}
    Let $|\lambda| = n$, and $\ell(\lambda) = \ell$. From the definition of the Jack characters and \cref{thm Jack char} and \cref{prop expr F}, we have
    \begin{align*}
      J^{(\alpha)}_\lambda
      &=[t^{|\lambda|}]\exp\left(\Binf(-t,\bfp,-\alpha \lambda_1)\right)\cdots\exp\left(\Binf(-t,\bfp,-\alpha \lambda_{\ell(\lambda)})\right)\cdot 1\\
      &=\sum_{n_1,\dots,n_\ell \geq 1\atop
        n_1+\cdots +n_\ell=n} \left([t^{n_1}]\exp\left(\Binf(-t,\bfp,-\alpha \lambda_{1})\right)\right)
\cdots  \left([t^{n_{\ell}}]\exp\left(\Binf(-t,\bfp,-\alpha \lambda_{\ell(\lambda)})\right)\right)\cdot 1.
    \end{align*}
But from the proof of \cref{thm vanishing}, the only choice for
$(n_i)_{1 \leq i \leq \ell}$
that does not give a null term is $n_i = \lambda_i$ for all $1 \leq i
\leq \ell$. Indeed, otherwise there exists $i$ such that $n_i>\lambda_i$, and from \cref{eq 2 thm vanishing}, we have
    $$\bigl([t^{n_{i}}]\exp\left(\Binf(-t,\bfp,-\alpha \lambda_{i})\right)\bigr)\cdots\bigl([t^{n_{\ell}}]\exp\left(\Binf(-t,\bfp,-\alpha \lambda_{\ell})\right)\bigr)\cdot 1=0.$$

    As a consequence, we obtain 
    \begin{equation}
        J^{(\alpha)}_\lambda=\left([t^{\lambda_1}]\exp\left(\Binf(-t,\bfp,-\alpha \lambda_{1})\right)\right)
\cdots  \left([t^{\lambda_{\ell}}]\exp\left(\Binf(-t,\bfp,-\alpha \lambda_{\ell})\right)\right)\cdot 1.
    \end{equation}
    Comparing this expression for the partitions $\lambda$ and $\mu$ we obtain the first part of the equation of the theorem.
    To obtain the second part of the equation we expand the
    exponential and we use the fact that the operators $\B_n(\bfp,-\alpha \lambda_1)$ commute for $n\geq 1$, see \cref{prop comm B}.
\end{proof}
\end{thm}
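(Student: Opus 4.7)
The plan is to combine the combinatorial formula from Theorem \ref{thm Jack char} (or equivalently Theorem \ref{thm Jack}) with the recursive form of $F^{(k)}$ in Proposition \ref{prop expr F}, and then exploit the vanishing property proved in Theorem \ref{thm vanishing} to collapse a sum over compositions of $|\lambda|$ to a single term.

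First, unwinding the definition of the Jack character and iterating Proposition \ref{prop expr F} one obtains
\begin{equation*}
  J_\lambda^{(\alpha)} = [t^{|\lambda|}]\exp\bigl(\Binf(-t,\bfp,-\alpha\lambda_1)\bigr)\cdots \exp\bigl(\Binf(-t,\bfp,-\alpha\lambda_{\ell(\lambda)})\bigr)\cdot 1.
\end{equation*}
Next, I would expand the overall coefficient extraction $[t^{|\lambda|}]$ by distributing it over the product, getting a sum over compositions $(n_1,\dots,n_{\ell(\lambda)})$ of $|\lambda|$ of the form
\begin{equation*}
  \sum_{n_1+\cdots+n_{\ell}=|\lambda|}\bigl([t^{n_1}]\exp(\Binf(-t,\bfp,-\alpha\lambda_1))\bigr)\cdots\bigl([t^{n_{\ell}}]\exp(\Binf(-t,\bfp,-\alpha\lambda_{\ell}))\bigr)\cdot 1.
\end{equation*}
The key step is then to apply the argument underlying the proof of Theorem \ref{thm vanishing}: Proposition \ref{prop stability} guarantees that each partial product from the right lies in a space $\mcP_{\leq \lambda_{i+1}} \subset \mcP_{\leq \lambda_i}$, and Proposition \ref{prop annhilation} then annihilates any factor with $n_i > \lambda_i$. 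Since we also have $\sum n_i = |\lambda|=\sum \lambda_i$, this forces $n_i=\lambda_i$ for every $i$, leaving the single surviving term
\begin{equation*}
  J_\lambda^{(\alpha)} = \bigl([t^{\lambda_1}]\exp(\Binf(-t,\bfp,-\alpha\lambda_1))\bigr)\cdots \bigl([t^{\lambda_{\ell}}]\exp(\Binf(-t,\bfp,-\alpha\lambda_{\ell}))\bigr)\cdot 1.
\end{equation*}

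Now the first equation of the theorem follows by applying the same identity to the partition $\mu = \lambda\setminus\lambda_1 = (\lambda_2,\dots,\lambda_{\ell(\lambda)})$, which produces exactly the product of the last $\ell(\lambda)-1$ factors acting on $1$; hence that tail equals $J_\mu^{(\alpha)}$ and one substitutes. For the second equation I would simply expand the exponential as a power series and collect the coefficient of $t^{\lambda_1}$; because $\B_n(\bfp,-\alpha\lambda_1)$ commute amongst themselves for all $n\geq 1$ by Proposition \ref{prop comm B}, the resulting products of operators are order-independent and are naturally indexed by compositions $(n_1,\dots,n_\ell)$ of $\lambda_1$, weighted by $1/(\ell!\,n_1\cdots n_\ell)$, as claimed.

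The main obstacle here is really a conceptual one that the excerpt has already resolved upstream: the vanishing/stability mechanism that forces the composition sum to collapse. Once Theorems \ref{thm vanishing}, Propositions \ref{prop stability} and \ref{prop annhilation} are available (together with the commutation of the $\B_n$'s), the present statement is a short corollary, essentially bookkeeping of coefficient extraction and invoking the same identity inductively on $\lambda$ and $\mu$.
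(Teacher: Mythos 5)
Your proposal is correct and follows essentially the same route as the paper: iterate \cref{prop expr F} to express $J_\lambda^{(\alpha)}$ as a coefficient of a product of exponentials, distribute the coefficient extraction over compositions of $|\lambda|$, invoke the stability/annihilation mechanism from the proof of \cref{thm vanishing} (\cref{prop stability} and \cref{prop annhilation}) to force $n_i=\lambda_i$, compare with the same identity for $\mu$, and finally expand the exponential using \cref{prop comm B}. No meaningful difference from the paper's argument.
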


\subsection{Positivity in Lassalle's conjecture}

We consider two sequences of variables $s_1,\dots,s_k$ and $r_1,\dots,r_k$.

We introduce the generating series 
$\tF^{(k)}\left(t,\bfp,\begin{array}{ccc}
    s_1 &\dots&s_k  \\
    r_1 & \dots& r_k
\end{array}\right)$ 
inductively by $F^{(0)}=1$ and for every $k\geq0$ 
$$\tF^{(k+1)}\left(t,\bfp,\begin{array}{ccc}
    s_1 &\dots&s_{k+1}  \\
    r_1 & \dots& r_{k+1}
\end{array}\right)=\exp\left(r_{k+1}\Binf(-t,\bfp,-\alpha s_{k+1})\right)\cdot \tF^{(k)}\left(t,\bfp,\begin{array}{ccc}
    s_1 &\dots&s_k  \\
    r_1 & \dots& r_k
\end{array}\right).$$

Hence the functions $F^{(k)}$ defined in \cref{eq F} are obtained as a specialization of $\tF^{(k)}$:
$$F^{(k)}(t,\bfp,s_1,\dots ,s_k)=\tF^{(k)}\left(t,\bfp,\begin{array}{ccc}
    s_1 &\dots&s_k  \\
    1 & \dots& 1
    \end{array}\right).
$$

Using the combinatorial interpretation for the operator $\Binf$ from
\cref{ssec cat Y}, the same argument that allowed to interpret the
function $F^{(k)}$, also gives the following interpretation for $\tF^{(k)}$:
\begin{equation}\label{eq tilde F}
    \tF^{(k)}\left(t,\bfp,\begin{array}{ccc}
    s_1 &\dots&s_k  \\
    r_1 & \dots& r_k
    \end{array}\right)=\sum_{M\in \Mk}\frac{(-t)^{|M|}p_{\nu_\diamond(M)}b^{\vartheta_\rho(M)}}{2^{|\Vbul(M)|-\cc(M)}\alpha^{cc(M)}}\prod_{1\leq i\leq k}\frac{r_{i}^{|\Vbul^{(i)}(M)|}(-\alpha s_{i})^{|\mathcal{V}_\circ^{(i)}(M)|}}{z_{\nu_\bullet^{(i)}(M)}}.
\end{equation}

The two following  properties follow from the definition of the functions $\tF^{(k)}$:

\begin{enumerate}[label=(\roman*)]
    \item For every $1\leq i\leq k-1$, 
    \begin{equation*}\label{eq prop tF 1}
      \tF^{(k)}\left(t,\bfp,\begin{array}{ccc}
    s_1 & \dots &s_k \\
    r_1 & \dots & r_k
\end{array}\right)\bigg|_{s_i=s_{i+1}=s}=
\tF^{(k-1)}\left(t,\bfp,\begin{array}{cccccc}
    s_1 & \dots & s& s_{i+2}&\dots& s_k \\
    r_1 & \dots & r_i+r_{i+1 } & r_{i+2}& \dots & r_k
\end{array}\right).  
    \end{equation*}

    \item For every $1\leq i\leq k$,
    \begin{equation*}\label{eq prop tF 2}
      \tF^{(k)}\left(t,\bfp,\begin{array}{ccc}
    s_1 & \dots &s_k \\
    r_1 & \dots & r_k
\end{array}\right) \bigg|_{r_i=0}
=\tF^{(k-1)}\left(t,\bfp,\begin{array}{cccccc}
    s_1 & \dots & s_{i-1}& s_{i+1}&\dots& s_k \\
    r_1 & \dots & r_{i-1}& r_{i+1}& \dots & r_k
\end{array}\right)  
    \end{equation*}
\end{enumerate}

 We deduce the following proposition.

\begin{prop}\label{prop tF}
Let $\lambda$ be a partition, and let $\left(\begin{array}{ccc}
    s_1 & \dots &s_k \\
    r_1 & \dots & r_k
\end{array}\right)$ be multirectangular coordinates of $\lambda$. Then 
$$\tF^{(k)}\left(t,\bfp,\begin{array}{ccc}
    s_1 & \dots &s_k \\
    r_1 & \dots & r_k
\end{array}\right)=F^{(\ell(\lambda))}(t,\bfp,\lambda_1,\dots,\lambda_{\ell(\lambda)}).$$ 
In particular, the quantity $\tF^{(k)}\left(t,\bfp,\begin{array}{ccc}
    s_1 & \dots &s_k \\
    r_1 & \dots & r_k
\end{array}\right)$ does not depend on the multirectangular coordinates of $\lambda$ chosen. 
\begin{proof}
We start by removing all the pairs $(s_i,r_i)$ for which $r_i=0$,
using property $(ii)$ above. Then, we use property $(i)$ in order to
decrease the remaining coordinates $r_i$ until they are all equal
1. More precisely, we apply multiple times the following equation
    \begin{equation*}
      \tF^{(k)}\left(t,\bfp,\begin{array}{ccc}
    s_1 & \dots &s_k \\
    r_1 & \dots & r_k
\end{array}\right)
=\tF^{(k+1)}\left(t,\bfp,\begin{array}{cccccc}
    s_1 & \dots & s_{i}& s_{i}&\dots& s_k \\
    r_1 & \dots & r_{i}-1& 1& \dots & r_k
\end{array}\right).
    \end{equation*}
Note that in each one of these operations the new coordinates obtained are also multirectangular coordinates for $\lambda$. Hence when $r_i=1$ for every $i$, we have $k=\ell(\lambda)$ and $s_i=\lambda_i$ for each $1\leq i\leq\ell(\lambda)$.
\end{proof}
\end{prop}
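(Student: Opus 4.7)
The plan is to reduce any choice of multirectangular coordinates $(s_1,\dots,s_k;r_1,\dots,r_k)$ for $\lambda$ to the canonical choice in which every $r_i$ equals $1$ and the $s_i$'s are exactly the parts $\lambda_1\ge\lambda_2\ge\cdots\ge\lambda_{\ell(\lambda)}$ of $\lambda$. Once this reduction is carried out, the right-hand side is immediate: by the very definition of $\tF^{(k)}$ and of $F^{(k)}$ as a specialization of $\tF^{(k)}$ at $r_1=\cdots=r_k=1$, one has
\begin{equation*}
\tF^{(\ell(\lambda))}\left(t,\bfp,\begin{array}{ccc}\lambda_1&\dots&\lambda_{\ell(\lambda)}\\ 1&\dots&1\end{array}\right) = F^{(\ell(\lambda))}(t,\bfp,\lambda_1,\dots,\lambda_{\ell(\lambda)}).
\end{equation*}

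The reduction proceeds in two passes, using the two elementary identities stated just before the proposition. First, I would apply property (ii) to drop every index $i$ with $r_i=0$; the underlying partition $\lambda$ is unchanged, and $\tF^{(k)}$ loses a trivial exponential factor $\exp(0)=1$. After this step all $r_i\ge 1$, and the sequence $(s_1,\dots,s_k;r_1,\dots,r_k)$ is still a valid system of multirectangular coordinates for $\lambda$. Second, I would iterate property (i) in the \emph{reverse} direction, splitting each block of height $r_i\ge 2$ into two consecutive blocks of heights $r_i-1$ and $1$ carrying the same width $s_i$; formally one introduces a new coordinate by inserting $(s_i,1)$ right after $(s_i,r_i-1)$, and the specialization $s_i=s_{i+1}$ in (i) then gives back $\tF^{(k)}$. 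Repeating this until every height is $1$ ends with a list $(\tilde{s}_1,\dots,\tilde{s}_{\ell(\lambda)};1,\dots,1)$, where the $\tilde{s}_j$ enumerate the parts of $\lambda$ with multiplicity in weakly decreasing order, i.e.~$\tilde{s}_j=\lambda_j$. The preceding equality then finishes the proof.

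The independence of $\tF^{(k)}$ from the chosen multirectangular coordinates is a direct corollary: the right-hand side $F^{(\ell(\lambda))}(t,\bfp,\lambda_1,\dots,\lambda_{\ell(\lambda)})$ depends only on $\lambda$. No obstacle is anticipated here; the argument is purely algebraic and rests entirely on the two simple identities (i) and (ii), which are themselves immediate from the inductive definition of $\tF^{(k)}$ together with the commutation relation $[\Binf(-t,\bfp,-\alpha s),\Binf(-t,\bfp,-\alpha s)]=0$ used implicitly to merge the two exponentials in the case $s_i=s_{i+1}$.
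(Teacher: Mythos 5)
Your proof is correct and follows essentially the same route as the paper: drop the indices with $r_i=0$ via property (ii), then iteratively split each block $(s_i,r_i)$ into $(s_i,r_i-1)$ and $(s_i,1)$ via property (i) applied in reverse, until all heights are $1$, at which point the specialization $\tF^{(\ell(\lambda))}|_{r_i=1}=F^{(\ell(\lambda))}$ concludes. The only cosmetic difference is that the paper phrases the splitting step as decreasing $r_i$ whereas you phrase it as running property (i) backwards; these are the same operation, and your extra remark that the trivial commutation $[\Binf(u),\Binf(u)]=0$ underlies the merge $e^{a\Binf}e^{b\Binf}=e^{(a+b)\Binf}$ in property (i) is accurate.
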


As a consequence of \cref{thm Jack char} and \cref{prop tF}, we obtain that for any partition~$\mu$
\begin{equation}\label{eq Jch tF}
  \tJch_\mu(\bfs,\bfr)=[t^{|\mu|}p_\mu]\tF^{(k)}\left(t,\bfp,\begin{array}{ccc}
    s_1 & \dots &s_k \\
    r_1 & \dots & r_k
\end{array}\right),  
\end{equation}
where $\bfs=(s_1\dots,s_k,0,\dots)$ and $\bfr=(r_1,\dots,r_k,0,\dots)$.
We now prove the positivity in Lassalle's conjecture.

\begin{proof}[Proof of positivity in \cref{thm Lassalle conj}]
Comparing \cref{eq tilde F} and \cref{eq Jch tF} and taking the limit on $k$ we get
\begin{equation}\label{main eq}
  \tJch_\mu(\bfs,\bfr)=\sum_{M}\frac{b^{\vartheta_\rho(M)}}{2^{|\Vbul(M)|-\cc(M)}\alpha^{cc(M)}}\prod_{i\geq 1}\frac{r_{i}^{|\Vbul^{(i)}(M)|}(-\alpha s_{i})^{|\mathcal{V}_\circ^{(i)}(M)|}}{z_{\nu_\bullet^{(i)}(M)}},  
\end{equation}
where the sum is taken over layered maps $M$ of face type $\mu$. Since every connected component of a bipartite map contains at least one white vertex, the $\alpha$-term which appears in the denominator is compensated. This concludes the proof of the theorem.
\end{proof}

\section*{Acknowledgement}

We extend our gratitude to Guillaume Chapuy, Valentin F\'eray, and Piotr Śniady for their invaluable collaboration and discussions over many years regarding related problems. Without their contributions, this research would not have been possible.

\bibliographystyle{amsalpha}
\bibliography{biblio2015.bib}

\end{document}